\numberwithin{equation}{section} \numberwithin{equation}{section}
\newtheorem{thm}{Theorem}[section]
\newtheorem{lem}[{thm}]{Lemma}
\newtheorem{prop}[{thm}]{Proposition}
\newtheorem{corr}[{thm}]{Corollary}
\newtheorem{rem}{Remark}[section]{\bfseries\upshape}
\newtheorem*{prob-a}{Problem A}
\newtheorem*{prob-b}{Problem B}
\newtheorem*{prob-c}{Problem C}
\newtheorem{con}[{thm}]{Conjecture}
\newtheorem*{ack}{Acknowledgment}
\newtheorem*{GPPW}{Genernal Payne-P\'{o}lya-Weinberger Conjecture}
\newtheorem*{PPW}{Therorem~(Payne-P\'{o}lya-Weinberger Conjecture)}
\newcommand{\DOI}[1]{doi: \href{https://doi.org/#1}{#1}}
\renewcommand{\oddsidemargin}{5mm}
\title[Eigenvalues of Xin-Laplacian]{Eigenvalues of Xin-Laplacian on \\ Complete  Riemannian manifolds}
\author[L. Zeng \& Z. Zeng]{Lingzhong Zeng and Zhouyuan Zeng}
\address{Lingzhong Zeng
\\  \newline \indent School of Mathematics and Statistics
\\  \newline \indent   Jiangxi Normal University, Nanchang 330022,  China. lingzhongzeng@yeah.net \\ \newline \indent
Zhouyuan Zeng\\   \newline \indent School of Mathematics and Statistics\\
 \newline \indent  Jiangxi Normal University,
Nanchang 330022, China. zhouyuan0811@163.com}
\begin{document}
\maketitle

\begin{abstract}  In this paper,
we consider Dirichlet eigenvalue problem which is related to  Xin-Laplacian on the bounded domain of
complete Riemannian manifolds. By establishing the general formulas, combining with some results of Chen and Cheng type, we prove some eigenvalue inequalities. As some applications, we consider the eigenvalues on some Riemannian manifolds admitting with special functions, the  translating solitons, minimal submanifolds on the Euclidean spaces, submanifolds on the unit spheres, projective spaces and so on. In particular,  for translating solitons, our eigenvalue inequalities are universal. In addition, we investigate the closed eigenvalue problem for the Xin-Laplacian and generalize the Reilly's result on the first eigenvalue of the Beltrami-Laplacian.  As some remarkable applications, we obtain a very sharp estimate for the upper bound of the second nonzero eigenvalue(without counting multiplicities of eigenvalues) of the Beltrami-Laplacian on the minimal isoparametric hypersurfaces and focal submanifolds in the unit sphere, which leads to a conjecture and is the most fascinating part of this paper. Furthermore, our result hints $2n$ may be the second eigenvalue of Beltrami-Laplacian on the isoparametric hypersurfaces who are not isometric a unit sphere.
\end{abstract}

\footnotetext{{\it Key words and phrases}: isoparametric hypersurface,  Xin-Laplacian;
eigenvalues; Riemannian manifolds; universal inequality; translating solitons.} \footnotetext{2010
\textit{Mathematics Subject Classification}:
 35P15, 53C40,53C30.}

\section{Introduction}

Let $\mathcal{M}^{n}$ be an $n$-dimensional, complete Riemannian submanifold isometrically immersed into the $(n+p)$-dimensional Euclidean space $\mathbb{R}^{n+p}$.
Suppose that $g_{0}$ is the standard metric on the Euclidean space  $\mathbb{R}^{n+p}$ and $g$ is a Riemannian metric on $\mathcal{M}^{n}$ induced from the Euclidean space  $\mathbb{R}^{n+p}$.
Throughout this paper, we use $\langle\cdot,\cdot\rangle_{g}$, $|\cdot|_{g}^{2}$, ${\rm div}$, $\Delta$, $\nabla$ and $\nu^{\top}$ to denote the Riemannian inner product associated with the induced metric $g$, norm  with respect to the inner product $\langle\cdot,\cdot\rangle_{g}$,  divergence, Laplacian, the gradient operator on Riemannian manifolds  $\mathcal{M}^{n}$ and the projection of the vector $\nu$ onto the tangent bundle of $\mathcal{M}^{n}$, respectively. Moreover, we use $\langle\cdot,\cdot\rangle_{g_{0}}$, $|\cdot|_{g_{0}}^{2}$ and $\nu^{\bot}$ to represent the standard Euclidean inner product, norm on $\mathbb{R}^{n+p}$ and the projection of $\nu$ onto the normal bundle of $\mathcal{M}^{n}$, respectively. Next, we define Xin-Laplacian (or call it $\mathfrak{L}_{\nu}$ operator) as follows:

\begin{equation}\label{L-equ}  \mathfrak{L}_{\nu}(\cdot) =\Delta(\cdot)+ \langle\nu,\nabla(\cdot)\rangle_{g_{0}}=e^{-\langle\nu,X\rangle_{g_{0}}}{\rm div}(e^{\langle\nu,X\rangle_{g_{0}}}\nabla(\cdot)).\end{equation}  Xin-Laplacian is  an elliptic differential operator and introduced by Xin
in \cite{Xin2}. From the viewpoint of geometry, Xin-Laplacian plays  an
important role for the geometric understanding of the translating solitons, see \cite{CQ,Xin2}. We remark that this operator  is similar to the $\mathfrak{L}$ operator introduced by  Colding and Minicozzi in \cite{CM} and Witten-Laplacian given by
$\Delta_{f}(\cdot)=\Delta(\cdot)-\langle\nabla f,\nabla(\cdot)\rangle_{g}$, where $f$ is a potential function defined on $\mathcal{M}^{n}$(cf. \cite{FLL,FS,WW}).
It can be shown that the elliptic differential operator $\mathfrak{L}_{\nu}$ is a self-adjoint operator with
respect to the weighted measure $e^{\langle\nu,X\rangle_{g_{0}}}dv$,  namely, for any $u, w
\in C_{0}^{2}(\Omega)$,
\begin{equation}
\begin{aligned}
\label{1.3} -\int_{\Omega}\langle\nabla u,\nabla w\rangle e^{\langle\nu,X\rangle_{g_{0}}}dv
=\int_{\Omega}(\mathfrak{L}_{\nu}w)ue^{\langle\nu,X\rangle_{g_{0}}}dv=\int_{\Omega}(\mathfrak{L}_{\nu}u)we^{\langle\nu,X\rangle_{g_{0}}}dv.
\end{aligned}
\end{equation}
From more analytic viewpoint, just like $\mathfrak{L}$ operator and Witten-Laplacian, it is of great importance to explore some analytic properties of Xin-Laplacian. For example, one can consider  Liouville property, spectrum of Xin-Laplacian, mean value inequality, Gauss
maps, heat kernel associated with the  Xin-Laplacian and so on.
In particular, the first eigenvalue will lead to a lot of very profound results in  understanding some geometric structure of translating solitons although we does not cover this aspect in this paper.

Let $\Omega$ be a bounded domain on an $n$-dimensional Riemannian manifold $\mathcal{M}^{n}$ with piecewise smooth boundary $\partial\Omega$. We consider  Dirichlet eigenvalue problem of   Xin-Laplacian on complete Riemannian manifolds as follows:
\begin{equation}\label{diri-prob}
 \mathfrak{L}_{\nu}u +\Lambda u=0, \ \  {\rm in} \ \ \ \ \Omega,  \ \ {\rm and}\ \
  \ u=0, \ \   {\rm on} \ \ \partial \Omega.\end{equation}
Assume that $\Lambda_{k}$ denotes the $k^{th}$ eigenvalue corresponding to the eigenfunction $u_{k}$. Then, the eigenvalue problem {\rm \eqref{diri-prob}} has real and discrete spectrum satisfying the following inequalities:
$
0<\Lambda_{1}<\Lambda_{2}\leq\Lambda_{3}\leq\cdots\leq\Lambda_{k}\leq\cdots\uparrow+\infty,
$
where each eigenvalue is repeated according to its multiplicity.

On one hand, suppose that $M^{n}$ is an $n$-dimensional Euclidean space $\mathbb{R}^{n}$, Payne,
P\'{o}lya and Weinberger \cite{PPW} investigated the eigenvalues for Dirichlet eigenvalue problem \eqref{diri-prob} of Laplacian and obtained a universal inequality as follows:

\begin{equation}\label{ppw-ineq}\Lambda_{k+1}-\Lambda_{k}\leq\frac{4}{nk}\sum^{k}_{i=1}\Lambda_{i}.\end{equation}Here, the words ``universal inequality" means that  the spectrum is subject to ¡°universal bounds¡± by which certain expressions
involving eigenvalues dominate others with no reference to the geometry of bounded domain $\Omega$ but  reference to the dimension $n$. The study of the universal inequalities are stemmed from Payne, P\'{o}lya and Weinberger's important work in 1956 (cf. \cite{PPW}).
Furthermore, in various settings, many mathematicians extended the universal inequality of Payne, P\'{o}lya and Weinberger.  In particular, Hile and Protter \cite{HP} proved the following universal inequality of eigenvalues:

\begin{equation}\label{hp-ineq}\sum^{k}_{i=1}\frac{\Lambda_{i}}{\Lambda_{k+1}-\Lambda_{i}}\geq\frac{nk}{4},\end{equation}
which is sharper than inequality \eqref{ppw-ineq} given by Payne, P\'{o}lya and Weinberger. Furthermore, an amazing contribution to eigenvalue inequality is that Yang \cite{Y} (cf. \cite{CY2})  obtained a very sharp universal inequality:

\begin{equation}\label{y1-ineq}\sum^{k}_{i=1}(\Lambda_{k+1}-\Lambda_{i})^{2}\leq\frac{4}{n}\sum^{k}_{i=1}(\Lambda_{k+1}-\Lambda_{i})\Lambda_{i}.\end{equation}
From \eqref{y1-ineq}, one can obtain

\begin{equation}\label{y2-ineq}\Lambda_{k+1}\leq\frac{1}{k}(1+\frac{4}{n})\sum^{k}_{i=1}\Lambda_{i}.\end{equation}
The inequalities \eqref{y1-ineq} and \eqref{y2-ineq} are called by Ashbaugh Yang's first inequality and second inequality,
respectively (cf. \cite{A1}, \cite{A2}). In fact, Chebyshev's inequality implies following connections
$\eqref{y1-ineq}\Rightarrow \eqref{y2-ineq} \Rightarrow \eqref{hp-ineq} \Rightarrow \eqref{ppw-ineq}.$

Let $\Psi$ denote the set of all isometric immersions from $\mathcal{M}^{n}$ into the Euclidean space $\mathbb{R}^{n+p}$. In an important literature \cite{CC}, Chen and Cheng investigated Dirichlet problem of Laplacian on the Riemannian manifolds in 2008. In details, based on an extrinsic method on the mean curvature of the immersion, they  proved
\begin{equation}
\begin{aligned}
\label{c-cheng-1} \sum^{k}_{i=1}(\Lambda_{k+1}-\Lambda_{i})^{2}
\leq\frac4n\sum^{k}_{i=1}(\Lambda_{k+1}-\Lambda_{i})\left(\Lambda_i+\frac{1}{4}C_{1}\right),
\end{aligned}
\end{equation}
where $C_{1}=\inf_{\psi\in\Psi}\max_{\Omega}n^{2}H^{2}.$

On the other hand, letting $\Omega$ be a bounded domain  on the plane $\mathbb{R}^{2}$, Payne, P\'{o}lya and Weinberger \cite{PPW} proved that its
lower order eigenvalues satisfy

\begin{equation}\label{ppw-2}\Lambda_{2}+\Lambda_{3}\leq6\Lambda_{1},\end{equation}
which leads to a famous conjecture for
$\Omega\subset\mathbb{R}^{n}$ as follows (see \cite{Ash}):

\begin{GPPW}Let $\Omega$ be a bounded domain on an $n$-dimensional Euclidean space $\mathbb{R}^{n}$. Assume that $\Lambda_{i}$ is the $i$-th eigenvalue of Dirichlet problem
\eqref{diri-prob} for the Laplace operator on $\mathbb{R}^{n}$. Then,    inequality
\begin{equation}
\frac{\Lambda_{2} +\Lambda_{3} +\cdots+ \Lambda_{n+1}}{
\Lambda_{1}}\leq n\frac{\Lambda_{2}(\mathbb{B}^{n})}{\Lambda_{1}(\mathbb{B}^{n})}\end{equation}holds, where $\Lambda_{i}(\mathbb{B}^{n})(i=1,2)$ denotes the $i^{th}$ eigenvalue of Laplacian on the ball $\mathbb{B}^{n}$ with the same volume as the bounded domain $\Omega$, i.e., $Vol(\Omega)=Vol(\Omega^{\ast})$.\end{GPPW} Attacking this conjecture, Brands \cite{Bran} improved
Payne, P\'{o}lya and Weinberger's inequality \eqref{ppw-2} to the following: $\Lambda_{2}+\Lambda_{3}  \leq\Lambda_{1}(3 + \sqrt{7}),
$ when $n=2$.
Furthermore, Hile and Protter \cite{HP} obtained
$\Lambda_{2} +\Lambda_{3}
\leq 5.622\Lambda_{1}.$ In 1980, Marcellini \cite{Mar} proved $\Lambda_{2}+\Lambda_{3} \leq(15 + \sqrt{345})/6\Lambda_{1}.$ In 2011, by a new  approach, Chen and Zheng \cite{CZh} proved $\Lambda_{2}+\Lambda_{3} \leq5.3507\Lambda_{1}.$ For general case,  Ashbaugh and Benguria
\cite{AB1} made a fundamental contribution for establishing a surprising universal inequality as follows:

\begin{equation}\label{1.16}
\frac{\Lambda_{2} +\Lambda_{3} +\cdots+ \Lambda_{n+1}}{
\Lambda_{1}}\leq n + 4.\end{equation} for $\Omega\subset\mathbb{R}^{n}$, in 1993. For more references on the solution of
this conjecture, we refer the readers to
\cite{AB2,AB3,Chit,HP} and references therein. In particular, an amazing breakthrough was made by Ashbaugh and Benguria in \cite{AB2}(or see\cite{AB3}). They  affirmatively settled the general Payne, P\'{o}lya and Weinberger's Conjecture under certain special case. More specifically, by dealing with some good properties of Bessel
functions, Ashbaugh and Benguria  proved a famous conjecture  listed in
problem collection of Yau \cite{Yau}(or cf. \cite{Ash}) as follows:
\begin{PPW}Let $\Omega$ be a bounded domain on an $n$-dimensional Euclidean space $\mathbb{R}^{n}$. Assume that $\Lambda_{i}$ is the $i$-th eigenvalue of the Dirichlet problem
\eqref{diri-prob} for the Laplace operator on $\mathbb{R}^{n}$. Then,  the following eigenvalue inequality

\begin{equation*}
\frac{\Lambda_{2}}{
\Lambda_{1}}\leq \frac{\Lambda_{2}(\mathbb{B}^{n})}{\Lambda_{1}(\mathbb{B}^{n})}\end{equation*}holds, where $\Lambda_{i}(\mathbb{B}^{n})(i=1,2)$ denotes the $i^{th}$ eigenvalue of Laplacian on the ball $\mathbb{B}^{n}$ with the same volume as the bounded domain $\Omega$, i.e., $Vol(\Omega)=Vol(\Omega^{\ast})$.\end{PPW} In 2008, Chen and Cheng
\cite{CC} proved \eqref{1.16} still holds when $\Omega$ is a
bounded domain in a complete Riemannian manifold isometrically
minimally immersed in $\mathbb{R}^{n+p}$ .
Furthermore, Ashbaugh and Benguria \cite{AB1} (cf. Hile and Protter \cite{HP} ) improved the above result
to the following interesting universal inequality:

\begin{equation}\frac{\Lambda_{2} + \Lambda_{3}+ \cdots+ \Lambda_{n+1}}{\Lambda_{1}}\leq n + 3 +\frac{\Lambda_{1}}{\Lambda_{2}}.
\end{equation}
Very recently, Cheng and Qi \cite{CQ} have proved that, for any $1\leq j\leq n + 2$, eigenvalues satisfy at
least one of the following:

\begin{equation*}\begin{aligned} {\rm(1)}\ \
\frac{\Lambda_{2}}{\Lambda_{1}}
< 2-
\frac{\Lambda_{1}}{
\Lambda
_{j}}, \ \ \
 {\rm(2)}\ \ \frac{\Lambda_{2} + \Lambda_{3}+ \cdots+ \Lambda_{n+1}}{\Lambda_{1}}\leq n + 3 +\frac{\Lambda_{1}}{\Lambda_{j}}.
\end{aligned}\end{equation*}In 2002, Levitin and Parnovski \cite{LP} proved an algebraic inequality,  and by using this algebraic inequality, they  generalized  \eqref{1.16}  to
\begin{equation}\label{LP1}
\frac{\Lambda_{j+1} +\Lambda_{j+2} +\cdots+ \Lambda_{j+n}}{
\Lambda_{j}}\leq n + 4,
\end{equation}
where $j$ is any positive integer. For general Riemannian manifold $\mathcal{M}^{n}$ isometrically immersed into the $(n+p)$-dimensional Euclidean space $\mathbb{R}^{n+p}$, in \cite{CC}, Chen and Cheng obtained
\begin{equation}
\begin{aligned}
\label{c-cheng-2} \frac{\lambda_{2} +\lambda_{3} +\cdots+ \lambda_{n+1}}{
\lambda_{1}}\leq n + 4
\end{aligned}
\end{equation}
where $\lambda_{i}=\Lambda_{i}+\frac{1}{4}\inf_{\psi\in\Psi}\max_{\Omega}n^{2}H^{2},$ and   $H$ denotes the mean curvature of $\mathcal{M}^{n}$ defined by \eqref{H-mean-H-equa}.
Also, Soufi, Harrell, Ilias and other mathematicians  made many very important contributions to eigenvalue problem of some self-adjoint elliptic differential operators. In particular, Soufi, Harrell, Ilias studied the eigenvalues of Schr\"{o}dinger operator, and  by the same algebraic argument, they established some interesting inequalities of Payne-P\'{o}lya-Weinberger type in \cite{SHI}, which generalizes inequality \eqref{c-cheng-2}.

Next, let $\mathcal{M}^{n}$ be an $n$-dimensional compact Riemannian manifolds without boundary. We consider the following closed eigenvalue problem of the differential operator $\mathfrak{L}_{\nu}$ on the Riemannian manifolds $\mathcal{M}^{n}$:
\begin{equation}\label{closed-prob}
\mathfrak{L}_{\nu}\overline{u} +\overline{\Lambda} \overline{u}=0, \ \  {\rm in} \ \ \ \ \mathcal{M}^{n}.\end{equation}
Let $\overline{\Lambda}_{k}$ denote the $k$-th eigenvalue of the closed eigenvalue problem \eqref{closed-prob}, which is corresponding to the eigenfunction $\overline{u}_{k}$. Similarly, the spectrum of the eigenvalue problem \eqref{closed-prob}  is
discrete and satisfies
$
0=\overline{\Lambda}_{0}<\overline{\Lambda}_{1}\leq\overline{\Lambda}_{2}\leq\cdots\leq\overline{\Lambda}_{k}\leq\cdots\rightarrow+\infty,
$
where each eigenvalue is repeated according to its multiplicity. Clearly, when $\nu$ vanishes, closed eigenvalue problem \eqref{closed-prob} becomes a classical closed eigenvalue problem of Beltrami-Laplacian:
\begin{equation}\label{closed-prob-2}
\Delta \overline{u} +\overline{\Lambda} \overline{u}=0, \ \  {\rm in} \ \ \ \ \mathcal{M}^{n}.\end{equation}
Assume that $\overline{\Lambda}_{k}$ denotes the $k^{th}$ eigenvalue corresponding to the eigenfunction $\overline{u}_{k}$.
Let $\overline{\Gamma}_{i}$ be the $i$-th distinct eigenvalue of the closed eigenvalue problem \eqref{closed-prob-2} of Beltrami Laplacian on a compact Riemannian manifold without boundary, where $i=0,1,2,\cdots$. In other words, without counting multiplicity of each eigenvalue,   one has the following strict inequalities: $0=\overline{\Gamma}_{0}<\overline{\Gamma}_{1}<\overline{\Gamma}_{2}<\overline{\Gamma}_{3}<\cdots\uparrow+\infty.$

\vskip3mm

\noindent\textbf{Motivation.} It is a very fundamental problem to investigate the eigenvalues of some elliptic operators on the Riemannian manifolds.
Usually, there are two important problems to be considered in spectral geometry: From an analytic perspective, given some geometric and topological structures on the manifolds, one tents to determine the dates or demonstrate certain behaviors of spectrum of elliptic operators; Conversely, from a geometric viewpoint, one always wants to obtain some information on the topology and geometry of manifolds when some spectrum dates are given. The motivation of this paper focuses on the former part. Although we do  does not address the latter part, a conjecture is proposed deriving from some estimates for upper bounds of the closed eigenvalue problem \eqref{closed-prob-2} of  Beltrami-Laplacian.
As the authors know, there is few of investigation for the spectrum of Xin-Laplacian. Thus, it is very urgent for us to consider the eigenvalue problem of Xin-Laplacian.
Inspired by the previous work and the above statements, it is natural for us to discuss the following problem.

\begin{prob-a}Can we establish some inequalities for the lower and higher order eigenvalues of Dirichlet problem \eqref{diri-prob}? Furthermore, for translating solitons, whether the spectral behavior of Xin-Laplacian has a similar rigidity just like the Dirichlet Laplacian on the domain of the Euclidean space or not? \end{prob-a}

In 1982, Yau posed a famous conjecture (see Conjecture \ref{yau-conj} in Section \ref{sec6}) and
his conjecture attracted the attention of many mathematicians, and we will briefly describe its progress in Subsection \ref{subsec6.3}. Up to now, it was far from settled. Yau's conjecture is concerned with the first eigenvalues, which is solved by Tang and Yan in \cite{TY1} when the hypersurface is assumed to be isoparametric. The authors think that the second eigenvalue is also important subject worthy of consideration, and  thus it is natural to ask the following question.

\begin{prob-b}Assume that $\mathcal{M}^{n}$ is an $n$-dimensional minimal hypersurface embedded into the unit sphere $\mathbb{S}^{n+1}(1)$. How can we estimate accurately the second nontrivial eigenvalue (without counting the multiplicities of eigenvalues) of eigenvalue problem \eqref{closed-prob-2}? Furthermore, suppose that $\mathcal{M}^{n}$ is a isoparametric hypersurface or focal submanifold of the unit sphere, can we directly calculate the date of the second nontrivial eigenvalue? \end{prob-b}

Supposing that $\mathcal{M}^{n}$ is an $n$-dimensional isoparametric hypersurface embedded into an $(n+1)$-dimensional unit sphere,  who is not isometric to a unit sphere,  some outstanding  literatures  indicated that $2n$ is an eigenvalue of Beltrami-Laplacian, see \cite{Sol3,TY3}. However, the available results still do not adequately solve the following problem.

\begin{prob-c}
Assume that $M^{n}$ is an $n$-dimensional isoparametric hypersurface embedded into an $(n+1)$-dimensional unit sphere and $\mathcal{M}^{n}$  is not isometric to a unit sphere. Whether   $2n$ is the second non-zero eigenvalue for the closed eigenvalue problem \eqref{closed-prob-2} of Beltrami-Laplacian or not? \end{prob-c}

For some special examples, for example in \cite{Mut,Sol1,Sol2,TXY} and the references therein, Problem C is partially well solved. However, for lots of the other cases, it is still unknown.
As we know, for Yau's conjecture, it is extremely difficult to prove that the coordinate function is the first eigenfunction. Likewise, it is even harder to prove that the eigenfunction corresponding to $2n$ is exactly the second eigenfunction  except for some special cases.

In this paper, we make an affirmative answer to Problem A and partially answer Problem B. Moreover, one of contributions of this paper to Problem C is that our result further hints that $2n$ may be the second eigenvalue of Beltrami-Laplacian on the isoparametric hypersurfaces. Based on those arguments, we propose some conjectures.
This paper is organized as follows.

In Section \ref{sec2}, we prove several auxiliary lemmas. Applying those auxiliary lemmas, we establish some general formulas for Dirichlet eigenvalue problem \eqref{diri-prob}.

Furthermore,  applying those general formulas, we prove the following eigenvalue inequalities in Section \ref{sec3}:
\begin{equation}
\begin{aligned}
\sum^{k}_{i=1}(\Lambda_{k+1}-\Lambda_{i})^{2}
\leq\frac4n\sum^{k}_{i=1}(\Lambda_{k+1}-\Lambda_{i}) \left(\Lambda_{i}+D_{1}\Lambda_{i}^{\frac{1}{2}}+\frac{1}{4}D_{1}^{2}+\frac{1}{4}C_{1}\right),
\end{aligned}
\end{equation}
and

\begin{equation}
\begin{aligned}
\sum_{l=1}^{n}\Lambda_{j+l}\leq
(4+n)\Lambda_{i}+4D_{1}\Lambda_{i}^{\frac{1}{2}}+D_{1}^{2}+C_{1},
\end{aligned}
\end{equation} where $C_{1}=\inf_{\psi\in \Psi}\max_{\Omega}n^{2}H^{2}\ \ {\rm and}  \ \ D_{1}= \max_{\Omega} |\nu^{\top}|_{g_{0}}.$ See Theorem \ref{thm1.1} and Theorem \ref{thm1.2} for details.  Observing the right hand of \eqref{L-equ}, we know that the Xin-Laplacian not only depends on the metric $g$ on the Riemannian manifold $\mathcal{M}^{n}$ but also depends on the standard metric $g_{0}$ on the Euclidean space. Therefore, it is different from the Witten-Laplacian, which only depends on  the Riemannian metric on $\mathcal{M}^{n}$.
It is well know that the Witten-Laplacian is unitarily equivalent to the Schr\"{o}dinger operator, which means that one can  estimate the eigenvalues of Witten-Laplacian by applying Schr\"{o}dinger operator to Witten Laplacian. See {\rm \cite{Set}} for details. However, Xin-Laplacian is not  unitarily equivalent to the Schr\"{o}dinger operator. As a consequence, some methods associated with  unitarily equivalent no longer works in our situations.
Therefore, we remark that our method is different from the method due to Levitin and Parnovski \cite{LP}, where they utilized some algebraic techniques to prove some desired results.

In  Section \ref{sec4}, we discuss the eigenvalues of $\mathfrak{L}_{II}$ operator on the translating solitons. To be special, we obtain the following universal inequalities:
  \begin{equation}
\begin{aligned}\label{1.17}
\sum^{k}_{i=1}(\Lambda_{k+1}-\Lambda_{i})^{2}
\leq\frac{4}{n}\sum^{k}_{i=1}(\Lambda_{k+1}-\Lambda_{i})
\left(\Lambda_{i}+\Lambda_{i}^{\frac{1}{2}}+\frac{n^{2}}{4} \right),
\end{aligned}
\end{equation}and
\begin{equation}
\begin{aligned}
  \sum^{n}_{k=1}\Lambda_{j+k} \leq(n+4)\Lambda_{j} +n^{2}+4\Lambda_{i}^{\frac{1}{2}}.
\end{aligned}
\end{equation}
See  Theorem \ref{thm5.1} and Theorem \ref{thm5.2}. One could hope that eigenvalue inequalities are universal for the Dirichlet problem of some elliptic operators on Riemannian manifolds, but, unfortunately, this is not always possible.  In general, it is not easy to obtain universal inequalities for weighted Laplacian and even Laplacian on the complete Riemannian manifolds. Therefore, our work can be regarded as a new contribution to universal inequality. Furthermore, by using \eqref{1.17}, we give some estimates for the upper bounds of the $k$-th eigenvalue and gaps of the consecutive eigenvalues of  $\mathfrak{L}_{II}$ operator on the translating solitons.

As some  further applications, we discuss the eigenvalues on the minimal submanifolds on the Euclidean spaces, submanifolds on the unit spheres, projective spaces in Section \ref{sec5}. In  addition, we also consider the eigenvalues  on some manifolds admitting with special functions such as Cartan-Hadamard manifolds, product manifolds and homogeneous manifolds and so on in this section. We refer the readers to Corollary \ref{corr-6.1}-\ref{corr-6.5} for details.

Before starting Problem B, we motivate the  study of closed eigenvalue problem \eqref{closed-prob} and establish some eigenvalue inequalities in Section \ref{sec6}. Furthermore, as some remarkable applications, we prove some eigenvalue inequalities of Xin-Laplacian on the  minimal submanifolds in the unit sphere and generalize
the Reilly's result on the first  eigenvalue of the Beltrami-Laplacian.
More importantly, we suppose that $\mathcal{M}^{n}$ is an $n$-dimensional compact minimal isoparametric hypersurface in the unit sphere $\mathbb{S} ^{n+1}(1)$, and prove that eigenvalues of the closed eigenvalue problem \eqref{closed-prob-2} of the Beltrami-Laplacian satisfy
\begin{equation}\label{iso-ineq}
\frac{1}{n} \sum_{k=1}^{n} \overline{\Lambda}_{n_{0}+k} \leq 2 n+4,
\end{equation}where $n_{0}$ denotes the value of the multiplicity of the first eigenvalue, which gets a very sharp estimation for the upper bound of the second eigenvalues as follows: $$\overline{\Gamma}_{2}\leq2n+4.$$
Here, we do not  count the multiplicity of eigenvalues.  As a byproduct, our result further hints that $2n$ could be the second non-zero eigenvalue in term of the isoparametric hypersurfaces of OT-FKM type. For further details, we refer the readers to Remark \ref{Rem-Sol3}. Clearly, focal submanifolds are some important minimal submanifolds of the unit spheres.
In the remainder part of this section, we also discuss the eigenvalues of the Laplacian on them and some upper bounds are obtained.

Based on some arguments for the eigenvalues of Xin-Laplacian on the complete Riemannian manifolds, several conjectures are posed in  Section \ref{sec7}. In particular, Conjecture {\rm \ref{Z-conj}} is closely related to  Yau's conjecture \ref{yau-conj}. Here, it is necessary for us to emphasize that conjecture  \ref{Z-conj} is presented based  entirely on a very sharp estimates for the upper bound of  the second non-zero eigenvalue (without counting the multiplicity of eigenvalues) in Section \ref{sec6}, and many important examples hint that this conjecture is true. In addition, the second  eigenvalue will perfectly characterize the isoparametric hypersurfaces if it is true.  Furthermore, assumed that  minimal hypersurfaces has constant scalar curvature(without isoparametric assumption), the first author present a rigidity conjecture (see Conjecture \ref{regidity-conj}), which is related to Yau's conjecture \ref{yau-conj} and  Chern's conjecture \ref{chern-conj}.  To solve   conjectures \ref{Z-conj} and \ref{regidity-conj}, it seems to be very crucial for us to have an in-depth understanding for the topology of minimal hypersurfaces on the unit spheres. Therefore, we think that the study of those conjectures maybe have a far-reaching impact on the topology of minimal hypersurfaces in the unit sphere, especially for the isoparametric theory.

\section{Several Auxiliary Lemmas and General Formulas} \label{sec2}
\vskip3mm
\subsection{General Formulas}\label{subsec2.1}
In this subsection, we would like to establish two general formulas, which will play critical roles in the proofs of main results.  Our first general formula says the following.

\begin{prop}
\label{prop2.2} Let $\phi_l$, $l=1, 2, \cdots, m$,  be  smooth
functions on an $n$-dimensional complete  Riemannian manifold $\mathcal{M}^{n}$
and $\Lambda_{k}$ the $k^{\text{th}}$ eigenvalue of \eqref{diri-prob}. Then, for any $j=1, 2, \cdots$,  there exists an
orthogonal matrix $A=(a_{ls})_{m\times m}$ such that $\Phi_l=\sum_{s=1}^ma_{ls}\phi_s$
satisfy
\begin{equation}\label{2.7}
\sum^{m}_{l=1}(\Lambda_{j+l}-\Lambda_{j})\|u_j\nabla
\Phi_{l}\|^{2}_{\Omega} \leq
\sum^{m}_{l=1}\int_{\Omega}\big(u_{j}\mathfrak{L}_{\nu}\Phi_{l} +2\langle\nabla
\Phi_{l},\nabla u_{j}\rangle_{g}\big)^{2}e^{\langle\nu,X\rangle_{g_{0}}}dv,
\end{equation}
where $u_{j}$ is an orthonormal eigenfunction corresponding to
eigenvalue $\Lambda_{j}$ and $$\|f(x)\|_{\Omega}^{2}=\int_{\Omega}f(x)^{2}e^{\langle\nu,X\rangle_{g_{0}}}dv.$$
\end{prop}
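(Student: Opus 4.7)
The plan is to reduce the inequality to a termwise comparison of Fourier coefficients of $u_j\Phi_l$ in the orthonormal basis $\{u_i\}_{i\geq 1}$ of the weighted space $L^2(\Omega, e^{\langle\nu,X\rangle_{g_0}}dv)$, with the orthogonal matrix $A$ selected by a QR-type argument. I would first set $\eta_l := u_j\mathfrak{L}_\nu\Phi_l + 2\langle\nabla\Phi_l,\nabla u_j\rangle_g$ and note the commutator identity
\[
\eta_l = \mathfrak{L}_\nu(u_j\Phi_l) - \Phi_l\,\mathfrak{L}_\nu u_j = \mathfrak{L}_\nu(u_j\Phi_l) + \Lambda_j u_j\Phi_l.
\]
Writing $u_j\Phi_l = \sum_i c_{li}u_i$ with $c_{li} := \int_\Omega u_j\Phi_l u_i\, e^{\langle\nu,X\rangle_{g_0}}dv$ and using $\mathfrak{L}_\nu u_i = -\Lambda_i u_i$, one obtains $\eta_l = \sum_i (\Lambda_j-\Lambda_i)c_{li}u_i$; Parseval's identity then gives
\[
\int_\Omega \eta_l^2\, e^{\langle\nu,X\rangle_{g_0}}dv = \sum_{i} (\Lambda_i-\Lambda_j)^2 c_{li}^2.
\]
A short integration by parts, using the self-adjointness relation \eqref{1.3} together with $u_j|_{\partial\Omega}=0$, yields the complementary identity
\[
\|u_j\nabla\Phi_l\|_\Omega^2 = -\int_\Omega u_j\Phi_l\,\eta_l\, e^{\langle\nu,X\rangle_{g_0}}dv = \sum_{i} (\Lambda_i-\Lambda_j) c_{li}^2.
\]

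Next, I would exhibit $A$ via QR. Let $M$ be the $m\times m$ matrix with entries $M_{ls} := \int_\Omega u_j\phi_l u_{j+s}\, e^{\langle\nu,X\rangle_{g_0}}dv$. The QR decomposition produces an orthogonal $A=(a_{ls})$ for which $AM$ is upper triangular, and the linear relation $\Phi_l=\sum_s a_{ls}\phi_s$ translates this directly into the cancellation
\[
c_{l,j+s} = 0 \qquad \text{for all } 1 \leq s \leq l-1,
\]
i.e., the expansion of $u_j\Phi_l$ contains no Fourier components along the intermediate eigenfunctions $u_{j+1},\ldots,u_{j+l-1}$.

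With the two Parseval-type identities in place, the asserted inequality reduces to the termwise comparison
\[
(\Lambda_{j+l}-\Lambda_j)(\Lambda_i-\Lambda_j) c_{li}^2 \;\leq\; (\Lambda_i-\Lambda_j)^2 c_{li}^2
\qquad \text{for every } i\geq 1.
\]
This is immediate for $i\leq j-1$ (left side nonpositive, right side nonnegative), trivial for $i=j$ (both zero), vacuous for $i\in\{j+1,\ldots,j+l-1\}$ thanks to the QR choice of $A$, and for $i\geq j+l$ reduces to the monotonicity $\Lambda_i\geq\Lambda_{j+l}$ of the Dirichlet spectrum. Summing over $i$ then yields $(\Lambda_{j+l}-\Lambda_j)\|u_j\nabla\Phi_l\|_\Omega^2\leq\int_\Omega\eta_l^2\, e^{\langle\nu,X\rangle_{g_0}}dv$, and summing over $l=1,\dots,m$ recovers \eqref{2.7}.

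The principal technical obstacle lies in the QR step: one must verify that the orthogonal rotation eliminates precisely those Fourier coefficients at $u_{j+1},\ldots,u_{j+l-1}$ that would otherwise spoil the termwise comparison, and no others. Once this bookkeeping is organized, the remaining ingredients, namely weighted integration by parts against the measure $e^{\langle\nu,X\rangle_{g_0}}dv$, Parseval's identity, and monotonicity of the spectrum, combine mechanically to deliver the desired inequality.
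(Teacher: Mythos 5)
Your proposal is correct and follows essentially the same route as the paper: the identity $\|u_j\nabla\Phi_l\|_\Omega^2=\sum_i(\Lambda_i-\Lambda_j)c_{li}^2$ is exactly the paper's Lemma \ref{lem2.1}, the QR/Gram--Schmidt selection of $A$ killing the coefficients at $u_{j+1},\dots,u_{j+l-1}$ is the paper's construction (the paper triangularizes the matrix with entries $(\Lambda_j-\Lambda_{j+s})\int_\Omega\phi_l u_ju_{j+s}$ rather than $\int_\Omega\phi_l u_ju_{j+s}$, which differs only by a diagonal column scaling and yields the same cancellation), and the termwise comparison using sign and monotonicity of $\Lambda_i-\Lambda_j$ together with Parseval/Bessel is the paper's final step.
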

In order to prove Proposition \ref{prop2.2}, we need the following auxiliary lemmas.

\begin{lem}
\label{lem2.1} Let $\phi$ be a smooth function on an $n$-dimensional
complete  Riemannian manifold $\Omega$. Assume that $\Lambda_{i}$  is  the $i^{\text{th}}$ eigenvalue of the
Dirichlet eigenvalue problem \eqref{diri-prob} and $u_{i}$ is an
orthonormal eigenfunction corresponding to $\Lambda_{i}$ such that $\mathfrak{L}_{\nu}u_{i} =-\Lambda_{i}u_{i},$ and $ \int_{\Omega}
u_{i}u_{j}e^{\langle\nu,X\rangle_{g_{0}}}dv=\delta_{ij},
$ where $i,j=1,2,\cdots$.   Then, for any $j=
1, 2, \cdots,$  the following equation
\begin{equation}\label{3.1}
\|u_{j}\nabla \phi\|_{\Omega}^{2}
=\sum^{\infty}_{k=1}(\Lambda_{k}-\Lambda_{j})\sigma_{jk}^{2},
\end{equation}holds,
where $\sigma_{jk}=\int_{\Omega}\phi u_{j}u_{k}e^{\langle\nu,X\rangle_{g_{0}}}dv.$
\end{lem}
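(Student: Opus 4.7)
The plan is to expand $\phi u_j$ in the complete weighted-$L^{2}$ orthonormal basis $\{u_{k}\}_{k\ge 1}$, to compute its weighted Dirichlet energy via Parseval, and then to reduce that energy to $\|u_{j}\nabla\phi\|_{\Omega}^{2}$ through a single integration by parts in the weighted sense.

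First, since $\mathfrak{L}_{\nu}$ with Dirichlet boundary conditions is self-adjoint on $L^{2}(\Omega,e^{\langle\nu,X\rangle_{g_{0}}}dv)$ with compact resolvent, its eigenfunctions form a complete orthonormal basis, so Parseval's identity yields
$$
\phi u_{j} \;=\; \sum_{k=1}^{\infty}\sigma_{jk}\,u_{k},\qquad \int_{\Omega}(\phi u_{j})^{2}\,e^{\langle\nu,X\rangle_{g_{0}}}\,dv \;=\; \sum_{k=1}^{\infty}\sigma_{jk}^{2}.
$$
Applying $\mathfrak{L}_{\nu}$ termwise using $\mathfrak{L}_{\nu}u_{k}=-\Lambda_{k}u_{k}$, pairing against $\phi u_{j}$ in the weighted inner product, and invoking the self-adjointness identity \eqref{1.3}, I would then obtain
$$
\int_{\Omega}|\nabla(\phi u_{j})|_{g}^{2}\,e^{\langle\nu,X\rangle_{g_{0}}}\,dv \;=\; -\int_{\Omega}(\phi u_{j})\,\mathfrak{L}_{\nu}(\phi u_{j})\,e^{\langle\nu,X\rangle_{g_{0}}}\,dv \;=\; \sum_{k=1}^{\infty}\Lambda_{k}\sigma_{jk}^{2}.
$$

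Second, to reduce this Dirichlet energy to the target quantity, I would expand $|\nabla(\phi u_{j})|_{g}^{2}=\phi^{2}|\nabla u_{j}|_{g}^{2}+2\phi u_{j}\langle\nabla\phi,\nabla u_{j}\rangle_{g}+u_{j}^{2}|\nabla\phi|_{g}^{2}$ and treat the first summand separately. Applying \eqref{1.3} to the pair $(\phi^{2}u_{j},u_{j})$ (legitimate since $u_{j}$ vanishes on $\partial\Omega$), expanding $\nabla(\phi^{2}u_{j})=2\phi u_{j}\nabla\phi+\phi^{2}\nabla u_{j}$, and substituting the eigenvalue equation for $u_{j}$ gives
$$
\int_{\Omega}\phi^{2}|\nabla u_{j}|_{g}^{2}\,e^{\langle\nu,X\rangle_{g_{0}}}\,dv \;=\; \Lambda_{j}\int_{\Omega}\phi^{2}u_{j}^{2}\,e^{\langle\nu,X\rangle_{g_{0}}}\,dv - 2\int_{\Omega}\phi u_{j}\langle\nabla\phi,\nabla u_{j}\rangle_{g}\,e^{\langle\nu,X\rangle_{g_{0}}}\,dv.
$$
Substituting this into the expansion of $|\nabla(\phi u_{j})|_{g}^{2}$, the cross terms $\pm 2\int \phi u_{j}\langle\nabla\phi,\nabla u_{j}\rangle_{g}\,e^{\langle\nu,X\rangle_{g_{0}}}\,dv$ cancel, and I am left with
$$
\sum_{k=1}^{\infty}\Lambda_{k}\sigma_{jk}^{2} \;=\; \Lambda_{j}\sum_{k=1}^{\infty}\sigma_{jk}^{2} + \|u_{j}\nabla\phi\|_{\Omega}^{2},
$$
which rearranges at once to \eqref{3.1}.

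The only delicate point is justifying the termwise action of $\mathfrak{L}_{\nu}$ on the spectral expansion of $\phi u_{j}$; this is standard because $\phi u_{j}$ lies in the form domain of $-\mathfrak{L}_{\nu}$ (it is smooth on $\overline{\Omega}$ and inherits the vanishing boundary condition from $u_{j}$), so both the $\sigma_{jk}^{2}$-series and the $\Lambda_{k}\sigma_{jk}^{2}$-series converge to the weighted $L^{2}$-norm and weighted Dirichlet energy of $\phi u_{j}$ respectively. Everything else is routine weighted integration by parts against the measure $e^{\langle\nu,X\rangle_{g_{0}}}dv$.
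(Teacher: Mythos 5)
Your proposal is correct and follows essentially the same route as the paper: expand $\phi u_{j}$ in the weighted eigenbasis, use Parseval to identify $\sum_{k}\Lambda_{k}\sigma_{jk}^{2}$ with the weighted Dirichlet energy $-\int_{\Omega}\phi u_{j}\,\mathfrak{L}_{\nu}(\phi u_{j})\,e^{\langle\nu,X\rangle_{g_{0}}}dv$, and then reduce that energy to $\|u_{j}\nabla\phi\|_{\Omega}^{2}$ by integration by parts. The only (immaterial) difference is bookkeeping: you expand $|\nabla(\phi u_{j})|_{g}^{2}$ pointwise and test the eigenvalue equation against $\phi^{2}u_{j}$, while the paper applies the Leibniz rule to $\mathfrak{L}_{\nu}(\phi u_{j})$ and integrates the term $-\phi u_{j}^{2}\mathfrak{L}_{\nu}\phi$ by parts; both yield the same cancellation of cross terms.
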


\begin{proof}
Since $\{u_{k}\}^{\infty}_{k=1}$ is an orthonormal basis of the
weighted $L^{2}(\Omega)$, then, for any $j$, where $j=1,2,\cdots$, we know that
\begin{equation}\label{2.2}
\phi u_{j}=\sum^{\infty}_{k=1}\sigma_{jk}u_{k}.
\end{equation}
According to Parseval's identity,  it is not difficult to check that
\begin{equation*}
\|\phi u_{j}\|^{2}_{\Omega}=\sum^{\infty}_{k=1}\sigma_{jk}^{2}.
\end{equation*}
By a simple computation, we immediately derive
\begin{equation*}
\begin{aligned}
\int_{\Omega}(\mathfrak{L}_{\nu}\left(\phi u_{j})-\phi\mathfrak{L}_{\nu}u_{j}\right)u_{k}e^{\langle\nu,X\rangle_{g_{0}}}dv
=(\Lambda_{j}-\Lambda_{k})\int_{\Omega}\phi u_{j}u_{k}e^{\langle\nu,X\rangle_{g_{0}}}dv,
\end{aligned}
\end{equation*}
and
\begin{equation*}
\mathfrak{L}_{\nu}(\phi u_{j}) =\phi\mathfrak{L}_{\nu}u_{j}+u_{j}\mathfrak{L}_{\nu}\phi+2\langle\nabla
\phi,\nabla u_{j}\rangle_{g}.
\end{equation*}
Therefore, we have
\begin{equation}
\label{2.3}\int_{\Omega}(u_{j}\mathfrak{L}_{\nu}\phi+2\langle\nabla \phi,\nabla
u_{j}\rangle_{g})u_{k}e^{\langle\nu,X\rangle_{g_{0}}}dv
=(\Lambda_{j}-\Lambda_{k})\int_{\Omega}\phi u_{j}u_{k}e^{\langle\nu,X\rangle_{g_{0}}}dv.
\end{equation}
Furthermore, from \eqref{2.2}, we deduce
\begin{equation}
\begin{aligned}
\label{2.4}
\sum^{\infty}_{k=1}(\Lambda_{k}-\Lambda_{j})\sigma_{jk}^{2}
&=\sum^{\infty}_{k=1}(\Lambda_{k}-\Lambda_{j})\left(\int_{\Omega}\phi u_{j}u_{k}e^{\langle\nu,X\rangle_{g_{0}}}dv\right)^{2}
\\&=\sum^{\infty}_{k=1}\Lambda_{k}\left(\int_{\Omega}\phi u_{j}u_{k}e^{\langle\nu,X\rangle_{g_{0}}}dv\right)^{2}-\Lambda_{j}\|\phi u_{j}\|_{\Omega}^{2}.
\end{aligned}\end{equation}
By using \eqref{2.2}, we derive
\begin{equation*}
\mathfrak{L}_{\nu}(\phi u_{j})
=\sum^{\infty}_{k=1}\sigma_{jk}\mathfrak{L}_{\nu}u_{k}=-\sum^{\infty}_{k=1}\sigma_{jk}\Lambda_{k}u_{k},
\end{equation*}
which implies
\begin{equation}
\label{2.5}\phi u_{j}\mathfrak{L}_{\nu}(\phi u_{j})
=-\sum^{\infty}_{k=1}\sigma_{jk}\Lambda_{k}u_{k}\phi u_{j}.
\end{equation}
Hence, it follows from \eqref{2.5} that
\begin{equation}
\begin{aligned}
\label{2.6} \int_{\Omega}\phi u_{j}\mathfrak{L}_{\nu}(\phi u_{j})e^{\langle\nu,X\rangle_{g_{0}}}dv
&=-\sum^{\infty}_{k=1}\int_{\Omega}\sigma_{jk}\Lambda_{k}u_{k}\phi u_{j}e^{\langle\nu,X\rangle_{g_{0}}}dv
\\&=-\sum^{\infty}_{k=1}\Lambda_{k}\int_{\Omega}\phi u_{j}u_{k}e^{\langle\nu,X\rangle_{g_{0}}}dv\int_{\Omega}u_{k}\phi u_{j}e^{\langle\nu,X\rangle_{g_{0}}}dv
\\&=-\sum^{\infty}_{k=1}\Lambda_{k}\left(\int_{\Omega}\phi u_{j}u_{k}e^{\langle\nu,X\rangle_{g_{0}}}dv\right)^{2}.
\end{aligned}
\end{equation}
Combining   \eqref{2.4} with \eqref{2.6}, we have
\begin{equation*}
\begin{aligned}
\sum^{\infty}_{k=1}(\Lambda_{k}-\Lambda_{j})\sigma_{jk}^{2}
&=\int_{\Omega}(-\phi u_{j}\mathfrak{L}_{\nu}(\phi u_{j})-\Lambda_{j}\phi^{2}u_{j}^{2})e^{\langle\nu,X\rangle_{g_{0}}}dv
\\&=\int_{\Omega}(-\phi u_{j}^{2}\mathfrak{L}_{\nu}\phi-2\langle\nabla\phi,\nabla u_{j}\rangle_{g} \phi u_{j})e^{\langle\nu,X\rangle_{g_{0}}}dv
\\&=\int_{\Omega}\left(|\nabla\phi|_{g}^{2}u_{j}^{2}+\frac{1}{2}\langle\nabla \phi^{2},\nabla u_{j}^{2}\rangle_{g}
-\frac{1}{2}\langle\nabla\phi^{2},\nabla
u_{j}^{2}\rangle_{g}\right)e^{\langle\nu,X\rangle_{g_{0}}}dv
\\&=\|u_{j}\nabla \phi\|^{2}_{\Omega}.
\end{aligned}
\end{equation*}
This completes the proof of Lemma \ref{lem2.1}.
\end{proof}
By making use of the Lemma \ref{lem2.1}, we give the proof of Proposition \ref{prop2.2}.

\vskip 3mm
\noindent \emph{Proof of Proposition }\ref{prop2.2}.  For any $j= 1, 2, \cdots$,
we consider the following $m\times m$-matrix:
\begin{equation*}
C:=\left(\int_{\Omega}\big(u_{j}\mathfrak{L}_{\nu}\phi_{l} +2\langle\nabla
\phi_{l},\nabla u_{j}\rangle_{g}\big)u_{j+s}e^{\langle\nu,X\rangle_{g_{0}}}dv\right)_{m\times m}.
\end{equation*}
According to the Gram-Schmidt orthogonalization, we know that there exists an orthogonal
matrix $A=(a_{ls})$ such that
\begin{equation*}
Q=AC=(q_{ls})_{m\times m}
=
\begin{aligned}\left(
                 \begin{array}{cccc}
                   q_{11} &q_{12} & \cdots& q_{1m} \\
                   0 & q_{22} &\cdots & q_{2m} \\
                   \vdots & \vdots & \ddots & \vdots \\
                   0 & 0 &\cdots & q_{mm} \\
                 \end{array}
               \right),
\end{aligned}
\end{equation*} that is,
\begin{equation*}
\begin{aligned}
q_{ls}&=\sum_{i=1}^{m}a_{li}\int_{\Omega}\biggl(u_{j}\mathfrak{L}_{\nu}\phi_{i}+2\langle\nabla
\phi_{i},\nabla u_{j}\rangle_{g}\biggl)u_{j+s}e^{\langle\nu,X\rangle_{g_{0}}}dv\\
&=\displaystyle\int_{\Omega}\left(u_{j}\mathfrak{L}_{\nu}\left(\sum_{i=1}^{m}a_{li}\phi_{i}\right)
+2\langle\nabla\left(\sum_{i=1}^{m}a_{li}\phi_{i}\right),\nabla
u_{j}\rangle_{g}\right)u_{j+s}e^{\langle\nu,X\rangle_{g_{0}}}dv,
\end{aligned}
\end{equation*}
with $q_{ls}=0$ for $l>s$. For
$
\Phi_{l}=\sum_{i=1}^{m}a_{li}\phi_{i},
$
we have
\begin{equation*}
\begin{aligned}
q_{ls}&=\displaystyle\int_{\Omega}\left(u_{j}\mathfrak{L}_{\nu}\Phi_{l}+2\left\langle\nabla
\Phi_{l},\nabla u_{j}\right\rangle_{g}\right)u_{j+s}e^{\langle\nu,X\rangle_{g_{0}}}dv=0, \ \text{for
$l>s$}.
\end{aligned}
\end{equation*}
Applying the Lemma \ref{lem2.1} to functions $\Phi_l$,
we yield
\begin{equation}
\begin{aligned}\label{2.8}
&\|u_j\nabla \Phi_{l}\|^{2}_{\Omega}=\sum^{j-1}_{k=1}(\Lambda_{k}-\Lambda_{j})\beta^{2}_{ljk}
+\sum^{j+l-1}_{k=j}(\Lambda_{k}-\Lambda_{j})\beta^{2}_{ljk}
+\sum^{\infty}_{k=j+l}(\Lambda_{k}-\Lambda
_{j})\beta^{2}_{ljk},
\end{aligned}
\end{equation}where
$
\beta_{ljk}:=\int_{\Omega}\Phi_{l}u_{j}u_{k}e^{\langle\nu,X\rangle_{g_{0}}}dv.
$
According to \eqref{2.3} in place of $\phi$ with  $\Phi_l$, it is easy to verify that
\begin{equation}\label{2.9}
\int_{\Omega}(u_{j}\mathfrak{L}_{\nu}\Phi_l+2\langle\nabla \Phi_l,\nabla
u_{j}\rangle_{g})u_{k}e^{\langle\nu,X\rangle_{g_{0}}}dv
=(\Lambda_{j}-\Lambda_{k})\int_{\Omega}\Phi_lu_{j}u_{k}e^{\langle\nu,X\rangle_{g_{0}}}dv,
\end{equation}
which implies
$$
(\Lambda_{k}-\Lambda_{j})\int_{\Omega}\Phi_lu_{j}u_{k}e^{\langle\nu,X\rangle_{g_{0}}}dv=(\Lambda_{k}-\Lambda_{j})\beta_{ljk}=0, \
\text{for $k=j, j+1, \cdots, j+l-1$}.
$$
From  \eqref{2.8}, we conclude
\begin{equation}
\begin{aligned}\label{2.10}
\|u_j\nabla \Phi_{l}\|^{2}_{\Omega}&\leq
\sum^{\infty}_{k=j+l}(\Lambda_{k}-\Lambda_{j})\beta^{2}_{ljk}.
\end{aligned}
\end{equation}
Hence, from \eqref{2.9}, \eqref{2.10} and Parseval's
identity, we infer that,
\begin{equation*}
\begin{aligned}
\sum^{m}_{l=1}(\Lambda_{j+l}-\Lambda_{j})\|u_j\nabla
\Phi_{l}\|^{2}_{\Omega}&
\leq \sum^{m}_{l=1}\sum^{\infty}_{k=j+l}(\Lambda_{k}-\Lambda_{j})^2\beta^{2}_{ljk}\\
&\leq \sum^{m}_{l=1}\int_{\Omega}\big(u_{j}\mathfrak{L}_{\nu}\Phi_{l}
+2\langle\nabla \Phi_{l},\nabla u_{j}\rangle_{g}\big)^{2}e^{\langle\nu,X\rangle_{g_{0}}}dv.
\end{aligned}
\end{equation*}
The proof ends.
 $$\eqno \Box$$

\noindent In what follows, we would like to prove the second  general formula for eigenvalues, which generalizes
a formula established by  Cheng and Yang in \cite{CY2} for the eigenvalue
problem of the Laplacian. We remark that the original method of this proof  is due to Cheng and
Yang in \cite{CY2}. However, for the convenience of readers, we shall give a
self contained proof.

\begin{prop}\label{prop2.3}
Let $(\mathcal{M}^{n},g)$ be an $n$-dimensional complete noncompact Riemannian manifold.
Assume that $\Lambda_{i}$  is  the $i^{\text{th}}$ eigenvalue of the
Dirichlet eigenvalue problem \eqref{diri-prob} and $u_{i}$ is an
orthonormal eigenfunction corresponding to $\Lambda_{i}$ such that
$\mathfrak{L}_{\nu}u_{i} =-\Lambda_{i}u_{i},$ and
$ \int_{\Omega}
u_{i}u_{j}e^{\langle\nu,X\rangle_{g_{0}}}dv=\delta_{ij},
$ where $i,j=1,2,\cdots$.
Then, for any function $\varphi(x)\in C^{2}(\Omega)$ and any positive
integer $k$,  eigenvalues of the Dirichlet eigenvalue problem \eqref{diri-prob} satisfy
\begin{equation}
\begin{aligned}
\label{2.11}
\sum^{k}_{i=1}(\Lambda_{k+1}-\Lambda_{i})^{2}\|u_{i}\nabla
\varphi\|_{\Omega}^{2} \leq\sum^{k}_{i=1}(\Lambda_{k+1}-\Lambda_{i})
\|2\langle\nabla \varphi,\nabla u_{i}\rangle_{g}+u_{i}\mathfrak{L}_{\nu}\varphi\|_{\Omega}^{2}.
\end{aligned}
\end{equation}

\end{prop}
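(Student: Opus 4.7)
The plan is to reduce Proposition \ref{prop2.3} to a sign analysis of a symmetric double sum by expanding both sides in the weighted $L^{2}$ orthonormal basis $\{u_{m}\}_{m\geq 1}$, in the spirit of Cheng--Yang.

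First, I would apply Lemma \ref{lem2.1} with $\phi=\varphi$ to obtain the identity
\[
\|u_{i}\nabla\varphi\|_{\Omega}^{2}=\sum_{m=1}^{\infty}(\Lambda_{m}-\Lambda_{i})\,a_{im}^{2},\qquad a_{im}:=\int_{\Omega}\varphi\,u_{i}u_{m}\,e^{\langle\nu,X\rangle_{g_{0}}}dv,
\]
valid for every $i$. Next, writing $R_{i}:=u_{i}\mathfrak{L}_{\nu}\varphi+2\langle\nabla\varphi,\nabla u_{i}\rangle_{g}$ and substituting $u_{m}$ as the test function in \eqref{2.3}, I would read off the $m$-th Fourier coefficient of $R_{i}$ as $(\Lambda_{i}-\Lambda_{m})\,a_{im}$. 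Because $\varphi\in C^{2}(\Omega)$ and the weight is smooth on the bounded domain $\Omega$, $R_{i}$ lies in the weighted $L^{2}$, so Parseval's identity yields
\[
\|R_{i}\|_{\Omega}^{2}=\sum_{m=1}^{\infty}(\Lambda_{m}-\Lambda_{i})^{2}\,a_{im}^{2}.
\]

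Substituting these two expressions into \eqref{2.11}, the desired inequality becomes equivalent to
\[
\sum_{i=1}^{k}\sum_{m=1}^{\infty}(\Lambda_{k+1}-\Lambda_{i})(\Lambda_{m}-\Lambda_{i})(\Lambda_{m}-\Lambda_{k+1})\,a_{im}^{2}\geq 0.
\]
I would then split the inner sum into three regimes. For $m\geq k+2$ all three factors are non-negative, so that contribution is manifestly non-negative. For $m=k+1$ the factor $(\Lambda_{m}-\Lambda_{k+1})$ vanishes. For $1\leq m\leq k$ I would invoke the symmetry $a_{im}=a_{mi}$ and compute that the summand $T(i,m)$ and its transpose $T(m,i)$ satisfy $T(i,m)+T(m,i)=(\Lambda_{m}-\Lambda_{i})(\Lambda_{k+1}-\Lambda_{i})\,a_{im}^{2}\,[(\Lambda_{m}-\Lambda_{k+1})+(\Lambda_{k+1}-\Lambda_{m})]=0$, so this contribution cancels pairwise.

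The step I expect to require the most care is the symmetrization over $\{1,\ldots,k\}^{2}$: one must verify that the cubic eigenvalue polynomial collapses under the transposition $i\leftrightarrow m$, which ultimately rests on the trivial identity $(\Lambda_{m}-\Lambda_{k+1})+(\Lambda_{k+1}-\Lambda_{m})=0$ combined with the manifest symmetry of $a_{im}$. A minor preliminary check is the $L^{2}$ membership of $R_{i}$ needed to justify Parseval, but this is immediate from the smoothness of $\varphi$, $u_{i}$, and the weight $e^{\langle\nu,X\rangle_{g_{0}}}$ on $\overline{\Omega}$.
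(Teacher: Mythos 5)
Your proposal is correct, and it reaches \eqref{2.11} by a genuinely different route than the paper. The paper follows the Cheng--Yang scheme: it forms the trial functions $\zeta_{i}=\varphi u_{i}-\sum_{j\le k}\sigma_{ij}u_{j}$, applies the Rayleigh--Ritz inequality to get $(\Lambda_{k+1}-\Lambda_{i})\|\zeta_{i}\|_{\Omega}^{2}\le\Theta_{i}$, inserts a Cauchy--Schwarz step to bound $(\Lambda_{k+1}-\Lambda_{i})^{2}\Theta_{i}$ by $(\Lambda_{k+1}-\Lambda_{i})\|R_{i}-\sum_{j}\tau_{ij}u_{j}\|_{\Omega}^{2}$, and finally uses the skew-symmetry of $\tau_{ij}=(\Lambda_{i}-\Lambda_{j})\sigma_{ij}$ together with the identity \eqref{2.20} to cancel the correction terms. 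You instead expand both sides completely in the eigenbasis: Lemma \ref{lem2.1} gives the left side, \eqref{2.3} plus Parseval gives the right side, and the inequality reduces to the non-negativity of $\sum_{i\le k}\sum_{m}(\Lambda_{k+1}-\Lambda_{i})(\Lambda_{m}-\Lambda_{i})(\Lambda_{m}-\Lambda_{k+1})a_{im}^{2}$, which you verify by the three-regime sign analysis; your symmetrization identity for $m\le k$ is correct (writing $A=\Lambda_{k+1}-\Lambda_{i}$, $B=\Lambda_{m}-\Lambda_{i}$, $C=\Lambda_{m}-\Lambda_{k+1}$, one has $T(i,m)=ABC$ and $T(m,i)=(-C)(-B)(-A)=-ABC$). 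Your version is more transparent: the only place an inequality enters is in discarding the manifestly non-negative tail $m\ge k+2$ (and Bessel, if one worries about completeness — note that Bessel's inequality alone already suffices here since the coefficients $\Lambda_{k+1}-\Lambda_{i}$ are non-negative, so even completeness of $\{u_{m}\}$ is not strictly needed for the direction you want). The paper's Rayleigh--Ritz formulation is the more traditional "test function" presentation and is what gets reused verbatim for the closed problem in Proposition \ref{prop7.15}. Two minor points worth tightening: you should justify interchanging the order of summation over $i$ and $m$ (each regime's series converges absolutely, e.g.\ the $m\ge k+2$ part is dominated by $(\Lambda_{k+1}-\Lambda_{1})\|R_{i}\|_{\Omega}^{2}$), and the integrability of $R_{i}$ really requires $\varphi$ to be $C^{2}$ up to $\overline{\Omega}$, which is how the result is used in the paper.
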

Before giving the proof of Proposition \ref{prop2.3}, we shall introduce several notations. Set

\begin{equation*}
\sigma_{ij}:=\int_{\Omega}\varphi u_{i}u_{j}e^{\langle\nu,X\rangle_{g_{0}}}dv,
\end{equation*}and

\begin{equation*}
\begin{aligned}
\Theta_{i}:=-\int_{\Omega}\zeta_{i}(u_{i}\mathfrak{L}_{\nu}\varphi+2\langle\nabla
\varphi,\nabla u_{i}\rangle_{g})e^{\langle\nu,X\rangle_{g_{0}}}dv .
\end{aligned}
\end{equation*}where $\varphi(x)\in C^{2}(\Omega)$, and
\begin{equation*}
\zeta_{i}:=\varphi u_{i}-\sum^{k}_{j=1}\sigma_{ij}u_{j}.
\end{equation*}
Define
$$
\tau_{ij}:=-\int_{\Omega}(u_{j}\mathfrak{L}_{\nu}\varphi+2\langle\nabla \varphi,\nabla
u_{j}\rangle_{g}) u_{i}e^{\langle\nu,X\rangle_{g_{0}}}dv.
$$

In order to prove Proposition \ref{prop2.3}, we need the following several auxiliary lemmas. The first auxiliary lemma is show that
$\tau_{ji}$ is skew-symmetric.
\begin{lem}\label{l2.2}Under the assumption of Proposition {\rm\ref{prop2.3}}, we have
\begin{equation}
\label{2.15} \tau_{ij}=(\Lambda_{i}-\Lambda_{j})\sigma_{ij},
\end{equation}and \begin{equation}
\tau_{ij}=-\tau_{ji}.
\end{equation}\end{lem}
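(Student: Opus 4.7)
The plan is to derive both identities directly from equation \eqref{2.3} of Lemma \ref{lem2.1}, which was established precisely for this type of manipulation. That formula, which reads
\begin{equation*}
\int_{\Omega}(u_{j}\mathfrak{L}_{\nu}\phi+2\langle\nabla \phi,\nabla u_{j}\rangle_{g})u_{k}e^{\langle\nu,X\rangle_{g_{0}}}dv
=(\Lambda_{j}-\Lambda_{k})\int_{\Omega}\phi u_{j}u_{k}e^{\langle\nu,X\rangle_{g_{0}}}dv,
\end{equation*}
was proved by exploiting the self-adjointness of $\mathfrak{L}_{\nu}$ with respect to the weighted measure $e^{\langle\nu,X\rangle_{g_{0}}}dv$ together with the product rule $\mathfrak{L}_{\nu}(\phi u_{j}) = \phi \mathfrak{L}_{\nu}u_{j}+u_{j}\mathfrak{L}_{\nu}\phi+2\langle\nabla\phi,\nabla u_{j}\rangle_{g}$, and it therefore applies verbatim to the smooth function $\varphi$ in place of $\phi$.

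First I would apply the identity with $\phi$ replaced by $\varphi$ and $k$ replaced by $i$, which yields
\begin{equation*}
\int_{\Omega}(u_{j}\mathfrak{L}_{\nu}\varphi+2\langle\nabla \varphi,\nabla u_{j}\rangle_{g})u_{i}e^{\langle\nu,X\rangle_{g_{0}}}dv
=(\Lambda_{j}-\Lambda_{i})\sigma_{ij}.
\end{equation*}
Negating both sides immediately gives $\tau_{ij}=(\Lambda_{i}-\Lambda_{j})\sigma_{ij}$, which is the first assertion.

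For the skew-symmetry, the only ingredient needed is the trivial observation that $\sigma_{ij}$ is symmetric in its indices: since $u_{i}u_{j}=u_{j}u_{i}$, one has $\sigma_{ij}=\sigma_{ji}$. Swapping $i$ and $j$ in the first identity then gives $\tau_{ji}=(\Lambda_{j}-\Lambda_{i})\sigma_{ji}=-(\Lambda_{i}-\Lambda_{j})\sigma_{ij}=-\tau_{ij}$.

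There is no real obstacle here; the content of the lemma is exactly a reindexed instance of \eqref{2.3} combined with the symmetry of $\sigma_{ij}$. The only point worth being slightly careful about is making sure the signs align between the definitions of $\tau_{ij}$ (which carries a minus sign in front of the integral) and the right hand side of \eqref{2.3}, so that the factor comes out as $\Lambda_{i}-\Lambda_{j}$ rather than $\Lambda_{j}-\Lambda_{i}$. Once this sign bookkeeping is done correctly, both formulas follow in a line or two.
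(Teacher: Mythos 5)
Your proof is correct and follows essentially the same route as the paper: the authors verify $\tau_{ij}=(\Lambda_{i}-\Lambda_{j})\sigma_{ij}$ directly from the self-adjointness identity \eqref{1.3} (of which \eqref{2.3} is the already-recorded consequence you invoke), and then obtain the skew-symmetry from $\sigma_{ij}=\sigma_{ji}$ exactly as you do. Your sign bookkeeping with the minus in the definition of $\tau_{ij}$ is also right.
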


\begin{proof}By utilizing \eqref{1.3}, it is easy to verify that
$
\tau_{ij}=(\Lambda_{i}-\Lambda_{j})\sigma_{ij}.
$ By the definition of $\sigma_{ij}$,
we know that $\sigma_{ij}=\sigma_{ji}.$
Therefore, we have
$
\tau_{ij}=-\tau_{ji}.
$
This finishes the proof of this Lemma.

 \end{proof}

Next, we shall give an estimate for the lower bound of $\Theta_{i}$.
\begin{lem}\label{l2.3}Under the assumption of Proposition {\rm\ref{prop2.3}}, we have

\begin{equation}
\begin{aligned}
\label{2.14}  (\Lambda_{k+1}-\Lambda_{i})\|\zeta_{i}\|_{\Omega}^{2}
\leq  \Theta_{i}.
\end{aligned}
\end{equation}\end{lem}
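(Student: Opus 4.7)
The plan is to prove Lemma 2.3 via the Rayleigh-Ritz variational characterization of $\Lambda_{k+1}$ applied to the trial function $\zeta_i$. The construction of $\zeta_i = \varphi u_i - \sum_{j=1}^{k}\sigma_{ij}u_j$ is precisely the Gram-Schmidt projection of $\varphi u_i$ onto the orthogonal complement of $\mathrm{span}\{u_1,\ldots,u_k\}$ in the weighted space $L^2(\Omega, e^{\langle \nu,X\rangle_{g_0}}dv)$, so everything is designed to make Rayleigh-Ritz directly applicable.

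First I would verify, by a direct one-line computation using the definition of $\sigma_{ij}$ and the orthonormality relation $\int_\Omega u_i u_j e^{\langle\nu,X\rangle_{g_0}}dv=\delta_{ij}$, that $\zeta_i$ is orthogonal to each $u_j$ with $j \le k$ in the weighted inner product, and that $\zeta_i$ vanishes on $\partial\Omega$ (since each $u_j$ does). Next I would invoke the min-max characterization, which in this self-adjoint weighted setting gives
\begin{equation*}
\Lambda_{k+1}\,\|\zeta_i\|_{\Omega}^{2} \;\le\; -\int_{\Omega}\zeta_i\,\mathfrak{L}_{\nu}\zeta_i\, e^{\langle\nu,X\rangle_{g_0}}\,dv.
\end{equation*}

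The heart of the argument is then to expand the right-hand side. I would use the product rule $\mathfrak{L}_{\nu}(\varphi u_i)=\varphi\,\mathfrak{L}_{\nu}u_i + u_i\,\mathfrak{L}_{\nu}\varphi + 2\langle\nabla\varphi,\nabla u_i\rangle_g$ together with the eigenvalue equations $\mathfrak{L}_{\nu}u_i = -\Lambda_i u_i$ and $\mathfrak{L}_{\nu}u_j = -\Lambda_j u_j$ to obtain
\begin{equation*}
-\mathfrak{L}_{\nu}\zeta_i \;=\; \Lambda_i\varphi u_i - u_i\,\mathfrak{L}_{\nu}\varphi - 2\langle\nabla\varphi,\nabla u_i\rangle_g - \sum_{j=1}^{k}\sigma_{ij}\Lambda_j u_j.
\end{equation*}
Multiplying by $\zeta_i$, integrating, and using the orthogonality $\int_\Omega \zeta_i u_j e^{\langle\nu,X\rangle_{g_0}}dv = 0$ for $j \le k$ kills the sum. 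The identity $\int_\Omega \zeta_i \varphi u_i \, e^{\langle\nu,X\rangle_{g_0}}dv = \|\zeta_i\|_{\Omega}^{2}$ (which follows again from the orthogonality, since $\varphi u_i = \zeta_i + \sum_j \sigma_{ij}u_j$) then rewrites the remaining terms as $\Lambda_i\|\zeta_i\|_{\Omega}^2 + \Theta_i$. Combining with the Rayleigh-Ritz inequality yields the desired bound $(\Lambda_{k+1}-\Lambda_i)\|\zeta_i\|_{\Omega}^{2} \le \Theta_i$.

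There is no real obstacle here; this is a standard Gram-Schmidt plus Rayleigh-Ritz argument adapted to the weighted measure $e^{\langle\nu,X\rangle_{g_0}}dv$. The only care needed is (i) ensuring that every integration by parts is justified by the self-adjointness relation \eqref{1.3}, since $\zeta_i$ inherits the Dirichlet boundary condition from $u_i$ and $u_j$, and (ii) keeping the sign conventions straight when converting $-\int\zeta_i\mathfrak{L}_\nu\zeta_i$ into $\int|\nabla\zeta_i|_g^2$ via \eqref{1.3} if one prefers the gradient form of the Rayleigh quotient. No further structural input about the immersion, mean curvature, or the vector field $\nu$ is needed at this stage — the lemma is a purely spectral-theoretic consequence of $\mathfrak{L}_\nu$ being self-adjoint with respect to the weighted measure.
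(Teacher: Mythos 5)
Your proposal is correct and follows essentially the same route as the paper: orthogonality of $\zeta_i$ to $u_1,\dots,u_k$, the Rayleigh--Ritz bound $\Lambda_{k+1}\|\zeta_i\|_\Omega^2\le -\int_\Omega\zeta_i\mathfrak{L}_\nu\zeta_i\,e^{\langle\nu,X\rangle_{g_0}}dv$, and the expansion of $\mathfrak{L}_\nu\zeta_i$ via the product rule to isolate $\Lambda_i\|\zeta_i\|_\Omega^2+\Theta_i$. No gaps.
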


\begin{proof} Since  $u_{j}$ is  an
orthonormal eigenfunction corresponding to the eigenvalue
$\Lambda_j$,
 $\{u_{j}\}^{\infty}_{j=1}$ forms an orthonormal basis of
the weighted $L^{2}(\Omega)$. Furthermore, by the Rayleigh-Ritz inequality, we
have
\begin{equation}
\begin{aligned}
\label{2.12} \Lambda_{k+1}\leq-\frac{\displaystyle
\int_{\Omega}\varphi\mathfrak{L}_{\nu}\varphi e^{\langle\nu,X\rangle_{g_{0}}}dv}{\displaystyle
\int_{\Omega}\varphi^{2}e^{\langle\nu,X\rangle_{g_{0}}}dv},
\end{aligned}
\end{equation}
for any function $\varphi$ satisfing $$\int_{\Omega}\varphi
u_{j}e^{\langle\nu,X\rangle_{g_{0}}}dv=0 ,\ \ 1\leq j\leq k.$$
By a direct calculation, it is not difficult to verify that
\begin{equation}
\begin{aligned}
\label{2.13} \int_{\Omega}\zeta_{i}u_{l}e^{\langle\nu,X\rangle_{g_{0}}}dv&=0,
\end{aligned}
\end{equation}
for $1\leq i, l\leq k.$  Clearly, \eqref{2.12}  implies
\begin{equation*}
\Lambda_{k+1}\leq-\frac{\displaystyle
\int_{\Omega}\zeta_{i}\mathfrak{L}_{\nu}\zeta_{i}e^{\langle\nu,X\rangle_{g_{0}}}dv}{\displaystyle
\int_{\Omega}\zeta^{2}_{i}e^{\langle\nu,X\rangle_{g_{0}}}dv}.
\end{equation*}
Since
\begin{equation*}
\begin{aligned}
\mathfrak{L}_{\nu}\zeta_{i} =u_{i}\mathfrak{L}_{\nu}\varphi-\Lambda_{i}\varphi u_{i}+2\langle\nabla \varphi,\nabla u_{j}\rangle_{g}
+\sum^{k}_{j=1}\Lambda_{j}\sigma_{ij}u_{j},
\end{aligned}
\end{equation*}
from \eqref{2.13}, we have
\begin{equation*}
\begin{aligned}
(\Lambda_{k+1}-\Lambda_{i})\|\zeta_{i}\|_{\Omega}^{2}
\leq-\int_{\Omega}\zeta_{i}(u_{i}\mathfrak{L}_{\nu}\varphi+2\langle\nabla
\varphi,\nabla u_{i}\rangle_{g})e^{\langle\nu,X\rangle_{g_{0}}}dv = \Theta_{i}.
\end{aligned}
\end{equation*} Thus, we finish the proof of this Lemma.

\end{proof}
From Lemma \ref{l2.3}, we have the following lemma.
\begin{lem}\label{l2.4}Under the assumption of Proposition {\rm\ref{prop2.3}}, we have
\begin{equation}
\begin{aligned}
\sum_{i=1}^{k}(\Lambda_{k+1}-\Lambda_{i})^{2}\Theta_{i}
\leq\sum_{i=1}^{k}(\Lambda_{k+1}-\Lambda_{i})\|u_{i}\mathfrak{L}_{\nu}\varphi+2\langle\nabla
\varphi,\nabla u_{i}\rangle_{g}-\sum_{j=1}^{k}\tau_{ij}u_{j}\|_{\Omega}^{2}.
\end{aligned}
\end{equation}
\end{lem}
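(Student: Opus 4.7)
The plan is to combine the variational inequality from Lemma \ref{l2.3} with a Cauchy--Schwarz bound, after replacing the factor $w_i := u_i \mathfrak{L}_\nu \varphi + 2\langle\nabla\varphi, \nabla u_i\rangle_g$ appearing in $\Theta_i$ by its projection onto $\mathrm{span}\{u_1,\ldots,u_k\}^{\perp}$ in the weighted $L^2$. Concretely, I would introduce
$$
v_i := w_i - \sum_{j=1}^{k}\tau_{ij}u_{j},
$$
which is precisely the function whose weighted $L^2$-norm squared appears on the right-hand side of the claim. The first step is to verify that $v_i$ is weighted-$L^2$-orthogonal to each $u_l$, $1\le l\le k$: by the definition of $\tau_{ij}$ one has $\int_\Omega w_i u_l\, e^{\langle\nu,X\rangle_{g_0}}dv = -\tau_{li}$, while $\sum_j\tau_{ij}\int_\Omega u_j u_l\, e^{\langle\nu,X\rangle_{g_0}}dv = \tau_{il}$, and these cancel thanks to the skew-symmetry $\tau_{il}=-\tau_{li}$ proved in Lemma \ref{l2.2}.

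Since $\zeta_i$ is already orthogonal to $u_1,\ldots,u_k$ in the weighted $L^2$ by \eqref{2.13}, the difference $w_i - v_i = \sum_j\tau_{ij}u_j$ integrates against $\zeta_i$ to zero, and therefore
$$
\Theta_i = -\int_\Omega \zeta_i w_i\, e^{\langle\nu,X\rangle_{g_0}}dv = -\int_\Omega \zeta_i v_i\, e^{\langle\nu,X\rangle_{g_0}}dv.
$$
Cauchy--Schwarz in the weighted $L^2$ then yields $\Theta_i^2 \leq \|\zeta_i\|_\Omega^2\,\|v_i\|_\Omega^2$. Invoking Lemma \ref{l2.3}, which gives $\Theta_i \geq (\Lambda_{k+1}-\Lambda_i)\|\zeta_i\|_\Omega^2 \geq 0$, I would substitute $\|\zeta_i\|_\Omega^2 \leq \Theta_i/(\Lambda_{k+1}-\Lambda_i)$ into the Cauchy--Schwarz estimate (the case $\Theta_i=0$ is trivial). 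After multiplying through by $(\Lambda_{k+1}-\Lambda_i)^2$ and cancelling one factor of $\Theta_i$, this produces
$$
(\Lambda_{k+1}-\Lambda_i)^{2}\Theta_i \leq (\Lambda_{k+1}-\Lambda_i)\|v_i\|_\Omega^{2},
$$
and summing over $i=1,\ldots,k$ is the desired inequality.

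The only delicate point is recognizing the correct correction $v_i$: subtracting $\sum_j \tau_{ij}u_j$ from $w_i$ is the unique choice that simultaneously (i) exploits the skew-symmetry of $\tau$ to force the orthogonality $v_i\perp u_l$ for $l\le k$, and (ii) is invisible when paired with $\zeta_i$ by \eqref{2.13}, so that passing from $w_i$ to $v_i$ inside $\Theta_i$ costs nothing. Once this algebraic bookkeeping is settled, the remainder is a mechanical combination of Cauchy--Schwarz with Lemma \ref{l2.3}, and no further geometric input is required.
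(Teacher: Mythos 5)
Your proposal is correct and follows essentially the same route as the paper: insert the correction $-\sum_{j}\tau_{ij}u_{j}$ into $\Theta_{i}$ at no cost via the orthogonality \eqref{2.13}, apply Cauchy--Schwarz, and combine with the lower bound of Lemma \ref{l2.3} to cancel a factor of $\Theta_{i}$. The additional verification that $v_{i}$ is weighted-$L^{2}$-orthogonal to $u_{1},\dots,u_{k}$ (via the skew-symmetry of $\tau$) is correct but not needed for this step; otherwise the argument matches the paper's.
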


\begin{proof}
Firstly, we give an estimate for the upper bound of $\Theta_{i}$.
From \eqref{2.14}, \eqref{2.13} and the Cauchy-Schwarz inequality,
we infer
\begin{equation}
\begin{aligned}
\label{2.16}
\Theta_{i}&=-\int_{\Omega}\zeta_{i}\left(u_{i}\mathfrak{L}_{\nu}\varphi+2\langle\nabla
\varphi,\nabla u_{i}\rangle_{g}-\sum_{j=1}^{k}\tau_{ij}u_{j}\right)e^{\langle\nu,X\rangle_{g_{0}}}dv
\\&\leq\left\{\|\zeta_{i}\|_{\Omega}^{2}\|u_{i}\mathfrak{L}_{\nu}\varphi+2\langle\nabla
\varphi,\nabla
u_{i}\rangle_{g}-\sum_{j=1}^{k}\tau_{ij}u_{j}\|_{\Omega}^{2}\right\}^{\frac{1}{2}}.
\end{aligned}
\end{equation}
Uniting \eqref{2.14} and \eqref{2.16}, we obtain
\begin{equation*}
\begin{aligned}
(\Lambda_{k+1}-\Lambda_{i})\Theta^{2}_{i}
&\leq(\Lambda_{k+1}-\Lambda_{i})\|\zeta_{i}\|_{\Omega}^{2}
\|u_{i}\mathfrak{L}_{\nu}\varphi+2\langle\nabla \varphi,\nabla
u_{i}\rangle_{g}-\sum_{j=1}^{k}\tau_{ij}u_{j}\|_{\Omega}^{2}
\\&\leq \Theta_{i}\|u_{i}\mathfrak{L}_{\nu}\varphi+2\langle\nabla \varphi,\nabla
u_{i}\rangle_{g}-\sum_{j=1}^{k}\tau_{ij}u_{j}\|_{\Omega}^{2}.
\end{aligned}
\end{equation*}
Therefore, we have
\begin{equation}
\begin{aligned}
\label{2.17} (\Lambda_{k+1}-\Lambda_{i})^{2}\Theta_{i}
\leq(\Lambda_{k+1}-\Lambda_{i})\|u_{i}\mathfrak{L}_{\nu}\varphi+2\langle\nabla
\varphi,\nabla u_{i}\rangle_{g}-\sum_{j=1}^{k}\tau_{ij}u_{j}\|_{\Omega}^{2}.
\end{aligned}
\end{equation}
Summing on $i$ from $1$ to $k$ for \eqref{2.17}, we derive
\begin{equation*}
\begin{aligned}
\sum_{i=1}^{k}(\Lambda_{k+1}-\Lambda_{i})^{2}\Theta_{i}
\leq\sum_{i=1}^{k}(\Lambda_{k+1}-\Lambda_{i})\|u_{i}\mathfrak{L}_{\nu}\varphi+2\langle\nabla
\varphi,\nabla u_{i}\rangle_{g}-\sum_{j=1}^{k}\tau_{ij}u_{j}\|_{\Omega}^{2},
\end{aligned}
\end{equation*}
as claimed. Thus, the proof of this lemma ends.

\end{proof}

Applying Lemma \ref{l2.2}, Lemma \ref{l2.3} and Lemma \ref{l2.4}, we give the proof of Proposition \ref{prop2.3}.

\vskip 3mm
\noindent \emph{Proof of Proposition} \ref{prop2.3}.
By the definition of $\tau_{ij}$ and \eqref{2.15}, it is not difficult to infer that
\begin{equation}
\begin{aligned}
\label{2.18}& \|u_{i}\mathfrak{L}_{\nu}\varphi +2\langle\nabla \varphi,\nabla
u_{i}\rangle_{g} -\sum^{n}_{j=1}\tau_{ij}u_{j}\|_{\Omega}^{2}
\\&=\|u_{i}\mathfrak{L}_{\nu}\varphi+2\langle\nabla
\varphi,\nabla u_{i}\rangle_{g}\|_{\Omega}^{2}- \sum^{n}_{j=1}\tau_{ij}^{2}
\\&=\|u_{i}\mathfrak{L}_{\nu}\varphi+2\langle\nabla
\varphi,\nabla u_{i}\rangle_{g}\|_{\Omega}^{2}-
\sum^{n}_{j=1}(\Lambda_{i}-\Lambda_{j})^{2}\sigma_{ij}^{2},
\end{aligned}
\end{equation}
According to the definitions of $\Theta_{i}$ and $\zeta_{i}$, it follows
from  \eqref{2.15} that,
\begin{equation}
\begin{aligned}
\label{2.19}\Theta_{i}
&=-\int_{\Omega}\left(\varphi u_{i}-\sum^{k}_{j=0}\sigma_{ij}u_{j}
\right)\Bigg(u_{i}\mathfrak{L}_{\nu}\varphi+2\langle\nabla \varphi,\nabla
u_{i}\rangle_{g}\Bigg)e^{\langle\nu,X\rangle_{g_{0}}}dv
\\&=-\int_{\Omega}
(\varphi u^{2}_{i}\mathfrak{L}_{\nu}\varphi+2\varphi u_{i}\langle\nabla \varphi,\nabla
u_{i}\rangle_{g})e^{\langle\nu,X\rangle_{g_{0}}}dv
\\&\quad~+\sum^{k}_{j=1}a_{ij}\int_{\Omega}u_{j}
(u_{i}\mathfrak{L}_{\nu}\varphi+2\langle\nabla \varphi,\nabla u_{i}\rangle_{g})e^{\langle\nu,X\rangle_{g_{0}}}dv
\\&=-\int_{\Omega}
\left(\varphi\mathfrak{L}_{\nu}\varphi-\frac{1}{2}\mathfrak{L}_{\nu}\varphi^{2}\right)u^{2}_{i}e^{\langle\nu,X\rangle_{g_{0}}}dv+\sum^{k}_{j=1}\sigma_{ij}\tau_{ij}
\\&=\int_{\Omega}
\langle\nabla \varphi,\nabla \varphi\rangle
u^{2}_{i}e^{\langle\nu,X\rangle_{g_{0}}}dv+\sum^{k}_{j=1}(\Lambda_{i}-\Lambda_{j})\sigma_{ij}^{2}.
\end{aligned}
\end{equation}
A simple calculation shows that

\begin{equation}
\begin{aligned}
\label{2.20}
 \sum_{i,j=1}^{k}(\Lambda_{k+1}-\Lambda_{i})^{2}(\Lambda_{i}-\Lambda_{j})\sigma^{2}_{ij}
=-\sum_{i,j=1}^{k}(\Lambda_{k+1}-\Lambda_{i})(\Lambda_{i}-\Lambda_{j})^{2}\sigma^{2}_{ij}.
\end{aligned}
\end{equation}
Furthermore,  uniting \eqref{2.17}, \eqref{2.18}, \eqref{2.19}
and \eqref{2.20}, we get
\begin{equation*}
\begin{aligned}
\sum^{k}_{i=1}(\Lambda_{k+1}-\Lambda_{i})^{2}\|u_{i}\nabla
\varphi\|_{\Omega}^{2} \leq\sum^{k}_{i=1}(\Lambda_{k+1}-\Lambda_{i})
\|2\langle\nabla \varphi,~\nabla u_{i}\rangle_{g}+u_{i}\mathfrak{L}_{\nu}\varphi\|_{\Omega}^{2}.
\end{aligned}
\end{equation*}
Therefore, we finish the proof of this proposition.
$$\eqno\Box$$

\subsection{Extrinsic Formulas}\label{subsec2.2}
\vskip3mm \noindent   Assume that $\left\{e_{1}, \cdots, e_{n}\right\}$ is a local orthonormal basis of $\mathcal{M}^{n}$ with respect to the induced metric $g$, and $\{e_{n+1}, \cdots, e_{n+p}\}$ is the local unit orthonormal normal vector fields. Assume that \begin{equation}\label{H-mean-H-equa}\textbf{H} =\frac{1}{n}\sum_{\alpha=n+1}^{n+p} H^{\alpha} e_{\alpha}=\frac{1}{n}\sum_{\alpha=n+1}^{n+p}\left(\sum_{i=1}^{n} h_{i i}^{\alpha}\right) e_{\alpha},\ \ {\rm and} \ \ H=\frac{1}{n} \sqrt{\sum_{\alpha=n+1}^{n+p}\left(\sum_{i=1}^{n} h_{i i}^{\alpha}\right)^{2}}\end{equation} are the mean curvature vector field and the mean curvature of $\mathcal{M}^{n}$, respectively. In order to prove our main results, we need the
following lemma. A proof of it can be
found in \cite{CC}.

\begin{lem}\label{lem2.5}
For  an $n$-dimensional  submanifold  $\mathcal{M}^{n}$ in Euclidean space
$\mathbb{R}^{n+p}$,   let $x=(x_{1},x_{2},\cdots,x_{n+p})$ is the
position vector of a point $p\in \mathcal{M}^{n}$ with
$x_{\alpha}=x_{\alpha}(y_{1}, \cdots, y_{n})$, $1\leq \alpha\leq
n+p$, where $(y_{1}, \cdots, y_{n})$ denotes a local coordinate
system of $\mathcal{M}^n$. Then, we have
\begin{equation*}
\sum^{n+p}_{\alpha=1}\langle\nabla x_{\alpha},\nabla x_{\alpha}\rangle_{g}= n,
\end{equation*}
\begin{equation*}
\begin{aligned}
\sum^{n+p}_{\alpha=1}\langle\nabla x_{\alpha},\nabla u\rangle_{g}\langle\nabla
x_{\alpha},\nabla w\rangle_{g}=\langle\nabla u,\nabla w\rangle_{g},
\end{aligned}
\end{equation*}
for any functions  $u, w\in C^{1}(\mathcal{M}^{n})$,
\begin{equation*}
\begin{aligned}
\sum^{n+p}_{\alpha=1}(\Delta x_{\alpha})^{2}=n^{2}H^{2},
\end{aligned}
\end{equation*}and
\begin{equation*}
\begin{aligned}
\sum^{n+p}_{\alpha=1}\Delta x_{\alpha}\nabla x_{\alpha}= 0,
\end{aligned}
\end{equation*}
where $H$ is the mean curvature of $\mathcal{M}^{n}$.
\end{lem}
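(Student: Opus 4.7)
The plan is to work in a local orthonormal frame $\{e_1,\ldots,e_n\}$ on $\mathcal{M}^n$ and to expand each $e_i$ in the ambient Cartesian basis of $\mathbb{R}^{n+p}$ as $e_i=\sum_{\alpha=1}^{n+p}e_i(x_\alpha)\,\partial/\partial x_\alpha$. Orthonormality of the frame in $(\mathbb{R}^{n+p},g_0)$ then translates into the single algebraic identity
\begin{equation*}
\delta_{ij}=\langle e_i,e_j\rangle_{g_0}=\sum_{\alpha=1}^{n+p}e_i(x_\alpha)\,e_j(x_\alpha),
\end{equation*}
which drives the first two formulas.

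For the first identity, write $\nabla x_\alpha=\sum_i e_i(x_\alpha)\,e_i$ so that $|\nabla x_\alpha|_g^2=\sum_i e_i(x_\alpha)^2$; exchanging the order of summation and invoking the above algebraic identity with $i=j$ yields $\sum_\alpha|\nabla x_\alpha|_g^2=\sum_i 1=n$. For the second identity, the same expansion gives $\langle\nabla x_\alpha,\nabla u\rangle_g=\sum_i e_i(x_\alpha)\,e_i(u)$ and analogously for $w$; multiplying and summing over $\alpha$ collapses, via the orthonormality identity, to $\sum_i e_i(u)e_i(w)=\langle\nabla u,\nabla w\rangle_g$.

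For the third identity, I would invoke Beltrami's formula $\Delta X=n\mathbf{H}$ componentwise, which gives $\Delta x_\alpha=n\,\mathbf{H}_\alpha$ where $\mathbf{H}_\alpha$ denotes the $\alpha$-th Cartesian component of the mean curvature vector. Squaring and summing then produces $\sum_\alpha(\Delta x_\alpha)^2=n^2|\mathbf{H}|_{g_0}^2=n^2H^2$ by the very definition of $H$ given in the introduction. For the fourth identity, the same expression $\Delta x_\alpha=n\mathbf{H}_\alpha$ shows that $\sum_\alpha\Delta x_\alpha\,\nabla x_\alpha=n\sum_\alpha\mathbf{H}_\alpha\,\nabla x_\alpha$; since $\nabla x_\alpha$ is precisely the tangential projection of $\partial/\partial x_\alpha$ onto $T\mathcal{M}^n$, the right-hand side is $n$ times the tangential projection of the vector $\mathbf{H}$ itself, which vanishes because $\mathbf{H}$ lies in the normal bundle.

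The main obstacle, if any, is making Beltrami's identity $\Delta X=n\mathbf{H}$ rigorous in the current notation, where $\Delta$ is intrinsic to $\mathcal{M}^n$ while $X$ takes values in the ambient $\mathbb{R}^{n+p}$. The cleanest way is to apply the Gauss formula $\overline{\nabla}_{e_i}e_i=\nabla_{e_i}e_i+\sum_\alpha h_{ii}^\alpha e_\alpha$ and sum on $i$, using that the ambient Hessian of the linear coordinate function $x_\alpha$ vanishes, so that only the normal term survives and produces $\Delta x_\alpha=\sum_\alpha\sum_i h_{ii}^\alpha\langle e_\alpha,\partial/\partial x_\alpha\rangle_{g_0}=n\mathbf{H}_\alpha$. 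Once this is in hand, the rest of the lemma reduces to the elementary linear algebra sketched above.
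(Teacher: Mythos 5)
Your proof is correct and is essentially the standard argument: the paper itself omits the proof of this lemma and simply cites Chen--Cheng \cite{CC}, where exactly this orthonormal-frame computation together with Beltrami's formula $\Delta X=n\mathbf{H}$ is used. The only blemish is notational: in your final display the index $\alpha$ is used simultaneously for the ambient coordinate and for the normal frame vector, so the formula should read $\Delta x_{\beta}=\sum_{\alpha=n+1}^{n+p}\sum_{i}h_{ii}^{\alpha}\langle e_{\alpha},\partial/\partial x_{\beta}\rangle_{g_{0}}=n\mathbf{H}_{\beta}$, but this does not affect the validity of the argument.
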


We choose a new coordinate system $\bar{y}=\left(\bar{y}^{1}, \cdots, \bar{y}^{n+p}\right)$ of $\mathbb{R}^{n+p}$ given by
$y-y(P)=\bar{y} A,$ such that $\left(\frac{\partial}{\partial \bar{y}^{1}}\right)_{P}, \cdots,\left(\frac{\partial}{\partial \bar{y}^{n}}\right)_{P} \operatorname{span} T_{P} \mathcal{M}^{n}$, and at $P,$
$ \langle\frac{\partial}{\partial \bar{y}^{i}}, \frac{\partial}{\partial \bar{y}^{j}}\rangle_{g}=\delta_{i j},$
where $A=\left(a_{\beta}^{\alpha}\right) \in O(n+p)$ is an $(n+p) \times (n+p)$ orthogonal matrix.   Let \begin{equation}\label{nu-eq}\nu= \sum^{n+p}_{\theta=1}\nu_{\theta}\frac{\partial}{\partial\overline{y}^{\theta}}\in\mathbb{R}^{n+p},\end{equation} and $$g_{0\alpha\beta}=\langle\frac{\partial}{\partial\overline{y}^{\alpha}},\frac{\partial}{\partial\overline{y}^{\beta}}\rangle_{g_{0}}.$$
Let $w$ be a smooth function defined on the Riemannian manifold $\mathcal{M}^{n}$. Under the local coordinate system $\bar{y}=\left(\bar{y}^{1}, \cdots, \bar{y}^{n}\right)$, by an easy exercise, one can show that

\begin{equation}\label{n-ineq-2}\nu^{\top}= \sum^{n}_{\theta=1}\nu_{\theta}\frac{\partial}{\partial\overline{y}^{\theta}},\end{equation}and

\begin{equation}\begin{aligned}\label{n-ineq-1}\langle\nu, \nabla w\rangle_{g_{0}}=\sum^{n}_{i=1}\nu_{i}\frac{\partial w}{\partial \overline{y}^{i}} .\end{aligned}\end{equation}
By Cauchy-Schwarz inequality, we have

\begin{equation}\label{n-ineq-3}\begin{aligned}\left(\sum^{n}_{\theta=1}\nu_{\theta}\frac{\partial w}{\partial \overline{y}^{\theta}}\right)^{2}\leq\left(\sum^{n}_{\theta=1}\nu_{\theta}^{2}\right)\cdot\sum^{n}_{\theta=1}\left(\frac{\partial w}{\partial \overline{y}^{\theta}}\right)^{2}.\end{aligned}\end{equation}
Therefore, combining \eqref{n-ineq-2}, \eqref{n-ineq-1} and \eqref{n-ineq-3}, we can prove the following lemma.

\begin{lem}\label{new-lemma1}Let $w$ be a smooth function defined on the Riemannian manifold $\mathcal{M}^{n}$, then we have
\begin{equation}\label{ineq-nu-w}\langle\nu, \nabla w\rangle_{g_{0}}\leq|\nu^{\top}|_{g_{0}}|\nabla w|_{g}.\end{equation}\end{lem}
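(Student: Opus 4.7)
The plan is essentially to read off the inequality from the Cauchy--Schwarz estimate \eqref{n-ineq-3} together with the identifications already set up in \eqref{nu-eq}--\eqref{n-ineq-1}. Since the claim is a pointwise inequality on $\mathcal{M}^{n}$, it suffices to verify it at an arbitrary point $P\in\mathcal{M}^{n}$, and I would work in the specially adapted orthonormal coordinates $\bar{y}=(\bar{y}^{1},\dots,\bar{y}^{n+p})$ introduced just before the lemma, where $\partial/\partial\bar{y}^{1},\dots,\partial/\partial\bar{y}^{n}$ is an orthonormal basis of $T_{P}\mathcal{M}^{n}$ and $\partial/\partial\bar{y}^{n+1},\dots,\partial/\partial\bar{y}^{n+p}$ spans the normal space.

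First I would note that $\nabla w\in T_{P}\mathcal{M}^{n}$, so expressing $\nabla w=\sum_{i=1}^{n}(\partial w/\partial\bar{y}^{i})\,\partial/\partial\bar{y}^{i}$ and using the orthonormality at $P$ together with the expansion \eqref{nu-eq} gives exactly the identity \eqref{n-ineq-1}, that is, only the tangential components $\nu_{1},\dots,\nu_{n}$ of $\nu$ enter the inner product $\langle\nu,\nabla w\rangle_{g_{0}}$. Next, I would apply the classical Cauchy--Schwarz inequality \eqref{n-ineq-3} to the $n$-term sum, obtaining
\begin{equation*}
\langle\nu,\nabla w\rangle_{g_{0}}^{2}=\Bigl(\sum_{i=1}^{n}\nu_{i}\frac{\partial w}{\partial\bar{y}^{i}}\Bigr)^{2}\le\Bigl(\sum_{i=1}^{n}\nu_{i}^{2}\Bigr)\Bigl(\sum_{i=1}^{n}\Bigl(\frac{\partial w}{\partial\bar{y}^{i}}\Bigr)^{2}\Bigr).
\end{equation*}

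Finally, I would recognize the two factors on the right: by the formula \eqref{n-ineq-2} for the tangential projection and the orthonormality of the frame, $\sum_{i=1}^{n}\nu_{i}^{2}=|\nu^{\top}|_{g_{0}}^{2}$; and since $\partial/\partial\bar{y}^{1},\dots,\partial/\partial\bar{y}^{n}$ is $g$-orthonormal at $P$, one has $\sum_{i=1}^{n}(\partial w/\partial\bar{y}^{i})^{2}=|\nabla w|_{g}^{2}$. Taking square roots yields the desired inequality $\langle\nu,\nabla w\rangle_{g_{0}}\le|\nu^{\top}|_{g_{0}}\,|\nabla w|_{g}$ at $P$, and since $P$ is arbitrary the estimate holds globally on $\mathcal{M}^{n}$. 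There is no real obstacle here beyond careful bookkeeping; the only thing to be mindful of is that the adapted frame is chosen pointwise, so each step must be performed at the fixed reference point before invoking arbitrariness of $P$.
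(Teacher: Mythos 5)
Your proposal is correct and follows exactly the paper's own route: the authors likewise obtain the lemma by combining the adapted-frame identities \eqref{n-ineq-2} and \eqref{n-ineq-1} with the Cauchy--Schwarz estimate \eqref{n-ineq-3}, and your write-up merely makes explicit the pointwise bookkeeping that the paper leaves implicit.
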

By a direct computation, one can show the following results of Chen and Cheng type.
\begin{lem}~\textbf{\emph{(Result of Cheng and Chen Type)}}\label{lem3.3}
Let $\left(y_{1}, \cdots, y_{n}\right)$ be an arbitrary coordinate system in a neighborhood $U$ of $P$ in $\mathcal{M}^{n} .$ Assume that $x$ with components $x_{\alpha}$ defined by
$
x_{\alpha}=x_{\alpha}\left(y_{1}, \cdots, y_{n}\right)$, where $1\leq \alpha \leq n+p$ is the position vector of $P$ in $\mathbb{R} ^{n+p}$.
Then, we have
\begin{equation}\label{v-2}
\sum_{\alpha=1}^{n+p}\left\langle\nabla x_{\alpha}, \nu\right\rangle_{g_{0}}^{2}=|\nu^{\top}|_{g_{0}}^{2},\end{equation}
where $\nabla$ is the gradient operator on $\mathcal{M}^{n}$.

\end{lem}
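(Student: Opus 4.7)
The plan is to exploit the orthogonal invariance of both sides and reduce the computation to the adapted frame introduced just before the lemma. First I would observe that if $A = (a^\alpha_\beta)\in O(n+p)$ is the orthogonal matrix defining the new coordinates $\bar{y}$ from $y$, then $x_\alpha - x_\alpha(P) = \sum_\beta a^\alpha_\beta \bar{x}_\beta$, hence $\nabla x_\alpha = \sum_\beta a^\alpha_\beta \nabla \bar{x}_\beta$. Using orthogonality $\sum_\alpha a^\alpha_\beta a^\alpha_\gamma = \delta_{\beta\gamma}$, one gets
\begin{equation*}
\sum_{\alpha=1}^{n+p} \langle \nabla x_\alpha, \nu\rangle_{g_0}^2 = \sum_{\beta=1}^{n+p} \langle \nabla \bar{x}_\beta, \nu\rangle_{g_0}^2,
\end{equation*}
so it suffices to verify the identity in the adapted coordinates at $P$. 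The right-hand side $|\nu^\top|_{g_0}^2$ is likewise invariant, since $\nu^\top$ is a geometric object, and by \eqref{n-ineq-2} it equals $\sum_{\theta=1}^n \nu_\theta^2$ in the adapted frame.

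The key computation is that at $P$, the tangent vector $\nabla \bar{x}_\beta$ equals the tangential projection of $\partial/\partial \bar{y}^\beta$. By the choice of $\bar{y}$, for $1\le \beta\le n$ the vector $\partial/\partial \bar{y}^\beta$ is already tangent to $\mathcal{M}^n$ at $P$ and orthonormal, so $\nabla \bar{x}_\beta|_P = \partial/\partial \bar{y}^\beta$; and for $n+1\le \beta\le n+p$ the vector $\partial/\partial \bar{y}^\beta$ is normal to $\mathcal{M}^n$ at $P$, so $\nabla \bar{x}_\beta|_P = 0$. Inserting the decomposition $\nu = \sum_\theta \nu_\theta \partial/\partial \bar{y}^\theta$ from \eqref{nu-eq} gives $\langle \nabla \bar{x}_\beta, \nu\rangle_{g_0} = \nu_\beta$ for $\beta\le n$ and $0$ otherwise, so the sum telescopes to $\sum_{\beta=1}^n \nu_\beta^2 = |\nu^\top|_{g_0}^2$, finishing the argument at $P$.

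If one prefers an intrinsic derivation bypassing the adapted frame, the alternative is to notice that any tangent vector $V$ regarded as a Euclidean vector satisfies $V = \sum_\beta \langle V,\nabla x_\beta\rangle_g \, e_\beta$, giving $\langle \nabla x_\alpha,\nu\rangle_{g_0} = \sum_\beta \nu_\beta \langle \nabla x_\alpha,\nabla x_\beta\rangle_g$. Squaring, summing over $\alpha$, and applying the second identity in Lemma \ref{lem2.5} collapses the $\alpha$-sum to $\langle \nabla x_\beta,\nabla x_\gamma\rangle_g$, yielding $|\nabla f|_g^2$ where $f=\langle \nu,X\rangle_{g_0}=\sum_\alpha \nu_\alpha x_\alpha$; since $V[f] = \langle \nu,V\rangle_{g_0} = \langle \nu^\top,V\rangle_g$ for every tangent $V$, one has $\nabla f = \nu^\top$ and the claim follows. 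There is no substantive obstacle here — the only point to be careful about is the identification of $\nabla x_\alpha$, viewed as a vector in $\mathbb{R}^{n+p}$ via the immersion, with its Euclidean coordinate expansion, which is exactly what the adapted-frame setup makes transparent.
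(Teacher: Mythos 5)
Your proof is correct and follows the route the paper intends: the paper states the lemma with only the remark ``by a direct computation,'' but the adapted coordinate frame and the formulas for $\nu^{\top}$ set up immediately before it are exactly what your first argument uses, and your computation of $\nabla\bar{x}_{\beta}$ at $P$ fills in the omitted details correctly. The alternative intrinsic derivation via Lemma \ref{lem2.5} and the identification $\nabla\langle\nu,X\rangle_{g_{0}}=\nu^{\top}$ is also sound, but both are the same ``direct computation'' the paper has in mind.
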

From Cauchy-Schwarz inequality, Lemma \ref{lem2.5} and Lemma \ref{lem3.3}, we have the following lemma.
\begin{lem}~\textbf{\emph{(Result of Cheng and Chen Type)}}\label{lem3.2}
Let $\left(y_{1}, \cdots, y_{n}\right)$ be an arbitrary coordinate system in a neighborhood $U$ of $P$ in $\mathcal{M}^{n} .$ Assume that $x$ with components $x_{\alpha}$ defined by $x_{\alpha}=x_{\alpha}\left(y_{1}, \cdots, y^{n}\right)$, where $1 \leq \alpha \leq n+p$,
is the position vector of $P$ in $\mathbb{R} ^{n+p}$.
Then, we have

\begin{equation}\label{uv-ineq}
\sum_{\alpha=1}^{n+p}\left\langle\nabla x_{\alpha}, \nabla u\right\rangle_{g}\left\langle\nabla x_{\alpha}, \nu\right\rangle_{g_{0}}\leq|\nabla u|_{g}|\nu^{\top}|_{g_{0}},\end{equation}
where $\nabla$ is the gradient operator on $\mathcal{M}^{n}$.

\end{lem}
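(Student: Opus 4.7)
The plan is to recognize the left-hand side as the standard Euclidean inner product in $\mathbb{R}^{n+p}$ of two auxiliary $(n+p)$-tuples of scalars, apply Cauchy-Schwarz in $\mathbb{R}^{n+p}$, and then identify each resulting sum of squares via the two preceding identity lemmas. This is precisely the route suggested by the sentence introducing the lemma, which names Cauchy-Schwarz, Lemma \ref{lem2.5}, and Lemma \ref{lem3.3} as the only ingredients.

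More concretely, working pointwise at $P \in \mathcal{M}^{n}$, I would introduce
$$A_{\alpha} := \langle \nabla x_{\alpha}, \nabla u\rangle_{g}, \qquad B_{\alpha} := \langle \nabla x_{\alpha}, \nu\rangle_{g_{0}}, \qquad 1 \leq \alpha \leq n+p,$$
so that the left-hand side of \eqref{uv-ineq} is literally the Euclidean dot product $\sum_{\alpha} A_{\alpha} B_{\alpha}$ of the vectors $(A_{\alpha}), (B_{\alpha}) \in \mathbb{R}^{n+p}$. The classical Cauchy-Schwarz inequality in $\mathbb{R}^{n+p}$ then yields
$$\sum_{\alpha=1}^{n+p} A_{\alpha} B_{\alpha} \leq \Bigl( \sum_{\alpha=1}^{n+p} A_{\alpha}^{2} \Bigr)^{1/2} \Bigl( \sum_{\alpha=1}^{n+p} B_{\alpha}^{2} \Bigr)^{1/2}.$$

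The two resulting sums of squares are exactly what the earlier lemmas compute. Specializing the second identity of Lemma \ref{lem2.5} to the case $w = u$ gives $\sum_{\alpha} A_{\alpha}^{2} = \langle \nabla u, \nabla u\rangle_{g} = |\nabla u|_{g}^{2}$. The second factor is handled by Lemma \ref{lem3.3}, which records $\sum_{\alpha} B_{\alpha}^{2} = |\nu^{\top}|_{g_{0}}^{2}$. Taking square roots turns the right-hand side of the Cauchy-Schwarz estimate into exactly $|\nabla u|_{g}\,|\nu^{\top}|_{g_{0}}$, which is the desired bound.

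No serious obstacle is anticipated: once the two identity lemmas are in hand, the argument is a two-line Cauchy-Schwarz application. The only care required is bookkeeping for the two different inner products — $\langle \cdot, \cdot\rangle_{g}$ on $T_{P}\mathcal{M}^{n}$ for the first factor and $\langle \cdot, \cdot\rangle_{g_{0}}$ on $\mathbb{R}^{n+p}$ for the second — but this distinction is harmless, since Cauchy-Schwarz is carried out purely at the level of the scalar sequences $(A_{\alpha})$ and $(B_{\alpha})$ in $\mathbb{R}^{n+p}$, the two metrics entering only through the definitions of these scalars.
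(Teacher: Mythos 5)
Your proof is correct and is exactly the argument the paper intends: the lemma is stated as following "from Cauchy-Schwarz inequality, Lemma \ref{lem2.5} and Lemma \ref{lem3.3}," and your pointwise Cauchy-Schwarz on the scalar sequences $\left(\langle\nabla x_{\alpha},\nabla u\rangle_{g}\right)_{\alpha}$ and $\left(\langle\nabla x_{\alpha},\nu\rangle_{g_{0}}\right)_{\alpha}$, combined with the two identity lemmas to evaluate the sums of squares, is that argument spelled out. No gaps.
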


Let $x_{1}, x_{2}, \cdots, x_{n+p}$ be the standard coordinate functions of $\mathbb{R}^{n+p}$ and
define an $((n+p) \times (n+p))$-matrix $D$ by
$D:=\left(d_{\alpha \beta}\right),~
{\rm  where}~ d_{\alpha \beta}=\int_{\Omega} x_{\alpha} u_{1} u_{\beta+1} .$  Using the orthogonalization of Gram and Schmidt, it is easy to see that there exist an upper triangle matrix $R=\left(R_{\alpha \beta}\right)$ and an orthogonal matrix $Q=\left(\tau_{\alpha \beta}\right)$ such that $R=QB,$ i.e.,
$
R_{\alpha \beta}=\sum_{\gamma=1}^{n+p} \tau_{\alpha \gamma} d_{\gamma \beta}=\int_{\Omega} \sum_{\gamma=1}^{n+p} \tau_{\alpha \gamma} x_{\gamma} u_{1} u_{\beta+1}=0,
$
for $1 \leq \beta<\alpha \leq n+p$. Defining \begin{equation}\label{h-a}h_{\alpha}=\sum_{\gamma=1}^{n+p} \tau_{\alpha \gamma} x_{\gamma},\end{equation} we have $\int_{\Omega} h_{\alpha} u_{1} u_{\beta+1}=0,$ where $1 \leq \beta<\alpha \leq n+p .$
Since $h_{\alpha}=\sum_{\gamma=1}^{n+p} \tau_{\alpha \gamma} x_{\gamma}$ and $Q$ is an orthogonal matrix, by
Lemma \eqref{lem2.5}, Lemma \eqref{lem3.3} and Lemma \eqref{lem3.2}, we can show the following lemma.

\begin{lem}Under the above convention, we have
\begin{equation}\label{n-2}
\sum_{\alpha=1}^{n+p}\left|\nabla h
_{\alpha}\right|_{g}^{2}=n,\end{equation}

\begin{equation}\label{de-h}\sum_{\alpha=1}^{n+p}\left(\Delta h_{\alpha}\right)^{2}=n^{2}H^{2},\end{equation}

\begin{equation}\label{hhu} \sum_{\alpha=1}^{n+p} \Delta h_{\alpha}\left\langle\nabla h_{\alpha}, \nabla u_{1}\right\rangle_{g}=0,
\end{equation}

\begin{equation}\label{hhv}
\sum_{\alpha=1}^{n+p} \Delta h_{\alpha}\left\langle\nabla h_{\alpha}, \nu\right\rangle_{g_{0}}=0,\end{equation}

\begin{equation}\label{vv2} \sum_{\alpha=1}^{n+p}\left\langle\nabla h_{\alpha}, \nu\right\rangle_{g_{0}}^{2}=\big|\nu^{\top}\big|_{g_{0}}^{2}, \end{equation}

\begin{equation}\label{huhv} \sum_{\alpha=1}^{n+p}\left\langle\nabla h_{\alpha}, \nabla u_{1}\right\rangle_{g}\left\langle\nabla h_{\alpha}, \nu\right\rangle_{g_{0}}\leq|\nabla u_{1}|_{g}|\nu^{\top}|_{g_{0}}, \end{equation}
and

\begin{equation}\label{nab-u-2} \sum_{\alpha=1}^{n+p}\left\langle\nabla h_{\alpha}, \nabla u_{1}\right\rangle_{g}^{2}=\left|\nabla u_{1}\right|_{g}^{2}. \end{equation}

\end{lem}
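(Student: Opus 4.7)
The plan is to exploit the orthogonality of the matrix $Q=(\tau_{\alpha\beta})$, which allows us to reduce every sum $\sum_{\alpha=1}^{n+p}$ involving $h_\alpha$ to the corresponding sum over the ambient coordinate functions $x_\gamma$, at which point Lemma~\ref{lem2.5}, Lemma~\ref{lem3.3} and Lemma~\ref{lem3.2} can be invoked. The starting observation is that, since each $\tau_{\alpha\gamma}$ is a constant, differentiation commutes with the sum in \eqref{h-a} so that
\[
\nabla h_\alpha=\sum_{\gamma=1}^{n+p}\tau_{\alpha\gamma}\nabla x_\gamma,\qquad \Delta h_\alpha=\sum_{\gamma=1}^{n+p}\tau_{\alpha\gamma}\Delta x_\gamma,
\]
and the defining identity $\sum_{\alpha=1}^{n+p}\tau_{\alpha\gamma}\tau_{\alpha\delta}=\delta_{\gamma\delta}$ of the orthogonal matrix $Q$ will collapse all the resulting double sums.

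Concretely, for \eqref{n-2} I would substitute the expansion of $\nabla h_\alpha$, interchange the order of summation, and apply the orthogonality relation to get $\sum_\alpha|\nabla h_\alpha|_g^2=\sum_\gamma|\nabla x_\gamma|_g^2=n$ by the first identity of Lemma~\ref{lem2.5}. The same pattern produces \eqref{de-h} via $\sum_\gamma(\Delta x_\gamma)^2=n^2H^2$, and likewise \eqref{nab-u-2} via the second identity of Lemma~\ref{lem2.5} with $u=w=u_1$, and \eqref{vv2} via the identity \eqref{v-2} of Lemma~\ref{lem3.3}. For the two vanishing identities \eqref{hhu} and \eqref{hhv}, the orthogonal substitution reduces $\sum_\alpha\Delta h_\alpha\,\nabla h_\alpha$ to $\sum_\gamma\Delta x_\gamma\,\nabla x_\gamma$, which vanishes as a vector field on $\mathcal{M}^n$ by the last identity of Lemma~\ref{lem2.5}; taking the inner products $\langle\,\cdot\,,\nabla u_1\rangle_g$ and $\langle\,\cdot\,,\nu\rangle_{g_0}$ of this vector identity then yields \eqref{hhu} and \eqref{hhv} respectively.

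Only the inequality \eqref{huhv} requires an additional ingredient beyond the orthogonal substitution, namely the discrete Cauchy--Schwarz inequality in the index $\alpha$,
\[
\sum_{\alpha=1}^{n+p}\langle\nabla h_\alpha,\nabla u_1\rangle_g\langle\nabla h_\alpha,\nu\rangle_{g_0}\le\Bigl(\sum_{\alpha=1}^{n+p}\langle\nabla h_\alpha,\nabla u_1\rangle_g^{2}\Bigr)^{1/2}\Bigl(\sum_{\alpha=1}^{n+p}\langle\nabla h_\alpha,\nu\rangle_{g_0}^{2}\Bigr)^{1/2},
\]
after which the two factors on the right are identified with $|\nabla u_1|_g$ and $|\nu^{\top}|_{g_0}$ via \eqref{nab-u-2} and \eqref{vv2} just established.

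There is no real technical obstacle here: the whole lemma is, at its core, the statement that the quantities in question are invariant under the orthogonal change of coordinates $x\mapsto h=Qx$ in $\mathbb{R}^{n+p}$, and the work is just to verify this invariance cleanly. The one point deserving care is bookkeeping of metrics: the pairings with $\nu$ must be taken in the Euclidean metric $g_{0}$ so that one genuinely appeals to \eqref{v-2} in Lemma~\ref{lem3.3} rather than to the intrinsic inner product on $\mathcal{M}^n$; once this is respected each line follows simply by transferring the corresponding identity from $x_\gamma$ to $h_\alpha$.
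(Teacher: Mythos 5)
Your proposal is correct and follows exactly the route the paper intends: the paper itself omits the details, remarking only that one need notice $Q$ is orthogonal and appeal to Lemmas \ref{lem2.5}, \ref{lem3.3} and \ref{lem3.2}, which is precisely the orthogonal-substitution-plus-Cauchy--Schwarz argument you spell out.
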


\vskip5mm

\section{Two Bounds for the Eigenvalues}\label{sec3}
\vskip3mm

In this section, applying general formulas, we give some bounds of the eigenvalues.

\subsection{Bound of Yang Type}\label{subsec3.1}

In this paper, we investigate the eigenvalues of Dirichlet problem \eqref{diri-prob} of Xin-Laplacian on the complete Riemannian manifolds. The first purpose of this paper is to prove an inequality of eigenvalues with higher order as follows.
\begin{thm}\label{thm1.1}
Let $(\mathcal{M}^{n},g)$ be an $n$-dimensional complete Riemannian manifold isometrically embedded into the Euclidean space $\mathbb{R}^{n+p}$ with mean curvature $H$.  Assume that $\Lambda_{i}$ denotes the $i$-th eigenvalue of the Dirichlet problem
\eqref{diri-prob}  of  the Xin-Laplacian. Then, we have

\begin{equation}
\begin{aligned}
\label{thm-1-11} \sum^{k}_{i=1}(\Lambda_{k+1}-\Lambda_{i})^{2}
\leq\frac4n\sum^{k}_{i=1}(\Lambda_{k+1}-\Lambda_{i}) \left(\Lambda_{i}+D_{1}\Lambda_{i}^{\frac{1}{2}}+\frac{1}{4}D_{1}^{2}+\frac{1}{4}C_{1}\right),
\end{aligned}
\end{equation}
and

\begin{equation}
\begin{aligned}
\label{thm-1-112} \sum^{k}_{i=1}(\Lambda_{k+1}-\Lambda_{i})^{2}
\leq\frac6n\sum^{k}_{i=1}(\Lambda_{k+1}-\Lambda_{i})\left(\Lambda_{i}+\frac{1}{2}D_{1}^{2}+\frac{1}{6}C_{1}\right),
\end{aligned}
\end{equation}
where $C_{1}=\inf_{\psi\in \Psi}\max_{\Omega}n^{2}H^{2}\ \ and\ \ D_{1}= \max_{\Omega} |\nu^{\top}|_{g_{0}}. $
\end{thm}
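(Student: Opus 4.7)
\medskip

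\noindent\textbf{Proof plan for Theorem \ref{thm1.1}.} The plan is to feed the coordinate functions of the ambient Euclidean space into the general formula of Proposition \ref{prop2.3} and then sum over $\alpha=1,\ldots,n+p$. Concretely, I would fix an isometric immersion $\psi\in\Psi$ and, for each $\alpha$, apply \eqref{2.11} with the trial function $\varphi=x_\alpha$ (equivalently, the orthogonalized $h_\alpha$ from \eqref{h-a}; the ambient orthogonal rotation used in Gram--Schmidt does not affect any of the pointwise identities below). Summing the resulting inequalities over $\alpha$ gives
\begin{equation*}
\sum_{\alpha=1}^{n+p}\sum_{i=1}^{k}(\Lambda_{k+1}-\Lambda_{i})^{2}\|u_{i}\nabla x_{\alpha}\|_\Omega^{2}
\leq\sum_{\alpha=1}^{n+p}\sum_{i=1}^{k}(\Lambda_{k+1}-\Lambda_{i})\bigl\|2\langle\nabla x_{\alpha},\nabla u_{i}\rangle_{g}+u_{i}\mathfrak{L}_{\nu}x_{\alpha}\bigr\|_\Omega^{2}.
\end{equation*}

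\noindent\textbf{Left-hand side.} Using $\sum_\alpha|\nabla x_\alpha|_g^{2}=n$ from Lemma \ref{lem2.5} and the normalization $\int_\Omega u_i^2\,e^{\langle\nu,X\rangle_{g_0}}dv=1$, the LHS reduces instantly to $n\sum_{i=1}^k(\Lambda_{k+1}-\Lambda_i)^2$, which is the desired Yang-type left-hand side.

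\noindent\textbf{Right-hand side.} Here $\mathfrak{L}_{\nu}x_\alpha=\Delta x_\alpha+\langle\nu,\nabla x_\alpha\rangle_{g_0}$, so the integrand in the norm is $A_\alpha+B_\alpha+C_\alpha$ with $A_\alpha=2\langle\nabla x_\alpha,\nabla u_i\rangle_g$, $B_\alpha=u_i\Delta x_\alpha$, $C_\alpha=u_i\langle\nu,\nabla x_\alpha\rangle_{g_0}$. Expanding $\sum_\alpha(A_\alpha+B_\alpha+C_\alpha)^2$ pointwise and invoking the identities gathered in Lemma \ref{lem2.5} and in the block \eqref{n-2}--\eqref{nab-u-2}, namely $\sum_\alpha\langle\nabla x_\alpha,\nabla u_i\rangle_g^{2}=|\nabla u_i|_g^{2}$, $\sum_\alpha(\Delta x_\alpha)^2=n^2H^2$, $\sum_\alpha\Delta x_\alpha\,\nabla x_\alpha=0$ (killing both $\sum A_\alpha B_\alpha$ and $\sum B_\alpha C_\alpha$), and $\sum_\alpha\langle\nabla x_\alpha,\nu\rangle_{g_0}^{2}=|\nu^{\top}|_{g_0}^{2}$, the only surviving cross term is $2\sum_\alpha A_\alpha C_\alpha$, which by Lemma \ref{lem3.2} is bounded by $4|u_i||\nabla u_i|_g|\nu^{\top}|_{g_0}$. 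Integrating against $e^{\langle\nu,X\rangle_{g_0}}dv$, using $\int_\Omega|\nabla u_i|_g^{2}e^{\langle\nu,X\rangle_{g_0}}dv=\Lambda_i$ (integration by parts with the eigenvalue equation) and estimating $n^2H^2\le C_1$ and $|\nu^\top|_{g_0}\le D_1$ after taking $\inf_\psi$, I arrive at a pointwise bound of the form
\begin{equation*}
\sum_{\alpha=1}^{n+p}\bigl\|2\langle\nabla x_\alpha,\nabla u_i\rangle_g+u_i\mathfrak{L}_\nu x_\alpha\bigr\|_\Omega^{2}\le 4\Lambda_i+C_1+D_1^{2}+4D_1\cdot I_i,
\end{equation*}
where $I_i:=\int_\Omega|u_i||\nabla u_i|_g e^{\langle\nu,X\rangle_{g_0}}dv$.

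\noindent\textbf{Two bounds from two estimates of the cross term.} The split between \eqref{thm-1-11} and \eqref{thm-1-112} comes from two different ways of handling $I_i$. For \eqref{thm-1-11}, Cauchy--Schwarz with the weighted measure gives $I_i\le\Lambda_i^{1/2}$, yielding the factor $4(\Lambda_i+D_1\Lambda_i^{1/2}+\tfrac14 D_1^2+\tfrac14 C_1)$ on the right and hence the constant $4/n$ after dividing by $n$. For \eqref{thm-1-112}, I instead apply Young's inequality $4|u_i||\nabla u_i|_g|\nu^\top|_{g_0}\le 2|\nabla u_i|_g^{2}+2u_i^{2}|\nu^\top|_{g_0}^{2}$ directly under the integral; combined with the previous identities this upgrades the $4|\nabla u_i|^2$ contribution to $6|\nabla u_i|^2$ and the $|\nu^\top|^2$ contribution to $3|\nu^\top|^2$, producing $6(\Lambda_i+\tfrac12 D_1^{2}+\tfrac16 C_1)$ and the constant $6/n$. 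Taking $\inf_{\psi\in\Psi}$ on both sides then promotes $\max_\Omega n^2H^2$ to $C_1$.

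\noindent\textbf{Expected obstacle.} The computational heart is the expansion of $\sum_\alpha(A_\alpha+B_\alpha+C_\alpha)^2$ and checking that all six cross terms collapse correctly: the vanishing of $\sum_\alpha\Delta x_\alpha\nabla x_\alpha$ (Lemma \ref{lem2.5}) disposes of the two cross terms involving $\Delta x_\alpha$, but the third cross term $\sum_\alpha A_\alpha C_\alpha$ does not vanish and is precisely where the new extrinsic bound \eqref{uv-ineq} of Lemma \ref{lem3.2} must enter; choosing between Cauchy--Schwarz and Young at this one step is exactly what yields the two different universal inequalities. Everything else is a bookkeeping application of the identities already established in Section \ref{sec2}.
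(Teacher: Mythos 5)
Your proposal is correct and follows essentially the same route as the paper: apply Proposition \ref{prop2.3} with $\varphi=x_{\alpha}$, sum over $\alpha$, use the identities of Lemmas \ref{lem2.5}, \ref{lem3.3}, \ref{lem3.2} to collapse the expansion of the right-hand side, and then obtain the two inequalities by treating the single surviving cross term either with Cauchy--Schwarz or with Young's (mean value) inequality before taking $\inf_{\psi\in\Psi}$ and $\max_{\Omega}$. The only cosmetic difference is that you bound $|\nu^{\top}|_{g_{0}}\leq D_{1}$ before applying Cauchy--Schwarz to the cross term, whereas the paper does it in the opposite order; the resulting bounds coincide.
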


\begin{rem}In Theorem {\rm \ref{thm1.1}}, one can show that \eqref{thm-1-112} is sharper than \eqref{thm-1-11}, when
\begin{equation}
\begin{aligned}\label{ineq-1.11}
2\Lambda_{i}^{\frac{1}{2}}\left(\int_{\Omega}u_{i}^{2} |\nu^{\top}|^{2}_{g_{0}} e^{\langle\nu,X\rangle_{g_{0}}}dv\right)^{\frac{1}{2}}
\geq\Lambda_{i}+\int_{\Omega}u_{i}^{2}|\nu^{\top}|_{g_{0}}^{2}e^{\langle\nu,X\rangle_{g_{0}}}dv;
\end{aligned}
\end{equation} while \eqref{thm-1-11} is sharper than \eqref{thm-1-112}, when inequality \eqref{ineq-1.11} goes the other way.

\end{rem}
\begin{rem}In Theorem {\rm\ref{thm1.1}},  assuming that $|\nu^{\top}|_{g_{0}}=0$, one can deduce the following inequality:

\begin{equation*}
\begin{aligned} \sum^{k}_{i=1}(\Lambda_{k+1}-\Lambda_{i})^{2}
\leq\frac4n\sum^{k}_{i=1}(\Lambda_{k+1}-\Lambda_{i}) \left(\Lambda_i+\frac{1}{4}C_{1}\right),
\end{aligned}
\end{equation*}where $C_{1}=\inf_{\psi\in \Psi}\max_{\Omega}n^{2}H^{2},$  which is given by  Chen and Cheng in {\rm \cite{CC}}.
 \end{rem}

\begin{rem} For Theorem {\rm\ref{thm1.1}}, an analogous version with respect to the $\mathfrak{L}$ operator is obtained by Chen and Peng in {\rm \cite{CP}} and a similar result for the drifting Laplacian is obtained by Xia and Xu in {\rm \cite{XX}}. \end{rem}

 In this subsection, we give the proof of Theorem \ref{thm1.1}.\vskip 3mm
\noindent {\it Proof of Theorem {\rm \ref{thm1.1}}}.   From Nash's
Theorem, there exists an isometric immersion from $\mathcal{M}^{n}$ into the $(n+p)$-dimensional Euclidean space
$\mathbb{R}^{n+p}$. Assume $x_{1}, \cdots, x_{n+p}$ are $(n+p)$
coordinate functions of $\mathbb{R}^{n+p}$, then $x_{1}, \cdots,
x_{n+p}$ are defined on $\mathcal{M}^{n}$ globally. Taking  $\varphi=x_{\alpha}$,
for  $1 \leq\alpha\leq n+p$,  we have, from the Proposition
\ref{prop2.3},
\begin{equation*}
\begin{aligned}
\sum^{k}_{i=1}(\Lambda_{k+1}-\Lambda_{i})^{2}\|u_{i}\nabla
x_{\alpha}\|_{\Omega}^{2}
\leq\sum^{k}_{i=1}(\Lambda_{k+1}-\Lambda_{i}) \|2\langle\nabla
x_{\alpha},\nabla u_{i}\rangle_{g}+u_{i}\mathfrak{L}_{\nu}x_{\alpha}\|_{\Omega}^{2}.
\end{aligned}
\end{equation*}
Taking sum on $\alpha$ from 1 to $n+p$, we have
\begin{equation}
\begin{aligned}
\label{3.2}
\sum^{k}_{i=1}(\Lambda_{k+1}-\Lambda_{i})^{2}\sum^{n+p}_{\alpha=1}\|u_{i}\nabla
x_{\alpha}\|_{\Omega}^{2}
\leq\sum^{k}_{i=1}(\Lambda_{k+1}-\Lambda_{i})
\sum^{n+p}_{\alpha=1}\|2\langle\nabla x_{\alpha},\nabla
u_{i}\rangle_{g}+u_{i}\mathfrak{L}_{\nu}x_{\alpha}\|_{\Omega}^{2}.
\end{aligned}\end{equation}
Applying Lemma \ref{lem2.5}, Lemma \ref{lem3.3} and Lemma \ref{lem3.2}, we have
\begin{equation*}
\begin{aligned} &\sum^{n+p}_{\alpha=1}\|2\langle\nabla x_{\alpha},\nabla
u_{i}\rangle_{g}+u_{i}\mathfrak{L}_{\nu}x_{\alpha}\|_{\Omega}^{2}
\\&\leq4\Lambda_{i}+\int_{\Omega}u_{i}^{2}(n^{2}H^{2}+|\nu^{\top}|_{g_{0}}^{2})e^{\langle\nu,X\rangle_{g_{0}}}dv+4\int_{\Omega}u_{i}|\nabla u_{i}|_{g}|\nu|_{g_{0}} e^{\langle\nu^{\top},X\rangle_{g_{0}}}dv.
\end{aligned}
\end{equation*}Furthermore, by Cauchy-Schwarz inequality, we infer that
\begin{equation}
\begin{aligned}
\label{3.3} &\sum^{n+p}_{\alpha=1}\|2\langle\nabla x_{\alpha},\nabla
u_{i}\rangle_{g}+u_{i}\mathfrak{L}_{\nu}x_{\alpha}\|_{\Omega}^{2}
\\ &\leq4\Lambda_{i}+\int_{\Omega}u_{i}^{2}(n^{2}H^{2}+|\nu^{\top}|_{g_{0}}^{2})e^{\langle\nu,X\rangle_{g_{0}}}dv\\&\quad+4\left(\int_{\Omega}|\nabla u_{i}|^{2}_{g} e^{\langle\nu,X\rangle_{g_{0}}}dv\right)^{\frac{1}{2}}\left(\int_{\Omega}u_{i}^{2} |\nu^{\top}|^{2}_{g_{0}} e^{\langle\nu,X\rangle_{g_{0}}}dv\right)^{\frac{1}{2}}
\\ &=4\Lambda_{i}+\int_{\Omega}u_{i}^{2}(n^{2}H^{2}+|\nu^{\top}|_{g_{0}}^{2})e^{\langle\nu,X\rangle_{g_{0}}}dv+4\Lambda_{i}^{\frac{1}{2}}\left(\int_{\Omega}u_{i}^{2} |\nu^{\top}|^{2}_{g_{0}} e^{\langle\nu,X\rangle_{g_{0}}}dv\right)^{\frac{1}{2}}.
\end{aligned}
\end{equation}
By the mean value inequality, we have
\begin{equation}
\begin{aligned}
\label{3.3-1} \sum^{n+p}_{\alpha=1}\|2\langle\nabla x_{\alpha},\nabla
u_{i}\rangle_{g}+u_{i}\mathfrak{L}_{\nu}x_{\alpha}\|_{\Omega}^{2}
 \leq 6\Lambda_{i}+\int_{\Omega}u_{i}^{2}(n^{2}H^{2}+3|\nu^{\top}|_{g_{0}}^{2})e^{\langle\nu,X\rangle_{g_{0}}}dv.
\end{aligned}
\end{equation}
Therefore, from \eqref{3.2}, \eqref{3.3}, \eqref{3.3-1}  and Lemma \ref{lem2.5},
we infer that
\begin{equation*}
\begin{aligned}
&\sum^{k}_{i=1}(\Lambda_{k+1}-\Lambda_{i})^{2}\\
&\leq\frac{4}{n}\sum^{k}_{i=1}(\Lambda_{k+1}-\Lambda_{i})\\&\times
\left(\Lambda_{i}+\frac{1}{4}\int_{\Omega}u_{i}^{2}\left(n^{2}H^{2} +|\nu^{\top}|_{g_{0}}^{2}\right)e^{\langle\nu,X\rangle_{g_{0}}}dv+\Lambda_{i}^{\frac{1}{2}}\left(\int_{\Omega}u_{i}^{2} |\nu^{\top}|^{2}_{g_{0}} e^{\langle\nu,X\rangle_{g_{0}}}dv\right)^{\frac{1}{2}}\right),
\end{aligned}
\end{equation*}and

\begin{equation*}
\begin{aligned}
&\sum^{k}_{i=1}(\Lambda_{k+1}-\Lambda_{i})^{2}
 \leq\frac{6}{n}\sum^{k}_{i=1}(\Lambda_{k+1}-\Lambda_{i})
\left(\Lambda_{i}+\frac{1}{6}\int_{\Omega}u_{i}^{2}\left(n^{2}H^{2} +3|\nu^{\top}|_{g_{0}}^{2}\right)e^{\langle\nu,X\rangle_{g_{0}}}dv\right).
\end{aligned}
\end{equation*}
Since eigenvalues are invariant in the sense of isometries, defining
$C_{1}=\inf_{\psi\in \Psi}\max_{\Omega}n^{2}H^{2}$, and $D_{1}= \max_{\Omega}|\nu^{\top}|_{g_{0}}$,
where $\Psi$ denotes the set of all isometric immersions from $\mathcal{M}^n$
into a Euclidean space, we have
\begin{equation*}
\begin{aligned}
&\sum^{k}_{i=1}(\Lambda_{k+1}-\Lambda_{i})^{2}\leq\frac{4}{n}\sum^{k}_{i=1}(\Lambda_{k+1}-\Lambda_{i})
\left(\Lambda_{i}+D_{1}\Lambda_{i}^{\frac{1}{2}}+\frac{1}{4}D_{1}^{2}+\frac{1}{4}C_{1}\right),
\end{aligned}
\end{equation*} and
\begin{equation*}
\begin{aligned}
&\sum^{k}_{i=1}(\Lambda_{k+1}-\Lambda_{i})^{2}\leq\frac{6}{n}\sum^{k}_{i=1}(\Lambda_{k+1}-\Lambda_{i})
\left(\Lambda_{i}+\frac{1}{2}D_{1}^{2}+\frac{1}{6}C_{1}\right),
\end{aligned}
\end{equation*}as claimed.
It finishes the proof of Theorem \ref{thm1.1}.
\begin{flushright}
$\square$
\end{flushright}

\noindent
By observing the proof of the Theorem\ref{thm1.1}, one has the following corollary.
\begin{corr}\label{corr3.1}
For an $n$-dimensional  complete Riemannian manifold $\mathcal{M}^{n}$, there
exists a function $H$ such that eigenvalues $\Lambda_{i}$ of the
Dirichlet  eigenvalue problem \eqref{diri-prob} of the differential operator $\mathfrak{L}_{\nu}$
satisfy
\begin{equation}\label{3.4}
\begin{aligned}
\sum^{k}_{i=1}(\Lambda_{k+1}-\Lambda_{i})^{2}
&\leq\frac{4}{n}\sum^{k}_{i=1}(\Lambda_{k+1}-\Lambda_{i})\\&\times
\left(\Lambda_{i}+\frac{1}{4}\int_{\Omega}u_{i}^{2}\left(n^{2}H^{2} +|\nu^{\top}|_{g_{0}}^{2}+4\Lambda_{i}^{\frac{1}{2}}|\nu^{\top}|_{g_{0}}\right)e^{\langle\nu,X\rangle_{g_{0}}}dv\right),
\end{aligned}
\end{equation}and
\begin{equation}\label{3.4-1}
\begin{aligned}
\sum^{k}_{i=1}(\Lambda_{k+1}-\Lambda_{i})^{2}
&\leq\frac{6}{n}\sum^{k}_{i=1}(\Lambda_{k+1}-\Lambda_{i})\\&\times
\left(\Lambda_{i}+\frac{1}{4}\int_{\Omega}u_{i}^{2}\left(n^{2}H^{2} +3|\nu^{\top}|_{g_{0}}^{2}\right)e^{\langle\nu,X\rangle_{g_{0}}}dv\right).
\end{aligned}
\end{equation}
\end{corr}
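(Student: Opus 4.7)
The plan is to mirror the proof of Theorem \ref{thm1.1} but stop one step earlier, before replacing $|\nu^{\top}|_{g_0}$ and $n^2 H^2$ by their uniform suprema $D_1$ and $C_1$ over $\Omega$ (and over the immersion class $\Psi$). In other words, the inequalities \eqref{3.4} and \eqref{3.4-1} are exactly the intermediate estimates obtained mid-way through the derivation of \eqref{thm-1-11} and \eqref{thm-1-112}, retaining the eigenfunction-weighted integrals rather than discarding them for constants.

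First I would invoke Nash's theorem to realize $\mathcal{M}^n$ isometrically inside some $\mathbb{R}^{n+p}$, and let $x_1,\dots,x_{n+p}$ denote the global coordinate functions restricted to $\mathcal{M}^n$. Next, I would apply Proposition \ref{prop2.3} with the test function $\varphi = x_\alpha$ for each $\alpha\in\{1,\dots,n+p\}$, and then sum the resulting inequalities on $\alpha$. The left-hand side reduces to $\sum_i (\Lambda_{k+1}-\Lambda_i)^2 \sum_\alpha \|u_i \nabla x_\alpha\|_\Omega^2 = n \sum_i (\Lambda_{k+1}-\Lambda_i)^2$ by the first identity in Lemma \ref{lem2.5}.

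For the right-hand side, I would expand the square and process the three resulting terms using Lemma \ref{lem2.5} together with Lemma \ref{lem3.3}: the $4\langle \nabla x_\alpha,\nabla u_i\rangle_g^2$ term sums to $4|\nabla u_i|_g^2$ (yielding $4\Lambda_i$ after integration), the $u_i^2 (\mathfrak{L}_\nu x_\alpha)^2 = u_i^2(\Delta x_\alpha + \langle \nu,\nabla x_\alpha\rangle_{g_0})^2$ term produces $u_i^2(n^2 H^2 + |\nu^{\top}|_{g_0}^2 + 2\Delta x_\alpha \langle\nu,\nabla x_\alpha\rangle_{g_0})$, and the cross terms involving $\Delta x_\alpha$ vanish after summation by \eqref{hhu}. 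The remaining cross term is $4 u_i \langle \nabla x_\alpha, \nabla u_i\rangle_g \langle \nu, \nabla x_\alpha\rangle_{g_0}$, which by Lemma \ref{lem3.2} is bounded by $4 u_i |\nabla u_i|_g |\nu^{\top}|_{g_0}$.

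At this point two roads diverge. For \eqref{3.4}, I would apply Cauchy--Schwarz in $L^2$ to the cross term, writing
\[
4\int_\Omega u_i |\nabla u_i|_g |\nu^{\top}|_{g_0} e^{\langle\nu,X\rangle_{g_0}}dv \le 4 \Lambda_i^{1/2} \Big(\int_\Omega u_i^2 |\nu^{\top}|_{g_0}^2 e^{\langle\nu,X\rangle_{g_0}}dv\Big)^{1/2},
\]
and keep the integrand of the $|\nu^{\top}|_{g_0}^2$ term intact; dividing by $n$ yields \eqref{3.4}. For \eqref{3.4-1}, I would instead apply the AM--GM inequality $2ab \le a^2 + b^2$ to the same cross term with $a = |\nabla u_i|_g$ and $b = u_i |\nu^{\top}|_{g_0}$, which absorbs one copy of $\Lambda_i$ and introduces an extra $2\int u_i^2 |\nu^{\top}|_{g_0}^2$, giving the factor $6/n$ and the coefficient $3$ in front of $|\nu^{\top}|_{g_0}^2$. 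There is no genuine obstacle here, since the corollary is essentially a bookkeeping restatement of the Theorem \ref{thm1.1} computation; the only mild care needed is verifying that the cross term involving $\Delta x_\alpha \langle \nu,\nabla x_\alpha\rangle_{g_0}$ indeed vanishes after summation (which follows from \eqref{hhu} applied with $h_\alpha$ replaced by $x_\alpha$, a consequence of orthogonal invariance).
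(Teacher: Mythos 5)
Your proposal is correct and is essentially the paper's own argument: the paper obtains Corollary \ref{corr3.1} precisely by reading off the intermediate inequalities in the proof of Theorem \ref{thm1.1} before the suprema $C_{1}$ and $D_{1}$ are taken, which is exactly what you do (same use of Nash embedding, Proposition \ref{prop2.3} with $\varphi=x_{\alpha}$, Lemma \ref{lem2.5}, Lemma \ref{lem3.3}, Lemma \ref{lem3.2}, and the Cauchy--Schwarz versus AM--GM split for the two inequalities). One minor caveat, which affects the paper's statement as much as your write-up: the Cauchy--Schwarz route actually yields the term $\Lambda_{i}^{1/2}\bigl(\int_{\Omega}u_{i}^{2}|\nu^{\top}|_{g_{0}}^{2}e^{\langle\nu,X\rangle_{g_{0}}}dv\bigr)^{1/2}$ rather than the literal $\Lambda_{i}^{1/2}\int_{\Omega}u_{i}^{2}|\nu^{\top}|_{g_{0}}e^{\langle\nu,X\rangle_{g_{0}}}dv$ printed in \eqref{3.4}, so what is really proved is the intermediate form appearing in the proof of Theorem \ref{thm1.1}; this is a wording defect of the corollary itself, not a gap in your argument.
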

\subsection{Bound of Payne-P\'{o}lya-Weinberger Type}\label{subsec3.2}

In this subsection, we shall give a bound of Payne-P\'{o}lya-Weinberger type for the Xin-Laplacian as follows.
\begin{thm}
\label{thm1.2}
Let $\mathcal{M}^{n}$ be an $n$-dimensional complete Riemannian
manifold isometrically embedded into the Euclidean space $\mathbb{R}^{n+p}$ with mean curvature $H$. Then,  for any $j(j=1,2,\cdots)$, Dirichlet  problem {\rm \eqref{diri-prob}}
of the Xin-Laplacian satisfy

\begin{equation}
\begin{aligned}
\label{z-ineq-2} \sum_{l=1}^{n}\Lambda_{j+l}\leq
(4+n)\Lambda_{i}+4D_{1}\Lambda_{i}^{\frac{1}{2}}+D_{1}^{2}+C_{1},
\end{aligned}
\end{equation}and
\begin{equation}
\begin{aligned}
\label{z-ineq-2-1} \sum_{l=1}^{n} \Lambda_{j+l} \leq
(6+n)\Lambda_{i}+3D_{1}^{2}+C_{1},
\end{aligned}
\end{equation}
where $C_{1}=\inf_{\psi\in \Psi}\max_{\Omega}n^{2}H^{2}\ \ and \ \ D_{1}= \max_{\Omega} |\nu^{\top}|_{g_{0}}.$
\end{thm}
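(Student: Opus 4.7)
The plan is to deduce both inequalities from Proposition \ref{prop2.2} applied with $m = n+p$ and $\phi_\alpha = x_\alpha$, $\alpha = 1, \dots, n+p$, the standard coordinate functions on the ambient Euclidean space (invoking Nash's theorem and then taking infimum over $\Psi$ as in the proof of Theorem \ref{thm1.1}). Writing $\Phi_l = \sum_\alpha a_{l\alpha} x_\alpha$ with $A = (a_{l\alpha})$ orthogonal, Proposition \ref{prop2.2} gives
\begin{equation*}
\sum_{l=1}^{n+p}(\Lambda_{j+l}-\Lambda_j)\|u_j\nabla\Phi_l\|_\Omega^2 \leq \sum_{l=1}^{n+p}\int_\Omega\bigl(u_j\mathfrak{L}_\nu\Phi_l+2\langle\nabla\Phi_l,\nabla u_j\rangle_g\bigr)^2 e^{\langle\nu,X\rangle_{g_0}}dv.
\end{equation*}
Because $A$ is orthogonal, the sum of squares on the right is invariant and equals the analogous sum with $x_\alpha$ in place of $\Phi_l$. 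Expanding the square and applying Lemmas \ref{lem2.5}, \ref{lem3.3}, and \ref{lem3.2} together with the identity $\sum_\alpha\Delta x_\alpha\nabla x_\alpha = 0$, the cross-term reduces to $\sum_\alpha\langle\nu,\nabla x_\alpha\rangle_{g_0}\langle\nabla x_\alpha,\nabla u_j\rangle_g \leq |\nu^\top|_{g_0}|\nabla u_j|_g$; combined with $\sum_\alpha(\mathfrak{L}_\nu x_\alpha)^2 = n^2H^2+|\nu^\top|^2_{g_0}$ and $\sum_\alpha\langle\nabla x_\alpha,\nabla u_j\rangle_g^2 = |\nabla u_j|_g^2$, the right-hand side is bounded by $\int_\Omega u_j^2(n^2H^2+|\nu^\top|^2_{g_0})dw + 4\int_\Omega u_j|\nu^\top|_{g_0}|\nabla u_j|_g\, dw + 4\Lambda_j$, where $dw = e^{\langle\nu,X\rangle_{g_0}}dv$.

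I would then control the middle term in two ways: Cauchy--Schwarz yields $4\Lambda_j^{1/2}\bigl(\int u_j^2|\nu^\top|^2_{g_0}dw\bigr)^{1/2} \leq 4D_1\Lambda_j^{1/2}$ (producing the first inequality), while the elementary bound $4ab\leq 2a^2+2b^2$ with $a = |\nabla u_j|_g$, $b = u_j|\nu^\top|_{g_0}$ gives $2\int u_j^2|\nu^\top|^2_{g_0}dw + 2\Lambda_j$ (producing the second inequality). Using $\int u_j^2 n^2H^2\, dw \leq C_1$ and $\int u_j^2|\nu^\top|^2_{g_0}dw \leq D_1^2$ after passing to the infimum/supremum, the right-hand side is at most $4\Lambda_j + 4D_1\Lambda_j^{1/2} + D_1^2 + C_1$ or $6\Lambda_j + 3D_1^2 + C_1$, respectively.

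The main obstacle, and the geometric heart of the argument, is extracting $\sum_{l=1}^{n}\Lambda_{j+l}$ from the left-hand side, which a priori runs up to $l = n+p$. For this I would establish the pointwise bound $|\nabla\Phi_l|_g^2 \leq 1$, together with $\sum_{l=1}^{n+p}|\nabla\Phi_l|_g^2 = n$. Indeed, write $|\nabla\Phi_l|_g^2 = a_l^{\!\top} G\, a_l$ where $G = \bigl(g(\nabla x_\alpha,\nabla x_\beta)\bigr)_{\alpha,\beta}$; at any point $P\in\mathcal{M}^n$, choose an orthonormal basis of $T_P\mathcal{M}^n$ and let $C$ be the $(n+p)\times n$ matrix whose columns give the coordinates of this basis in the standard frame of $\mathbb{R}^{n+p}$. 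Then $C^{\!\top}C = I_n$ and $G|_P = CC^{\!\top}$ is the orthogonal projection onto $T_P\mathcal{M}^n$, so its eigenvalues are $n$ ones and $p$ zeros; consequently $|\nabla\Phi_l|_g^2 \leq |a_l|^2 = 1$ and $\sum_l|\nabla\Phi_l|_g^2 = \operatorname{tr}(AGA^{\!\top}) = \operatorname{tr}(G) = n$. Integrating against $u_j^2\, dw$ (which has total mass $1$) gives $0 \leq w_l := \|u_j\nabla\Phi_l\|_\Omega^2 \leq 1$ with $\sum_l w_l = n$. Since $a_l := \Lambda_{j+l}-\Lambda_j$ is non-decreasing, the rearrangement minimum of $\sum a_l w_l$ over such weights is attained at $w_1 = \cdots = w_n = 1$, $w_{n+1} = \cdots = 0$, yielding
\begin{equation*}
\sum_{l=1}^{n+p}(\Lambda_{j+l}-\Lambda_j)w_l \geq \sum_{l=1}^{n}(\Lambda_{j+l}-\Lambda_j).
\end{equation*}
Chaining this with the right-hand side estimates and rearranging produces \eqref{z-ineq-2} and \eqref{z-ineq-2-1} (with $i = j$ in the statement).
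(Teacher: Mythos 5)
Your proposal is correct and follows essentially the same route as the paper: apply Proposition \ref{prop2.2} to the coordinate functions, bound the right-hand side via the extrinsic identities of Lemmas \ref{lem2.5}--\ref{lem3.2} with the two alternative treatments of the cross term (Cauchy--Schwarz versus $4ab\leq 2a^2+2b^2$), and reduce the left-hand sum from $n+p$ to $n$ terms using $|\nabla\Phi_l|_g^2\leq 1$ and $\sum_l|\nabla\Phi_l|_g^2=n$. Your rearrangement formulation of that last step and your projection-matrix proof of $|\nabla\Phi_l|_g^2\leq 1$ are just more explicit versions of what the paper does implicitly.
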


\begin{rem}In Theorem {\rm \ref{thm1.2}}, when $|\nu^{\top}|_{g_{0}}=0$, for all $j=1,2,\cdots,$ we have

\begin{equation*}
\frac{\Lambda_{j+1} +\Lambda_{j+2} +\cdots+ \Lambda_{j+n}}{
\Lambda_{j}}\leq n + 4,\end{equation*} which generalizes   Ashbaugh and Benguria's universal inequality \eqref{1.16}.\end{rem}

\noindent
{\it Proof of Theorem {\rm \ref{thm1.2}}}.   Nash's
Theorem implies that there exists an isometric immersion from
$\mathcal{M}^{n}$ into $\mathbb{R}^{n+p}$. Let $x_{1}, \cdots, x_{n+p}$ be
coordinate functions of $\mathbb{R}^{n+p}$. Then  $x_{1}, \cdots,
x_{n+p}$ are defined on $\mathcal{M}^{n}$ globally. Applying the Proposition
\ref{prop2.2} to functions $\phi_l=x_{l}$, we obtain
\begin{equation}
\begin{aligned}\label{4.1}
\sum^{n+p}_{l=1}(\Lambda_{j+l}-\Lambda_{j})\|u_j\nabla
\Phi_{l}\|^{2}_{\Omega} \leq
\sum^{n+p}_{l=1}\int_{\Omega}\big(u_{j}\mathfrak{L}_{\nu}\Phi_{l}
+2\langle\nabla \Phi_{l},\nabla u_{j}\rangle_{g}\big)^{2}e^{\langle\nu,X\rangle_{g_{0}}}dv,
\end{aligned}
\end{equation}
with $\Phi_{l}=\sum_{s=1}^{n+p}a_{ls}x_{s},$ where $A=(a_{ij})_{(n+p)\times(n+p)}$ is an orthogonal matrix.
Furthermore, we know that $\Phi_{l}$ satisfies Proposition
\ref{prop2.2} since $A=(a_{lt})$ is an
orthogonal matrix. By an orthogonal transformation, it is not hard
to prove, for any $l$, $|\nabla \Phi_{l}|_{g}^{2}\leq 1.$
Furthermore, with a simple calculation, we derive

\begin{equation*}
\begin{aligned}
&\quad~\sum^{n+p}_{l=1}(\Lambda_{j+l}-\Lambda_{j})\|\nabla \Phi_{l}u_{j}\|^{2}_{\Omega}\\
&\geq\sum^{n}_{l=1}(\Lambda_{j+l}-\Lambda_{j})\|\nabla
\Phi_{l}u_{j}\|^{2}_{\Omega}
+(\Lambda_{j+n+1}-\Lambda_{j})\sum^{n+p}_{l=n+1}\|\nabla
\Phi_{l}u_{j}\|^{2}_{\Omega}\\
&=\sum^{n}_{l=1}(\Lambda_{j+l}-\Lambda_{j})\|\nabla
\Phi_{l}u_{j}\|^{2}_{\Omega}
+(\Lambda_{j+n+1}-\Lambda_{j})\int_{\Omega}\left(\sum_{l=1}^{n}(1-|\nabla
\Phi_{l}|_{g}^{2})\right)u_{j}^{2}e^{\langle\nu,X\rangle_{g_{0}}}dv\\
&\geq\sum^{n}_{l=1}(\Lambda_{j+l}-\Lambda_{j})\|\nabla
\Phi_{l}u_{j}\|^{2}_{\Omega}
+\sum^{n}_{l=1}(\Lambda_{j+l}-\Lambda_{j})\int_{\Omega}\left(u_{1}^{2}-|\nabla
\Phi_{l}|_{g}^{2}\right)u_{j}^{2}e^{\langle\nu,X\rangle_{g_{0}}}dv,
\end{aligned}
\end{equation*}
which tells us
\begin{equation}
\begin{aligned}
\label{3.10}\sum_{l=1}^{n}(\Lambda_{j+l}-\Lambda_{j}) \leq
\int_{\Omega}\sum^{n+p}_{l=1}(u_{j}\mathfrak{L}_{\nu}\Phi_{l} +2\langle\nabla
\Phi_{l},\nabla u_{j}\rangle_{g})^{2}e^{\langle\nu,X\rangle_{g_{0}}}dv.
\end{aligned}
\end{equation}From Lemma \ref{lem2.5}, we have
\begin{equation}
\begin{aligned}
\label{3.11}\sum^{n+p}_{l=1}\left(\Delta\Phi_{l}\right)^{2}=n^{2}H^{2},
\end{aligned}
\end{equation}

\begin{equation}\label{3.12}
\begin{aligned}\sum^{n+p}_{l=1}\Delta\Phi_{l}\nabla\Phi_{l}=0,\end{aligned}
\end{equation} and

\begin{equation}\label{3.13}
\begin{aligned}\sum^{n+p}_{l=1}\langle\nabla\Phi_{l},\nabla w\rangle_{g}\langle\nabla\Phi_{l},\nabla v\rangle_{g}=\langle\nabla w,\nabla v\rangle_{g},
\end{aligned}
\end{equation}since $A=(a_{lt})$ is an $(n+p)\times(n+p)$-orthogonal matrix.
Substituting \eqref{3.11},  \eqref{3.12}, \eqref{3.13} into \eqref{3.10}, we infer that

\begin{equation*}
\begin{aligned}
&\sum_{l=1}^{n}(\Lambda_{j+l}-\Lambda_{j})\\&\leq
\int_{\Omega}\sum^{n+p}_{l=1}\left\{u_{j}\mathfrak{L}_{\nu}\Phi_{l} +2\langle\nabla
\Phi_{l},\nabla u_{j}\rangle_{g}\right\}^{2}e^{\langle\nu,X\rangle_{g_{0}}}dv
\\&\leq4\Lambda_{i}+
\int_{\Omega}\Big{\{}u_{j}^{2}(n^{2}H^{2}+|
\nu^{\top}|_{g_{0}}^{2})\Big{\}}e^{\langle\nu,X\rangle_{g_{0}}}dv +4\int_{\Omega}u_{i}|\nabla u_{j}|_{g}|\nu|_{g_{0}}e^{\langle\nu,X\rangle_{g_{0}}}dv.
\end{aligned}
\end{equation*}
Therefore, by Cauchy-Schwarz inequality and the mean value inequality, we have
\begin{equation*}
\begin{aligned}
\sum_{l=1}^{n}(\Lambda_{j+l}-\Lambda_{j})\leq
4\Lambda_{i}+\int_{\Omega}u_{j}^{2}\left(n^{2}H^{2} +|\nu^{\top}|_{g_{0}}^{2}+4\Lambda_{i}^{\frac{1}{2}}|\nu^{\top}|_{g_{0}}\right)e^{\langle\nu,X\rangle_{g_{0}}}dv\\
\end{aligned}
\end{equation*}
and

\begin{equation*}
\begin{aligned}
\sum_{l=1}^{n}(\Lambda_{j+l}-\Lambda_{j})\leq
6\Lambda_{i}+\int_{\Omega}u_{j}^{2}\left(n^{2}H^{2} +3|\nu^{\top}|_{g_{0}}^{2}\right)e^{\langle\nu,X\rangle_{g_{0}}}dv,
\end{aligned}
\end{equation*}as desired.
The proof of the Theorem \ref{thm1.2} is finished.
\begin{flushright}
$\square$
\end{flushright}

\begin{corr}
\label{corr3.3} For  an $n$-dimensional complete Riemannian manifold
$\mathcal{M}^{n}$, there exists a function $H$ such that, for any $j=1,2,\cdots$,
 eigenvalues of the Dirichlet eigenvalue problem \eqref{diri-prob} of the differential operator $\mathfrak{L}_{\nu}$
satisfy
\begin{equation*}
\begin{aligned}
\sum_{l=1}^{n}\Lambda_{j+l}\leq
(4+n)\Lambda_{i}+\int_{\Omega}u_{j}^{2}\left(n^{2}H^{2} +|\nu^{\top}|_{g_{0}}^{2}+4\Lambda_{i}^{\frac{1}{2}}|\nu^{\top}|_{g_{0}}\right)e^{\langle\nu,X\rangle_{g_{0}}}dv,
\end{aligned}
\end{equation*}
and

\begin{equation*}
\begin{aligned}
\sum_{l=1}^{n} \Lambda_{j+l} \leq
(6+n)\Lambda_{i}+\int_{\Omega}u_{j}^{2}\left(n^{2}H^{2} +3|\nu^{\top}|_{g_{0}}^{2}\right)e^{\langle\nu,X\rangle_{g_{0}}}dv.
\end{aligned}
\end{equation*}
\end{corr}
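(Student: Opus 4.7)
Corollary \ref{corr3.3} is the ``pre-supremum'' form of Theorem \ref{thm1.2}: all one has to do is rerun the proof of Theorem \ref{thm1.2} and stop at the last inequality that still involves the integrals $\int_\Omega u_j^2 n^2 H^2 e^{\langle\nu,X\rangle_{g_0}}dv$ and $\int_\Omega u_j^2 |\nu^\top|_{g_0}^2 e^{\langle\nu,X\rangle_{g_0}}dv$, rather than replacing them by $C_1$ and $D_1^2$. Accordingly, I would state no new ingredients and invoke only those results already proved above.

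First I would apply Nash's theorem to produce an isometric embedding of $\mathcal{M}^n$ into some $\mathbb{R}^{n+p}$ with coordinate functions $x_1,\ldots,x_{n+p}$ defined globally on $\mathcal{M}^n$. Taking $\phi_l = x_l$ in Proposition \ref{prop2.2}, I obtain an orthogonal matrix $A=(a_{ls})$ such that $\Phi_l = \sum_s a_{ls} x_s$ satisfies
\begin{equation*}
\sum_{l=1}^{n+p}(\Lambda_{j+l}-\Lambda_j)\|u_j\nabla \Phi_l\|_\Omega^2 \le \sum_{l=1}^{n+p}\int_\Omega\bigl(u_j\mathfrak{L}_\nu\Phi_l + 2\langle\nabla\Phi_l,\nabla u_j\rangle_g\bigr)^2 e^{\langle\nu,X\rangle_{g_0}}dv.
\end{equation*}
Since $A$ is orthogonal one has $|\nabla\Phi_l|_g^2 \le 1$ and $\sum_{l=1}^{n+p}|\nabla\Phi_l|_g^2 = n$ on $\mathcal{M}^n$ (Lemma \ref{lem2.5}), and the same splitting trick used in the proof of Theorem \ref{thm1.2}, i.e. comparing the contributions of $l=1,\ldots,n$ with those of $l=n+1,\ldots,n+p$, yields
\begin{equation*}
\sum_{l=1}^{n}(\Lambda_{j+l}-\Lambda_j) \le \int_\Omega \sum_{l=1}^{n+p}\bigl(u_j\mathfrak{L}_\nu\Phi_l + 2\langle\nabla\Phi_l,\nabla u_j\rangle_g\bigr)^2 e^{\langle\nu,X\rangle_{g_0}}dv.
\end{equation*}

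Next I would expand the square on the right and invoke the extrinsic identities derived above: Lemma \ref{lem2.5} for $\sum(\Delta\Phi_l)^2 = n^2H^2$, $\sum \Delta\Phi_l\,\nabla\Phi_l = 0$ and $\sum\langle\nabla\Phi_l,\nabla u_j\rangle_g^2 = |\nabla u_j|_g^2$, together with Lemmas \ref{lem3.3} and \ref{lem3.2} to handle the terms involving $\nu$, producing
\begin{equation*}
\sum_{l=1}^{n}(\Lambda_{j+l}-\Lambda_j) \le 4\Lambda_j + \int_\Omega u_j^2 \bigl(n^2H^2 + |\nu^\top|_{g_0}^2\bigr) e^{\langle\nu,X\rangle_{g_0}}dv + 4\int_\Omega u_j |\nabla u_j|_g |\nu^\top|_{g_0}\, e^{\langle\nu,X\rangle_{g_0}}dv.
\end{equation*}
The cross term is handled in two distinct ways, exactly as in the proof of Theorem \ref{thm1.2}: applying Cauchy--Schwarz and using $\int_\Omega |\nabla u_j|_g^2 e^{\langle\nu,X\rangle_{g_0}}dv = \Lambda_j$ gives the bound $4\Lambda_j^{1/2}\bigl(\int_\Omega u_j^2|\nu^\top|_{g_0}^2 e^{\langle\nu,X\rangle_{g_0}}dv\bigr)^{1/2}$, which after rearrangement yields the first inequality of the corollary; applying instead $2ab \le a^2+b^2$ to split the cross term into $2\int|\nabla u_j|_g^2 + 2\int u_j^2|\nu^\top|_{g_0}^2$ yields the second inequality.

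There is essentially no obstacle beyond bookkeeping, since every analytic and geometric input (Proposition \ref{prop2.2}, the general formula; Lemmas \ref{lem2.5}, \ref{lem3.3}, \ref{lem3.2}, the extrinsic identities and the $\nu$-estimate) has already been established; the only care required is to keep the integrals of $n^2 H^2$ and $|\nu^\top|_{g_0}^2$ against $u_j^2 e^{\langle\nu,X\rangle_{g_0}}$ intact and resist passing to the suprema $C_1$ and $D_1$. Taking those suprema recovers Theorem \ref{thm1.2}, which is a useful consistency check.
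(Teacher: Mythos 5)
Your proposal is correct and is essentially identical to the paper's route: Corollary \ref{corr3.3} is precisely the intermediate inequality reached at the end of the proof of Theorem \ref{thm1.2}, before the integrals of $n^{2}H^{2}$ and $|\nu^{\top}|_{g_{0}}^{2}$ against $u_{j}^{2}e^{\langle\nu,X\rangle_{g_{0}}}$ are majorized by the constants $C_{1}$ and $D_{1}$. Your handling of the cross term by Cauchy--Schwarz on one hand and by $2ab\le a^{2}+b^{2}$ on the other is exactly how the paper obtains the two displayed inequalities (note only that the mixed indices $i$ and $j$ in the statement are a typographical slip inherited from the paper, which you resolve correctly).
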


\begin{rem}In  some of the most important early literatures {\rm \cite{Ha,HM,HS}}, Harrell II,  Michel,  Stubbe developed an algebraic technique to discuss the eigenvalue problems and this technique enable them to explore the universal inequality  in various of settings. Recently, based on an algebraic technique, some similar eigenvalue inequalities of Schr\"{o}dinger operator  are established by Soufi,  Harrell  and  Ilias in their cerebrated paper {\rm \cite{SHI}}. However, the method estimating the eigenvalues in this paper is different from their one.
\end{rem}

\subsection{A Remark on Theorem \ref{thm1.1} and Theorem \ref{thm1.2} }  According to Colin de Verdi\`{e}re's construction {\rm \cite{CV}} and the celebrated  isometric embedding theorem due to Nash and Moser, there exist no universal inequalities for the eigenvalues of the Laplace operator on the bounded domain of a Riemannian submanifold isometrically embedded into the Euclidean space, unless it is a submanifold with constant mean curvature. Likewise, for some submanifolds of the Euclidean spaces, eigenvalues of the Xin-Laplacian do not satisfy universal inequalities. Therefore, in the absence of any other condition of intrinsic geometry, the spectrum of drifting Laplacian naturally contains information about the extrinsic geometry of a submanifold when it is embedded into certain Euclidean space. However, for translating solitons, one can obtain some universal inequalities. For example, see Section {\rm \ref{sec4}}.

\section{Eigenvalues on the Translating Solitons}\label{sec4}
In this section, we would like to exploit the eigenvalues of Xin-Laplacian on the translating solitons.

\subsection{Translating Solitons Associated with MCF} \label{subsec4.1}
Let $X : \mathcal{M}^{n}\rightarrow \mathbb{R}^{n+p}$ be an isometric immersion from an $n$-dimensional, oriented, complete
Riemannian manifold $\mathcal{M}^{n}$ to the Euclidean space $\mathbb{R}^{n+p}$.  We consider a smooth family of immersions
$X_{t} = X(\cdot,t):\mathcal{M}^{n}\rightarrow \mathbb{R}^{n+p}$ with corresponding images $\mathcal{M}^{n}_{t} = X_{t}(\mathcal{M}^{n})$ such that the following mean curvature equation system \cite{H,LXX}:

\begin{equation}\label{MCF-Equa}{\begin{cases}
&\frac{d}{dt}X(x,t)=\textbf{H}(x,t), x\in \mathcal{M}^{n}   \\
& X(\cdot,0) = X(\cdot),
\end{cases}}\end{equation}
is satisfied, where $\textbf{H}(x,t)$ is the mean curvature vector of $\mathcal{M}_{t}$ at $X(x, t)$ in $\mathbb{R}^{n+p}$.  We let $\nu_{0}$ be a constant vector with unit length in $\mathbb{R}^{n+p}$. We  denote $\nu_{0}^{\bot}$ the normal projection
of $\nu_{0}$ to the normal bundle of $\mathcal{M}^{n}$ in $\mathbb{R}^{n+p}$. A submanifold $X:\mathcal{M}^{n}\rightarrow\mathbb{R}^{n+p}$ is said to be a translating soliton of the mean curvature flow \eqref{MCF-Equa}, if it
satisfies:
$ \textbf{H}=\nu_{0}^{\bot},$ which is a special solution of the mean curvature flow equation \eqref{MCF-Equa}.
 They are not only special solutions to the mean curvature
flow equations, but they often occur as Type-II singularity of a mean curvature flow, which play an important role in the study
of the mean curvature flow \cite{AV}.
In \cite{Xin2}, Xin studied some basic properties of translating solitons: the volume growth, generalized maximum principle, Gauss maps and certain functions related to the Gauss maps. In addition, he carried out point-wise estimates and integral estimates for the squared norm of the second fundamental form. By utilizing these estimates,  Xin proved some rigidity theorems for translating solitons in the Euclidean space in higher codimension in \cite{Xin2}. Recently, Chen and Qiu \cite{ChQ} proved established a nonexistence theorem for the spacelike translating solitons. These results are proved by using a new Omori-Yau maximal principle. To agree with the notation appearing in \cite{Xin2}, we denote $\mathfrak{L}_{\nu_{0}}$ by $\mathfrak{L}_{II}$ henceforth in the section.

\subsection{Eigenvalue Inequality of Yang Type}\label{subsec4.2}
As an application of Theorem \ref{thm1.1}, we study the eigenvalues of $\mathfrak{L}_{II}$ operator on the complete translating solitons in this section. More precisely, we prove the following theorem.
\begin{thm}\label{thm5.1}
Let  $\mathcal{M}^{n}$ be an $n$-dimensional  complete  translating soliton isometrically embedded into the Euclidean space $\mathbb{R}^{n+p}$. Then,
the eigenvalues $\Lambda_{i}(1\leq i\leq k)$ of
Dirichlet eigenvalue problem \eqref{diri-prob} of the differential operator $\mathfrak{L}_{II}$
satisfy
\begin{equation}\label{inq-4.1}
\begin{aligned}
\sum^{k}_{i=1}(\Lambda_{k+1}-\Lambda_{i})^{2}
\leq\frac{4}{n}\sum^{k}_{i=1}(\Lambda_{k+1}-\Lambda_{i})
\left(\Lambda_{i}+\Lambda_{i}^{\frac{1}{2}}+\frac{n^{2}}{4} \right),
\end{aligned}
\end{equation}and, for any $n\geq2$,

\begin{equation}\label{3.5-1}
\begin{aligned}
\sum^{k}_{i=1}(\Lambda_{k+1}-\Lambda_{i})^{2}
\leq\frac{6}{n}\sum^{k}_{i=1}(\Lambda_{k+1}-\Lambda_{i})
\left(\Lambda_{i}+\frac{n^{2}}{6}\right).
\end{aligned}
\end{equation}
\end{thm}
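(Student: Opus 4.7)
The plan is to invoke Theorem \ref{thm1.1} (or equivalently Corollary \ref{corr3.1}) and then exploit the structural equation that defines a translating soliton in order to replace the extrinsic quantities $C_1$ and $D_1$ by pointwise bounds depending only on $n$. The key observation is that on a translating soliton $X:\mathcal{M}^n\to\mathbb{R}^{n+p}$ with $\textbf{H}=\nu_0^\bot$ and $|\nu_0|_{g_0}=1$, one has the orthogonal decomposition $|\nu_0^{\top}|_{g_0}^2+|\nu_0^{\bot}|_{g_0}^2=1$ together with $H^2=|\textbf{H}|_{g_0}^2=|\nu_0^{\bot}|_{g_0}^2$. From this algebraic identity we obtain the universal pointwise bounds
\begin{equation*}
n^2H^2+|\nu_0^{\top}|_{g_0}^2\leq n^2,\qquad |\nu_0^{\top}|_{g_0}\leq 1,
\end{equation*}
since $n^2H^2+|\nu_0^{\top}|_{g_0}^2=n^2|\nu_0^{\bot}|_{g_0}^2+|\nu_0^{\top}|_{g_0}^2\leq n^2(|\nu_0^{\bot}|_{g_0}^2+|\nu_0^{\top}|_{g_0}^2)=n^2$. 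This is precisely the rigidity that promotes the extrinsic inequality of Theorem \ref{thm1.1} to a universal one.

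First I would specialize Corollary \ref{corr3.1} with $\nu=\nu_0$. The first branch of the corollary contains the integrand $\Lambda_i+\tfrac14\int_\Omega u_i^2\bigl(n^2H^2+|\nu^{\top}|_{g_0}^2+4\Lambda_i^{1/2}|\nu^{\top}|_{g_0}\bigr)e^{\langle\nu,X\rangle_{g_0}}dv$. Using the bounds above together with the normalization $\int_\Omega u_i^2 e^{\langle\nu_0,X\rangle_{g_0}}dv=1$, each summand is estimated by $\Lambda_i+\tfrac14 n^2+\Lambda_i^{1/2}$, which, upon substitution, yields exactly \eqref{inq-4.1}.

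Next I would handle \eqref{3.5-1} by specializing the second branch of Corollary \ref{corr3.1}, whose integrand is (up to the constant in front) $\Lambda_i+\tfrac{1}{6}\int_\Omega u_i^2(n^2H^2+3|\nu^{\top}|_{g_0}^2)e^{\langle\nu,X\rangle_{g_0}}dv$. Here the decisive pointwise estimate is $n^2H^2+3|\nu_0^{\top}|_{g_0}^2\leq n^2$, which rewrites as $(n^2-3)|\nu_0^{\top}|_{g_0}^2\geq 0$ after using $n^2H^2=n^2(1-|\nu_0^{\top}|_{g_0}^2)$. This is exactly where the hypothesis $n\geq 2$ enters: only for $n\geq 2$ does $n^2-3\geq 0$ hold, allowing the extrinsic term to be absorbed into $n^2$. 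After this pointwise bound and the normalization of $u_i$, the integral collapses to at most $n^2$, giving \eqref{3.5-1}.

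The main obstacle is not computational, it is conceptual: recognizing that the translating soliton equation $\textbf{H}=\nu_0^{\bot}$ is exactly the algebraic relation needed to convert the two extrinsic data $n^2H^2$ and $|\nu_0^{\top}|_{g_0}^2$, which a priori depend on the immersion, into a single pointwise quantity controlled by $|\nu_0|_{g_0}^2=1$. Once this is identified, everything else is a direct substitution into Corollary \ref{corr3.1}; the $n\geq 2$ restriction on the second inequality is an inevitable byproduct of the coefficient $3$ appearing in front of $|\nu^{\top}|_{g_0}^2$ from the mean-value step in the derivation of \eqref{3.3-1}.
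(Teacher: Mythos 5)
Your proposal is correct and follows essentially the same route as the paper: the paper likewise substitutes the soliton relations $\textbf{H}=\nu_0^{\bot}$, $|\nu_0|_{g_0}^2=1$ into Corollary \ref{corr3.1}, uses $n^2H^2+|\nu_0^{\top}|_{g_0}^2\leq n^2$ and $|\nu_0^{\top}|_{g_0}\leq 1$ together with the normalization of $u_i$, and treats \eqref{3.5-1} "similarly." Your explicit identification of where $n\geq 2$ is needed (absorbing $3|\nu_0^{\top}|_{g_0}^2$ via $n^2-3\geq 0$) is a welcome detail the paper leaves implicit.
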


\begin{proof}Since $\mathcal{M}^{n}$ is an $n$-dimensional  complete  translating soliton isometrically embedded into the $(n+p)$ dimensional Euclidean space $\mathbb{R}^{n+p}$, we have

\begin{equation}\label{3.6}H=\nu_{0}^{\perp} \ \ {\rm and} \ \ |\nu_{0}|_{g_{0}}^{2}=1,\end{equation}  which implies that \begin{equation}\label{3.7}\int_{\Omega}n^{2}H^{2}e^{\langle\nu_{0},X\rangle_{g_{0}}}dv= \int_{\Omega}n^{2}|\nu_{0}^{\perp}|^{2}e^{\langle\nu_{0},X\rangle_{g_{0}}}dv\leq n^{2}.\end{equation}
Combining with  \eqref{3.6} and \eqref{3.7} yields

\begin{equation}\label{3.8}\frac{1}{4}\int_{\Omega}u_{i}^{2}\left(n^{2}H^{2} +|\nu_{0}^{\top}|_{g_{0}}^{2}\right)e^{\langle\nu_{0},X\rangle_{g_{0}}}dv\leq \frac{n^{2}}{4}.\end{equation}
 Substituting \eqref{3.8} into \eqref{3.4}, we obtain \eqref{inq-4.1}. The proof of \eqref{3.5-1} is similar.\end{proof}

\subsection{Eigenvalue Inequality of Levitin-Parnovski Type}\label{subsec4.3}

Applying Theorem \ref{thm1.2}, we can show the following theorem.
\begin{thm}\label{thm5.2}
Let  $\mathcal{M}^{n}$ be an $n$-dimensional  complete  translating soliton isometrically embedded into the Euclidean space $\mathbb{R}^{n+p}$. Then, for any $j=1,2,\cdots$,
the eigenvaluesof
Dirichlet eigenvalue problem \eqref{diri-prob} of the differential operator $\mathfrak{L}_{II}$
satisfy

\begin{equation*}
\begin{aligned}
  \sum^{n}_{k=1}\Lambda_{j+k} \leq(n+4)\Lambda_{j} +n^{2}+4\Lambda_{i}^{\frac{1}{2}},
\end{aligned}
\end{equation*}and, for any $n\geq2$,

\begin{equation*}
\begin{aligned}
  \sum^{n}_{k=1}\Lambda_{j+k} \leq(n+6)\Lambda_{j} +n^{2}.
\end{aligned}
\end{equation*}
\end{thm}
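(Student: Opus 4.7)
The plan is to derive both inequalities as direct consequences of Theorem~\ref{thm1.2}, more precisely the integral estimates stated in Corollary~\ref{corr3.3}, by exploiting the defining identity of a translating soliton in exactly the manner used for Theorem~\ref{thm5.1}. First I would recall that for a translating soliton $X:\mathcal{M}^n\to\mathbb{R}^{n+p}$ one has $\mathbf{H}=\nu_0^{\perp}$ with $|\nu_0|_{g_0}=1$, so the orthogonal decomposition yields the pointwise identity
\begin{equation*}
|\nu_0^{\perp}|_{g_0}^{2}+|\nu_0^{\top}|_{g_0}^{2}=1 \quad\text{on }\mathcal{M}^{n}.
\end{equation*}
In particular $n^{2}H^{2}=n^{2}|\nu_0^{\perp}|_{g_0}^{2}$ and $|\nu_0^{\top}|_{g_0}\leq 1$.

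For the first inequality I would apply the first estimate of Corollary~\ref{corr3.3} with $\nu=\nu_0$ to obtain
\begin{equation*}
\sum_{l=1}^{n}\Lambda_{j+l}\leq (n+4)\Lambda_{j}+\int_{\Omega}u_{j}^{2}\Bigl(n^{2}H^{2}+|\nu_0^{\top}|_{g_0}^{2}+4\Lambda_{j}^{1/2}|\nu_0^{\top}|_{g_0}\Bigr)e^{\langle\nu_0,X\rangle_{g_0}}dv.
\end{equation*}
The crucial observation is that the integrand admits a pointwise bound independent of the immersion: since $n\geq 1$,
\begin{equation*}
n^{2}H^{2}+|\nu_0^{\top}|_{g_0}^{2}=n^{2}|\nu_0^{\perp}|_{g_0}^{2}+|\nu_0^{\top}|_{g_0}^{2}\leq n^{2}\bigl(|\nu_0^{\perp}|_{g_0}^{2}+|\nu_0^{\top}|_{g_0}^{2}\bigr)=n^{2},
\end{equation*}
while $|\nu_0^{\top}|_{g_0}\leq 1$. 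The weighted normalization $\int_{\Omega}u_{j}^{2}e^{\langle\nu_0,X\rangle_{g_0}}dv=1$ then gives the first claim.

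For the second inequality under the hypothesis $n\geq 2$, I would use the second estimate of Corollary~\ref{corr3.3}:
\begin{equation*}
\sum_{l=1}^{n}\Lambda_{j+l}\leq (n+6)\Lambda_{j}+\int_{\Omega}u_{j}^{2}\Bigl(n^{2}H^{2}+3|\nu_0^{\top}|_{g_0}^{2}\Bigr)e^{\langle\nu_0,X\rangle_{g_0}}dv.
\end{equation*}
Here the analogous pointwise estimate $n^{2}H^{2}+3|\nu_0^{\top}|_{g_0}^{2}\leq n^{2}$ is exactly the place where $n\geq 2$ enters: the coefficient $3$ can be absorbed into $n^{2}$ precisely when $n^{2}\geq 3$, after which the soliton identity collapses the sum to $n^{2}(|\nu_0^{\perp}|^{2}+|\nu_0^{\top}|^{2})=n^{2}$. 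Integration against $u_{j}^{2}e^{\langle\nu_0,X\rangle_{g_0}}$ finishes the proof.

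There is no substantial obstacle; the only subtlety is to keep the estimates at the level of the pointwise integrands appearing in Corollary~\ref{corr3.3} rather than invoking the separated constants $C_{1}$ and $D_{1}^{2}$ of Theorem~\ref{thm1.2}. Doing the latter would only yield $C_{1}+D_{1}^{2}\leq n^{2}+1$ and produce a spurious additive $+1$; the soliton identity must be used \emph{before} taking suprema to achieve the clean constant $n^{2}$.
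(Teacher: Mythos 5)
Your proposal is correct and follows essentially the same route the paper intends: the paper omits the proof of Theorem \ref{thm5.2} as "similar to Theorem \ref{thm5.1}," which is precisely your argument of inserting the soliton identity $H=|\nu_0^{\perp}|$, $|\nu_0^{\perp}|_{g_0}^2+|\nu_0^{\top}|_{g_0}^2=1$ pointwise into the integral bounds of Corollary \ref{corr3.3} before taking any suprema, together with the normalization $\int_\Omega u_j^2 e^{\langle\nu_0,X\rangle_{g_0}}dv=1$. Your closing remark correctly identifies why the pointwise estimate (rather than the separated constants $C_1$, $D_1$) is needed for the clean constant $n^2$, and your explanation of where $n\geq 2$ enters (absorbing the coefficient $3$ requires $n^2\geq 3$) is exactly right.
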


\begin{proof}The proof is similar to Theorem \ref{thm5.1}. Therefore, we omit it here.

\end{proof}

\begin{rem}
Roughly speaking, it is very difficult to obtain universal inequalities of Witten-Laplacian on the Ricci solitons in the sense of Ricci flows or the self-shrinkers in the sense of the mean curvature flows unless it is a trivial Ricci soliton and there are some special assumption for the potential function $f$. For example, see {\rm\cite{XX}}.  However, it is surprising that, in Theorem {\rm \ref{thm5.2}} and  Theorem {\rm \ref{thm5.1}}, the eigenvalue inequalities are universal.

\end{rem}

\subsection{Estimates for the Upper Bounds and Gaps of Consecutive Eigenvalues }\label{subsec4.4}
In what follows, we will give several applications of Theorem {\rm \ref{thm5.2}} and  Theorem {\rm \ref{thm5.1}}. First of all,  by \eqref{inq-4.1}, we have\begin{equation}\label{thm0-1-11}n \sum_{i=1}^{k}\left(\Lambda_{k+1}-\Lambda_{i}\right)^{2} \leq \sum_{i=1}^{k}\left(\Lambda_{k+1}-\Lambda_{i}\right)\left(4 \Lambda_{i}+4\Lambda^{\frac{1}{2}}_{i}+n^{2}\right).\end{equation}
Since the formula \eqref{thm0-1-11} is a quadratic inequality of $\Lambda_{k+1}$, according to the direct but somewhat tedious calculation,   one can get

\begin{equation*}\begin{aligned} \Lambda_{k+1}&\leq
 \frac{1}{k}\sum_{i=1}^{k}\left[\frac{2}{n} \left(\Lambda_{i}+\Lambda^{\frac{1}{2}}_{i}\right)+\Lambda_{i}\right]+\frac{n}{2} \\&+ \Bigg{\{}\left[\frac{1}{k}\frac{2}{n}\sum_{i=1}^{k} \left(\Lambda_{i}+\Lambda^{\frac{1}{2}}_{i}\right)+\frac{n}{2}\right]^{2}-\left(1+\frac{4}{n}\right) \frac{1}{k} \sum_{i=1}^{k}\left(\Lambda_{i}-\frac{1}{k} \sum_{j=1}^{k} \Lambda_{j}\right)^{2}
\\&+\frac{1}{k}\frac{4}{n}\left[\sum_{i=1}^{k}\Lambda^{\frac{1}{2}}_{i}\left(\frac{1}{k}\sum_{j=1}^{k}\Lambda_{j} -\Lambda_{i}\right)\right]\Bigg{\}}^{\frac{1}{2}}.\end{aligned}\end{equation*}

Thus, we have the following estimates for the upper bound of the eigenvalues of Xin-Laplacian on the translating solitons.
\begin{corr}
For an $n$-dimensional complete translating soliton $(\mathcal{M}^{n},g)$, the
$k^{\text{th}}$ eigenvalue $\Lambda_{k}$ of the Dirichlet eigenvalue
problem \eqref{diri-prob} of the
differential operator $\mathfrak{L}_{II}$ satisfy,

\begin{equation}\begin{aligned}\label{inq-4.11}\Lambda_{k+1}&\leq
 \frac{1}{k}\sum_{i=1}^{k}\left[\frac{2}{n} \left(\Lambda_{i}+\Lambda^{\frac{1}{2}}_{i}\right)+\Lambda_{i}\right]+\frac{n}{2} + \Bigg\{\left[\frac{1}{k}\frac{2}{n}\sum_{i=1}^{k} \left(\Lambda_{i}+\Lambda^{\frac{1}{2}}_{i}\right)+\frac{n}{2}\right]^{2}\\&-\left(1+\frac{4}{n}\right) \frac{1}{k} \sum_{i=1}^{k}\left(\Lambda_{i}-\frac{1}{k} \sum_{j=1}^{k} \Lambda_{j}\right)^{2}
 +\frac{1}{k^{2}}\frac{4}{n} \sum_{i,j=1}^{k} \left(  \Lambda_{i}^{\frac{1}{2}}\Lambda_{j} -\Lambda_{i}^{\frac{3}{2}}\right) \Bigg\}^{\frac{1}{2}}.\end{aligned}\end{equation}

\end{corr}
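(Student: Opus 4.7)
The plan is to start from the Yang-type inequality \eqref{thm0-1-11}, view it as a quadratic inequality in the unknown $\Lambda_{k+1}$, solve it by the quadratic formula, and then algebraically rewrite the discriminant in the form stated in the corollary.

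First I would expand \eqref{thm0-1-11} in powers of $\Lambda_{k+1}$. Collecting terms, the inequality takes the form
\begin{equation*}
nk\,\Lambda_{k+1}^{2}-2P\,\Lambda_{k+1}+Q\leq 0,
\end{equation*}
where
\begin{equation*}
P=n\sum_{i=1}^{k}\Lambda_{i}+2\sum_{i=1}^{k}\Lambda_{i}+2\sum_{i=1}^{k}\Lambda_{i}^{1/2}+\tfrac{k n^{2}}{2},\qquad
Q=(n+4)\sum_{i=1}^{k}\Lambda_{i}^{2}+4\sum_{i=1}^{k}\Lambda_{i}^{3/2}+n^{2}\sum_{i=1}^{k}\Lambda_{i}.
\end{equation*}
Since the coefficient of $\Lambda_{k+1}^{2}$ is positive, the roots of the associated quadratic are real and $\Lambda_{k+1}$ must lie below the larger one, so
\begin{equation*}
\Lambda_{k+1}\leq \frac{P}{nk}+\sqrt{\Bigl(\frac{P}{nk}\Bigr)^{2}-\frac{Q}{nk}}.
\end{equation*}

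Next I would identify the two pieces with the expressions appearing in the corollary. A direct rearrangement gives
\begin{equation*}
\frac{P}{nk}=\frac{1}{k}\sum_{i=1}^{k}\Bigl[\tfrac{2}{n}\bigl(\Lambda_{i}+\Lambda_{i}^{1/2}\bigr)+\Lambda_{i}\Bigr]+\frac{n}{2},
\end{equation*}
which matches the non-radical part of \eqref{inq-4.11}. For the discriminant, I would introduce the shorthand $\bar\Lambda=\frac{1}{k}\sum_{i}\Lambda_{i}$, $\bar{\Lambda^{1/2}}=\frac{1}{k}\sum_{i}\Lambda_{i}^{1/2}$, $\overline{\Lambda^{2}}=\frac{1}{k}\sum_{i}\Lambda_{i}^{2}$ and $\overline{\Lambda^{3/2}}=\frac{1}{k}\sum_{i}\Lambda_{i}^{3/2}$, and expand $(P/(nk))^{2}-Q/(nk)$. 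Most cross-terms match directly with the square
\begin{equation*}
\Bigl[\tfrac{2}{n}(\bar\Lambda+\bar{\Lambda^{1/2}})+\tfrac{n}{2}\Bigr]^{2}=\tfrac{4}{n^{2}}(\bar\Lambda+\bar{\Lambda^{1/2}})^{2}+2(\bar\Lambda+\bar{\Lambda^{1/2}})+\tfrac{n^{2}}{4}.
\end{equation*}
The remaining terms are $-(1+\tfrac{4}{n})\bigl(\overline{\Lambda^{2}}-\bar\Lambda^{2}\bigr)+\tfrac{4}{n}\bigl(\bar\Lambda\,\bar{\Lambda^{1/2}}-\overline{\Lambda^{3/2}}\bigr)$, and I would invoke the two elementary identities
\begin{equation*}
\overline{\Lambda^{2}}-\bar\Lambda^{2}=\frac{1}{k}\sum_{i=1}^{k}\Bigl(\Lambda_{i}-\frac{1}{k}\sum_{j=1}^{k}\Lambda_{j}\Bigr)^{2},\qquad
\bar\Lambda\,\bar{\Lambda^{1/2}}-\overline{\Lambda^{3/2}}=\frac{1}{k^{2}}\sum_{i,j=1}^{k}\bigl(\Lambda_{i}^{1/2}\Lambda_{j}-\Lambda_{i}^{3/2}\bigr)
\end{equation*}
to convert them into the exact expressions appearing inside the bracket of \eqref{inq-4.11}.

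The only real work is bookkeeping: it is essentially a long algebraic expansion and cancellation. The trickiest step, and the one I would double-check carefully, is matching the coefficients of the mixed terms $\bar\Lambda\bar{\Lambda^{1/2}}$, $\bar\Lambda$ and $\bar{\Lambda^{1/2}}$ between $(P/(nk))^{2}$, $Q/(nk)$, and the claimed square, so that after subtraction the residue is precisely a combination of the two variance-type expressions written above. Once this identification is verified the statement follows directly from the quadratic formula, with no further analytic input.
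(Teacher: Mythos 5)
Your proposal is correct and follows exactly the route the paper takes: the paper itself derives \eqref{inq-4.11} by treating \eqref{thm0-1-11} as a quadratic inequality in $\Lambda_{k+1}$ and performing "the direct but somewhat tedious calculation," and your expansion of $P$, $Q$, the identification of $P/(nk)$ with the non-radical part, and the rewriting of the discriminant via the variance identity and the double-sum identity all check out. The only cosmetic point is that the reality of the roots follows from the quadratic being $\leq 0$ at the point $\Lambda_{k+1}$ (hence nonnegative discriminant), not merely from the positivity of the leading coefficient, but this does not affect the argument.
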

If we use a positive real-valued function $f(n)$ to replace $n$ in the proofs of Theorem 2.1 in \cite{CY4},   then we can extend Cheng and Yang's recursion formula  to the following general case.

\vskip 2mm
\noindent

\begin{thm}\label{cy-type-re}{\bf( A recursion formula of Cheng and Yang Type)}.  Let  $\mu_1 \leq  \mu_2 \leq  \dots,
\leq \mu_{k+1}$ be any positive  real numbers satisfying
\begin{equation*}
  \sum_{i=1}^k(\mu_{k+1}-\mu_i)^2 \le
 \frac{4}{f(n)}\sum_{i=1}^k\mu_i(\mu_{k+1} -\mu_i).
\end{equation*}
 Define
 \begin{equation*}
 \Lambda_k=\frac 1k\sum_{i=1}^k\mu_i,\qquad T_k=\frac 1k
\sum_{i=1}^k\mu_i^2, \ \ \
F_k=\left (1+\frac{2}{f(n)} \right )\Lambda_k^2-T_k.
\end{equation*}
Then, we have
\begin{equation*}
F_{k+1}\leq C(n,k) \left ( \frac {k+1}k \right )^{\frac{4}{f(n)}}F_k,
\end{equation*}
where
$$
C(n,k) =1-\frac 1{3f(n)}
  \left (\frac k{k+1}\right )^{\frac{4}{f(n)}}\frac {\left(1+\frac{2}{f(n)}\right )\left (1+
  \frac{4}{f(n)}\right )}{(k+1)^3}<1.
$$\end{thm}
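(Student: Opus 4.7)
The plan is to follow, essentially line by line, the proof of Theorem~2.1 in \cite{CY4}, observing that every manipulation in that argument is purely algebraic in the parameter $n$, so the substitution $n \mapsto f(n)$ (with $f(n)$ any positive real number) is legitimate and yields the stated recursion. The hypothesis does not use that $n$ is an integer anywhere.

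First I would rewrite the hypothesis
$$\sum_{i=1}^{k}(\mu_{k+1}-\mu_i)^2 \le \frac{4}{f(n)}\sum_{i=1}^{k}\mu_i(\mu_{k+1}-\mu_i)$$
as a quadratic inequality in the single variable $\mu_{k+1}$. Expanding the squares and using $k\Lambda_k=\sum\mu_i$ and $kT_k=\sum\mu_i^{2}$, and setting $a=1+\tfrac{2}{f(n)}$, $b=1+\tfrac{4}{f(n)}$, this reduces to
$$\mu_{k+1}^{2} - 2a\Lambda_k\mu_{k+1} + bT_k \;\le\; 0,$$
whose largest admissible value is the upper root
$$\mu_{k+1} \;\le\; a\Lambda_k + \sqrt{a^{2}\Lambda_k^{2} - bT_k}.$$
Using $T_k = a\Lambda_k^{2} - F_k$, the radicand rewrites as $bF_k - \tfrac{2a}{f(n)}\Lambda_k^{2}$, which in particular forces $F_k>0$ whenever the hypothesis is non-degenerate.

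Next, I would exploit the elementary recursions
$$\Lambda_{k+1} = \frac{k\Lambda_k + \mu_{k+1}}{k+1}, \qquad T_{k+1} = \frac{kT_k + \mu_{k+1}^{2}}{k+1},$$
to regard $F_{k+1} = a\Lambda_{k+1}^{2} - T_{k+1}$ as an explicit function $G(\mu_{k+1})$ of the free variable $\mu_{k+1}$. A direct differentiation, $G'(\mu_{k+1}) = \tfrac{2}{k+1}\bigl(a\Lambda_{k+1}-\mu_{k+1}\bigr)$, identifies a unique critical point, and a monotonicity check shows $G$ attains its maximum on the admissible interval at the upper root from the previous step. Substituting this value in and repeatedly eliminating $T_k$ via $T_k=a\Lambda_k^{2}-F_k$ produces, after regrouping, a product of the form $\bigl(\tfrac{k+1}{k}\bigr)^{4/f(n)} F_k$ times a scalar factor strictly less than one.

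The main obstacle, as in the original Cheng--Yang argument, is this final bookkeeping step: a naive estimate would discard a small positive remainder and give only $F_{k+1}\le \bigl(\tfrac{k+1}{k}\bigr)^{4/f(n)}F_k$, whereas the sharper bound demands that one retain a subtraction of order $(k+1)^{-3}$ with the precise prefactor $\tfrac{1}{3f(n)}\bigl(\tfrac{k}{k+1}\bigr)^{4/f(n)}(1+\tfrac{2}{f(n)})(1+\tfrac{4}{f(n)})$. This is extracted by isolating the cubic-order term in the expansion before applying a Bernoulli-type inequality to $\bigl(1+\tfrac{1}{k}\bigr)^{4/f(n)}$. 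All inequalities involved at this stage are algebraic identities in the real parameters $a$, $b$ and $k$, so the substitution $n \to f(n)$ causes no difficulty, and the resulting $C(n,k)<1$ has precisely the form claimed.
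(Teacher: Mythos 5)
Your proposal is correct and takes exactly the same route as the paper: the paper's entire justification of this theorem is the remark that one may replace $n$ by the positive quantity $f(n)$ throughout the proof of Theorem 2.1 in \cite{CY4}, since that argument is purely algebraic in the parameter, and your sketch (the quadratic inequality in $\mu_{k+1}$, its upper root, the recursions for $\Lambda_{k+1}$ and $T_{k+1}$, and the final Bernoulli-type bookkeeping) is a faithful outline of that computation. No further comparison is needed.
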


\vskip 2mm
\noindent

By Theorem \ref{cy-type-re}, we can show the following proposition. See \cite{CZe}.
\begin{prop}\label{prop-5.6} Let $\lambda_{1} \leq \lambda_{2} \leq \cdots \leq \lambda_{k+1}$ be any positive real numbers satisfying the following inequality

\begin{equation}\label{gen-cas-f}
\sum_{i=1}^{k}\left(\lambda_{k+1}-\lambda_{i}\right)^{2} \leq \frac{4}{f(n)} \sum_{i=1}^{k} \lambda_{i}\left(\lambda_{k+1}-\lambda_{i}\right),
\end{equation}with $f(n)>0$.
 Then we have

\begin{equation}\label{Z-co}
\lambda_{k+1} \leq \left(1+\frac{4}{f(n)}\right) k^{\frac{2}{f(n)}} \lambda_{1}.
\end{equation} \end{prop}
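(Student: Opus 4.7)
The plan is to combine the Cheng--Yang type recursion from Theorem \ref{cy-type-re} with a one-variable maximization derived from the quadratic form of the hypothesis, exactly as one generalizes the standard Cheng--Yang argument by replacing the dimension $n$ with the positive parameter $f(n)$.

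First, I would expand the hypothesis \eqref{gen-cas-f} into a quadratic inequality in $\lambda_{k+1}$. Setting $\Lambda_k := \tfrac{1}{k}\sum_{i=1}^k \lambda_i$, $T_k := \tfrac{1}{k}\sum_{i=1}^k \lambda_i^2$, and $F_k := (1+\tfrac{2}{f(n)})\Lambda_k^2 - T_k$ (the same quantities appearing in Theorem \ref{cy-type-re}), the given inequality becomes
\[
\lambda_{k+1}^2 - 2\bigl(1+\tfrac{2}{f(n)}\bigr)\Lambda_k \lambda_{k+1} + \bigl(1+\tfrac{4}{f(n)}\bigr)T_k \leq 0.
\]
Taking the larger root and rewriting the discriminant via the identity $(1+\tfrac{2}{f(n)})^2\Lambda_k^2 - (1+\tfrac{4}{f(n)})T_k = (1+\tfrac{4}{f(n)})F_k - \tfrac{2}{f(n)}(1+\tfrac{2}{f(n)})\Lambda_k^2$ yields the pointwise bound
\[
\lambda_{k+1} \leq \bigl(1+\tfrac{2}{f(n)}\bigr)\Lambda_k + \sqrt{\bigl(1+\tfrac{4}{f(n)}\bigr)F_k - \tfrac{2}{f(n)}\bigl(1+\tfrac{2}{f(n)}\bigr)\Lambda_k^2}.
\]

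Second, I would iterate Theorem \ref{cy-type-re}, assuming (as is standard for such recursions) that the hypothesis \eqref{gen-cas-f} holds at every preceding index. Since $C(n,i)<1$ for each $i$, the recursion telescopes to $F_k < \prod_{i=1}^{k-1}\bigl(\tfrac{i+1}{i}\bigr)^{4/f(n)} F_1 = k^{4/f(n)} F_1$. A direct computation gives $F_1 = (1+\tfrac{2}{f(n)})\lambda_1^2 - \lambda_1^2 = \tfrac{2}{f(n)}\lambda_1^2$, hence $F_k < \tfrac{2}{f(n)} k^{4/f(n)} \lambda_1^2$.

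Third, I would regard the right-hand side of the pointwise bound as a function $g(\Lambda_k)$ with $F_k$ held fixed and maximize over the admissible range $\Lambda_k \in [0,\sqrt{A/B}]$, where $A,B$ are the obvious constants. A direct differentiation shows the unique critical point is at $\Lambda_k^2 = \tfrac{f(n)}{2}F_k$, and substituting this back collapses the two terms algebraically to
\[
g_{\max} = \bigl(1+\tfrac{4}{f(n)}\bigr)\sqrt{\tfrac{f(n)F_k}{2}}.
\]
Combined with the Step 2 bound, this gives $\lambda_{k+1}\leq g_{\max} < (1+\tfrac{4}{f(n)})k^{2/f(n)}\lambda_1$, as claimed in \eqref{Z-co}.

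The main obstacle is Step 3. A naive separate bounding of the linear term $(1+\tfrac{2}{f(n)})\Lambda_k$ and of the square-root term would miss the sharp constant $1+\tfrac{4}{f(n)}$, because both contributions would simultaneously be overestimated; one easily checks, for instance, that such an approach produces a strictly larger constant even in the model case $f(n)=2$. The one-variable optimization is precisely what exploits the geometric constraint that the marginal gain from enlarging $\Lambda_k$ is offset by a decrease in the discriminant, and this balance is what yields the clean factor $1+\tfrac{4}{f(n)}$ in front of $k^{2/f(n)}\lambda_1$.
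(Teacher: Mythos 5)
Your proof is correct and is precisely the Cheng--Yang argument that the paper invokes without giving details (the paper's ``proof'' consists solely of the citation to \cite{CY4}): the quadratic bound in $\lambda_{k+1}$, the telescoped recursion $F_k < k^{4/f(n)}F_1 = \tfrac{2}{f(n)}k^{4/f(n)}\lambda_1^2$, and the one-variable maximization with critical point $\Lambda_k^2=\tfrac{f(n)}{2}F_k$ all check out. The only caveat --- which you already flag --- is that iterating Theorem \ref{cy-type-re} down to $F_1$ requires the hypothesis \eqref{gen-cas-f} at every index $j\le k$, not just at $k$; this is an imprecision in the statement of the proposition itself (inherited from the eigenvalue setting, where the Yang-type inequality holds for all $k$) rather than a gap in your argument.
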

\begin{proof}By using the same approach in \cite{CY4}, we can give the proof of this proposition. Here, we leave out the details of this proof.\end{proof}

Taking $f(n)=\frac{2 n}{3}$ and $\lambda_{i}=\Lambda_{i}+\frac{n^{2}}{6}$ in \eqref{gen-cas-f}, we can give an estimate for the upper bound of  the eigenvalues of $\mathfrak{L}_{II}$
on the translating solitons.

\begin{corr}\label{coro-5.7}
For an $n$-dimensional complete translating soliton $(\mathcal{M}^{n},g)$, the
$k^{\text{th}}$ eigenvalue $\Lambda_{k}$ of the Dirichlet eigenvalue
problem \eqref{diri-prob} of the
differential operator $\mathfrak{L}_{II}$ satisfy, for any
$k\geq 1$,
\begin{equation}\label{cy2}
\Lambda_{k+1}+ \frac{n^{2}}{6}
\leq \left(1 +\frac{6}{n}\right) \left(\Lambda_{1}+\frac{n^{2}}{6}\right) \ k^{\frac{3}{n}}.
\end{equation}

\end{corr}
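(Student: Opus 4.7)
The plan is to combine the translating soliton eigenvalue inequality \eqref{3.5-1} from Theorem \ref{thm5.1} with the abstract recursion estimate Proposition \ref{prop-5.6}, using exactly the substitution suggested in the hint just before the statement. The whole argument reduces to checking that \eqref{3.5-1} can be rewritten in the form \eqref{gen-cas-f} after a translation of the spectrum.

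First I would start from the inequality \eqref{3.5-1} established in Theorem \ref{thm5.1},
\begin{equation*}
\sum^{k}_{i=1}(\Lambda_{k+1}-\Lambda_{i})^{2}
\leq \frac{6}{n}\sum^{k}_{i=1}(\Lambda_{k+1}-\Lambda_{i})\left(\Lambda_{i}+\frac{n^{2}}{6}\right),
\end{equation*}
which holds for the Dirichlet eigenvalues of $\mathfrak{L}_{II}$ on any $n$-dimensional complete translating soliton with $n \ge 2$. I would then set
$$
\lambda_{i} := \Lambda_{i} + \frac{n^{2}}{6}, \qquad i=1,2,\dots,k+1,
$$
so that $\lambda_{k+1}-\lambda_{i} = \Lambda_{k+1}-\Lambda_{i}$ and $\Lambda_{i} + \frac{n^{2}}{6} = \lambda_{i}$. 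The displayed inequality becomes
\begin{equation*}
\sum^{k}_{i=1}(\lambda_{k+1}-\lambda_{i})^{2}
\leq \frac{6}{n}\sum^{k}_{i=1}\lambda_{i}(\lambda_{k+1}-\lambda_{i})
= \frac{4}{\,2n/3\,}\sum^{k}_{i=1}\lambda_{i}(\lambda_{k+1}-\lambda_{i}),
\end{equation*}
which is precisely condition \eqref{gen-cas-f} with $f(n) = \tfrac{2n}{3}$.

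Next I would invoke Proposition \ref{prop-5.6} for this choice of $f$. Its conclusion \eqref{Z-co} reads
$$
\lambda_{k+1} \leq \left(1+\frac{4}{f(n)}\right) k^{\frac{2}{f(n)}} \lambda_{1}
= \left(1+\frac{6}{n}\right) k^{\frac{3}{n}} \lambda_{1}.
$$
Undoing the substitution $\lambda_{i} = \Lambda_{i} + \tfrac{n^{2}}{6}$ yields exactly \eqref{cy2}, which is the statement to be proved.

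The only bookkeeping step requiring care is verifying the hypothesis $\lambda_{1}\le\dots\le\lambda_{k+1}$ and $\lambda_{i}>0$ needed to apply Proposition \ref{prop-5.6}; both are automatic because the $\Lambda_{i}$ are nonnegative, nondecreasing, and $\tfrac{n^{2}}{6}>0$. Since Proposition \ref{prop-5.6} itself is proved elsewhere by the Cheng--Yang recursion argument embodied in Theorem \ref{cy-type-re}, no additional analytic input is needed here. I do not foresee any genuine obstacle: the entire corollary is a mechanical consequence of matching \eqref{3.5-1} to the algebraic template of \eqref{gen-cas-f} via a single translation of the eigenvalues.
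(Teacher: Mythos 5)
Your proposal is correct and is essentially identical to the paper's own argument: the paper likewise obtains \eqref{cy2} by taking $f(n)=\tfrac{2n}{3}$ and $\lambda_i=\Lambda_i+\tfrac{n^2}{6}$ in \eqref{gen-cas-f} so that \eqref{3.5-1} matches the hypothesis of Proposition \ref{prop-5.6}, whose conclusion \eqref{Z-co} then gives the stated bound. Your verification of monotonicity and positivity of the shifted $\lambda_i$ is the right (and only) bookkeeping needed.
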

\begin{rem}The upper bound \eqref{cy2} is not sharp in the sense of order of eigenvalues. Recall that, in {\rm \cite{CY4}}, by establishing a recursion formula, Cheng and Yang gave a sharp upper bound in the sense of order of eigenvalues.
Unfortunately, Cheng and Yang's recursion formula dose not work in our situation. In other words, Proposition {\rm \ref{prop-5.6}} can not apply directly to inequality \eqref{inq-4.1}. Therefore, we are fail to get a sharp upper bound of Cheng and Yang type. To get a sharp upper bound, it seems that a similar recursion formula needs to be proved.

\end{rem}
Next, we give an estimate for the upper bound of the gap of consecutive eigenvalues of the Dirichlet problem \eqref{diri-prob} of $\mathfrak{L}_{II}$ operator on the translating solitons.

\begin{corr}\label{cor5.10}Under the same condition as Theorem {\rm \ref{thm5.1}}, we have

\begin{equation}\begin{aligned}\label{gap-product-1}
&\Lambda_{k+1}-\Lambda_{k} \leq 2 \left\{\left[\frac{1}{k}\frac{2}{n}\sum_{i=1}^{k} \left(\Lambda_{i}+\Lambda^{\frac{1}{2}}_{i}\right)+\frac{n}{2}\right]^{2}\right.\\&\left.-\left(1+\frac{4}{n}\right) \frac{1}{k} \sum_{i=1}^{k}\left(\Lambda_{i}-\frac{1}{k} \sum_{j=1}^{k} \Lambda_{j}\right)^{2}
 +\frac{1}{k^{2}}\frac{4}{n} \sum_{i,j=1}^{k} \left(  \Lambda_{i}^{\frac{1}{2}}\Lambda_{j} -\Lambda_{i}^{\frac{3}{2}}\right)\right\}^{\frac{1}{2}},
\end{aligned}
\end{equation} and

\begin{equation}\begin{aligned}\label{gap-product-2}\Lambda_{k+1}&-\Lambda_{k} \leq 2\left[\left(\frac{3}{n} \frac{1}{k} \sum_{i=1}^{k} \Lambda_{i}+\frac{n}{3}\right)^{2} -\left(1+\frac{6}{n}\right) \frac{1}{k} \sum_{j=1}^{k}\left(\Lambda_{j}-\frac{1}{k} \sum_{i=1}^{k} \Lambda_{i}\right)^{2}\right]^{\frac{1}{2}}.\end{aligned}
\end{equation}
\end{corr}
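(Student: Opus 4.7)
The strategy is a standard quadratic–root argument applied to the Yang-type inequalities in Theorem \ref{thm5.1}. Starting from \eqref{inq-4.1}, I move everything to the left-hand side and collect terms in $\Lambda_{k+1}$ to obtain a quadratic inequality of the form
\[
P(\Lambda_{k+1})=\Lambda_{k+1}^{2}-2A\Lambda_{k+1}+B\leq 0,
\]
where $A=\frac{1}{k}\sum_{i=1}^{k}\bigl[\frac{2}{n}(\Lambda_{i}+\Lambda_{i}^{1/2})+\Lambda_{i}\bigr]+\frac{n}{2}$ and $B$ is an explicit symmetric function of $\Lambda_{1},\dots,\Lambda_{k}$. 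This forces $\Lambda_{k+1}\in[A-\sqrt{A^{2}-B},\,A+\sqrt{A^{2}-B}]$ and recovers the upper bound \eqref{inq-4.11}.

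The crucial step is to show that $\Lambda_{k}$ satisfies the \emph{same} quadratic $P(\Lambda_{k})\leq 0$. To this end, I apply Theorem \ref{thm5.1} with $k$ replaced by $k-1$: the resulting inequality involves only $\Lambda_{1},\dots,\Lambda_{k-1}$ and $\Lambda_{k}$. Since the would-be $i=k$ term $(\Lambda_{k}-\Lambda_{k})^{2}$ vanishes on the left and $(\Lambda_{k}-\Lambda_{k})(\cdot)$ vanishes on the right, I can freely adjoin this trivial term to extend the summation up to $k$ without changing the inequality. Expanding the extended inequality yields $P(\Lambda_{k})\leq 0$ with exactly the same coefficients $A$ and $B$. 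Hence both $\Lambda_{k}$ and $\Lambda_{k+1}$ lie in the interval $[A-\sqrt{A^{2}-B},\,A+\sqrt{A^{2}-B}]$, which immediately gives $\Lambda_{k+1}-\Lambda_{k}\leq 2\sqrt{A^{2}-B}$.

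What remains is a routine algebraic identification of $A^{2}-B$ with the expression inside the braces in \eqref{gap-product-1}. Writing $\overline{\Lambda^{2}}=\bar{\Lambda}^{2}+\frac{1}{k}\sum_{i=1}^{k}(\Lambda_{i}-\bar{\Lambda})^{2}$ and using the elementary identity
\[
\bar{\Lambda}\,\overline{\Lambda^{1/2}}-\overline{\Lambda^{3/2}}=\frac{1}{k^{2}}\sum_{i,j=1}^{k}\bigl(\Lambda_{i}^{1/2}\Lambda_{j}-\Lambda_{i}^{3/2}\bigr),
\]
a direct expansion yields $A^{2}-B=(A-\bar{\Lambda})^{2}-(1+\tfrac{4}{n})\mathrm{Var}+\tfrac{4}{nk^{2}}\sum_{i,j}(\Lambda_{i}^{1/2}\Lambda_{j}-\Lambda_{i}^{3/2})$, which matches the stated form.

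The proof of \eqref{gap-product-2} is entirely parallel, starting instead from \eqref{3.5-1}: the $\overline{\Lambda^{1/2}}$ contributions are absent and the algebra is correspondingly simpler. I do not anticipate any serious obstacle in this argument; the only non-algebraic input is the ``mirror'' observation that $\Lambda_{k}$ satisfies the same quadratic as $\Lambda_{k+1}$, and this is precisely what produces the factor of $2$ in front of the square root. Everything else amounts to bookkeeping.
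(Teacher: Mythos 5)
Your proposal is correct and follows essentially the same route as the paper, which likewise derives the gap bound by observing (after Cheng--Yang) that $\Lambda_{k}$ satisfies the same quadratic inequality as $\Lambda_{k+1}$, so both roots of the quadratic bracket both eigenvalues and the gap is at most the interval length $2\sqrt{A^{2}-B}$. You in fact justify more carefully than the paper why the coefficients are unchanged (adjoining the vanishing $i=k$ term to the $(k-1)$-version of Theorem \ref{thm5.1}), which is the step the paper only asserts.
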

\begin{proof} The strategy of this proof is similar to Corollary 1 in \cite{CY1}. For completeness, we give the proof of this corollary.
Since $k$ is an any integer, we know that \eqref{inq-4.11} is also true if we replace $k+1$ with $k$. Equivalently, we have
$$
n \sum_{i=1}^{k-1}\left(\Lambda_{k}-\Lambda_{i}\right)^{2} \leq \sum_{i=1}^{k-1}\left(\Lambda_{k}-\Lambda_{i}\right)\left(4 \Lambda_{i}+4\Lambda_{i}^{\frac{1}{2}}+n^{2}\right).
$$
Namely, $\Lambda_{k}$ also satisfies the same quadratic inequality. Therefore, we infer

\begin{equation}\begin{aligned}\label{inq-4.20} \Lambda_{k+1}&\geq
 \frac{1}{k}\sum_{i=1}^{k}\left[\frac{2}{n} \left(\Lambda_{i}+\Lambda^{\frac{1}{2}}_{i}\right)+\Lambda_{i}\right]+\frac{n}{2}+ \Bigg\{\left[\frac{1}{k}\frac{2}{n}\sum_{i=1}^{k} \left(\Lambda_{i}+\Lambda^{\frac{1}{2}}_{i}\right)+\frac{n}{2}\right]^{2} \\&-\left(1+\frac{4}{n}\right) \frac{1}{k} \sum_{i=1}^{k}\left(\Lambda_{i}-\frac{1}{k} \sum_{j=1}^{k} \Lambda_{j}\right)^{2}
 +\frac{1}{k^{2}}\frac{4}{n} \sum_{i,j=1}^{k} \left(  \Lambda_{i}^{\frac{1}{2}}\Lambda_{j} -\Lambda_{i}^{\frac{3}{2}}\right)\Bigg\}^{\frac{1}{2}}.\end{aligned}\end{equation}
 From \eqref{inq-4.11} and \eqref{inq-4.20}, we get \eqref{gap-product-1}.  By modifying Cheng and Yang's proof presented in \cite{CY4}, one can get \eqref{gap-product-2}.
Thus, this completes the proof of Corollary \ref{cor5.10}.

\end{proof}

\section{Further Applications}\label{sec5}
In this section, we would like to give some applications of Theorem \ref{thm1.1} and Theorem \ref{thm1.2}. Specially, we obtain some eigenvalue inequalities on the submanifolds of the
Euclidean spaces, unit spheres and projective spaces.
\subsection{Eigenvalues on the Manifolds of Euclidean Space and Unit Sphere}\label{subsec5.1}
Firstly, we suppose that $\mathcal{M}^{n}$ is an $n$-dimensional complete submanifold isometrically embedded into the $(n+p)$-dimensional Euclidean space $\mathbb{R}^{n+p}$ with the mean curvature $H\equiv0$. Then, according to Theorem \ref{thm1.1},  one can deduce the following corollary.
\begin{corr}\label{corr-6.1}Let $(\mathcal{M}^{n},g)$ be an $n$-dimensional complete minimal submanifold isometrically embedded into the Euclidean space $\mathbb{R}^{n+p}$. Then, for any $j=1,2,\cdots$,
eigenvalues of the Dirichlet problem \eqref{diri-prob} of  the Xin-Laplacian satisfy
\begin{equation*}
\begin{aligned}
\sum^{k}_{i=1}(\Lambda_{k+1}-\Lambda_{i})^{2}
&\leq\frac{4}{n}\sum^{k}_{i=1}(\Lambda_{k+1}-\Lambda_{i})
\left(\Lambda_{i}+D_{4}\Lambda_{i}^{\frac{1}{2}}+ \frac{1}{4}D_{4}^{2}\right),
\end{aligned}
\end{equation*}and
\begin{equation*}
\begin{aligned}
\sum_{l=1}^{n}(\Lambda_{j+l}-\Lambda_{j})\leq
4\Lambda_{j}+4 D_{4}\Lambda_{j}^{\frac{1}{2}}+ D_{4}^{2},
\end{aligned}
\end{equation*}
where $D_{4}$ is given by
$
D_{4}=\max_{\Omega}|\nu^{\top}|_{g_{0}}.
$  \end{corr}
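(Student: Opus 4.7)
The plan is to derive this corollary as a direct specialization of Theorem \ref{thm1.1} and Theorem \ref{thm1.2} to the setting where the ambient embedding is minimal. First I would note that if $\mathcal{M}^{n}$ is isometrically embedded into $\mathbb{R}^{n+p}$ with mean curvature identically zero, then the extrinsic quantity
\[
C_{1}=\inf_{\psi\in\Psi}\max_{\Omega}n^{2}H^{2}
\]
vanishes, since $\psi$ itself is an admissible isometric immersion realizing $H\equiv 0$. The remaining extrinsic constant $D_{1}=\max_{\Omega}|\nu^{\top}|_{g_{0}}$ is precisely the quantity denoted $D_{4}$ in the statement.

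Next I would simply substitute $C_{1}=0$ and $D_{1}=D_{4}$ into inequality \eqref{thm-1-11} of Theorem \ref{thm1.1}, which yields
\[
\sum_{i=1}^{k}(\Lambda_{k+1}-\Lambda_{i})^{2}
\leq \frac{4}{n}\sum_{i=1}^{k}(\Lambda_{k+1}-\Lambda_{i})\Bigl(\Lambda_{i}+D_{4}\Lambda_{i}^{\tfrac{1}{2}}+\tfrac{1}{4}D_{4}^{2}\Bigr),
\]
which is the first claimed inequality. Likewise, substituting $C_{1}=0$ and $D_{1}=D_{4}$ into \eqref{z-ineq-2} of Theorem \ref{thm1.2} gives
\[
\sum_{l=1}^{n}\Lambda_{j+l}\leq (4+n)\Lambda_{j}+4D_{4}\Lambda_{j}^{\tfrac{1}{2}}+D_{4}^{2},
\]
and, rearranging the left-hand side in the form $\sum_{l=1}^{n}(\Lambda_{j+l}-\Lambda_{j})$ as in the statement, this amounts exactly to the second claimed inequality.

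Since both steps are pure substitution once the minimality hypothesis is used, there is essentially no obstacle here: the work was already carried out in the proofs of Theorem \ref{thm1.1} and Theorem \ref{thm1.2}, via the Nash embedding together with the extrinsic formulas in Lemma \ref{lem2.5} and the auxiliary Lemmas \ref{lem3.3}--\ref{lem3.2} that handle the $\nu^{\top}$ terms. The only point worth stating explicitly in the write-up is that $C_{1}=0$ follows from the assumed minimality of the particular immersion $\psi$, so one need not take an infimum over a family of immersions to obtain the conclusion.
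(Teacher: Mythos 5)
Your proposal is correct and coincides with the paper's own (essentially one-line) derivation: the paper obtains Corollary \ref{corr-6.1} precisely by noting that minimality of the given embedding forces $C_{1}=\inf_{\psi\in\Psi}\max_{\Omega}n^{2}H^{2}=0$ and then specializing \eqref{thm-1-11} and \eqref{z-ineq-2} with $D_{1}=D_{4}$. Your remark that the infimum over $\Psi$ is not needed because the given minimal immersion already realizes $H\equiv 0$ is accurate and consistent with the paper's intent.
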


Next, we consider that $(\mathcal{M}^{n},g)$ is an $n$-dimensional submanifold isometrically immersed in the unit sphere $\mathbb{S}^{n+p-1}(1) \subset \mathbb{R}^{n+p}$  with mean curvature vector $ \overline{\textbf{H}}$. We use $\overline{\Psi}$ to denote the set of all isometric immersions from $\mathcal{M}^{n}$ into  the unit sphere $\mathbb{S}^{n+p-1}(1)$. By Theorem \ref{thm1.1}, we have the following corollary.
\begin{corr}\label{corr-6.2}
If $(\mathcal{M}^{n},g)$ be an $n$-dimensional submanifold isometrically immersed in the unit sphere $\mathbb{S}^{n+p-1}(1) \subset \mathbb{R}^{n+p}$  with mean curvature vector $ \overline{\emph{\textbf{H}}}$. Then,
eigenvalues of the Dirichlet problem \eqref{diri-prob} of  the Xin-Laplacian satisfy

\begin{equation}\label{Sub-Sph}
\begin{aligned}
\sum^{k}_{i=1}(\Lambda_{k+1}-\Lambda_{i})^{2}
&\leq\frac{4}{n}\sum^{k}_{i=1}(\Lambda_{k+1}-\Lambda_{i})
\left[\Lambda_{i}+D_{5}\Lambda_{i}^{\frac{1}{2}}+ \frac{1}{4}\left(D_{5}^{2}+C_{5}\right)\right],
\end{aligned}
\end{equation}and, for any $j=1,2,\cdots$,
\begin{equation}
\begin{aligned}\label{Sub-Sph-s}
\sum_{l=1}^{n}(\Lambda_{j+l}-\Lambda_{j})\leq
4\Lambda_{j}+4 D_{5}\Lambda_{j}^{\frac{1}{2}}+ D_{5}^{2}+C_{5},
\end{aligned}
\end{equation}
where $C_{5}=\inf_{\overline{\sigma}\in\overline{\Psi}}\max_{\Omega}n^{2}(|\overline{\emph{\textbf{H}}}|^{2}+1)\ \ and \ \ D_{5}=\max_{\Omega}|\nu^{\top}|_{g_{0}}.$

\end{corr}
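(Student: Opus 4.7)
The plan is to deduce Corollary \ref{corr-6.2} directly from Theorems \ref{thm1.1} and \ref{thm1.2} by converting the extrinsic data (mean curvature) measured inside the sphere into the extrinsic data measured inside the ambient Euclidean space, since those two theorems are stated in terms of immersions into $\mathbb{R}^{n+p}$.

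First, I would observe that the canonical inclusion $\iota:\mathbb{S}^{n+p-1}(1)\hookrightarrow\mathbb{R}^{n+p}$ is an isometric embedding, so for every $\overline{\sigma}\in\overline{\Psi}$ the composition $\iota\circ\overline{\sigma}$ is an element of $\Psi$; in short, $\{\iota\circ\overline{\sigma}:\overline{\sigma}\in\overline{\Psi}\}\subset\Psi$. Next, I would carry out the standard Gauss-type decomposition of the ambient connection: for tangent vectors $X,Y$ to $\mathbb{S}^{n+p-1}(1)$ one has
\begin{equation*}
\nabla^{\mathbb{R}^{n+p}}_{X}Y=\nabla^{\mathbb{S}}_{X}Y-\langle X,Y\rangle_{g_{0}}\,\mathbf{p},
\end{equation*}
with $\mathbf{p}$ the unit position vector. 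Tracing this identity with respect to an orthonormal frame of $T\mathcal{M}^{n}$ and splitting further along $\nabla^{\mathbb{S}}_X Y=\nabla^{\mathcal{M}}_X Y+\mathrm{II}^{\mathcal{M}/\mathbb{S}}(X,Y)$ yields the well-known relation
\begin{equation*}
\mathbf{H}_{\mathbb{R}^{n+p}}=\overline{\mathbf{H}}-\mathbf{p}.
\end{equation*}
Because $\overline{\mathbf{H}}$ lies in the normal bundle of $\mathcal{M}^{n}$ inside $\mathbb{S}^{n+p-1}(1)$, it is tangent to the sphere, hence orthogonal to $\mathbf{p}$; Pythagoras then gives $H^{2}=|\overline{\mathbf{H}}|^{2}+1$, so
\begin{equation*}
n^{2}H^{2}=n^{2}\big(|\overline{\mathbf{H}}|^{2}+1\big).
\end{equation*}

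Taking the maximum over $\Omega$ and then the infimum over the (smaller) family of immersions that factor through $\mathbb{S}^{n+p-1}(1)$ gives $C_{1}\leq C_{5}$, while the tangential projection $\nu^{\top}$ depends only on the induced metric on $\mathcal{M}^{n}$ and the fixed vector $\nu\in\mathbb{R}^{n+p}$, so $D_{1}=D_{5}$ verbatim. Substituting these bounds into inequality \eqref{thm-1-11} of Theorem \ref{thm1.1} produces \eqref{Sub-Sph}, and substituting into inequality \eqref{z-ineq-2} of Theorem \ref{thm1.2} produces \eqref{Sub-Sph-s}. No serious obstacle arises; the only ingredient beyond a direct invocation of the two theorems is the classical Gauss-equation computation of $H^{2}$ in terms of $|\overline{\mathbf{H}}|^{2}$, and once that identity is in hand the inequalities follow mechanically.
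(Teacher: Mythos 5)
Your proposal is correct and follows essentially the same route as the paper: compose the spherical immersion with the canonical embedding $\mathbb{S}^{n+p-1}(1)\hookrightarrow\mathbb{R}^{n+p}$, use $H^{2}=|\overline{\mathbf{H}}|^{2}+1$, and invoke Theorems \ref{thm1.1} and \ref{thm1.2}. The paper simply asserts the mean-curvature identity and the inclusion $\overline{\Psi}\subset\Psi$ without the Gauss-formula and monotonicity details you supply, so your write-up is a slightly more complete version of the same argument.
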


\begin{proof}
Since the unit sphere can be canonically imbedded into Euclidean space, we have
the following diagram:

\begin{equation*}\begin{aligned}
\xymatrix{
  \mathcal{M}^{n}\ar[dr]_{\psi\circ \sigma} \ar[r]^{\sigma}
                & \mathbb{S}^{n+p-1} \ar[d]^{\psi}  \\
                & \mathbb{R}^{n+p}              }\end{aligned} \end{equation*}
where $\psi: \mathbb{S}^{n+p-1}(1)\rightarrow \mathbb{R}^{n+p}$ is the canonical imbedding from the unit sphere $S^{n+p-1}(1)$ into $\mathbb{R}^{n+p},$ and  $\sigma: \mathcal{M}^{n}\rightarrow \mathbb{S} ^{n+p-1}(1)$ is an isometrical immersion. Then, the composite map $\psi \circ \sigma: \mathcal{M}^{n} \rightarrow \mathbb{R}^{n+p}$ is an isometric immersion from $\mathcal{M}^{n}$ to $\mathbb{R}^{n+p} .$ Let $\overline{\textbf{H}}$ and $\textbf{H}$ be the mean curvature vector fields of $\sigma$ and $\psi \circ \sigma,$ respectively. Then, we have
$
\left| \textbf{H}\right|^{2}=|\overline{\textbf{H}}|^{2}+1.
$
Applying Theorem \ref{thm1.1} directly, we can get \eqref{Sub-Sph} and \eqref{Sub-Sph-s}. Therefore, we finish the proof of this corollary.\end{proof}

In particular, we assume that $(\mathcal{M}^{n},g)$ is an $n$-dimensional unit sphere $\mathbb{S}^{n}(1)$, and then, the mean curvature equals to $1$. This is, $\left| \overline{\textbf{H}}\right|=0$, and thus, we have $\left| \textbf{H}\right|=1$. Therefore, by Corollary \ref{corr-6.2}, we obtain the following corollary.

\begin{corr}\label{corr-6.3}Let $(\mathcal{M}^{n},g)$ be an $n$-dimensional unit sphere $\mathbb{S}^{n}(1)$ and $\Omega$ is a bounded domain on  $\mathbb{S}^{n}(1)$. Then,
eigenvalues of the Dirichlet problem \eqref{diri-prob} of  the Xin-Laplacian satisfy

\begin{equation*}
\begin{aligned}
\sum^{k}_{i=1}(\Lambda_{k+1}-\Lambda_{i})^{2}
&\leq\frac{4}{n}\sum^{k}_{i=1}(\Lambda_{k+1}-\Lambda_{i})
\left[\Lambda_{i}+D_{5}\Lambda_{i}^{\frac{1}{2}}+ \frac{1}{4}\left(D_{5}^{2}+n^{2}\right)\right],
\end{aligned}
\end{equation*}and, for any $j=1,2,\cdots$,
\begin{equation*}
\begin{aligned}
\sum_{l=1}^{n}(\Lambda_{j+l}-\Lambda_{j})\leq
4\Lambda_{j}+4 D_{5}\Lambda_{j}^{\frac{1}{2}}+ D_{5}
^{2}+n^{2},
\end{aligned}
\end{equation*}
where $D_{5}$ is given by $D_{5}=\frac{1}{4}\max_{\Omega}|\nu^{\top}|_{g_{0}}.$
\end{corr}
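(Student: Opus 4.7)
The plan is to obtain this corollary as a direct specialization of Corollary \ref{corr-6.2}, in which $\mathcal{M}^n$ is only required to be an $n$-dimensional submanifold isometrically immersed in the unit sphere $\mathbb{S}^{n+p-1}(1)$ with a general mean curvature vector $\overline{\mathbf{H}}$. The key point is to choose the immersion so that the constant $C_5 = \inf_{\overline{\sigma}\in\overline{\Psi}}\max_{\Omega} n^2(|\overline{\mathbf{H}}|^2 + 1)$ can be pinned down explicitly.

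First, I would take the codimension to be $p=1$ and choose $\overline{\sigma}$ to be the identity isometric immersion of $\mathcal{M}^n = \mathbb{S}^n(1)$ into itself. For this choice, the second fundamental form of $\overline{\sigma}$ vanishes identically, so the intrinsic mean curvature vector $\overline{\mathbf{H}}$ of $\overline{\sigma}$ is zero on all of $\Omega$. Consequently $n^2(|\overline{\mathbf{H}}|^2 + 1) \equiv n^2$ on $\Omega$, and since any other isometric immersion $\overline{\sigma}\in\overline{\Psi}$ necessarily satisfies $|\overline{\mathbf{H}}|^2 \geq 0$, this identity choice realizes the infimum in the definition of $C_5$. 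Therefore $C_5 = n^2$.

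Next, I would substitute $C_5 = n^2$ directly into inequalities \eqref{Sub-Sph} and \eqref{Sub-Sph-s} of Corollary \ref{corr-6.2}. This produces exactly the two claimed inequalities for eigenvalues of the Xin-Laplacian on $\mathbb{S}^n(1)$, with $D_5 = \max_\Omega |\nu^\top|_{g_0}$ inherited from the general statement (the stated normalization of $D_5$ in the corollary being a matter of convention). Geometrically, this just reflects the fact that the canonical inclusion $\psi: \mathbb{S}^n(1)\hookrightarrow\mathbb{R}^{n+1}$ has constant mean curvature of unit length, so $n^2 H^2 \equiv n^2$ when $\mathcal{M}^n = \mathbb{S}^n(1)$ is viewed as a submanifold of $\mathbb{R}^{n+1}$ through $\psi\circ\overline{\sigma}$.

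There is no real obstacle here: this corollary is strictly a specialization, and the only minor point to check carefully is that the identity map indeed attains the infimum defining $C_5$, which follows immediately from the non-negativity of $|\overline{\mathbf{H}}|^2$. No new eigenvalue argument or auxiliary computation is required beyond invoking Corollary \ref{corr-6.2}.
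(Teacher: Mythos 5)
Your proposal is correct and follows essentially the same route as the paper: specialize Corollary \ref{corr-6.2} using the identity immersion of $\mathbb{S}^n(1)$ into itself, for which $\overline{\mathbf{H}}=0$, so that $|\mathbf{H}|^2=1$ under the canonical embedding into $\mathbb{R}^{n+1}$ and hence $C_5=n^2$. You are also right that the constant should be $D_5=\max_\Omega|\nu^\top|_{g_0}$ as in Corollary \ref{corr-6.2}; the extra factor $\frac{1}{4}$ in the corollary's displayed definition of $D_5$ is an inconsistency in the paper, not a feature your argument needs to reproduce.
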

\subsection{Eigenvalues on the Submanifolds of the Projective Spaces}\label{subsec5.2}
Next, let us recall some results for submanifolds on the projective spaces. For more details, we refer the readers to \cite{Chb,SHI}. Let $\mathbb{F}$ denote the field $\mathbb{R}$ of real numbers,
the field $\mathbb{C}$ of complex numbers or the field $\mathbb{Q}$ of quaternions.
  In a natural way, $\mathbb{R} \subset \mathbb{C} \subset \mathbb{Q}$. For each
element $z$ of $\mathbb{F}$, we define the conjugate of $z$ as follows:
If
$
z=z_{0}+z_{1} i+z_{2} j+z_{3} k \in \mathbb{Q}
$
with $z_{0}, z_{1}, z_{2}, z_{3} \in \mathbb{R} ,$ then
$
\bar{z}=z_{0}-z_{1} i-z_{3} j-z_{3} k.
$
If $z$ is in $\mathbb{C}, \bar{z}$ coincides with the ordinary complex conjugate.
Let us denote by $\mathbb{F}P^{m}$ the $m$-dimensional real projective space if $\mathbb{F}= \mathbb{R}$, the complex projective space with real dimension $2 m$ if $\mathbb{F}= \mathbb{C}$, and the quaternionic projective space with real dimension $4 m$ if $\mathbb{F}= \mathbb{Q}$, respectively. For convenience, we
introduce the integers

\begin{equation}\label{df}
d_{\mathbb{F}}=\operatorname{dim}_{\mathbb{R}}\mathbb{F}=\left\{\begin{array}{ll}
1, & \text { if } \mathbb{F} = \mathbb{R}; \\
2, & \text { if } \mathbb{F} = \mathbb{C}; \\
4, & \text { if } \mathbb{F} = \mathbb{Q}.
\end{array}\right.
\end{equation} It is well known that the manifold $\mathbb{F}P^{m}$ carries a canonical metric so that the Hopf fibration
$\rho: \mathbb{S}^{d_{\mathbb{F}} \cdot(m+1)-1} \subset \mathbb{F}^{m+1} \rightarrow \mathbb{F}P^{m}$ is a Riemannian submersion.
Hence, the sectional curvature of $\mathbb{R}P^{m}$ is $1$, the holomorphic sectional curvature is $4$ and the quaternion sectional  curvature is $4$. We use $\mathcal{A}$ to denote the space of all $(m+1)\times(m+1)$ matrices over $\mathbb{F}$ and let
$$\mathcal{H}_{m+1}(\mathbb{F})=\left\{A \in \mathcal{A} _{m+1}(\mathbb{F}) \mid A^{*}:=\overline{^{t} A}=A\right\}$$ be the vector space of $(m+1) \times(m+1)$ Hermitian
matrices with coefficients in the field $\mathbb{F}$. Then, one can endow $\mathcal{H}_{m+1}( \mathbb{F})$ with an inner product as follows:
$
\langle A, B\rangle=\frac{1}{2} \operatorname{tr}(A B),
$
where ${\rm tr}(\cdot)$ denotes the trace for the given $(m+1) \times(m+1)$ matrix. It is clear that the map $$\rho: \mathbb{S} ^{d_{\mathbb{F}} \cdot(m+1)-1} \subset \mathbb{F} ^{m+1} \rightarrow \mathcal{H}_{m+1}(\mathbb{F})$$ given by
\[
\rho(\textbf{z})=\textbf{z}\textbf{z}^{\ast}=\left(\begin{array}{llll}
\left|z_{0}\right|^{2} & z_{0} \overline{z_{1}} & \cdots & z_{0} \overline{z_{m}} \\
z_{1} \overline{z_{0}} & \left|z_{1}\right|^{2} & \cdots & z_{1} \overline{z_{m}} \\
\cdots & \cdots & \cdots & \cdots \\
z_{m} \overline{z_{0}} & z_{m} \overline{z_{1}} & \cdots & \left|z_{m}\right|^{2}
\end{array}\right)
\]
induces an isometric embedding $\rho$ from $\mathbb{F}P^{m}$ into $\mathcal{H}_{m+1}( \mathbb{F})$ through the Hopf fibration, where
$\textbf{z}=(z_{0},z_{1},\cdots,z_{m})\in\mathbb{S} ^{d_{\mathbb{F}} \cdot(m+1)-1}.$ Moreover, $\rho\left( \mathbb{F}P^{m}\right)$ is a minimal submanifold of the hypersphere $\mathbb{S} \left(\frac{I}{m+1}, \sqrt{\frac{m}{2(m+1)}}\right)$ of $\mathcal{H}_{m+1}( \mathbb{F})$ with radius $\sqrt{\frac{m}{2(m+1)}}$ and
center $\frac{I}{m+1}$, where $I$ is the identity matrix.
In addition, we need the following lemma (see Lemma 6.3 in Chapter 4 in \cite{Chb}, \cite{Sa} and a proof of this lemma in \cite{T}):

\begin{lem} \label{lem-proj}Let $f: \mathcal{M}^{n}  \rightarrow \mathbb{F} P^{\text {m }}$ be an isometric immersion, and let $\widehat{\textbf{H}}$ and $\textbf{H}$ be the mean curvature
vector fields of the immersions $f$ and $\rho \circ f,$ respectively (here $\rho$ is the induced isometric embedding $\rho$ from $\mathbb{F}P^{m}$ into $\mathcal{H}_{m+1}( \mathbb{F})$ explained above). Then, we have
\[
\left| \textbf{H}\right|^{2}=|\widehat{\textbf{H}}|^{2}+\frac{4(n+2)}{3 n}+\frac{2}{3 n^{2}} \sum_{i \neq j} K\left(e_{i}, e_{j}\right),
\]
where $\left\{e_{i}\right\}_{i=1}^{n}$ is a local orthonormal basis of $\overline{\Gamma}(T \mathcal{M}^{n})$ and $K$ is the sectional curvature of $\mathbb{F}P^{m}$ expressed $b y$
\[
K\left(e_{i}, e_{j}\right)=\left\{\begin{array}{ll}
1, & \text { if } \mathbb{F} = \mathbb{R}; \\
1+3\left(e_{i} \cdot J e_{j}\right)^{2}, & \text { if } \mathbb{F} = \mathbb{C}; \\
1+\sum_{r=1}^{3} 3\left(e_{i} \cdot J_{r} e_{j}\right)^{2}, & \text { if } \mathbb{F} = \mathbb{Q},
\end{array}\right.
\]
where $J$ is the complex structure of $\mathbb{C}P^{m}$ and $J_{r}$ is the quaternionic structure of $\mathbb{Q}P ^{m}$.\end{lem}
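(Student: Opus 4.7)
The plan is to exploit the composition of isometric immersions $f: \mathcal{M}^n \to \mathbb{F}P^m$ and $\rho: \mathbb{F}P^m \to \mathcal{H}_{m+1}(\mathbb{F})$, together with the fact that $\mathcal{H}_{m+1}(\mathbb{F})$ is Euclidean under $\langle A,B\rangle = \tfrac{1}{2}\operatorname{tr}(AB)$. I would begin with the composition identity for second fundamental forms: if $\widehat{\sigma}$ denotes the second fundamental form of $f$ in $\mathbb{F}P^m$ and $\sigma_\rho$ that of $\rho$ in $\mathcal{H}_{m+1}(\mathbb{F})$, then for $X,Y\in T_p\mathcal{M}^n$
\[\sigma_{\rho\circ f}(X,Y) = d\rho\bigl(\widehat{\sigma}(X,Y)\bigr) + \sigma_\rho\bigl(df(X),\,df(Y)\bigr).\]
Tracing over a local orthonormal frame $\{e_i\}_{i=1}^n$ of $T\mathcal{M}^n$ and observing that $d\rho(\widehat{\textbf{H}})$ is tangent to $\rho(\mathbb{F}P^m)$ while each $\sigma_\rho(e_i,e_i)$ is normal to it, the two summands are orthogonal, so
\[n^2|\textbf{H}|^2 = n^2|\widehat{\textbf{H}}|^2 + \Bigl|\sum_{i=1}^n \sigma_\rho(e_i,e_i)\Bigr|^2.\]

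Next I would apply the Gauss equation for $\rho$ in the flat ambient space: for orthonormal $e_i\neq e_j$,
\[\langle\sigma_\rho(e_i,e_i),\sigma_\rho(e_j,e_j)\rangle - |\sigma_\rho(e_i,e_j)|^2 = K(e_i,e_j).\]
Expanding $|\sum_i\sigma_\rho(e_i,e_i)|^2$ and substituting this identity into the cross terms yields
\[\Bigl|\sum_i\sigma_\rho(e_i,e_i)\Bigr|^2 = \sum_{i,j=1}^n|\sigma_\rho(e_i,e_j)|^2 + \sum_{i\neq j}K(e_i,e_j).\]
Hence the problem reduces to establishing the pointwise identity
\[S(p) \;:=\; \sum_{i,j=1}^n|\sigma_\rho(e_i,e_j)|^2 \;=\; \frac{4n(n+2)}{3} - \frac{1}{3}\sum_{i\neq j}K(e_i,e_j)\]
at each $p\in\mathcal{M}^n$, after which substitution into the Pythagorean identity above and division by $n^2$ produces exactly the claimed formula.

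The identity for $S$ is the heart of the proof and the main obstacle; it depends on the specific Veronese-type structure of $\rho$. Since $\rho(\mathbf{z}) = \mathbf{z}\mathbf{z}^*$, one can differentiate $\rho$ twice along horizontal lifts of $X,Y$ to $\mathbb{S}^{d_{\mathbb{F}}(m+1)-1}\subset\mathbb{F}^{m+1}$ and project onto the normal bundle of $\rho(\mathbb{F}P^m)$ to obtain an explicit matrix expression for $\sigma_\rho(X,Y)$. Equivariance of $\rho$ under the transitive isometry group of $\mathbb{F}P^m$ ($O(m+1)$, $U(m+1)$ or $Sp(m+1)$ according as $\mathbb{F}=\mathbb{R},\mathbb{C},\mathbb{Q}$) forces $|\sigma_\rho(X,Y)|^2$ to be a biquadratic polynomial invariant of the pair $(X,Y)$, necessarily of the form
\[|\sigma_\rho(X,Y)|^2 = \alpha|X|^2|Y|^2 + \beta\langle X,Y\rangle^2 + \gamma\,\Psi(X,Y),\]
where $\Psi$ is the K\"{a}hler/quaternionic bilinear invariant (respectively $0$, $\langle JX,Y\rangle^2$, or $\sum_{r=1}^3\langle J_rX,Y\rangle^2$) that already appears in the formula for $K(X,Y)$. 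The constants $\alpha,\beta,\gamma$ are determined by (i) minimality of $\rho(\mathbb{F}P^m)$ in the sphere $\mathbb{S}\!\left(\tfrac{I}{m+1},\sqrt{m/(2(m+1))}\right)$, which fixes $|\textbf{H}_\rho|^2 = 2(m+1)/m$, and (ii) direct evaluation of $|\sigma_\rho(X,Y)|^2$ on a pair of well-chosen tangent vectors at a single point. Summing the resulting pointwise formula over $i,j$ then gives the required identity for $S$. An alternative route, which avoids the explicit matrix computation, is to invoke the Ros--Sakamoto classification of parallel minimal submanifolds of spheres, which characterizes $\sigma_\rho$ up to the normalizations just fixed; either way, once the algebraic form of $|\sigma_\rho(X,Y)|^2$ is in hand, all remaining steps are routine trace manipulations.
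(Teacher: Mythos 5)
The paper does not actually prove this lemma: it is quoted verbatim from the literature (Lemma 6.3 of Chapter 4 in \cite{Chb}, with \cite{Sa} and \cite{T} cited for proofs), so there is no in-paper argument to compare against. Your reduction is nevertheless the standard and correct one: the composition formula $\sigma_{\rho\circ f}(X,Y)=d\rho(\widehat{\sigma}(X,Y))+\sigma_{\rho}(df(X),df(Y))$, the orthogonality of the two traced summands, and the Gauss equation in the flat ambient space correctly reduce the lemma to the pointwise identity $S=\sum_{i,j}|\sigma_{\rho}(e_i,e_j)|^{2}=\tfrac{4n(n+2)}{3}-\tfrac{1}{3}\sum_{i\neq j}K(e_i,e_j)$, and the arithmetic matching this against the stated constants checks out.

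The one place where your write-up stops short of a proof is precisely the step you flag as the crux: you assert the invariant-theoretic form of $|\sigma_{\rho}(X,Y)|^{2}$ and that the constants can be fixed, but you do not fix them, and the target identity for $S$ is never actually verified. The cleanest way to close this (and more economical than computing $\sigma_\rho$ from $\rho(\mathbf{z})=\mathbf{z}\mathbf{z}^{*}$) is to use that $\rho$ is a $\lambda$-isotropic immersion, i.e.\ $|\sigma_{\rho}(X,X)|^{2}=\lambda^{2}$ for every unit tangent vector $X$; polarizing this over $X\pm Y$ for orthonormal $X,Y$ gives $\langle\sigma_{\rho}(X,X),\sigma_{\rho}(Y,Y)\rangle+2|\sigma_{\rho}(X,Y)|^{2}=\lambda^{2}$, which combined with the Gauss equation yields
\begin{equation*}
|\sigma_{\rho}(X,Y)|^{2}=\frac{\lambda^{2}-K(X,Y)}{3},\qquad \langle\sigma_{\rho}(X,X),\sigma_{\rho}(Y,Y)\rangle=\frac{\lambda^{2}+2K(X,Y)}{3}.
\end{equation*}
Tracing the second identity over an orthonormal frame of $\mathbb{F}P^{m}$ and equating with $(\dim_{\mathbb{R}}\mathbb{F}P^{m})^{2}\,|\mathbf{H}_{\rho}|^{2}=(d_{\mathbb{F}}m)^{2}\cdot\tfrac{2(m+1)}{m}$ determines $\lambda^{2}=4$ in all three cases (for $\mathbb{R}P^{m}$ one uses $K\equiv 1$; for $\mathbb{C}P^{m}$, $\mathbb{Q}P^{m}$ one uses the known scalar curvature), and then $S=4n+\tfrac{4n(n-1)}{3}-\tfrac{1}{3}\sum_{i\neq j}K(e_i,e_j)$ gives exactly the required identity. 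With that supplement your argument is complete; as written, the decisive computation is only a plan.
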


$$\eqno\Box$$

Therefore, one can infer from Lemma \ref{lem-proj} that

\begin{equation*}
\left|\textbf{H}\right|^{2}=\left\{\begin{array}{ll}
|\widehat{\textbf{H}}|^{2}+\frac{2(n+1)}{2 n}, & \text { for } \mathbb{R} P^{m}; \\
|\widehat{\textbf{H}}|^{2}+\frac{2(n+1)}{2 n}+\frac{2}{n^{2}} \sum_{i, j=1}^{n}\left(e_{i} \cdot J e_{j}\right)^{2} \leq|\widehat{\textbf{H}}|^{2}+\frac{2(n+2)}{n}, & \text { for } \mathbb{C} P^{m}; \\
|\widehat{\textbf{H}}|^{2}+\frac{2(n+1)}{2 n}+\frac{2}{n^{2}} \sum_{i, j=1}^{n} \sum_{r=1}^{3}\left(e_{i} \cdot J_{r} e_{j}\right)^{2} \leq|\widehat{\textbf{H}}|^{2}+\frac{2(n+4)}{n}, & \text { for } \mathbb{Q} P^{m}.
\end{array}\right.
\end{equation*}Hence,  from the above equation, one can verify the following inequality:

\begin{equation}\label{HH}
\left| \textbf{H}\right|^{2} \leq|\widehat{\textbf{H}}|^{2}+\frac{2\left(n+d_{\mathbb{F}}\right)}{n}.
\end{equation}
We note that the equality in \eqref{HH} holds if and only if $\mathcal{M}^{n}$ is a complex submanifold of $\mathbb{C}P^{m}$ (for the case $\mathbb{C}P^{m}$ ) while $n \equiv 0(\bmod 4)$ and $\mathcal{M}^{n}$ is an invariant submanifold of $\mathbb{Q}P^{m}\left(\text { for the case } \mathbb{Q}P^{m}\right)$. By Theorem \ref{thm1.1} and \ref{thm1.2}, we can show the following corollary.

\begin{corr}\label{corr-6.5}
If $\mathcal{M}^{n}$ is isometrically immersed in a projective space $\mathbb{F}P^{m}$ with mean curvature vector $\widehat{\textbf{H}}$, Then,
eigenvalues of the Dirichlet problem \eqref{diri-prob} of  the Xin-Laplacian satisfy

\begin{equation}\label{proj-inequa}
\begin{aligned}
\sum^{k}_{i=1}(\Lambda_{k+1}-\Lambda_{i})^{2}
&\leq\frac{4}{n}\sum^{k}_{i=1}(\Lambda_{k+1}-\Lambda_{i})
\left[\Lambda_{i}+D_{6}\Lambda_{i}^{\frac{1}{2}}+ \frac{1}{4}\left(D_{6}^{2}+C_{6}\right)\right],
\end{aligned}
\end{equation}and, for any $j=1,2,\cdots$,
\begin{equation}
\begin{aligned}\label{proj-inequa-1}
\sum_{l=1}^{n}(\Lambda_{j+l}-\Lambda_{j})\leq
4\Lambda_{j}+4 D_{6}\Lambda_{j}^{\frac{1}{2}}+ D_{6}^{2}+C_{6},
\end{aligned}
\end{equation}
where
$
C_{6}=\frac{1}{4}\inf _{\psi \in \Psi}\max_{\Omega}\left(n^{2}H^{2}+2n\left(n+d_{\mathbb{F} }\right)\right),
$ and $D_{6}=\frac{1}{4}\max_{\Omega} |\nu^{\top}|_{g_{0}},$ and
$d_{\mathbb{F}}=\operatorname{dim}_{ \mathbb{R} } \mathbb{F}$ defined by \eqref{df}.

\end{corr}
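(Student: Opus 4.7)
The plan is to reduce the problem to the Euclidean case already handled by Theorem \ref{thm1.1} and Theorem \ref{thm1.2}. Given an isometric immersion $f\colon \mathcal{M}^{n}\to \mathbb{F}P^{m}$, I would compose it with the canonical isometric embedding $\rho\colon \mathbb{F}P^{m}\to \mathcal{H}_{m+1}(\mathbb{F})$ recalled in the excerpt. Since $\mathcal{H}_{m+1}(\mathbb{F})$ endowed with $\langle A,B\rangle=\tfrac{1}{2}\mathrm{tr}(AB)$ is a finite dimensional real inner-product space, i.e.\ a Euclidean space of dimension $\tfrac{1}{2}(m+1)\bigl(d_{\mathbb{F}}m+2\bigr)$, the composition $\rho\circ f\colon \mathcal{M}^{n}\to \mathcal{H}_{m+1}(\mathbb{F})$ realizes $\mathcal{M}^{n}$ as an isometrically embedded submanifold of some $\mathbb{R}^{N}$, which is exactly the ambient hypothesis of the two theorems.

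First, applying Theorem \ref{thm1.1} and Theorem \ref{thm1.2} to the composite immersion $\rho\circ f$, I obtain the inequalities \eqref{thm-1-11} and \eqref{z-ineq-2} for the eigenvalues of the Xin--Laplacian with constants $C_{1}=\inf_{\psi\in\Psi}\max_{\Omega}n^{2}|\textbf{H}|^{2}$ and $D_{1}=\max_{\Omega}|\nu^{\top}|_{g_{0}}$ computed with respect to the ambient space $\mathcal{H}_{m+1}(\mathbb{F})$, where $\textbf{H}$ denotes the mean curvature vector of $\rho\circ f$. The constant $D_{1}$ is already the one appearing in the statement of the corollary (up to the stated normalization), since it depends only on the tangential projection of the constant vector $\nu$.

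Next, I would invoke the intrinsic-extrinsic relation between the mean curvature $\widehat{\textbf{H}}$ of $f$ in $\mathbb{F}P^{m}$ and the mean curvature $\textbf{H}$ of $\rho\circ f$ provided by Lemma \ref{lem-proj} and the pointwise bound \eqref{HH}, namely
\begin{equation*}
|\textbf{H}|^{2}\leq |\widehat{\textbf{H}}|^{2}+\frac{2(n+d_{\mathbb{F}})}{n}.
\end{equation*}
Multiplying by $n^{2}$ and taking the infimum over all isometric immersions $\psi\in\Psi$ yields
\begin{equation*}
n^{2}|\textbf{H}|^{2}\leq n^{2}|\widehat{\textbf{H}}|^{2}+2n(n+d_{\mathbb{F}}),
\end{equation*}
so $C_{1}$ is controlled by the constant $C_{6}$ of the corollary. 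Substituting these bounds into the conclusions of Theorems \ref{thm1.1} and \ref{thm1.2} produces \eqref{proj-inequa} and \eqref{proj-inequa-1}.

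The entire argument is a direct composition, and there is no serious obstacle: the deep ingredients (the universal inequalities of Theorem \ref{thm1.1} and Theorem \ref{thm1.2}, the isometric embedding $\rho$, and the curvature comparison in Lemma \ref{lem-proj}) have all been established earlier in the excerpt. The only bookkeeping concerns are verifying that $\rho$ is indeed isometric onto its image and that $\nu^{\top}$ is correctly interpreted with respect to the induced metric, after which the estimates follow immediately.
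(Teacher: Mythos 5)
Your proposal follows exactly the route the paper takes: compose the immersion $f$ with the canonical isometric embedding $\rho\colon \mathbb{F}P^{m}\to\mathcal{H}_{m+1}(\mathbb{F})$, apply Theorem \ref{thm1.1} and Theorem \ref{thm1.2} to $\rho\circ f$, and use the mean-curvature comparison \eqref{HH} to replace $C_{1}$ by $C_{6}$. This matches the paper's proof of Corollary \ref{corr-6.5}, so no further comment is needed.
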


\begin{proof}
As we know, there exists a canonical imbedding map $\rho: \mathbb{F}P^{m} \rightarrow \mathcal{H}_{m+1}( \mathbb{F})$ from $\mathbb{F} P^{m}( \mathbb{F} = \mathbb{R} , \mathbb{C} , \mathbb{Q} )$ to Euclidean space $\mathcal{H}_{m+1}( \mathbb{F} )$. Therefore, for compact manifold $\mathcal{M}^{n}$ isometrically immersed into the projective space $\mathbb{F} P^{m},$ one has the following diagram:

\begin{equation*}\begin{aligned}
\xymatrix{
  \mathcal{M}^{n}\ar[dr]_{\rho\circ f} \ar[r]^{f}
                & \mathbb{F}P^{m} \ar[d]^{\rho}  \\
                &\mathcal{H}_{m+1}(\mathbb{F})             }\end{aligned} \end{equation*}
 $f: \mathcal{M}^{n} \rightarrow$
$\mathbb{F}P^{m}$ denotes  an isometric immersion from $\mathcal{M}^{n}$ to $\mathbb{F}P^{m}$. Then, the composite map $\rho \circ f: \mathcal{M}^{n} \rightarrow \mathcal{H}_{m+1}( \mathbb{F})$ is an isometric immersion from $\mathcal{M}^{n}$ to $\mathcal{H}_{m+1}( \mathbb{F})$. According to inequality \eqref{HH} and Theorem \ref{thm1.1}, we can conclude \eqref{proj-inequa}  and \eqref{proj-inequa-1}. Hence, it completes the proof of corollary \ref{corr-6.5}.

\end{proof}

\subsection{Manifolds Admitting Some Special Functions}\label{subsec5.3}
In this section, we would like to discuss the eigenvalue of Xin-Laplacian on the manifolds admitting some special functions.
\begin{thm}\label{thm4.1-1}
Let $\mathcal{M}^{n}$ be an $n$-dimensional complete Riemannian manifold and let $\Omega$ be a bounded domain with smooth boundary in $\mathcal{M}^{n}$. Denote by $\Lambda_{i}$ the $i$ -th eigenvalue of the problem \eqref{diri-prob} of the differential operator $\mathfrak{L}_{\nu}$. If there exist a function $\mathcal{W}: \Omega \rightarrow \mathbb{R}$ and a positive constant $C_{3}$ such that $
|\nabla \mathcal{W}|_{g}=1$, and $|\Delta \mathcal{W}|_{a}\leq C_{3}$, where $|w|_{a}$ denotes the absolute value of $w$,
then
\begin{equation}
\begin{aligned}\label{eq-5.7sumD3}
\sum_{i=1}^{k}\left(\Lambda_{k+1}-\Lambda_{i}\right)^{2} \leq\sum_{i=1}^{k}\left(\Lambda_{k+1}-\Lambda_{i}\right)\left(4 \Lambda_{i}+4\left(C_{3}+D_{3}\right) \Lambda_{i}^{\frac{1}{2}}+\left(C_{3}+D_{3}\right)^{2}\right),
\end{aligned}
\end{equation}where $D_{3}=\max_{\Omega}|\nu^{\top}|_{g_{0}}$.

\end{thm}

\begin{rem} Let $\mathcal{M}^{n}$ be an $n$-dimensional connected complete Riemannian manifold whose Ricci curvature satisfies $\operatorname{Ric}_{\mathcal{M}^{n}} \geq-(n-1) \kappa^{2}, \kappa \geq 0 .$ Suppose that there is a smooth function $\mathcal{W}$ on $\mathcal{M}^{n}$ with $|\nabla \mathcal{W}|_{g}=1$. Then, we have $|\Delta \mathcal{W}|_{a} \leq(n-1) \kappa^{2} .$ See Remark 3.6 in {\rm \cite{S}}.
Furthermore, we consider that $\xi:[0,+\infty) \rightarrow M$ is a geodesic ray, namely a unit speed geodesic with $d(\xi(s), \xi(t))=t-s$ for any $t>s>0 .$ Then the Busemann function $b_{\xi}$ corresponding to $\xi$ is defined as $b_{\xi}(q):=\lim _{t \rightarrow+\infty}(d(q, \xi(t))-t).$
If $\mathcal{M}^{n}$ is an Hadamard manifold, then it is known that $b_{\xi}$ is a convex function of class $C^{2}$ with $|\nabla b_{\xi}|_{g}\equiv 1$ and these conditions characterize Busemann functions (see  {\rm \cite{BGS,HH}}).  Thus, the Bussemann functions on the Cartan-Hadamard manifolds $\mathcal{M}^{n}$ satisfy the conditions in  Theorem {\rm\ref{thm4.1-1}}. Also, if $\mathcal{N}^{n-1}$ is complete Riemannian manifold with Ricci curvature bounded below and if $\mathcal{M}^{n}=\mathcal{N}^{n-1} \times \mathbb{R}$ is the product of $\mathcal{N}$ and $\mathbb{R}$ with the product metric, then the function $f: \mathcal{M}^{n} \rightarrow \mathbb{R}$ given by $f(p, t)=t$ satisfies the conditions  of Theorem  {\rm\ref{thm4.1-1}}.  \end{rem}

\begin{rem}  Let $\mathcal{M}^{n}= \mathbb{R} \times \mathcal{N}^{n-1}$ be the complete manifold with the warped product metric $d s_{M}^{2}=$ $d t^{2}+\exp (2 t) d s_{N}^{2},$ where $\mathcal{N}^{n-1}$ is a complete manifold. If the Ricci curvature of $\mathcal{N}$ is non-negative, then $\operatorname{Ric}_{\mathcal{M}^{n}} \geq-(n-1)$, which means that the function $f: \mathcal{M}^{n} \rightarrow \mathbb{R}$ given by $f(p, t)=t$ satisfies $|\nabla f|_{g}=1 \ \  and \ \ |\Delta f|_{a} \leq n-1.$ See {\rm \cite{Sa}} for details. Consequently,
the product Riemannian manifold $\mathcal{M}^{n}$ satisfies the condition in Theorem {\rm \ref{thm4.1-1}}.
\end{rem}

\vskip3mm

\noindent \emph{Proof of Theorem} \ref{thm4.1-1}.
Substituting $\varphi=\mathcal{W}$ into \eqref{2.11}, and utilizing \eqref{new-lemma1} and Cauchy-Schwarz inequality, we infer that

\begin{equation*}
\begin{aligned}&\sum_{i=1}^{k}\left(\Lambda_{k+1}-\Lambda_{i}\right)^{2}  \\&\leq \sum_{i=1}^{k}\left(\Lambda_{k+1}-\Lambda_{i}\right) \int_{\Omega}\left(u_{i}\left(\Delta \mathcal{W}+\langle\nu, \nabla \mathcal{W}\rangle_{g_{0}}\right)+2\left\langle\nabla \mathcal{W}, \nabla u_{i}\right\rangle_{g}\right)^{2}e^{\langle\nu,X\rangle_{g_{0}}}dv \\&
\leq \sum_{i=1}^{k}\left(\Lambda_{k+1}-\Lambda_{i}\right) \int_{\Omega}\left(\left|u_{i}\right|(|\Delta \mathcal{W}|_{a}+|\nu^{\top}|_{g_{0}}|\nabla \mathcal{W}|_{g}) +2|\nabla \mathcal{W}|_{g} \left|\nabla u_{i}\right|_{g}\right)^{2} e^{\langle\nu,X\rangle_{g_{0}}}dv,\end{aligned}
\end{equation*}since $|\nabla\mathcal{W}|_{g}=1$. Furthermore,  we have

\begin{equation}\label{rig-in}
\begin{aligned}&\sum_{i=1}^{k}\left(\Lambda_{k+1}-\Lambda_{i}\right)^{2}  \\&\leq\sum_{i=1}^{k}\left(\Lambda_{k+1}-\Lambda_{i}\right) \int_{\Omega}\left[\left(C_{3}+|\nu^{\top}|_{g_{0}}\right)\left|u_{i}\right|_{a}+2\left|\nabla u_{i}\right|_{g}\right]^{2} e^{\langle\nu,X\rangle_{g_{0}}}dv\\
&\leq \sum_{i=1}^{k}\left(\Lambda_{k+1}-\Lambda_{i}\right) \Bigg{[}\left(C_{3}+D_{3}\right)^{2}\left\|u_{i}\right\|_{\Omega}^{2} +4\left(C_{3}+D_{3}\right) \left\| u_{i}\left|\nabla u_{i}\right|_{g} \right\|_{\Omega} +4\left\|\left|\nabla u_{i} \right|_{g}\right\|_{\Omega}^{2}\Bigg{]},\end{aligned}
\end{equation} since $
|\nabla \mathcal{W}|_{g}=1$, and $|\Delta \mathcal{W}|_{a}\leq C_{3}$,  where $D_{3}=|\nu^{\top}|_{g_{0}}.$ By Cauchy-Schwarz inequality, we derive

\begin{equation}\label{cau-schine}
\left\| u_{i}\left|\nabla u_{i}\right|_{g} \right\|_{\Omega}\leq\left(\left\| u_{i} \right\|_{\Omega}\right)^{\frac{1}{2}}\left(\left\|\left|\nabla u_{i}\right|_{g} \right\|_{\Omega}\right)^{\frac{1}{2}}=\Lambda_{i}^{\frac{1}{2}}.
\end{equation}From \eqref{rig-in} and \eqref{cau-schine}, we yield \eqref{eq-5.7sumD3},
as we desired.
Hence, this completes the proof of this theorem.
$$\eqno\Box$$

\begin{thm}\label{thm4.1-2}
Let $\mathcal{M}^{n}$ be an $n$-dimensional complete Riemannian manifold and let $\Omega$ be a bounded domain with smooth boundary in $\mathcal{M}^{n}$. Denote by $\Lambda_{i}$ the $i$ -th eigenvalue of the problem \eqref{diri-prob} of the differential operator $\mathfrak{L}_{\nu}$. If $\Omega$ admits an eigenmap $$f=\left(f_{1}, f_{2}, \cdots, f_{m+1}\right): \Omega \rightarrow \mathbb{S}^{m}(1)$$ corresponding to an eigenvalue $\eta,$ that is,
$
\Delta f_{\alpha}=-\eta f_{\alpha},\ \ where \ \ \alpha=1, \cdots, m+1,$  and
$ \sum_{\alpha=1}^{m+1} f_{\alpha}^{2}=1,
$
then
\begin{equation}
\begin{aligned}\label{inequlity-4.2}
\sum_{i=1}^{k}\left(\Lambda_{k+1}-\Lambda_{i}\right)^{2} \leq \sum_{i=1}^{k}\left(\Lambda_{k+1}-\Lambda_{i}\right)\left( 4 \Lambda_{i}+4 D_{3}\Lambda_{i}^{1 / 2}+D_{3}^{2}+ \eta\right),
\end{aligned}
\end{equation}
where $\mathbb{S}^{m}(1)$ is the unit sphere of dimension $m$ and $D_{3}=\max_{\Omega}|\nu^{\top}|_{g_{0}}.$

\end{thm}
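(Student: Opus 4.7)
The plan is to apply the general formula in Proposition \ref{prop2.3} with $\varphi$ taken successively equal to each component function $f_{\alpha}$ of the eigenmap, and then sum on $\alpha$ from $1$ to $m+1$, exploiting the algebraic relations forced by the eigenmap hypothesis to collapse the resulting sums into intrinsic quantities.

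First, I will record three pointwise identities that follow immediately from the defining relations of an eigenmap into the sphere. Differentiating $\sum_{\alpha=1}^{m+1}f_{\alpha}^{2}=1$ gives $\sum_{\alpha}f_{\alpha}\nabla f_{\alpha}=0$, and applying $\tfrac12\Delta$ to the same relation together with $\Delta f_{\alpha}=-\eta f_{\alpha}$ yields the key identity
\begin{equation*}
\sum_{\alpha=1}^{m+1}|\nabla f_{\alpha}|_{g}^{2}=\eta.
\end{equation*}
From this and the Cauchy--Schwarz inequality one obtains, at each point, the pointwise bounds $\sum_{\alpha}\langle\nabla f_{\alpha},\nabla u_{i}\rangle_{g}^{2}\le\eta|\nabla u_{i}|_{g}^{2}$ and $\sum_{\alpha}\langle\nu,\nabla f_{\alpha}\rangle_{g_{0}}^{2}\le\eta|\nu^{\top}|_{g_{0}}^{2}$, and also $\sum_{\alpha}f_{\alpha}\langle\nu,\nabla f_{\alpha}\rangle_{g_{0}}=0$.

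Next, I will substitute $\varphi=f_{\alpha}$ into \eqref{2.11}, note that $\mathfrak{L}_{\nu}f_{\alpha}=-\eta f_{\alpha}+\langle\nu,\nabla f_{\alpha}\rangle_{g_{0}}$, and sum on $\alpha$. On the left-hand side the identity $\sum_{\alpha}|\nabla f_{\alpha}|_{g}^{2}=\eta$ together with $\|u_{i}\|_{\Omega}^{2}=1$ produces exactly $\eta\sum_{i=1}^{k}(\Lambda_{k+1}-\Lambda_{i})^{2}$. On the right-hand side I will expand the square
\begin{equation*}
\bigl(2\langle\nabla f_{\alpha},\nabla u_{i}\rangle_{g}+u_{i}\mathfrak{L}_{\nu}f_{\alpha}\bigr)^{2}
\end{equation*}
and sum on $\alpha$. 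The cross term $-4\eta u_{i}\sum_{\alpha}f_{\alpha}\langle\nabla f_{\alpha},\nabla u_{i}\rangle_{g}$ and the term $-2\eta u_{i}^{2}\sum_{\alpha}f_{\alpha}\langle\nu,\nabla f_{\alpha}\rangle_{g_{0}}$ both vanish by the relation $\sum_{\alpha}f_{\alpha}\nabla f_{\alpha}=0$; the remaining four contributions are controlled by the three pointwise bounds above, giving
\begin{equation*}
\sum_{\alpha}\bigl(2\langle\nabla f_{\alpha},\nabla u_{i}\rangle_{g}+u_{i}\mathfrak{L}_{\nu}f_{\alpha}\bigr)^{2}\le \eta\bigl(4|\nabla u_{i}|_{g}^{2}+\eta u_{i}^{2}+u_{i}^{2}|\nu^{\top}|_{g_{0}}^{2}+4|u_{i}||\nabla u_{i}|_{g}|\nu^{\top}|_{g_{0}}\bigr).
\end{equation*}

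Finally, I will integrate against $e^{\langle\nu,X\rangle_{g_{0}}}dv$ and divide through by the common factor $\eta$. Using $\int_{\Omega}|\nabla u_{i}|_{g}^{2}e^{\langle\nu,X\rangle_{g_{0}}}dv=\Lambda_{i}$, the bound $|\nu^{\top}|_{g_{0}}\le D_{3}$, and Cauchy--Schwarz in the form $\int_{\Omega}|u_{i}||\nabla u_{i}|_{g}e^{\langle\nu,X\rangle_{g_{0}}}dv\le\Lambda_{i}^{1/2}$, the right-hand side collapses to $4\Lambda_{i}+4D_{3}\Lambda_{i}^{1/2}+D_{3}^{2}+\eta$, which yields \eqref{inequlity-4.2}. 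The only delicate step is the bookkeeping of the six cross terms in the expansion; once one observes that two of them vanish by $\sum_{\alpha}f_{\alpha}\nabla f_{\alpha}=0$ and the remaining ones are uniformly controlled by the energy identity $\sum_{\alpha}|\nabla f_{\alpha}|_{g}^{2}=\eta$, the factor of $\eta$ on the left cancels the extra factor of $\eta$ that is pulled out of the right, and the stated inequality falls out without further intrinsic input.
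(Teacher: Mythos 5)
Your proposal is correct and follows essentially the same route as the paper: substituting $\varphi=f_{\alpha}$ into Proposition \ref{prop2.3}, summing over $\alpha$, using $\sum_{\alpha}|\nabla f_{\alpha}|_{g}^{2}=\eta$ and $\sum_{\alpha}f_{\alpha}\nabla f_{\alpha}=0$ to evaluate the left side and kill the two cross terms, and bounding the remaining terms by Cauchy--Schwarz before dividing by $\eta$. The bookkeeping of the six terms in your expansion checks out and matches the paper's estimates \eqref{18-ineq}--\eqref{ineq-20}.
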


\begin{rem}  Let $\mathcal{M}^{n}$ be a compact homogeneous Riemannian manifold. Then, Riemannian manifold $\mathcal{M}^{n}$ admits eigenmaps to some unit sphere for the first positive eigenvalue of the Laplacian. See {\rm \cite{Li1}}. Therefore, it satisfies the condition of Theorem {\rm\ref{thm4.1-2}}.
\end{rem}

\vskip 3mm
\noindent \emph{Proof of Theorem} \ref{thm4.1-2}.
Taking the Laplacian for the following equation

\begin{equation}\label{sum-4.3}
\sum_{\alpha=1}^{m+1} f_{\alpha}^{2}=1
\end{equation}
and using the fact that
$\Delta f_{\alpha}=-\eta f_{\alpha}, \ \ {\rm where}  \ \ \alpha=1, \cdots, m+1,$ we have

\begin{equation}\label{m+1-ineq1}
\sum_{\alpha=1}^{m+1}\left|\nabla f_{\alpha}\right|_{g}^{2}=\eta.
\end{equation}
Taking the gradient for the equation \eqref{sum-4.3}, we have

\begin{equation}\label{m+1-ineq2}
\sum_{\alpha=1}^{m+1} f_{\alpha} \nabla f_{\alpha}=\textbf{0}.
\end{equation}
By Cauchy-Schwarz inequality, \eqref{ineq-nu-w} and \eqref{m+1-ineq1}, we have
\begin{equation}\begin{aligned}\label{18-ineq}
 \int_{\Omega} \sum_{\alpha=1}^{m+1}\left(u_{i}\left\langle\nu, \nabla f_{\alpha}\right\rangle_{g_{0}}\right)^{2} e^{\langle\nu,X\rangle_{g_{0}}}dv \leq\eta\int_{\Omega} u_{i}^{2}|\nu^{\top}|^{2}_{g_{0}} e^{\langle\nu,X\rangle_{g_{0}}}dv,
\end{aligned}\end{equation}

\begin{equation}\begin{aligned}\label{19-ineq}
 \int_{\Omega} \sum_{\alpha=1}^{m+1}\left(4u_{i}\left\langle\nu, \nabla f_{\alpha}\right\rangle_{g_{0}}\left\langle \nabla u_{i}, \nabla f_{\alpha}\right\rangle_{g}\right) e^{\langle\nu,X\rangle_{g_{0}}}dv\leq\eta\int_{\Omega}  \left(4u_{i}|\nu^{\top}|_{g_{0}}| \nabla u_{i}|_{g}\right) e^{\langle\nu,X\rangle_{g_{0}}}dv,
\end{aligned}\end{equation}
and

\begin{equation}\begin{aligned}\label{ineq-20}
 4\int_{\Omega} \sum_{\alpha=1}^{m+1}\left\langle \nabla u_{i}, \nabla f_{\alpha}\right\rangle_{g}^{2} e^{\langle\nu,X\rangle_{g_{0}}}dv\leq4\eta\int_{\Omega}|\nabla u_{i}|^{2}_{g}e^{\langle\nu,X\rangle_{g_{0}}}dv.
\end{aligned}\end{equation}From \eqref{m+1-ineq1},  we have

\begin{equation}\label{left-ineq}\sum_{\alpha=1}^{m+1}\sum^{k}_{i=1}(\Lambda_{k+1}-\Lambda_{i})^{2}\|u_{i}\nabla
f_{\alpha}\|_{\Omega}^{2}=\eta  \sum_{i=1}^{k}\left(\Lambda_{k+1}-\Lambda_{i}\right)^{2}.\end{equation}Taking $\varphi=f_{\alpha}$ in \eqref{2.11} and summing over $\alpha$, we infer that
\begin{equation}
\begin{aligned}\label{sum-ineq-1a}
\sum_{\alpha=1}^{m+1}\sum^{k}_{i=1}(\Lambda_{k+1}-\Lambda_{i})^{2}\|u_{i}\nabla
f_{\alpha}\|_{\Omega}^{2} &\leq\sum_{\alpha=1}^{m+1}\sum^{k}_{i=1}(\Lambda_{k+1}-\Lambda_{i})
\|2\langle\nabla f_{\alpha},\nabla u_{i}\rangle_{g}+u_{i}\mathfrak{L}_{\nu}f_{\alpha}\|_{\Omega}^{2}.
\end{aligned}
\end{equation}
Substituting \eqref{ineq-nu-w}, \eqref{m+1-ineq1}-\eqref{left-ineq} into \eqref{left-ineq}, we conclude that,

\begin{equation*}\begin{aligned}
\eta & \sum_{i=1}^{k}\left(\Lambda_{k+1}-\Lambda_{i}\right)^{2} \\
& \leq \sum_{i=1}^{k}\left(\Lambda_{k+1}-\Lambda_{i}\right) \int \sum_{\Omega}^{m+1}\left(u_{i}\left(\Delta f_{\alpha}+\left\langle\nu, \nabla f_{\alpha}\right\rangle_{g_{0}}\right)+2\left\langle\nabla f_{\alpha}, \nabla u_{i}\right\rangle_{g}\right)^{2} e^{\langle\nu,X\rangle_{g_{0}}}dv\\
&=\sum_{i=1}^{k}\left(\Lambda_{k+1}-\Lambda_{i}\right) \int_{\Omega}\sum_{\alpha=1}^{m+1}\left(-\eta u_{i} f_{\alpha}+u_{i}\left\langle\nu, \nabla f_{\alpha}\right\rangle_{g_{0}}+2\left\langle \nabla u_{i}, \nabla f_{\alpha}\right\rangle_{g}\right)^{2} e^{\langle\nu,X\rangle_{g_{0}}}dv.\\
&\leq \sum_{i=1}^{k}\left(\Lambda_{k+1}-\Lambda_{i}\right)\left(\eta^{2}+\int_{\Omega}\left(4\left|\nabla u_{i}\right|_{g}^{2}+4\left|u_{i}\right|_{g}\left|\nabla u_{i}\right|_{g}|\nu^{\top}|_{g_{0}}+u_{i}^{2}|\nu^{\top}|_{g_{0}}^{2}\right) \eta e^{\langle\nu,X\rangle_{g_{0}}}dv\right) \\&
\leq \sum_{i=1}^{k}\left(\Lambda_{k+1}-\Lambda_{i}\right)\left(\eta^{2}+\left(4 \Lambda_{i}+4D_{3}\Lambda_{i}^{1 / 2}+D_{3}^{2}\right) \eta\right),
\end{aligned}\end{equation*}where
$D_{3}=\max_{\Omega}|\nu^{\top}|.$
Thus, we can obtain \eqref{inequlity-4.2}.
$$\eqno\Box$$

\vskip5mm

\section{The Closed Eigenvalue Problem}\label{sec6}
In this section, we investigate eigenvalue inequalities for the closed eigenvalue problem on the compact Riemannian manifolds.

\subsection{ Estimates for the Eigenvalue of Closed Eigenvalue Problem}\label{subsec6.1}

Let $\mathfrak{L}_{\nu}$ be an $n$-dimensional compact Riemannian manifolds without boundary. In this subsection, we would like to study closed eigenvalue problem \eqref{closed-prob} and establish some eigenvalue inequalities.
By the same method as the proof of Proposition
\ref{prop2.2}, one can prove the following propsition.
\begin{prop}
\label{prop7.1} Let $\overline{\phi}_l$, $l=1, 2, \cdots, m$,  be  smooth
functions on an $n$-dimensional closed  Riemannian manifold $\mathcal{M}^{n}$.
Assume that $\overline{\Lambda}_{i}$  is  the $i^{\text{th}}$ eigenvalue of the
closed eigenvalue problem \eqref{closed-prob} and $\overline{u}_{i}$ is an
orthonormal eigenfunction corresponding to $\overline{\Lambda}_{i}$, where $i =0,
1,2,\cdots$, such that
$
\mathfrak{L}_{\nu}\overline{u}_{i} =-\overline{\Lambda}_{i}\overline{u}_{i},$ and
$\int_{\mathcal{M}^{n}}
\overline{u}_{i}\overline{u}_{j}e^{\langle\nu,X\rangle_{g_{0}}}dv=\delta_{ij},
$ for any $i,j=0,1,2,\cdots$. Then, for any $j=0,1, 2, \cdots$,  there exists an
orthogonal matrix $A=(a_{lt})_{m\times m}$ such that $\overline{\Phi}_l=\sum_{s=1}^ma_{ls}\overline{\phi}_s$
satisfy
\begin{equation}\label{2.7}
\sum^{m}_{l=1}(\overline{\Lambda}_{j+l}-\overline{\Lambda}_{j})\|\overline{u}_j\nabla
\overline{\Phi}_{l}\|^{2}_{\mathcal{M}^{n}} \leq
\sum^{m}_{l=1}\int_{\mathcal{M}^{n}}\big(\overline{u}_{j}\mathfrak{L}_{\nu}\overline{\Phi}_{l} +2\langle\nabla
\overline{\Phi}_{l},\nabla \overline{u}_{j}\rangle_{g}\big)^{2}e^{\langle\nu,X\rangle_{g_{0}}}dv.
\end{equation}

\end{prop}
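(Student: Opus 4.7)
The plan is to mirror the proof of Proposition \ref{prop2.2} verbatim, replacing the bounded domain $\Omega$ by the closed manifold $\mathcal{M}^{n}$ throughout. Since $\mathcal{M}^{n}$ has no boundary, no boundary terms arise in any integration by parts, so the self-adjointness identity analogous to \eqref{1.3} holds on $\mathcal{M}^{n}$ with respect to the weighted measure $e^{\langle\nu,X\rangle_{g_{0}}}dv$ for all smooth functions, and the spectrum $\{\overline{\Lambda}_{i}\}_{i=0}^{\infty}$ with orthonormal eigenfunctions $\{\overline{u}_{i}\}_{i=0}^{\infty}$ provides a complete orthonormal basis of the weighted $L^{2}(\mathcal{M}^{n})$.

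The first step is to establish the closed-manifold analogue of Lemma \ref{lem2.1}: for any smooth $\overline{\phi}$ on $\mathcal{M}^{n}$ and any $j\geq 0$,
\begin{equation*}
\|\overline{u}_{j}\nabla\overline{\phi}\|_{\mathcal{M}^{n}}^{2}
=\sum_{k=0}^{\infty}(\overline{\Lambda}_{k}-\overline{\Lambda}_{j})\overline{\sigma}_{jk}^{2},
\qquad
\overline{\sigma}_{jk}:=\int_{\mathcal{M}^{n}}\overline{\phi}\,\overline{u}_{j}\overline{u}_{k}\,e^{\langle\nu,X\rangle_{g_{0}}}dv.
\end{equation*}
The derivation is identical to that of Lemma \ref{lem2.1}: expand $\overline{\phi}\,\overline{u}_{j}=\sum_{k}\overline{\sigma}_{jk}\overline{u}_{k}$, use the self-adjointness of $\mathfrak{L}_{\nu}$ to get $(\overline{\Lambda}_{j}-\overline{\Lambda}_{k})\overline{\sigma}_{jk}=\int_{\mathcal{M}^{n}}(\overline{u}_{j}\mathfrak{L}_{\nu}\overline{\phi}+2\langle\nabla\overline{\phi},\nabla\overline{u}_{j}\rangle_{g})\overline{u}_{k}\,e^{\langle\nu,X\rangle_{g_{0}}}dv$, and then repeat the manipulation \eqref{2.4}--\eqref{2.6}, whose last line is simply $\|\overline{u}_{j}\nabla\overline{\phi}\|_{\mathcal{M}^{n}}^{2}$.

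Next, for fixed $j$ I form the $m\times m$ matrix
\begin{equation*}
\overline{C}:=\left(\int_{\mathcal{M}^{n}}\bigl(\overline{u}_{j}\mathfrak{L}_{\nu}\overline{\phi}_{l}+2\langle\nabla\overline{\phi}_{l},\nabla\overline{u}_{j}\rangle_{g}\bigr)\overline{u}_{j+s}\,e^{\langle\nu,X\rangle_{g_{0}}}dv\right)_{m\times m}
\end{equation*}
and apply Gram--Schmidt to produce an orthogonal matrix $A=(a_{ls})$ such that $Q=A\overline{C}$ is upper triangular. Setting $\overline{\Phi}_{l}=\sum_{i=1}^{m}a_{li}\overline{\phi}_{i}$, this means
\begin{equation*}
\int_{\mathcal{M}^{n}}\bigl(\overline{u}_{j}\mathfrak{L}_{\nu}\overline{\Phi}_{l}+2\langle\nabla\overline{\Phi}_{l},\nabla\overline{u}_{j}\rangle_{g}\bigr)\overline{u}_{j+s}\,e^{\langle\nu,X\rangle_{g_{0}}}dv=0,\quad l>s.
\end{equation*}
Combined with the identity $(\overline{\Lambda}_{k}-\overline{\Lambda}_{j})\overline{\beta}_{ljk}=\int_{\mathcal{M}^{n}}(\overline{u}_{j}\mathfrak{L}_{\nu}\overline{\Phi}_{l}+2\langle\nabla\overline{\Phi}_{l},\nabla\overline{u}_{j}\rangle_{g})\overline{u}_{k}\,e^{\langle\nu,X\rangle_{g_{0}}}dv$, where $\overline{\beta}_{ljk}:=\int_{\mathcal{M}^{n}}\overline{\Phi}_{l}\overline{u}_{j}\overline{u}_{k}\,e^{\langle\nu,X\rangle_{g_{0}}}dv$, this forces $\overline{\beta}_{ljk}=0$ for $k=j,j+1,\ldots,j+l-1$. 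Applying the closed-manifold Lemma \ref{lem2.1} to each $\overline{\Phi}_{l}$ and discarding these vanishing middle terms yields $\|\overline{u}_{j}\nabla\overline{\Phi}_{l}\|_{\mathcal{M}^{n}}^{2}\leq\sum_{k\geq j+l}(\overline{\Lambda}_{k}-\overline{\Lambda}_{j})\overline{\beta}_{ljk}^{2}$. Summing over $l$ and applying Parseval's identity exactly as in \eqref{2.8}--\eqref{2.10} delivers the desired inequality.

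The only conceptual point worth noting is the allowance $j=0$, for which $\overline{\Lambda}_{0}=0$ and $\overline{u}_{0}$ is constant; however the identity $(\overline{\Lambda}_{0}-\overline{\Lambda}_{0})\overline{\beta}_{l00}=0$ is trivially consistent with the argument, so no modification is needed. Thus there is no genuine obstacle: the proof is essentially bookkeeping, verifying that every step from Section \ref{sec2} transfers intact once one observes that the absence of boundary removes the need for the cutoff/support hypothesis $u,w\in C_{0}^{2}(\Omega)$.
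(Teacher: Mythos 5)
Your proposal is correct and follows exactly the route the paper intends: the paper's own justification of Proposition \ref{prop7.1} is simply the remark that it follows ``by the same method as the proof of Proposition \ref{prop2.2},'' and your write-up carries out precisely that transfer, with the right observations about the absence of boundary terms and the indexing from $i=0$.
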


Synthesizing Proposition \ref{prop7.1}, Lemma \ref{lem2.5}, Lemma \ref{lem3.3} and Lemma \ref{lem3.2}, we can prove the following theorem.

\begin{thm}
\label{thm7.1}
Let $\mathcal{M}^{n}$ be an $n$-dimensional compact Riemannian
manifold without boundary isometrically embedded into the Euclidean space $\mathbb{R}^{n+p}$.  Then, for any $j=0,1,2,\cdots$, the eigenvalues of closed eigenvalue  problem \eqref{closed-prob}
of Xin-Laplacian satisfy

\begin{equation}
\begin{aligned}
\label{z-ineq-2}\sum^{n}_{k=1}\overline{\Lambda}_{j+k} \leq(n+4)\overline{\Lambda}_{j} +4\overline{D}_{1}\overline{\Lambda}_{j}^{\frac{1}{2}}+\overline{D}^{2}_{1}+\overline{C}_{1},
\end{aligned}
\end{equation}and
\begin{equation}
\begin{aligned}
\label{z-ineq-2-1} \sum^{n}_{k=1}\overline{\Lambda}_{j+k} \leq(n+6)\overline{\Lambda}_{j}+3\overline{D}^{2}_{1}+\overline{C}_{1},
\end{aligned}
\end{equation}
where
$\overline{C}_{1}= \inf_{\psi\in \Psi}\max_{\mathcal{M}^{n}}n^{2}H^{2} \ \ and \ \ \overline{D}_{1}= \max_{\mathcal{M}^{n}}|\nu^{\top}|_{g_{0}}.$
\end{thm}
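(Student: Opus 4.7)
The plan is to adapt verbatim the argument used for Theorem~\ref{thm1.2}, replacing the bounded domain $\Omega$ with the closed manifold $\mathcal{M}^{n}$ and Proposition~\ref{prop2.2} with its closed-manifold analogue Proposition~\ref{prop7.1}. First, by Nash's embedding theorem I regard $\mathcal{M}^{n}$ as an isometric submanifold of $\mathbb{R}^{n+p}$ so that the coordinate functions $x_{1},\dots,x_{n+p}$ are globally defined on $\mathcal{M}^{n}$. Feeding $\overline{\phi}_{l}=x_{l}$ into Proposition~\ref{prop7.1}, I obtain an orthogonal matrix $A=(a_{ls})$ and rotated coordinates $\overline{\Phi}_{l}=\sum_{s}a_{ls}x_{s}$ for which
\begin{equation*}
\sum_{l=1}^{n+p}(\overline{\Lambda}_{j+l}-\overline{\Lambda}_{j})\|\overline{u}_{j}\nabla\overline{\Phi}_{l}\|_{\mathcal{M}^{n}}^{2}\le\sum_{l=1}^{n+p}\int_{\mathcal{M}^{n}}\bigl(\overline{u}_{j}\mathfrak{L}_{\nu}\overline{\Phi}_{l}+2\langle\nabla\overline{\Phi}_{l},\nabla\overline{u}_{j}\rangle_{g}\bigr)^{2}e^{\langle\nu,X\rangle_{g_{0}}}dv.
\end{equation*}

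Next I extract a lower bound for the left-hand side. Orthogonality of $A$ together with Lemma~\ref{lem2.5} gives $\sum_{l=1}^{n+p}|\nabla\overline{\Phi}_{l}|_{g}^{2}=n$ pointwise and $|\nabla\overline{\Phi}_{l}|_{g}^{2}\le 1$ for every $l$. Splitting the sum at $l=n$ and using the monotonicity $\overline{\Lambda}_{j+l}\ge \overline{\Lambda}_{j+n}$ for $l\ge n+1$, exactly as in the reduction leading to \eqref{3.10}, the inequality collapses to
\begin{equation*}
\sum_{l=1}^{n}(\overline{\Lambda}_{j+l}-\overline{\Lambda}_{j})\le\sum_{l=1}^{n+p}\int_{\mathcal{M}^{n}}\bigl(\overline{u}_{j}\mathfrak{L}_{\nu}\overline{\Phi}_{l}+2\langle\nabla\overline{\Phi}_{l},\nabla\overline{u}_{j}\rangle_{g}\bigr)^{2}e^{\langle\nu,X\rangle_{g_{0}}}dv.
\end{equation*}

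Then I estimate the right-hand side using the extrinsic identities of Lemmas~\ref{lem2.5}, \ref{lem3.3} and \ref{lem3.2}, which are preserved under orthogonal change of coordinates and therefore hold for $\overline{\Phi}_{l}$: namely $\sum_{l}(\Delta\overline{\Phi}_{l})^{2}=n^{2}H^{2}$, $\sum_{l}\Delta\overline{\Phi}_{l}\,\nabla\overline{\Phi}_{l}=0$, $\sum_{l}\langle\nabla\overline{\Phi}_{l},\nabla\overline{u}_{j}\rangle_{g}^{2}=|\nabla\overline{u}_{j}|_{g}^{2}$, $\sum_{l}\langle\nabla\overline{\Phi}_{l},\nu\rangle_{g_{0}}^{2}=|\nu^{\top}|_{g_{0}}^{2}$, and $\sum_{l}\Delta\overline{\Phi}_{l}\,\langle\nabla\overline{\Phi}_{l},\nu\rangle_{g_{0}}=0$. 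Expanding the square, discarding the vanishing cross terms, and applying Cauchy--Schwarz to the remaining cross term $4\int\overline{u}_{j}\langle\nabla\overline{u}_{j},\nu\rangle_{g_{0}}e^{\langle\nu,X\rangle_{g_{0}}}dv$ produces the bound underlying \eqref{z-ineq-2}; using instead the elementary inequality $2AB\le A^{2}+B^{2}$ on the same cross term produces the bound underlying \eqref{z-ineq-2-1}. Invoking $\|\overline{u}_{j}\|_{\mathcal{M}^{n}}=1$, the definitions $\overline{C}_{1}=\inf_{\psi\in\Psi}\max_{\mathcal{M}^{n}}n^{2}H^{2}$ and $\overline{D}_{1}=\max_{\mathcal{M}^{n}}|\nu^{\top}|_{g_{0}}$, and the isometric invariance of the spectrum, the two desired inequalities follow.

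The main obstacle is essentially notational: one must confirm that Proposition~\ref{prop7.1} is available starting at $j=0$ with the trivial eigenvalue $\overline{\Lambda}_{0}=0$ and the constant eigenfunction. Inspection of the proof of Proposition~\ref{prop2.2} shows that only the weighted $L^{2}$ orthonormality of $\{\overline{u}_{i}\}_{i\ge 0}$ is used, and no boundary condition appears; thus the closed-manifold version goes through without change. Once this is checked, the estimation of the right-hand side is a pointwise extrinsic calculation identical to that in the proof of Theorem~\ref{thm1.2}, and the conclusion is immediate.
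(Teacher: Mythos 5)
Your proposal is correct and follows exactly the route the paper takes: the paper's proof of Theorem \ref{thm7.1} simply states that one repeats the proof of Theorem \ref{thm1.2} (via Proposition \ref{prop7.1} in place of Proposition \ref{prop2.2}) with the eigenvalue indexing shifted for the closed problem, which is precisely what you spell out. Your verification that only weighted $L^{2}$ orthonormality (and no boundary condition) is used, so that the argument is valid from $j=0$ with the constant eigenfunction, is the same observation the paper relies on implicitly.
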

\begin{proof}By making use of the same proof as in the proof of Theorem \ref{thm1.2}, we can
prove this theorem if one notices to count the number of eigenvalues from $1$. \end{proof}

For the sake of  the appearance
of the mean curvature, it is very natural to generalize an important result obtained by Rielly in \cite{R}.
Indeed, by Theorem \ref{thm7.1}, we have following corollary.
\begin{corr}
\label{c7.2}
Let $\mathcal{M}^{n}$ be an $n$-dimensional compact Riemannian
manifold without boundary.   Then, for any $j=0,1,2,\cdots$, the eigenvalues of closed eigenvalue  problem \eqref{closed-prob}
of Xin-Laplacian satisfy
\begin{equation}
\begin{aligned}
\label{z-ineq-5} \sum^{n}_{k=1}\overline{\Lambda}_{j+k} \leq(n+4)\overline{\Lambda}_{j} +\int_{\mathcal{M}^{n}} \overline{u}_{j}^{2}\left(n^{2}H^{2} +|\nu^{\top}|_{g_{0}}^{2}+4\overline{\Lambda}_{j}^{\frac{1}{2}}|\nu^{\top}|_{g_{0}}\right)e^{\langle\nu,X\rangle_{g_{0}}}dv,
\end{aligned}
\end{equation}and
\begin{equation}
\begin{aligned}
\label{z-ineq-5} \sum^{n}_{k=1}\overline{\Lambda}_{j+k} \leq(n+6)\overline{\Lambda}_{j} +\int_{\mathcal{M}^{n}} \overline{u}_{j}^{2}\left(n^{2}H^{2} +3|\nu^{\top}|_{g_{0}}^{2}\right)e^{\langle\nu,X\rangle_{g_{0}}}dv.
\end{aligned}
\end{equation}

\end{corr}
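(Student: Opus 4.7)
\textbf{Proof proposal for Corollary \ref{c7.2}.} The plan is to re-run the proof of Theorem \ref{thm7.1} (equivalently, the closed-manifold analogue of Theorem \ref{thm1.2}) but to stop short of taking the extremum over $\mathcal{M}^{n}$ that was used to introduce the constants $\overline{C}_{1}$ and $\overline{D}_{1}$. In other words, the integrals
\[
\int_{\mathcal{M}^{n}}\overline{u}_{j}^{2}\,n^{2}H^{2}\,e^{\langle\nu,X\rangle_{g_{0}}}\,dv
\quad\text{and}\quad
\int_{\mathcal{M}^{n}}\overline{u}_{j}^{2}|\nu^{\top}|_{g_{0}}^{2}\,e^{\langle\nu,X\rangle_{g_{0}}}\,dv
\]
will be kept in their integrated form rather than being bounded by their pointwise maxima.

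First I would invoke Nash's isometric embedding theorem to realise $\mathcal{M}^{n}$ as a submanifold of some Euclidean space $\mathbb{R}^{n+p}$ and let $x_{1},\dots,x_{n+p}$ be its coordinate functions. Applying Proposition \ref{prop7.1} with $\overline{\phi}_{l}=x_{l}$ produces an orthogonal matrix $A=(a_{ls})$ and functions $\overline{\Phi}_{l}=\sum_{s}a_{ls}x_{s}$ for which the master inequality
\[
\sum_{l=1}^{n+p}(\overline{\Lambda}_{j+l}-\overline{\Lambda}_{j})\|\overline{u}_{j}\nabla\overline{\Phi}_{l}\|_{\mathcal{M}^{n}}^{2}
\leq\sum_{l=1}^{n+p}\int_{\mathcal{M}^{n}}\bigl(\overline{u}_{j}\mathfrak{L}_{\nu}\overline{\Phi}_{l}+2\langle\nabla\overline{\Phi}_{l},\nabla\overline{u}_{j}\rangle_{g}\bigr)^{2}e^{\langle\nu,X\rangle_{g_{0}}}\,dv
\]
holds. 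Using that $A$ is orthogonal and $|\nabla\overline{\Phi}_{l}|_{g}^{2}\leq 1$, the same truncation argument as in the proof of Theorem \ref{thm1.2} (comparing the first $n$ terms to the remaining $p$ terms and invoking $\sum_{l=1}^{n+p}|\nabla\overline{\Phi}_{l}|_{g}^{2}=n$) yields
\[
\sum_{l=1}^{n}(\overline{\Lambda}_{j+l}-\overline{\Lambda}_{j})\leq \sum_{l=1}^{n+p}\int_{\mathcal{M}^{n}}\bigl(\overline{u}_{j}\mathfrak{L}_{\nu}\overline{\Phi}_{l}+2\langle\nabla\overline{\Phi}_{l},\nabla\overline{u}_{j}\rangle_{g}\bigr)^{2}e^{\langle\nu,X\rangle_{g_{0}}}\,dv.
\]

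Next I would expand the squared integrand. The cross term $\Delta\overline{\Phi}_{l}\langle\nabla\overline{\Phi}_{l},\nabla\overline{u}_{j}\rangle_{g}$ sums to zero by \eqref{hhu}/\eqref{3.12}; the term $\sum_{l}(\Delta\overline{\Phi}_{l})^{2}=n^{2}H^{2}$ by \eqref{de-h}/\eqref{3.11}; the term $\sum_{l}\langle\nabla\overline{\Phi}_{l},\nu\rangle_{g_{0}}^{2}=|\nu^{\top}|_{g_{0}}^{2}$ by \eqref{vv2}/\eqref{v-2}; the term $\sum_{l}\langle\nabla\overline{\Phi}_{l},\nabla\overline{u}_{j}\rangle_{g}^{2}=|\nabla\overline{u}_{j}|_{g}^{2}$ by \eqref{nab-u-2}/Lemma \ref{lem2.5}; and the cross term $\sum_{l}\Delta\overline{\Phi}_{l}\langle\nabla\overline{\Phi}_{l},\nu\rangle_{g_{0}}$ vanishes by \eqref{hhv}. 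The remaining cross term $\sum_{l}\langle\nabla\overline{\Phi}_{l},\nu\rangle_{g_{0}}\langle\nabla\overline{\Phi}_{l},\nabla\overline{u}_{j}\rangle_{g}$ is controlled pointwise by $|\nu^{\top}|_{g_{0}}|\nabla\overline{u}_{j}|_{g}$ via \eqref{huhv}/Lemma \ref{lem3.2}. Assembling these identities gives a bound by
\[
4\overline{\Lambda}_{j}+\int_{\mathcal{M}^{n}}\overline{u}_{j}^{2}\bigl(n^{2}H^{2}+|\nu^{\top}|_{g_{0}}^{2}\bigr)e^{\langle\nu,X\rangle_{g_{0}}}\,dv+4\int_{\mathcal{M}^{n}}|\overline{u}_{j}||\nabla\overline{u}_{j}|_{g}|\nu^{\top}|_{g_{0}}\,e^{\langle\nu,X\rangle_{g_{0}}}\,dv,
\]
where I have used $\int_{\mathcal{M}^{n}}|\nabla\overline{u}_{j}|_{g}^{2}e^{\langle\nu,X\rangle_{g_{0}}}dv=\overline{\Lambda}_{j}$.

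Finally, I would treat the last term two different ways: by Cauchy--Schwarz it is at most $4\overline{\Lambda}_{j}^{1/2}\bigl(\int\overline{u}_{j}^{2}|\nu^{\top}|_{g_{0}}^{2}e^{\langle\nu,X\rangle_{g_{0}}}dv\bigr)^{1/2}$, and by the arithmetic--geometric mean inequality it is at most $2\int|\nabla\overline{u}_{j}|_{g}^{2}e^{\langle\nu,X\rangle_{g_{0}}}dv+2\int\overline{u}_{j}^{2}|\nu^{\top}|_{g_{0}}^{2}e^{\langle\nu,X\rangle_{g_{0}}}dv=2\overline{\Lambda}_{j}+2\int\overline{u}_{j}^{2}|\nu^{\top}|_{g_{0}}^{2}e^{\langle\nu,X\rangle_{g_{0}}}dv$. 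Substituting these two estimates into the previous display, and rearranging the $\overline{\Lambda}_{j}$ term onto the left to convert $\sum_{l=1}^{n}(\overline{\Lambda}_{j+l}-\overline{\Lambda}_{j})$ into $\sum_{k=1}^{n}\overline{\Lambda}_{j+k}-n\overline{\Lambda}_{j}$, delivers the two asserted inequalities.

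The only nontrivial bookkeeping is making sure the ``truncation at the first $n$ eigenvalues'' step from the Dirichlet argument carries over verbatim to the closed setting where indexing starts at $j=0$; this is the main technical point to double-check, but because Proposition \ref{prop7.1} is stated for $j=0,1,2,\dots$ with the same orthogonality structure, the monotonicity argument $\overline{\Lambda}_{j+n+1}\geq\overline{\Lambda}_{j+l}$ for $l\leq n$ goes through unchanged. Everything else is an application of the extrinsic identities collected at the end of Section \ref{sec2}.
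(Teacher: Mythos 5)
Your proposal follows essentially the same route as the paper: Corollary \ref{c7.2} is obtained by rerunning the proof of Theorem \ref{thm7.1} (the closed analogue of the proof of Theorem \ref{thm1.2}, with indexing from $0$) and stopping before the pointwise maxima $\overline{C}_{1},\overline{D}_{1}$ are introduced, exactly as Corollary \ref{corr3.3} is extracted from the proof of Theorem \ref{thm1.2} in the Dirichlet case, and your truncation step and use of the extrinsic identities match the paper's. One caveat, inherited from the paper rather than introduced by you: the Cauchy--Schwarz step yields $4\overline{\Lambda}_{j}^{1/2}\bigl(\int_{\mathcal{M}^{n}}\overline{u}_{j}^{2}|\nu^{\top}|_{g_{0}}^{2}e^{\langle\nu,X\rangle_{g_{0}}}dv\bigr)^{1/2}$ rather than the term $4\overline{\Lambda}_{j}^{1/2}\int_{\mathcal{M}^{n}}\overline{u}_{j}^{2}|\nu^{\top}|_{g_{0}}e^{\langle\nu,X\rangle_{g_{0}}}dv$ appearing in the first stated inequality, and since $\|\overline{u}_{j}\|_{\mathcal{M}^{n}}=1$ the former dominates the latter, so what your argument (and the paper's own Corollary \ref{corr3.3}) literally establishes is the slightly weaker version of the first inequality, while the second inequality, via the arithmetic--geometric mean step, comes out exactly as stated.
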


\begin{corr}
\label{cor7.4} For  an $n$-dimensional complete submanifold $\mathcal{M}^{n}$
in  the Euclidean space $\mathbb{R}^{n+p}$,
 eigenvalues  of the closed eigenvalue  problem \eqref{closed-prob} of the
differential operator $\mathfrak{L}_{\nu}$ satisfy

\begin{equation}
\begin{aligned}\label{Rielly-type-2}
\sum^{n}_{k=1}\overline{\Lambda}_{k} \leq
\dfrac{\int_{\mathcal{M}^{n}}\left(n^{2}H^{2}+3|\nu^{\top}|_{g_{0}}^{2}\right)e^{\langle\nu,X\rangle_{g_{0}}}dv}{\int_{\mathcal{M}^{n}}e^{\langle\nu,X\rangle_{g_{0}}}dv}.
\end{aligned}
\end{equation}
\end{corr}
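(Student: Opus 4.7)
The plan is to derive Corollary \ref{cor7.4} as an immediate specialization of Corollary \ref{c7.2}. Since $\mathcal{M}^{n}$ is compact without boundary, the weighted Laplacian $\mathfrak{L}_{\nu}$ admits $\overline{\Lambda}_{0}=0$ as its bottom eigenvalue, with corresponding eigenfunction any constant. The orthonormality convention
\begin{equation*}
\int_{\mathcal{M}^{n}} \overline{u}_{0}^{2}\, e^{\langle\nu,X\rangle_{g_{0}}}dv=1
\end{equation*}
then pins down
\begin{equation*}
\overline{u}_{0}\equiv \Bigl(\textstyle\int_{\mathcal{M}^{n}} e^{\langle\nu,X\rangle_{g_{0}}}dv\Bigr)^{-1/2}.
\end{equation*}

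Next I would plug $j=0$ into the second inequality of Corollary \ref{c7.2}. The $(n+6)\overline{\Lambda}_{0}$ term drops out because $\overline{\Lambda}_{0}=0$, so we are left with
\begin{equation*}
\sum_{k=1}^{n}\overline{\Lambda}_{k}\leq \int_{\mathcal{M}^{n}} \overline{u}_{0}^{2}\bigl(n^{2}H^{2}+3|\nu^{\top}|_{g_{0}}^{2}\bigr)e^{\langle\nu,X\rangle_{g_{0}}}dv.
\end{equation*}
Since $\overline{u}_{0}^{2}$ is a positive constant, it pulls outside the integral, and substituting its value immediately yields the Reilly-type ratio on the right-hand side of \eqref{Rielly-type-2}.

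There is no genuine obstacle here beyond recognising that the normalisation of the trivial eigenfunction is exactly what produces the denominator $\int_{\mathcal{M}^{n}}e^{\langle\nu,X\rangle_{g_{0}}}dv$; the substantive work has already been carried out in establishing Theorem \ref{thm7.1} and its corollary. Indeed, one could equivalently start from the first inequality of Corollary \ref{c7.2}, and there the cross term $4\overline{\Lambda}_{0}^{1/2}|\nu^{\top}|_{g_{0}}$ also vanishes, giving the sharper bound with coefficient $1$ in front of $|\nu^{\top}|_{g_{0}}^{2}$; the stated corollary uses the slightly weaker form, which is consistent with naming it a Reilly-type generalization where $3|\nu^{\top}|_{g_{0}}^{2}$ encodes the extrinsic contribution of the ambient vector $\nu$.
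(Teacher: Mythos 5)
Your proof is correct and matches the paper's own argument: take $j=0$ in Corollary \ref{c7.2}, use $\overline{\Lambda}_{0}=0$ and the normalization $\int_{\mathcal{M}^{n}}\overline{u}_{0}^{2}e^{\langle\nu,X\rangle_{g_{0}}}dv=1$ to identify the constant $\overline{u}_{0}^{2}$ with the reciprocal of the weighted volume. Your side observation that the first inequality of Corollary \ref{c7.2} would give the sharper coefficient $1$ in front of $|\nu^{\top}|_{g_{0}}^{2}$ (since the cross term vanishes at $j=0$) is also valid.
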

\begin{proof}
Since $\overline{\Lambda}_0=0$ and $\overline{u}_0$ is constant, by taking $j=0$ in the Theorem \ref{thm7.1}, we can infer that,
\begin{equation*}
\begin{aligned}
 \sum^{n}_{k=1}\overline{\Lambda}_{k} \leq
\dfrac{\int_{\mathcal{M}^{n}}\left(n^{2}H^{2}+3|\nu^{\top}|_{g_{0}}^{2}\right)e^{\langle\nu,X\rangle_{g_{0}}}dv}{\int_{\mathcal{M}^{n}}e^{\langle\nu,X\rangle_{g_{0}}}dv},
\end{aligned}
\end{equation*}
where we have used a fact as follows: $$\int_{\mathcal{M}^{n}}\overline{u}_0^2e^{\langle\nu,X\rangle_{g_{0}}}dv=1.$$ Therefore,  we finish the proof of this corollary.
\end{proof}
\begin{rem}\label{rem5.1}
If we take $\nu=0$, the operator $\mathfrak{L}_{\nu}$ is the Beltrami-Laplacian and  we have
\begin{equation*}
\begin{aligned}
\sum^{n}_{k=1}\overline{\Lambda}_{k} \leq
\dfrac{n^{2}\int_{\mathcal{M}^{n}}{H}^{2}dv}{\int_{\mathcal{M}^{n}}dv},
\end{aligned}
\end{equation*}which is a remarkable result obtained by Ilias and Makhoul in {\rm \cite{IM}}. Also, see {\rm \cite{SHI}}.
In particular, when $M^{n}$ is an $n$-dimensional unit sphere $\mathbb{S}^{n}(1)$ and $\nu$ is a zero vector, the identity holds. Hence, our result is a generalization of Reilly's result  in {\rm \cite{R}} on the first
eigenvalue
$$
\overline{\Lambda}_1\leq \dfrac{n\int_{\mathcal{M}^{n}}{H}^{2}dv}{\int_{\mathcal{M}^{n}}dv}.
$$
\end{rem}

Next, we consider that $\mathcal{M}^{n}$ is  an $n$-dimensional  compact minimal submanifold in
the unit sphere $\mathbb{S}^{n+p}(1)$. For this case, we have the following result.
\begin{thm}\label{thm7.6}
Let $\mathcal{M}^{n}$ be  an $n$-dimensional  compact minimal submanifold in
the unit sphere $\mathbb{S}^{n+p}(1)$.  Then, for any $j$, where $j=0,1,2,\cdots$,
eigenvalues of the closed eigenvalue problem \eqref{closed-prob} of the
differential operator $\mathfrak{L}_{\nu}$ satisfy

\begin{equation}
\begin{aligned}
\label{z-ineq-22}\sum^{n}_{k=1}\overline{\Lambda}_{j+k} \leq(n+4)\overline{\Lambda}_{j} +4\overline{D}_{2}\overline{\Lambda}_{j}^{\frac{1}{2}}+\overline{D}^{2}_{2}+n^{2},
\end{aligned}
\end{equation} and

\begin{equation}
\begin{aligned}
\label{z-ineq-2-12} \sum^{n}_{k=1}\overline{\Lambda}_{j+k} \leq(n+6)\overline{\Lambda}_{j}+3\overline{D}_{2}^{2}+n^{2},
\end{aligned}
\end{equation}
where $\overline{D}_{2}= \inf_{\psi\in \Psi}\max_{\mathcal{M}^{n}} |\nu^{\top}|_{g_{0}}.$
\end{thm}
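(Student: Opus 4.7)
The plan is to reduce Theorem \ref{thm7.6} to Theorem \ref{thm7.1} by composing the given isometric immersion with the canonical embedding of the unit sphere into Euclidean space, exactly as in the proof of Corollary \ref{corr-6.2}. Concretely, if $\sigma:\mathcal{M}^{n}\to\mathbb{S}^{n+p}(1)$ is the given isometric immersion and $\psi:\mathbb{S}^{n+p}(1)\hookrightarrow\mathbb{R}^{n+p+1}$ is the canonical inclusion, then the composite $\psi\circ\sigma:\mathcal{M}^{n}\to\mathbb{R}^{n+p+1}$ is an isometric immersion of $\mathcal{M}^{n}$ into a Euclidean space of codimension $p+1$. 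Since the closed eigenvalue problem \eqref{closed-prob} is intrinsic (it only involves the induced metric $g$ and the vector $\nu$), the spectrum $\{\overline{\Lambda}_{k}\}$ is unchanged when one replaces $\sigma$ by $\psi\circ\sigma$.

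The next step is to compute the mean curvature of this composite immersion. Let $\overline{\mathbf{H}}$ and $\mathbf{H}$ denote the mean curvature vectors of $\sigma$ and $\psi\circ\sigma$, respectively. A standard computation (already used in the proof of Corollary \ref{corr-6.2}) yields
\begin{equation*}
|\mathbf{H}|^{2}=|\overline{\mathbf{H}}|^{2}+1.
\end{equation*}
Now I invoke the minimality hypothesis: $\overline{\mathbf{H}}\equiv 0$ on $\mathcal{M}^{n}$, which implies $|\mathbf{H}|^{2}\equiv 1$, and hence $n^{2}H^{2}\equiv n^{2}$ everywhere. Consequently, in the notation of Theorem \ref{thm7.1} applied to the embedding $\psi\circ\sigma$, we have
\begin{equation*}
\overline{C}_{1}=\inf_{\psi\in\Psi}\max_{\mathcal{M}^{n}}n^{2}H^{2}\leq n^{2},
\end{equation*}
and likewise $\overline{D}_{1}\leq\overline{D}_{2}$ after taking the infimum over $\psi\in\Psi$.

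Substituting these bounds directly into the two inequalities \eqref{z-ineq-2} and \eqref{z-ineq-2-1} of Theorem \ref{thm7.1} then gives \eqref{z-ineq-22} and \eqref{z-ineq-2-12}, respectively. There is no genuine obstacle here: the only subtlety to check is that the projection $\nu^{\top}$ onto the tangent bundle of $\mathcal{M}^{n}$ is the same whether one views $\mathcal{M}^{n}$ inside $\mathbb{S}^{n+p}(1)$ or inside $\mathbb{R}^{n+p+1}$, which is immediate from the fact that $T_{p}\mathcal{M}^{n}\subset T_{p}\mathbb{S}^{n+p}(1)\subset T_{p}\mathbb{R}^{n+p+1}$, so $\overline{D}_{2}=\inf_{\psi\in\Psi}\max_{\mathcal{M}^{n}}|\nu^{\top}|_{g_{0}}$ is well defined and is precisely the quantity that enters Theorem \ref{thm7.1} for the composite immersion. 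This completes the reduction.
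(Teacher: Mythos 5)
Your argument is correct and is essentially the paper's own proof: the paper likewise regards $\mathcal{M}^{n}$ as a compact submanifold of $\mathbb{R}^{n+p+1}$ via the canonical embedding of $\mathbb{S}^{n+p}(1)$, notes that minimality forces $H\equiv 1$ (so $n^{2}H^{2}\equiv n^{2}$), and then quotes the general Euclidean-space result (the paper cites Corollary \ref{c7.2}, the integral form of your Theorem \ref{thm7.1}, which is an immaterial difference). The only cosmetic slips are your aside that the problem is ``intrinsic'' and the direction of $\overline{D}_{1}\leq\overline{D}_{2}$: the operator $\mathfrak{L}_{\nu}$ does depend on the immersion, but here it is defined via the composite $\psi\circ\sigma$ so there is nothing to transfer, and the $\inf_{\psi\in\Psi}$ appearing in $\overline{D}_{2}$ is the paper's own (equally loose) convention.
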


\begin{proof} Since $\mathcal{M}^{n}$ is  an $n$-dimensional minimal
submanifold in the unit sphere $\mathbb{S}^{n+p}(1)$, then $\mathcal{M}^n$ can
be seen as a compact submanifold in $\mathbb{R}^{n+p+1}$ with mean
curvature $H\equiv1$. Therefore, by  Corollary \ref{c7.2}, we know that both inequalities \eqref{z-ineq-22} and \eqref{z-ineq-2-12}
hold.
\end{proof}

By the same strategy as the proof of Proposition
\ref{prop2.3}, one also can prove the following proposition.
\begin{prop}\label{prop7.15}
Let $(\mathcal{M}^{n},g)$ be an $n$-dimensional compact Riemannian manifold without boundary.
Assume that $\overline{\Lambda}_{i}$  is  the $i^{\text{th}}$ eigenvalue of the
closed eigenvalue problem \eqref{closed-prob} and $\overline{u}_{i}$ is an
orthonormal eigenfunction corresponding to $\overline{\Lambda}_{i}$, $i =0,
1,2,\cdots$, such that$ \mathfrak{L}_{\nu}\overline{u}_{i} =-\overline{\Lambda}_{i}\overline{u}_{i},$ and $ \int_{\mathcal{M}^{n}}
\overline{u}_{i}\overline{u}_{j}e^{\langle\nu,X\rangle_{g_{0}}}dv=\delta_{ij},$ for any $i,j=0,1,2,\cdots$.
Then, for any function $\overline{\varphi}(x)\in C^{2}(\mathcal{M}^{n})$ and any positive
integer $k$,  eigenvalues of the close eigenvalue problem \eqref{closed-prob} satisfy
\begin{equation*}
\begin{aligned}
\sum^{k}_{i=0}(\overline{\Lambda}_{k+1}-\overline{\Lambda}_{i})^{2}\|\overline{u}_{i}\nabla
\overline{\varphi}\|_{\mathcal{M}^{n}}^{2} \leq\sum^{k}_{i=0}(\overline{\Lambda}_{k+1}-\overline{\Lambda}_{i})
\|2\langle\nabla \overline{\varphi},\nabla \overline{u}_{i}\rangle_{g}+\overline{u}_{i}\mathfrak{L}_{\nu}\overline{\varphi}\|_{\mathcal{M}^{n}}^{2},
\end{aligned}
\end{equation*}
where $
\|\overline{\varphi}(x)\|_{\mathcal{M}^{n}}^{2} =\int_{\mathcal{M}^{n}}\overline{\varphi}^{2}(x)e^{\langle\nu,X\rangle_{g_{0}}}dv.
$
\end{prop}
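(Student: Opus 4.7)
The plan is to transcribe the proof of Proposition \ref{prop2.3} to the closed setting, with the only genuinely new consideration being that sums start at index $0$ and that the lowest eigenfunction $\overline{u}_0$ is a (nonzero) constant corresponding to $\overline{\Lambda}_0 = 0$. Because $\partial \mathcal{M}^n = \emptyset$, the self-adjointness identity \eqref{1.3} applies to any pair of smooth functions on $\mathcal{M}^n$ with no boundary contribution, so every integration-by-parts manipulation used in Section \ref{sec2} carries over verbatim to the weighted measure $e^{\langle\nu,X\rangle_{g_{0}}}dv$ on the closed manifold.

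Concretely, I would introduce
\begin{equation*}
\overline{\sigma}_{ij} := \int_{\mathcal{M}^n} \overline{\varphi}\, \overline{u}_i \overline{u}_j\, e^{\langle\nu,X\rangle_{g_{0}}}\, dv, \qquad \overline{\zeta}_i := \overline{\varphi}\, \overline{u}_i - \sum_{j=0}^{k} \overline{\sigma}_{ij} \overline{u}_j,
\end{equation*}
together with the obvious closed analogues $\overline{\tau}_{ij}$ and $\overline{\Theta}_i$ of $\tau_{ij}$ and $\Theta_i$ from Section \ref{sec2}, and then prove the three statements parallel to Lemma \ref{l2.2}, Lemma \ref{l2.3}, and Lemma \ref{l2.4}. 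The skew-symmetry $\overline{\tau}_{ij} = -\overline{\tau}_{ji}$ again follows directly from \eqref{1.3} and the symmetry $\overline{\sigma}_{ij} = \overline{\sigma}_{ji}$. The Rayleigh--Ritz step yielding $(\overline{\Lambda}_{k+1}-\overline{\Lambda}_i)\|\overline{\zeta}_i\|^2_{\mathcal{M}^n} \le \overline{\Theta}_i$ works because, by construction, $\overline{\zeta}_i$ is weighted-$L^2$-orthogonal to every $\overline{u}_\ell$ with $0 \le \ell \le k$; in particular, the inclusion of $j=0$ in the sum defining $\overline{\zeta}_i$ is essential in order to orthogonalise against the constant eigenfunction $\overline{u}_0$, so that the Rayleigh--Ritz quotient can be bounded by $\overline{\Lambda}_{k+1}$. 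The Cauchy--Schwarz step producing $(\overline{\Lambda}_{k+1}-\overline{\Lambda}_i)^2 \overline{\Theta}_i \le (\overline{\Lambda}_{k+1}-\overline{\Lambda}_i)\|\overline{u}_i \mathfrak{L}_\nu \overline{\varphi} + 2\langle\nabla\overline{\varphi},\nabla\overline{u}_i\rangle_g - \sum_{j=0}^{k}\overline{\tau}_{ij}\overline{u}_j\|^2_{\mathcal{M}^n}$ is then formally identical to \eqref{2.16}--\eqref{2.17}.

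To finish, I would sum over $i = 0, 1, \dots, k$, expand the right-hand norm via Parseval together with $\overline{\tau}_{ij} = (\overline{\Lambda}_i - \overline{\Lambda}_j)\overline{\sigma}_{ij}$, evaluate $\overline{\Theta}_i$ by the identity $\overline{\varphi}\, \mathfrak{L}_\nu \overline{\varphi} - \tfrac{1}{2}\mathfrak{L}_\nu \overline{\varphi}^{2} = -|\nabla \overline{\varphi}|_g^2$, and then exploit the antisymmetry cancellation
\begin{equation*}
\sum_{i,j=0}^{k}(\overline{\Lambda}_{k+1}-\overline{\Lambda}_i)^2(\overline{\Lambda}_i-\overline{\Lambda}_j)\overline{\sigma}_{ij}^2 = -\sum_{i,j=0}^{k}(\overline{\Lambda}_{k+1}-\overline{\Lambda}_i)(\overline{\Lambda}_i-\overline{\Lambda}_j)^2\overline{\sigma}_{ij}^2
\end{equation*}
to make the mixed $\overline{\sigma}_{ij}^2$ terms cancel, delivering the asserted inequality. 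The main (and mild) obstacle is purely bookkeeping: one must verify that the $i=0$ summand causes no issues, which it does not, since $\nabla \overline{u}_0 = 0$ makes both sides of that summand reduce to explicit nonnegative expressions, and that $\overline{u}_0$ is correctly included in the orthogonalisation that defines $\overline{\zeta}_i$. No new analytic ingredient beyond what has already been established in Section \ref{sec2} is required.
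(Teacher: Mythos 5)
Your proposal is correct and is exactly what the paper does: the paper proves Proposition \ref{prop7.15} simply by invoking the same strategy as Proposition \ref{prop2.3}, which is precisely the transcription you carry out, including the essential point that the orthogonalisation defining $\overline{\zeta}_i$ must range over $j=0,\dots,k$ so as to kill the component along the constant eigenfunction $\overline{u}_0$ before applying Rayleigh--Ritz. No further comment is needed.
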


By using  Proposition \ref{prop7.15}  and Lemma \ref{lem2.5}, we can establish the following eigenvalue inequality of Yang type.

\begin{thm}\label{thm7.16}
Let $(\mathcal{M}^{n},g)$ be an $n$-dimensional closed Riemannian manifold isometrically embedded into the Euclidean space $\mathbb{R}^{n+p}$.
Assume that $\overline{\Lambda}_{i}$  is the $i^{th}$ eigenvalue of eigenvalue problem
\eqref{closed-prob} of  the Xin-Laplacian. Then, we have

\begin{equation}
\begin{aligned}
\label{thm-6.2-1} \sum^{k}_{i=0}(\overline{\Lambda}_{k+1}-\overline{\Lambda}_{i})^{2}
\leq\frac4n&\sum^{k}_{i=0}(\overline{\Lambda}_{k+1}-\overline{\Lambda}_{i})  \left(\overline{\Lambda}_i+\frac{1}{4}\overline{D}_{1}\overline{\Lambda}_{j}^{\frac{1}{2}}+\overline{D}_{1}^{2}+\frac{1}{4}\overline{C}_{1}\right),
\end{aligned}
\end{equation} and

\begin{equation}
\begin{aligned}
\label{thm-6.2-2}\sum^{k}_{i=0}(\overline{\Lambda}_{k+1}-\overline{\Lambda}_{i})^{2}
\leq\frac6n&\sum^{k}_{i=0}(\overline{\Lambda}_{k+1}-\overline{\Lambda}_{i})\left(\overline{\Lambda}_i+3\overline{D}_{1}^{2}+\frac{1}{6}\overline{C}_{1}\right),
\end{aligned}
\end{equation}
where $\overline{C}_{1}= \inf_{\psi\in \Psi}\max_{\mathcal{M}^{n}}n^{2}H^{2}\ \  and \ \ \overline{D}_{1}= \max_{\mathcal{M}^{n}}|\nu^{\top}|_{g_{0}}.$
\end{thm}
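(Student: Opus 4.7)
\noindent\textbf{Proof proposal for Theorem \ref{thm7.16}.} The plan is to mirror the argument of Theorem \ref{thm1.1}, replacing Proposition \ref{prop2.3} by its closed-manifold counterpart Proposition \ref{prop7.15}, and taking as test functions the coordinate functions of an isometric embedding into $\mathbb{R}^{n+p}$. Since the existence of such an embedding is hypothesised (or guaranteed by Nash's theorem), we denote the standard coordinates by $x_{1},\dots,x_{n+p}$ and view them as globally defined smooth functions on $\mathcal{M}^{n}$.

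First, I would apply Proposition \ref{prop7.15} to each $\overline{\varphi}=x_{\alpha}$, $1\le\alpha\le n+p$, and sum on $\alpha$. The left-hand side becomes
\begin{equation*}
\sum_{\alpha=1}^{n+p}\sum_{i=0}^{k}(\overline{\Lambda}_{k+1}-\overline{\Lambda}_{i})^{2}\|\overline{u}_{i}\nabla x_{\alpha}\|_{\mathcal{M}^{n}}^{2}
= n\sum_{i=0}^{k}(\overline{\Lambda}_{k+1}-\overline{\Lambda}_{i})^{2},
\end{equation*}
by Lemma \ref{lem2.5} (specifically $\sum_{\alpha}|\nabla x_{\alpha}|_{g}^{2}=n$) together with the orthonormality $\int_{\mathcal{M}^{n}}\overline{u}_{i}^{2}e^{\langle\nu,X\rangle_{g_{0}}}dv=1$. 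On the right, after expanding the square and using $\mathfrak{L}_{\nu}x_{\alpha}=\Delta x_{\alpha}+\langle\nu,\nabla x_{\alpha}\rangle_{g_{0}}$, I would apply the identities $\sum_{\alpha}(\Delta x_{\alpha})^{2}=n^{2}H^{2}$, $\sum_{\alpha}\Delta x_{\alpha}\nabla x_{\alpha}=0$, and $\sum_{\alpha}\langle\nabla x_{\alpha},\nabla \overline{u}_{i}\rangle_{g}^{2}=|\nabla \overline{u}_{i}|_{g}^{2}$ from Lemma \ref{lem2.5}, plus the extrinsic formulas $\sum_{\alpha}\langle\nabla x_{\alpha},\nu\rangle_{g_{0}}^{2}=|\nu^{\top}|_{g_{0}}^{2}$ (Lemma \ref{lem3.3}), $\sum_{\alpha}\Delta x_{\alpha}\langle\nabla x_{\alpha},\nu\rangle_{g_{0}}=0$ (an $(n+p)$-coordinate analogue of \eqref{hhv}), and the Cauchy--Schwarz bound $\sum_{\alpha}\langle\nabla x_{\alpha},\nabla \overline{u}_{i}\rangle_{g}\langle\nabla x_{\alpha},\nu\rangle_{g_{0}}\le|\nabla \overline{u}_{i}|_{g}|\nu^{\top}|_{g_{0}}$ (Lemma \ref{lem3.2}).

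Carrying these substitutions through, the cross terms collapse and I arrive at
\begin{equation*}
\sum_{\alpha=1}^{n+p}\|\,2\langle\nabla x_{\alpha},\nabla \overline{u}_{i}\rangle_{g}+\overline{u}_{i}\mathfrak{L}_{\nu}x_{\alpha}\|_{\mathcal{M}^{n}}^{2}
\le 4\overline{\Lambda}_{i}+\int_{\mathcal{M}^{n}}\overline{u}_{i}^{2}\bigl(n^{2}H^{2}+|\nu^{\top}|_{g_{0}}^{2}\bigr)e^{\langle\nu,X\rangle_{g_{0}}}dv
+4\overline{\Lambda}_{i}^{1/2}\Bigl(\int_{\mathcal{M}^{n}}\overline{u}_{i}^{2}|\nu^{\top}|_{g_{0}}^{2}e^{\langle\nu,X\rangle_{g_{0}}}dv\Bigr)^{1/2},
\end{equation*}
where I used $\int_{\mathcal{M}^{n}}|\nabla \overline{u}_{i}|_{g}^{2}e^{\langle\nu,X\rangle_{g_{0}}}dv=\overline{\Lambda}_{i}$ from \eqref{1.3}. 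Dividing by $n$ and taking $\overline{C}_{1}=\inf_{\psi\in\Psi}\max_{\mathcal{M}^{n}}n^{2}H^{2}$ and $\overline{D}_{1}=\max_{\mathcal{M}^{n}}|\nu^{\top}|_{g_{0}}$ yields \eqref{thm-6.2-1}. For \eqref{thm-6.2-2}, I would instead bound the last mixed term via the mean value inequality $2\overline{\Lambda}_{i}^{1/2}(\int \overline{u}_{i}^{2}|\nu^{\top}|^{2})^{1/2}\le \overline{\Lambda}_{i}+\int \overline{u}_{i}^{2}|\nu^{\top}|^{2}$, absorbing an extra $\overline{\Lambda}_{i}$ into the coefficient and turning the $|\nu^{\top}|_{g_{0}}^{2}$ term into a constant $3|\nu^{\top}|_{g_{0}}^{2}$ (up to the coefficient adjustments recorded in the statement).

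The only subtlety is that the summation index now runs from $i=0$; however $\overline{\Lambda}_{0}=0$ and $\overline{u}_{0}$ is a (weighted) constant, so $\|\overline{u}_{0}\nabla x_{\alpha}\|^{2}_{\mathcal{M}^{n}}$ is still computed from $\sum_{\alpha}|\nabla x_{\alpha}|_{g}^{2}=n$ after normalisation, and no boundary integration by parts is needed because $\mathcal{M}^{n}$ is closed, so \eqref{1.3} holds directly for all smooth functions. I expect the main obstacle to be bookkeeping---specifically, verifying that the orthogonal-matrix trick behind $\sum_{\alpha}\langle\nabla x_{\alpha},\nabla w\rangle_{g}\langle\nabla x_{\alpha},\nu\rangle_{g_{0}}\le|\nabla w|_{g}|\nu^{\top}|_{g_{0}}$ remains valid when applied to the global coordinates $x_{\alpha}$ rather than the rotated coordinates $h_{\alpha}$ of \eqref{h-a}, and that the Cauchy--Schwarz inequality is applied in the correct direction so that one recovers precisely the constants $\overline{D}_{1}$ and $\overline{C}_{1}$ claimed in the statement.
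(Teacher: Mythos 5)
Your proposal is correct and follows essentially the same route as the paper, whose own proof of Theorem \ref{thm7.16} simply states that one repeats the argument of Theorem \ref{thm1.1} verbatim (coordinate functions as test functions in Proposition \ref{prop7.15}, the identities of Lemma \ref{lem2.5}, Lemma \ref{lem3.3} and Lemma \ref{lem3.2}, then Cauchy--Schwarz for \eqref{thm-6.2-1} and the mean value inequality for \eqref{thm-6.2-2}), taking care only to start the eigenvalue count at $i=0$. The constants you derive are the ones from Theorem \ref{thm1.1}, i.e.\ $\overline{D}_{1}\overline{\Lambda}_{i}^{1/2}+\tfrac14\overline{D}_{1}^{2}$ and $\tfrac12\overline{D}_{1}^{2}$, which indicates the slightly different coefficients printed in the statement of Theorem \ref{thm7.16} are typographical rather than a gap in your argument.
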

\begin{proof}The proof almost is a copy of the proof of Theorem \ref{thm1.1} word by word, and the only thing needs to be done is to
notices to count the number of eigenvalues from $0$. \end{proof}

By Theorem \ref{thm7.16}, we have following corollary.
\begin{corr}\label{coro7.17}
Let $(\mathcal{M}^{n},g)$ be an $n$-dimensional closed Riemannian manifold isometrically embedded into the Euclidean space $\mathbb{R}^{n+p}$.
Assume that $\overline{\Lambda}_{i}$  is the $i^{th}$ eigenvalue of eigenvalue problem
\eqref{closed-prob} of  the Xin-Laplacian. Then, we have

\begin{equation*}
\begin{aligned}
 \sum^{k}_{i=0}(\overline{\Lambda}_{k+1}-\overline{\Lambda}_{i})^{2}
\leq\frac4n&\sum^{k}_{i=0}(\overline{\Lambda}_{k+1}-\overline{\Lambda}_{i}) \\&\times \left[\overline{\Lambda}_i+\frac{1}{4}\int_{\mathcal{M}^{n}}\overline{u}
_{i}^{2}\left(n^{2}H^{2} +|\nu^{\top}|_{g_{0}}^{2}+4\overline{\Lambda}_{j}^{\frac{1}{2}}|\nu^{\top}|_{g_{0}}\right)e^{\langle\nu,X\rangle_{g_{0}}}dv\right],
\end{aligned}
\end{equation*}
and
\begin{equation*}
\begin{aligned}
 \sum^{k}_{i=0}(\overline{\Lambda}_{k+1}-\overline{\Lambda}_{i})^{2}
\leq\frac6n&\sum^{k}_{i=0}(\overline{\Lambda}_{k+1}-\overline{\Lambda}_{i})   \left[\overline{\Lambda}_i+\frac{1}{6}\int_{\mathcal{M}^{n}}\overline{u}
_{i}^{2}\left(n^{2}H^{2} +3|\nu^{\top}|_{g_{0}}^{2} \right)e^{\langle\nu,X\rangle_{g_{0}}}dv\right].
\end{aligned}
\end{equation*}
\end{corr}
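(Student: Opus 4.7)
The plan is to retrace the proof of Theorem \ref{thm7.16}, stopping just before passing to the suprema that define the constants $\overline{C}_{1}$ and $\overline{D}_{1}$. The situation is precisely analogous to the way Corollary \ref{corr3.1} sits inside the argument for Theorem \ref{thm1.1}; only the indexing starts at $0$ because of the closed eigenvalue convention, and the domain of integration is all of $\mathcal{M}^{n}$ rather than a bounded $\Omega$.

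First I would invoke Nash's embedding theorem so that the ambient coordinate functions $x_{1},\dots,x_{n+p}$ of $\mathbb{R}^{n+p}$ restrict to globally defined smooth functions on $\mathcal{M}^{n}$. Apply Proposition \ref{prop7.15} with test function $\overline{\varphi}=x_{\alpha}$ and sum the resulting inequality over $\alpha=1,\dots,n+p$. On the left the identity $\sum_{\alpha}|\nabla x_{\alpha}|_{g}^{2}=n$ from Lemma \ref{lem2.5} gives exactly $n\sum_{i=0}^{k}(\overline{\Lambda}_{k+1}-\overline{\Lambda}_{i})^{2}$. On the right, expand
\[
\big(2\langle \nabla x_{\alpha},\nabla \overline{u}_{i}\rangle_{g} + \overline{u}_{i}\mathfrak{L}_{\nu}x_{\alpha}\big)^{2}
=\big(2\langle \nabla x_{\alpha},\nabla \overline{u}_{i}\rangle_{g} + \overline{u}_{i}\Delta x_{\alpha} + \overline{u}_{i}\langle \nu,\nabla x_{\alpha}\rangle_{g_{0}}\big)^{2},
\]
and sum over $\alpha$. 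Using Lemma \ref{lem2.5} together with the Chen--Cheng type identities of Lemmas \ref{lem3.3} and \ref{lem3.2}, and in particular \eqref{hhu}, \eqref{hhv}, \eqref{vv2} and \eqref{nab-u-2}, all cross terms either collapse or vanish; what survives is
\[
\sum_{\alpha}\big(2\langle \nabla x_{\alpha},\nabla \overline{u}_{i}\rangle_{g} + \overline{u}_{i}\mathfrak{L}_{\nu}x_{\alpha}\big)^{2}
\leq 4|\nabla \overline{u}_{i}|_{g}^{2} + \overline{u}_{i}^{2}\big(n^{2}H^{2}+|\nu^{\top}|_{g_{0}}^{2}\big) + 4\overline{u}_{i}|\nabla \overline{u}_{i}|_{g}|\nu^{\top}|_{g_{0}},
\]
where the last term uses Lemma \ref{new-lemma1} to control $\langle \nu,\nabla \overline{u}_{i}\rangle_{g_{0}}$.

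For the first inequality, integrate against the weight $e^{\langle\nu,X\rangle_{g_{0}}}$ and handle the cross term by Cauchy--Schwarz,
\[
4\int_{\mathcal{M}^{n}}\overline{u}_{i}|\nabla \overline{u}_{i}|_{g}|\nu^{\top}|_{g_{0}}e^{\langle\nu,X\rangle_{g_{0}}}dv
\leq 4\overline{\Lambda}_{i}^{\frac{1}{2}}\left(\int_{\mathcal{M}^{n}}\overline{u}_{i}^{2}|\nu^{\top}|_{g_{0}}^{2}e^{\langle\nu,X\rangle_{g_{0}}}dv\right)^{\frac{1}{2}},
\]
recognizing $\int_{\mathcal{M}^{n}}|\nabla \overline{u}_{i}|_{g}^{2}e^{\langle\nu,X\rangle_{g_{0}}}dv=\overline{\Lambda}_{i}$ via the self-adjointness identity \eqref{1.3}. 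Dividing by $n$ yields the stated bound with factor $4/n$. For the second inequality, replace Cauchy--Schwarz on the cross term by the mean value inequality, $4\overline{u}_{i}|\nabla \overline{u}_{i}|_{g}|\nu^{\top}|_{g_{0}}\leq 2|\nabla \overline{u}_{i}|_{g}^{2}+2\overline{u}_{i}^{2}|\nu^{\top}|_{g_{0}}^{2}$, absorb the $|\nabla \overline{u}_{i}|_{g}^{2}$ piece into the $\overline{\Lambda}_{i}$ term, and rescale, producing the coefficients $6/n$ and the $3|\nu^{\top}|_{g_{0}}^{2}$ inside the integrand.

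I do not anticipate a genuine obstacle: every identity needed has been assembled in Section \ref{sec2}, and the closed-manifold setting plays no adversarial role because the coordinate functions are globally defined and no boundary term ever appears in \eqref{1.3}. The only point that requires a moment of care is the bookkeeping of the index range $i=0,1,\dots,k$ and the observation that Proposition \ref{prop7.15}, rather than Proposition \ref{prop2.3}, must be invoked — but this is merely a notational adjustment. Thus the corollary follows directly from the internal step of the argument for Theorem \ref{thm7.16}, before the maxima over $\mathcal{M}^{n}$ and the infima over $\Psi$ are taken.
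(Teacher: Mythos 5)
Your proposal is correct and follows essentially the same route as the paper: the corollary is exactly the intermediate stage of the proof of Theorem \ref{thm7.16} (itself a verbatim transcription of the proof of Theorem \ref{thm1.1} with indexing from $0$ and Proposition \ref{prop7.15} in place of Proposition \ref{prop2.3}), read off before passing to $\max_{\mathcal{M}^{n}}$ and $\inf_{\psi\in\Psi}$, precisely as Corollary \ref{corr3.1} is extracted from the proof of Theorem \ref{thm1.1}. Your handling of the cross term by Cauchy--Schwarz for the first inequality and by the mean value inequality for the second matches the paper's computation in \eqref{3.3} and \eqref{3.3-1}.
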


Finally, we assume that $\mathcal{M}^{n}$ is  an $n$-dimensional  compact minimal submanifold in
the unit sphere $\mathbb{S}^{n+p}(1)$. For this case, we have the following theorem.
\begin{thm}\label{thm7.18}
Let $\mathcal{M}^{n}$ be  an $n$-dimensional  compact minimal submanifold in
the unit sphere $\mathbb{S}^{n+p}(1)$.  Then, for any $j$, where $j=0,1,2,\cdots$,
eigenvalues of the closed eigenvalue problem \eqref{closed-prob} of
differential operator $\mathfrak{L}_{\nu}$ satisfy

\begin{equation}
\begin{aligned}
\label{thm-6.2-1} \sum^{k}_{i=0}(\overline{\Lambda}_{k+1}-\overline{\Lambda}_{i})^{2}
\leq\frac4n&\sum^{k}_{i=0}(\overline{\Lambda}_{k+1}-\overline{\Lambda}_{i})  \left(\overline{\Lambda}_i+\frac{1}{4}\overline{D}_{2}\overline{\Lambda}_{j}^{\frac{1}{2}}+\overline{D}_{2}^{2}+\frac{1}{4}n^{2}\right),
\end{aligned}
\end{equation} and

\begin{equation}
\begin{aligned}
\label{thm-6.2-2} \sum^{k}_{i=0}(\overline{\Lambda}_{k+1}-\overline{\Lambda}_{i})^{2}
\leq\frac6n&\sum^{k}_{i=0}(\overline{\Lambda}_{k+1}-\overline{\Lambda}_{i})\left(\overline{\Lambda}_i+3\overline{D}_{2}^{2}+\frac{1}{6}n^{2}\right),
\end{aligned}
\end{equation}
where
$\overline{D}_{2}= \inf_{\psi\in \Psi}\max_{\mathcal{M}^{n}} |\nu^{\top}|_{g_{0}}.$
\end{thm}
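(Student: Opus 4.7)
The plan is to deduce Theorem~\ref{thm7.18} as a direct specialization of Theorem~\ref{thm7.16} (equivalently Corollary~\ref{coro7.17}), exactly in the spirit of the argument already used to pass from Corollary~\ref{c7.2} to Theorem~\ref{thm7.6}. The key observation is that a minimal submanifold of the unit sphere becomes, after composition with the canonical isometric embedding $\mathbb{S}^{n+p}(1)\hookrightarrow\mathbb{R}^{n+p+1}$, an isometric submanifold of Euclidean space with mean curvature identically equal to $1$.

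First, I would set up the composition
\[
\mathcal{M}^{n}\xrightarrow{\;\sigma\;}\mathbb{S}^{n+p}(1)\xrightarrow{\;\psi\;}\mathbb{R}^{n+p+1},
\]
where $\psi$ is the canonical inclusion. Let $\widehat{\mathbf{H}}$ denote the mean curvature vector of $\sigma$ in $\mathbb{S}^{n+p}(1)$ and $\mathbf{H}$ that of $\psi\circ\sigma$ in $\mathbb{R}^{n+p+1}$. The classical identity $|\mathbf{H}|^{2}=|\widehat{\mathbf{H}}|^{2}+1$ (already invoked in the proof of Corollary~\ref{corr-6.2}) together with the minimality hypothesis $\widehat{\mathbf{H}}\equiv 0$ forces $H\equiv 1$ for $\mathcal{M}^{n}$ viewed inside $\mathbb{R}^{n+p+1}$, so that $n^{2}H^{2}\equiv n^{2}$. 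Consequently $\overline{C}_{1}=\inf_{\psi\in\Psi}\max_{\mathcal{M}^{n}}n^{2}H^{2}\leq n^{2}$ when the relevant infimum is taken over the sphere-factored immersions.

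Next, I would feed this into the two inequalities of Theorem~\ref{thm7.16}. Inserting $\overline{C}_{1}=n^{2}$ in the right-hand sides of \eqref{thm-6.2-1} and \eqref{thm-6.2-2} produces inequalities in which the $\tfrac{1}{4}\overline{C}_{1}$ and $\tfrac{1}{6}\overline{C}_{1}$ terms become $\tfrac{1}{4}n^{2}$ and $\tfrac{1}{6}n^{2}$ respectively. At the same time, since eigenvalues of $\mathfrak{L}_{\nu}$ are invariants of the Riemannian structure and do not depend on the particular choice of ambient isometric immersion, I am free to minimize $\max_{\mathcal{M}^{n}}|\nu^{\top}|_{g_{0}}$ over all admissible isometric immersions $\psi\in\Psi$; that infimum is, by definition, $\overline{D}_{2}$. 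Replacing the intrinsic quantity $\overline{D}_{1}$ by $\overline{D}_{2}$ in Theorem~\ref{thm7.16} yields exactly \eqref{thm-6.2-1} and \eqref{thm-6.2-2} in the statement of Theorem~\ref{thm7.18}.

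The argument is essentially a bookkeeping step, and I do not expect any genuine obstacle: everything nontrivial has been carried out in Proposition~\ref{prop7.15}, Theorem~\ref{thm7.16}, and the preceding auxiliary lemmas. The only point that requires minor care is justifying the infimum in the definition of $\overline{D}_{2}$, i.e.\ verifying that the estimates derived from Theorem~\ref{thm7.16} hold for \emph{each} representative immersion into $\mathbb{R}^{n+p+1}$ constructed through $\psi\circ\sigma$, and then taking the infimum on the right. This is permissible because the left-hand side depends only on the spectrum of $\mathfrak{L}_{\nu}$, which is intrinsic up to the choice of $\nu$, while every quantity on the right involving the extrinsic data is controlled uniformly by its sphere-mediated immersion.
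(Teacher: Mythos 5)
Your proposal is correct and follows essentially the same route as the paper: the paper's proof also regards $\mathcal{M}^{n}$ as a compact submanifold of $\mathbb{R}^{n+p+1}$ with mean curvature $H\equiv 1$ and then applies Corollary~\ref{coro7.17} (the integral form of Theorem~\ref{thm7.16}) to obtain both inequalities. The remark about taking the infimum over immersions to pass from $\overline{D}_{1}$ to $\overline{D}_{2}$ is the same invariance argument already used in the proof of Theorem~\ref{thm1.1}.
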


\begin{proof} Since $\mathcal{M}^{n}$ is  an $n$-dimensional minimal
submanifold in the unit sphere $\mathbb{S}^{n+p}(1)$, then $\mathcal{M}^n$ can
be viewed as a compact submanifold in $\mathbb{R}^{n+p+1}$ with mean
curvature $H=1$. Therefore, by the Corollary \ref{coro7.17}, it is easy to see that both inequalities \eqref{thm-6.2-1} and \eqref{thm-6.2-2}
hold. This completes the proof of this theorem.
\end{proof}

\begin{rem}In Theorem {\rm \ref{thm7.18}},  we assume that  $\nu=0$, and then, \eqref{thm-6.2-1} implies that

\begin{equation}
\begin{aligned}
\label{yang-closed}
\sum^{k}_{i=0}(\overline{\Lambda}_{k+1}-\overline{\Lambda}_{i})^{2}
\leq\frac4n\sum^{k}_{i=0}(\overline{\Lambda}_{k+1}-\overline{\Lambda}_{i})\left( \overline{\Lambda}_i+\frac{n^{2}}{4}\right),
\end{aligned}
\end{equation}which is given by Cheng and Yang in {\rm \cite{CY1}}.
\end{rem}

\subsection{Geometry of Isoparametric Foliations}\label{subsec6.2}
In recent years, isoparametric theory has remarkable applications in the research of geometry of submanifolds and spectrum analysis.
For the sake of reader's convenience, we recall some fundamental facts about the isoparametric hypersurfaces and focal submanifolds. For more information on isoparametric hypersurfaces and focal submanifolds, we refer the readers to the good articles \cite{CR,TY2}. Firstly, let us introduce the definition of isoparametric functions. For this purpose, let $b$ and $a$ be a smooth function and a continuous function defined on $\mathbb{R}$, respectively.  Let $f$ be a smooth function defined on $\mathbb{S}^{n+1}(1)$.  If $f$ satisfies

\begin{equation}\label{transnormal}
|\nabla f|_{g}^{2}=b(f),\end{equation}and

\begin{equation}\label{iso-ineq-1}
\Delta f=a(f),
\end{equation}
then it is said to be isoparametric.  A function satisfying only
\eqref{transnormal} is called transnormal. The geometric meaning of \eqref{transnormal} and \eqref{iso-ineq-1} is that the regular level
hypersurfaces of $f$ are parallel with each other and have constant mean curvatures. In this sense,
the regular level hypersurfaces of $f$ are called \emph{isoparametric hypersurfaces}, and the two singular level
sets of $f$ are called \emph{focal submanifolds}. Of course, one can also define isoparametric hypersurfaces via an extrinsic geometric viewpoint as follows: A hypersurface $\mathcal{M}^{n}$ in the $(n+1)$-dimensional unit sphere $\mathbb{S}^{n+1}(1)$ is
said to be isoparametric, if all of the principle curvatures  are constant functions.  A well-known result of Cartan states that isoparametric hypersurfaces come as a family of parallel hypersurfaces. To be more specific, given an isoparametric hypersurface $\mathcal{M}^{n}$ in $\mathbb{S}^{n+1}(1)$ and a smooth field $\xi$ with unit normals to $\mathcal{M}^{n}$, for each $x \in \mathcal{M}^{n}$ and $\theta \in \mathbb{R} ,$ we can define $\phi_{\theta}: \mathcal{M}^{n} \rightarrow$ $\mathbb{S}^{n+1}(1)$ by
$$
\phi_{\theta}(x)=\cos \theta x+\sin \theta \xi(x).
$$
Here, $\phi_{\theta}(x)$ is the point at an oriented distance $\theta$ to $\mathcal{M}$ along the normal geodesic through $x$. If $\theta \neq \theta_{\epsilon}$ for any $\epsilon=1, \cdots, \ell$, where $\ell$ denotes the number of distinct constant principal curvatures, $\phi_{\theta}$ is a parallel hypersurface to $M$ at an oriented distance $\theta$. If $\theta=\theta_{\epsilon}$ for some $\epsilon=1, \cdots,\ell,$ it is easy to find that for any vector $Y$ in the principal distributions $$E_{\epsilon}(x)= \left\{Y \in T_{x} M \mid \mathcal{A}_{\xi} Y=\cot \theta_{\epsilon}Y\right\},$$ where $\mathcal{A}_{\xi}$ is a shape operator with
respect to $\xi$,

$$\left(\phi_{\theta}\right)_{*}Y=\left(\cos \theta-\sin \theta \cot \theta_{\epsilon}\right) Y=\frac{\sin \left(\theta_{\epsilon}-\theta\right)}{\sin \theta_{\epsilon}}Y=0 .$$ In
other words, if $\cot \theta=\cot \theta_{\epsilon}$ is a principal curvature of $\mathcal{M}^{n}$, $\phi_{\theta}$ is not an immersion, but is actually a focal submanifold of codimension $m_{\epsilon}+1$  in $\mathbb{S}^{n+1}(1)$.  Using an elegant topological method, M\"{u}nzner proved the remarkable result that the number $\ell$ must be $1,2,3,4,$ or 6 $m_{\epsilon}=m_{\epsilon+2}($ indices mod $\ell) ; \theta_{\epsilon}=\theta_{1}+\frac{\epsilon-1}{\ell} \pi(\epsilon=1, \cdots, \ell) ;$ and when
$\ell$ is odd, $m_{1}=m_{2}$(cf. \cite{Mun1,Mun2}).
M\"{u}nzner asserted that regardless of the number of distinct principal curvatures of $M,$ there are only two distinct focal submanifolds in a parallel family of isoparametric hypersurfaces, and every isoparametric hypersurface is a tube of constant radius over each focal submanifold. We denote the distinct focal submanifolds by $\mathcal{M}_{1}, \mathcal{M}_{2}$ according to the inverse
images of maximum or minimum values of $f$ satisfy the equations system \eqref{transnormal} and \eqref{iso-ineq-1}, respectively. It is well known that $\mathcal{M}_{i}$, where $i=1,2$, are
minimal submanifolds in $\mathbb{S}^{n+1}(1)$.
Assuming that $\left\{\textbf{P}_{0}, \cdots, \textbf{P}_{m}\right\}$ is a symmetric Clifford system on $\mathbb{R}^{2 l}$, this is, $\textbf{P}_{i}$ 's are symmetric matrices satisfying $$\textbf{P}_{i} \textbf{P}_{j}+\textbf{P}_{j} \textbf{P}_{i}=2 \delta_{i j} \textbf{I}_{2 l},$$ in \cite{FKM}, Ferus, Karcher and M\"{u}nzner constructed a polynomial function $\Re$ on $\mathbb{R} ^{2l}$ as follows:
$$
\begin{array}{c}
\Re: \mathbb{R} ^{2 l} \rightarrow \mathbb{R}, \\
\Re(x)=|x|^{4}-2 \sum_{i=0}^{m}\left\langle \textbf{P}_{i} x, x\right\rangle^{2}.
\end{array}
$$
Then, each level hypersurface of $f=\left.\Re\right|_{S^{2 l-1}}$, i.e., the preimage of some regular value of $f,$ is an isoparametric hypersurface with four distinct constant principal curvatures. We choose $\xi=\frac{\nabla f}{|\nabla f|},$  and it can be asserted that $\mathcal{M}_{1}=f^{-1}(1), \mathcal{M}_{2}=f^{-1}(-1)$, with codimensions $m_{1}+1$ and $m_{2}+1$ in $\mathbb{S}^{n+1}(1),$ respectively. The multiplicity pairs $\left(m_{1}, m_{2}\right)=(m, l-m-1),$ provided $m>0$ and $l-m-1>0,$ where $l=k \delta(m)$ $(k=1,2,3, \cdots)$ and $\delta(m)$ is the dimension of an irreducible module of the Clifford algebra $C _{m-1}$ on $\mathbb{R} ^{l}$. See \cite{GTY}.

\subsection{Eigenvalues on the Isoparametric Hypersurfaces of Laplacian}\label{subsec6.3}

In 1982, Yau posed a famous conjecture as follows:
\begin{con}\label{yau-conj}\textbf{\emph{(Yau's Conjecture{\rm \cite{Yau}})}}~  The first nontrivial(non-zero) eigenvalue of Beltrami-Laplacian for every closed embedding minimal hypersurface in the unit sphere equals to the dimension of the hypersurface.\end{con}

Attacking Yau's conjecture, a significant breakthrough to it was made by Choi and Wang \cite{CW}. They proved that the first eigenvalue of every (embedded ) closed minimal hypersurface in $\mathbb{S}^{n+1}(1)$ is not smaller than $\frac{n}{2}$. Furthermore, Brendle pointed out that the first eigenvalue is larger than $\frac{n}{2}$ in his survey paper \cite{Br}. Usually, the calculation of the eigenvalues of the Beltrami-Laplacian, even of the first eigenvalue, is rather complicated and difficult. Up to now, Yau's conjecture remains unsolved.
In 2013, Tang and Yan made an extremely important contribution to this conjecture in \cite{TY1}, where they made an affirmative answer to this conjecture under the condition that $\mathcal{M}^{n}$ is a closed embedding isoparametric hypersurfaces  in $\mathbb{S}^{n+1}(1)$. For more progress on this conjecture, we refer the readers to \cite{Ko,MOU,Mut,Sol1,Sol2,Sol3} and references therein. As a fascinating application of Theorem \ref{thm7.1}, we can show the following result.

\begin{thm}\label{thm7.7}Let $\mathcal{M}^{n}$ be an $n$-dimensional compact minimal isoparametric hypersurface in the unit sphere $\mathbb{S} ^{n+1}(1)$. Then, eigenvalues of the closed eigenvalue problem \eqref{closed-prob} of the Beltrami-Laplacian satisfy
\begin{equation}\label{iso-ineq}
\frac{1}{n} \sum_{k=1}^{n} \overline{\Lambda}_{n_{0}+k} \leq 2 n+4,
\end{equation}where $n_{0}$ denotes the value of the multiplicity of the first eigenvalue.\end{thm}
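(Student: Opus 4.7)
The plan is to combine two ingredients: the universal eigenvalue inequality of Theorem \ref{thm7.6} for minimal submanifolds of the unit sphere, specialized to the case $\nu=0$ (so that $\mathfrak{L}_{\nu}$ reduces to the Laplace--Beltrami operator), together with the resolution of Yau's conjecture for isoparametric hypersurfaces due to Tang and Yan \cite{TY1}, which pins down the exact value of the first nontrivial eigenvalue.

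First, I would observe that when $\nu = 0$, the Xin-Laplacian $\mathfrak{L}_{\nu}$ coincides with the Laplace--Beltrami operator $\Delta$, and the closed eigenvalue problem \eqref{closed-prob} becomes the standard closed problem \eqref{closed-prob-2}. Since $\mathcal{M}^{n}$ is a minimal hypersurface in $\mathbb{S}^{n+1}(1)$, Theorem \ref{thm7.6} applies with codimension $p = 1$ inside the sphere (hence $p+1 = 2$ codimension in $\mathbb{R}^{n+2}$), and with $\overline{D}_{2} = 0$ because $\nu = 0$. This yields, for every $j = 0, 1, 2, \ldots$, the universal inequality
\begin{equation*}
\sum_{k=1}^{n} \overline{\Lambda}_{j+k} \;\leq\; (n+4)\,\overline{\Lambda}_{j} + n^{2}.
\end{equation*}

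Next, I would invoke the theorem of Tang and Yan \cite{TY1}, which settles Yau's conjecture affirmatively in the isoparametric setting: for every closed minimal isoparametric hypersurface in $\mathbb{S}^{n+1}(1)$, the first nonzero eigenvalue of the Laplace--Beltrami operator equals the ambient dimension $n$. By the definition of $n_{0}$ as the multiplicity of this first eigenvalue, we have
\begin{equation*}
\overline{\Lambda}_{1} = \overline{\Lambda}_{2} = \cdots = \overline{\Lambda}_{n_{0}} = n.
\end{equation*}
Taking $j = n_{0}$ in the displayed universal inequality and substituting $\overline{\Lambda}_{n_{0}} = n$ gives
\begin{equation*}
\sum_{k=1}^{n} \overline{\Lambda}_{n_{0}+k} \;\leq\; (n+4)\,n + n^{2} \;=\; 2n^{2} + 4n,
\end{equation*}
and dividing by $n$ produces the desired bound $\frac{1}{n}\sum_{k=1}^{n}\overline{\Lambda}_{n_{0}+k} \leq 2n+4$.

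There is no real technical obstacle here, since both inputs are already available: Theorem \ref{thm7.6} is proved earlier in the paper, and the Tang--Yan theorem is cited. The only subtle point to address carefully in the write-up is the choice of indexing, namely that in Theorem \ref{thm7.6} the eigenvalues are counted from $j = 0$ (with $\overline{\Lambda}_{0} = 0$ corresponding to the constant eigenfunction), so that applying the inequality at $j = n_{0}$ correctly skips past the entire eigenspace of the first nonzero eigenvalue and begins the sum at $\overline{\Lambda}_{n_{0}+1}$. Once this bookkeeping is made explicit, the inequality follows immediately from the two substitutions above.
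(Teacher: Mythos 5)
Your proposal is correct and follows essentially the same route as the paper: the authors also specialize the universal inequality \eqref{z-ineq-22} of Theorem \ref{thm7.6} to the Laplace--Beltrami case ($\nu=0$, so $\overline{D}_{2}=0$), invoke Tang--Yan to get $\overline{\Lambda}_{1}=\cdots=\overline{\Lambda}_{n_{0}}=n$, and apply the inequality at $j=n_{0}$. Your explicit remark about the indexing starting from $j=0$ is a helpful clarification that the paper leaves implicit.
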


\begin{proof}
Assume that $\mathcal{M}^{n}$ is a unit sphere $\mathbb{S} ^{n+1}(1)$, the assertion is obvious. Now, we consider that $M^{n}$ is a minimal isoparametric hypersurface other than $\mathbb{S} ^{n}(1),$ we know that
$\overline{\Lambda}_{1}=\overline{\Lambda}_{2}=\cdots=\overline{\Lambda}_{n_{0}}=n$ according to some results showed by Tang and Yan in \cite{TY1}. From   \eqref{z-ineq-22},  we directly get \eqref{iso-ineq}.
\end{proof}

\begin{rem}Let $\mathcal{M}^{n}$ be an $n$-dimensional compact minimal isoparametric hypersurface in the unit sphere $\mathbb{S} ^{n+1}(1)$ and $|\nu|_{g_{0}}=0$. Then, according to Theorem {\rm \ref{thm7.7}}, we get an estimate for the upper bound of the second non-zero eigenvalue without counting the multiplicities of eigenvalues as follows: \begin{equation}\label{iso-ineq-new}
\overline{\Gamma}_{2} \leq 2 n+4.
\end{equation} \end{rem}

\begin{rem}Let $\mathcal{M}^{n}$ be an $n$-dimensional   unit sphere $\mathbb{S} ^{n}(1)$ and $|\nu|_{g_{0}}=0$. Then,  we have
$$
\overline{\Gamma}_{2}=2n+2,
$$ which means that eigenvalue inequality given in Theorem {\rm \ref{iso-ineq}} is very sharp. \end{rem}

\begin{rem}Let $\mathcal{M}^{n}$ be an $n$-dimensional compact minimal isoparametric hypersurface in the unit sphere $\mathbb{S} ^{n+1}(1)$ and $|\nu|_{g_{0}}=0$.  Then, from
\eqref{yang-closed}, we can obtain a weaker inequality than \eqref{iso-ineq}. To be more specific, we have
$$\overline{\Gamma}_{2}\leq2n+4.$$

\end{rem}

\begin{rem} \label{Rem-Sol3}In {\rm \cite{Sol3}},
Solomon  constructed an eigenfunction on a so-called quartic isoparametric hypersurface
$\mathcal{M}^{n}$ of OT-FKM-type, to conclude that $\mathcal{M}^{n}$ has $2n$ as an eigenvalue to fill the gap of eigenvalue sequence $0,n,3n,4n,\cdots$, which contain in the spectrum of Laplacian on the quartic isoparametric hypersurface
$\mathcal{M}^{n}$. Therefore, Theorem {\rm \ref{thm7.7}} further hints that $2n$ could be the second non-zero eigenvalue, although we still don't know whether $2n$ is the  second non-zero eigenvalue or not.
Furthermore, for the isoparametric hypersurfaces of OT-FKM type, Tang and Yan revealed an important fact that $2n$ is an eigenvalue of Beltrami-Laplacian in  {\rm \cite{TY3}}, whose eigenfunction is an isoparametric function. We remind that there is still a question: is it true that $2n$ is the second eigenvalue? In fact, it is an extremely difficult problem, and up to now, it remains open. However, Theorem {\rm \ref{thm7.7}} further indicates that  $2n$  could be the second eigenvalue of Beltrami-Laplacian in the isoparametric minimal hypersurfaces of OT-FKM type. \end{rem}

\subsection{Eigenvalues on the Focal Submanifolds of Laplacian}\label{subsec6.4}In this subsection, we are concerned with the focal submanifolds. It is remarkable that the focal submanifolds of isoparametric hypersurfaces
provide infinitely many spherical submanifolds with abundant intrinsic and extrinsic geometric properties.
For instance, they are both minimal in a unit sphere. Moreover, two focal submanifolds of an isoparametric
hypersurface with four distinct principal curvatures are both Willmore in a unit sphere. See \cite{LY}.
Let $\mathcal{M}_{1}$ be the focal submanifold of an isoparametric hypersurface with four distinct principal curvatures in the unit sphere
$\mathbb{S}^{n+1}(1)$ with codimension $m_{1}+1$.  Tang and Yan {\rm \cite{TY1}} investigated the eigenvalue of Laplacian on the focal submanifold of an isoparametric hypersurface with four distinct principal curvatures and obtained an estimates for the lower bound as follows:

\begin{equation*}\Lambda_{n+3}\left(\mathcal{M}_{1}\right)\geq \frac{4(n+2)\left(m_{2}-1\right)}{n},
\end{equation*}which implies that

\begin{equation*}\overline{ \Gamma}_{2}\left(\mathcal{M}_{1}\right)\geq \frac{4(n+2)\left(m_{2}-1\right)}{n}.\end{equation*} Applying Theorem \ref{thm7.1}, we can get an estimates for the upper bound of the eigenvalues Laplacian on the  focal submanifold of an isoparametric hypersurface with four distinct principal curvatures. This is what the following theorem states.

\begin{thm} \label{thm-6.21} Let $\mathcal{M}_{1}$ be the focal submanifold of an isoparametric hypersurface with four distinct principal curvatures with  dimension   $$\operatorname{dim} \mathcal{M}_{1} \geq \frac{2n}{3}+1$$ in the unit sphere
$\mathbb{S}^{n+1}(1)$. Then, for the eigenvalues of Beltrami-Laplacian, we have

\begin{equation}
\begin{aligned}\label{focal-ineq-1}
\frac{1}{m_{1}+2m_{2}}\sum^{m_{1}+2m_{2}}_{k=1}\overline{\Lambda}_{n+2+k} \leq 2(n+m_{2}+2).
\end{aligned}
\end{equation}
In particular, we have

\begin{equation}\label{focal-ineq-2}
  \overline{\Gamma}_{2} \leq 2(n+m_{2}+2).
\end{equation}
A similar conclusion holds for $\mathcal{M}_{2}$ under an analogous condition.\end{thm}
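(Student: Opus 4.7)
The plan is to view $\mathcal{M}_1$ as a closed minimal submanifold of the unit sphere $\mathbb{S}^{n+1}(1) \subset \mathbb{R}^{n+2}$ and invoke Theorem \ref{thm7.6} (with $\nu = 0$) combined with Takahashi's identity for coordinate eigenfunctions. Since the isoparametric hypersurface has four distinct principal curvatures, M\"{u}nzner's relation gives $n = 2(m_1+m_2)$, so $d_1 := \dim \mathcal{M}_1 = (n+1) - (m_1+1) = m_1 + 2m_2$, which matches the range of summation in \eqref{focal-ineq-1}.

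First, by Takahashi's theorem, the $n+2$ ambient coordinate functions $x_1, \dots, x_{n+2}$ of $\mathbb{R}^{n+2}$ restrict to eigenfunctions of $-\Delta$ on $\mathcal{M}_1$ with common eigenvalue $d_1$. Because $\mathcal{M}_1$ is not contained in any proper affine subspace of $\mathbb{R}^{n+2}$, these restrictions are linearly independent, and therefore $\overline{\Lambda}_{n+2} \leq d_1$. Next, since $\mathcal{M}_1$ is minimal in $\mathbb{S}^{n+1}(1)$ its mean curvature in $\mathbb{R}^{n+2}$ satisfies $H \equiv 1$, so Theorem \ref{thm7.6}, applied with $\nu = 0$, ambient dimension $d_1$, and starting index $j = n+2$, yields
\begin{equation*}
\sum_{k=1}^{d_1} \overline{\Lambda}_{n+2+k} \;\leq\; (d_1 + 4)\,\overline{\Lambda}_{n+2} + d_1^{2} \;\leq\; (d_1+4)d_1 + d_1^{2} \;=\; 2\,d_1(d_1+2).
\end{equation*}
Dividing through by $d_1 = m_1 + 2m_2$ and using the elementary bound $m_1 + 2m_2 + 2 \leq 2(m_1+m_2) + m_2 + 2 = n + m_2 + 2$, which follows from $m_1 + m_2 \leq n$, gives precisely \eqref{focal-ineq-1}. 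The second estimate \eqref{focal-ineq-2} then follows by monotonicity: the smallest term $\overline{\Lambda}_{n+3}$ in the sum is bounded by its average, so once one knows the multiplicity of $\overline{\Gamma}_1 = d_1$ does not exceed $n+2$, one obtains $\overline{\Gamma}_2 \leq \overline{\Lambda}_{n+3} \leq 2(n+m_2+2)$.

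The main obstacle lies in the multiplicity claim needed for \eqref{focal-ineq-2}. Takahashi's construction immediately produces a $d_1$-eigenspace of dimension at least $n+2$, but ruling out any further independent eigenfunctions at the level $d_1$ requires auxiliary spectral information about focal submanifolds of four-principal-curvature isoparametric families, of the sort supplied by the works of Muto-Ohnita-Urakawa and Tang-Yan; the dimensional hypothesis $\dim \mathcal{M}_1 \geq \tfrac{2n}{3}+1$ presumably enters precisely to guarantee that the first eigenspace is exactly the span of the restricted ambient coordinates. Once this multiplicity fact is granted, the remainder of the proof is a direct specialization of Theorem \ref{thm7.6} followed by the elementary manipulations above.
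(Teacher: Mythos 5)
Your proof is correct and takes essentially the same route as the paper: both arguments amount to applying Theorem \ref{thm7.6} with $\nu=0$ and starting index $j=n+2$, using the fact that $\overline{\Lambda}_{n+2}\le \dim\mathcal{M}_1=m_1+2m_2$, and then the elementary count $n=2(m_1+m_2)$. The only (harmless) difference is that you justify $\overline{\Lambda}_{n+2}\le \dim\mathcal{M}_1$ via Takahashi's theorem and fullness, whereas the paper directly quotes Tang--Yan's result that $\overline{\Lambda}_1=m_1+2m_2$ with multiplicity $n+2$ under the stated dimension hypothesis --- which, as you correctly observe, is exactly the multiplicity information needed to pass from \eqref{focal-ineq-1} to \eqref{focal-ineq-2}.
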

\begin{proof}If $\operatorname{dim} \mathcal{M}_{1} \geq \frac{2}{3} n+1,$ Tang and Yan \cite{TY1} proved that,
$$
\overline{\Lambda}_{1}\left(\mathcal{M}_{1}\right)=m_{1}+2m_{2}
$$
with multiplicity $n+2.$ Therefore, it follows from \eqref{z-ineq-22} that,

\begin{equation}
\begin{aligned}
\sum^{m_{1}+2m_{2}}_{k=1}\overline{\Lambda}_{n+2+k} \leq\left[\left(m_{1}+2m_{2}\right)+4\right]\left(m_{1}+2m_{2}\right) +\left(m_{1}+2m_{2}\right)^{2},
\end{aligned}
\end{equation}which gives \eqref{focal-ineq-1}, since $n=2(m_{1}+m_{2})$. From \eqref{focal-ineq-1}, it is not difficult to conclude \eqref{focal-ineq-2}. This completes the proof of Theorem \ref{thm-6.21}.

\end{proof}

\begin{rem}
Both $\mathcal{M}_{1}$ and $\mathcal{M}_{2}$ are fully embedded in $\mathbb{S}^{n+1}(1)$ if
$\ell \geq 3$, namely, they cannot be embedded into a hypersphere, which means that, the dimension $n-m_{1}$ (resp. $n-m_{2}$)
of $\mathcal{M}_{1}$ is an eigenvalue of $\mathcal{M}_{1}$ (resp. $\mathcal{M}_{2}$) with multiplicity at least $n + 2$ (cf. {\rm \cite{TY1}}).\end{rem}
 For the focal submanifold $\mathcal{\mathcal{M}}_{1}$ of OT-FKM type in $\mathbb{S}^{5}(1)$ with $\left(\mathcal{M}_{1}, \mathcal{M}_{2}\right)=$ (1,1), Tang, Xie and Yan \cite{TXY} proved that
$
\overline{\Lambda}_{1}\left(\mathcal{M}_{1}\right)=\operatorname{dim} M_{1}=3
$
with multiplicity $6$. Furthermore, for the focal submanifold $\mathcal{M}_{1}$ of homogeneous OT-FKM type in $\mathbb{S}^{15}(1)$ with $\left(m_{1}, m_{2}\right)=(4,3)$, they claimed that
$
\overline{\Lambda}_{1}\left(\mathcal{M}_{1}\right)=\operatorname{dim} \mathcal{M}_{1}=10
$
with multiplicity $16$. Thus, we can prove the following theorems in the light of the idea of the proof of Theorem \ref{thm-6.21}.

\begin{thm}For the focal submanifold $\mathcal{M}_{1}$ of OT-FKM type in $\mathbb{S}^{5}(1)$ with $\left(m_{1}, m_{2}\right)=$ (1,1), we have
\begin{equation}
\begin{aligned}\label{focal-ineq-3}
\sum^{3}_{k=1}\overline{\Lambda}_{6+k} \leq 30.
\end{aligned}
\end{equation}
 In particular, for the second eigenvalue (without considering the multiplicity) of the Beltrami-Laplacian, we have

\begin{equation}\label{focal-ineq-4}
  \overline{\Gamma}_{2} \leq 10.\end{equation}\end{thm}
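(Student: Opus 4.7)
The plan is to apply Theorem~\ref{thm7.6} (the closed analogue for minimal submanifolds of the unit sphere) with $\nu=0$, so that the operator $\mathfrak{L}_\nu$ reduces to the Laplace--Beltrami operator. The focal submanifold $\mathcal{M}_1$ of OT-FKM type in $\mathbb{S}^{5}(1)$ with $(m_1,m_2)=(1,1)$ is a compact minimal submanifold of dimension $n-m_1=4-1=3$ in $\mathbb{S}^{5}(1)=\mathbb{S}^{n+p}(1)$ with $n=3$ and $p=2$, so Theorem~\ref{thm7.6} applies directly. The crucial external input is the computation of Tang, Xie and Yan in \cite{TXY}, which asserts that
\[
\overline{\Lambda}_{1}(\mathcal{M}_{1})=\dim\mathcal{M}_{1}=3
\]
with multiplicity $6$. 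Equivalently, $\overline{\Lambda}_{1}=\overline{\Lambda}_{2}=\cdots=\overline{\Lambda}_{6}=3$ and $\overline{\Lambda}_{7}=\overline{\Gamma}_{2}$.

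First I would specialise \eqref{z-ineq-22} in Theorem~\ref{thm7.6}. Since $\nu=0$, the constant $\overline{D}_{2}=\inf_{\psi\in\Psi}\max_{\mathcal{M}^{n}}|\nu^{\top}|_{g_{0}}$ vanishes, and with $n=3$ the inequality collapses to
\[
\sum_{k=1}^{3}\overline{\Lambda}_{j+k}\;\leq\;7\,\overline{\Lambda}_{j}+9
\]
for every index $j\geq 0$. Taking $j=6$ and inserting $\overline{\Lambda}_{6}=3$ from the Tang--Xie--Yan computation, I obtain
\[
\sum_{k=1}^{3}\overline{\Lambda}_{6+k}\;\leq\;7\cdot 3+9\;=\;30,
\]
which is precisely \eqref{focal-ineq-3}.

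To deduce \eqref{focal-ineq-4}, I would use that the first distinct eigenvalue $\overline{\Gamma}_{1}=3$ already exhausts the indices $1,2,\ldots,6$, so each of the eigenvalues $\overline{\Lambda}_{7},\overline{\Lambda}_{8},\overline{\Lambda}_{9}$ is at least the second distinct eigenvalue $\overline{\Gamma}_{2}$. Therefore
\[
3\,\overline{\Gamma}_{2}\;\leq\;\overline{\Lambda}_{7}+\overline{\Lambda}_{8}+\overline{\Lambda}_{9}\;\leq\;30,
\]
yielding $\overline{\Gamma}_{2}\leq 10$. There is essentially no technical obstacle: the whole argument is a direct substitution into the universal inequality already proved in Section~\ref{sec6}, the only nontrivial ingredient being the multiplicity-$6$ identification of the first eigenvalue imported from \cite{TXY}. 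The only point that requires a bit of care is to apply Theorem~\ref{thm7.6} with the intrinsic dimension $n=3$ of $\mathcal{M}_{1}$ (rather than the dimension $4$ of the ambient isoparametric hypersurface), and to use $j=6$ in \eqref{z-ineq-22} so that $\overline{\Lambda}_{j}$ lands on the last copy of the first eigenvalue.
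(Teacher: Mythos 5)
Your argument is correct and is exactly the route the paper takes: specialise the closed-problem inequality \eqref{z-ineq-22} of Theorem~\ref{thm7.6} to $\nu=0$ with the intrinsic dimension $n=3$ of $\mathcal{M}_{1}$, take $j=6$, and feed in the Tang--Xie--Yan computation $\overline{\Lambda}_{1}=\cdots=\overline{\Lambda}_{6}=3$ to get $7\cdot 3+9=30$, then divide by $3$ using $\overline{\Gamma}_{2}\leq\overline{\Lambda}_{7}\leq\overline{\Lambda}_{8}\leq\overline{\Lambda}_{9}$. This is precisely the ``idea of the proof of Theorem~\ref{thm-6.21}'' that the paper invokes, so there is nothing to add.
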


\begin{thm}
For the focal submanifold $\mathcal{M}_{1}$ of homogeneous OT-FKM type in $\mathbb{S}^{15}(1)$ with $\left(m_{1}, m_{2}\right)=(4,3)$, we have
\begin{equation*}
\begin{aligned}
\sum^{10}_{k=1}\overline{\Lambda}_{16+k} \leq 240.
\end{aligned}
\end{equation*}
In particular, we have

\begin{equation*}
  \overline{\Gamma}_{2} \leq 24.\end{equation*}\end{thm}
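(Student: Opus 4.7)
\vskip 3mm
\noindent\textbf{Proof proposal.} The plan is to apply Theorem \ref{thm7.6} (the closed Payne-P\'{o}lya-Weinberger type inequality for minimal submanifolds of the unit sphere) with $\nu = 0$ to the focal submanifold $\mathcal{M}_{1}$, starting the index at the top of the first eigenspace. Concretely, with $(m_{1},m_{2})=(4,3)$ the ambient isoparametric setting lives in $\mathbb{S}^{n+1}(1)=\mathbb{S}^{15}(1)$ (so $n=2(m_{1}+m_{2})=14$), and the focal submanifold has dimension $\dim\mathcal{M}_{1}=n-m_{1}=10$. Since $\mathcal{M}_{1}$ is minimal in $\mathbb{S}^{15}(1)$, it may be viewed as a compact minimal submanifold of $\mathbb{R}^{16}$, so Theorem \ref{thm7.6} applies with the ``$n$'' there equal to $10$.

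Next I would invoke the cited result of Tang, Xie and Yan, which states that for this homogeneous OT-FKM focal submanifold
\[
\overline{\Lambda}_{1}(\mathcal{M}_{1})=\dim\mathcal{M}_{1}=10
\]
with multiplicity exactly $16$. Combined with the monotonicity $0=\overline{\Lambda}_{0}<\overline{\Lambda}_{1}\leq\overline{\Lambda}_{2}\leq\cdots$, this gives $\overline{\Lambda}_{1}=\overline{\Lambda}_{2}=\cdots=\overline{\Lambda}_{16}=10$. Taking $\nu=0$ (so $\overline{D}_{2}=0$) and $j=16$ in \eqref{z-ineq-22} with $n=10$, the universal inequality reduces to
\[
\sum_{k=1}^{10}\overline{\Lambda}_{16+k}\leq (10+4)\,\overline{\Lambda}_{16}+10^{2}=14\cdot 10+100=240,
\]
which is the first claimed bound.

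Finally, to pass from this averaged bound to the estimate on the second distinct eigenvalue, I would observe that by definition of $\overline{\Gamma}_{2}$ (counting eigenvalues without multiplicity) we have $\overline{\Gamma}_{2}=\overline{\Lambda}_{17}$, because all of $\overline{\Lambda}_{1},\dots,\overline{\Lambda}_{16}$ equal $10=\overline{\Gamma}_{1}$. Since $\overline{\Lambda}_{17}\leq \overline{\Lambda}_{17+k}$ for every $k\geq 0$, the left side of the displayed inequality is at least $10\,\overline{\Lambda}_{17}$, and therefore $\overline{\Gamma}_{2}=\overline{\Lambda}_{17}\leq 240/10=24$, which is the second assertion.

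The proof is essentially a direct specialization of Theorem \ref{thm7.6}, so no real obstacle is expected; the only nonroutine input is the precise value and multiplicity of $\overline{\Lambda}_{1}(\mathcal{M}_{1})$, which is borrowed from \cite{TXY}. One subtle point worth double-checking when writing the proof is the correct choice of $j$ in \eqref{z-ineq-22}: one must take $j$ equal to the full multiplicity $16$ of $\overline{\Lambda}_{1}$ (not $j=1$), since only then does the left-hand side start with eigenvalues that are strictly larger than $10$, which is exactly what is needed to extract a bound on $\overline{\Gamma}_{2}$ rather than merely on $\overline{\Gamma}_{1}$.
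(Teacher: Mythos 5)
Your proposal is correct and follows exactly the route the paper intends: the paper proves this theorem "in the light of the idea of the proof of Theorem \ref{thm-6.21}", i.e., by feeding the Tang--Xie--Yan value $\overline{\Lambda}_{1}(\mathcal{M}_{1})=\dim\mathcal{M}_{1}=10$ with multiplicity $16$ into the closed inequality \eqref{z-ineq-22} of Theorem \ref{thm7.6} with $\nu=0$, submanifold dimension $10$, and $j=16$, then dividing by $10$ to bound $\overline{\Gamma}_{2}=\overline{\Lambda}_{17}$. Your remark about choosing $j$ equal to the full multiplicity of the first eigenvalue is precisely the point that makes the argument yield $\overline{\Gamma}_{2}\leq 24$.
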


\begin{rem}
For the focal submanifold $\mathcal{M}_{1}$ of homogeneous OT-FKM type with $\left(m_{1}, m_{2}\right)=(1,k)$, according to the Proposition 1.1 in {\rm \cite{TY1}} and Theorem {\rm \ref{thm7.1}}, we can similarly give an upper estimate for the non-zero eigenvalue without counting the multiplicity of eigenvalues. \end{rem}

\begin{rem}It is well known that, when $\ell=2$, the focal submanifolds are isometric to $\mathbb{S}^{p}(1)$ and $\mathbb{S}^{q}(1)$. Thus, their second non-zero eigenvalues (without counting the multiplications) equal to two times of their dimensions, respectively.\end{rem}
\begin{rem}  When $\ell=3,$  Cartan showed that $m_{1}=m_{2}=1,2,4$ or $8 .$ In the unit spheres $\mathbb{S}^{4}(1), \mathbb{S}^{7}(1), \mathbb{S}^{13}(1)$ and $\mathbb{S}^{25}(1)$, the focal submanifolds of them are the Veronese embedding of $\mathbb{R} P^{2}, \mathbb{C} P^{2}, \mathbb{H} P^{2}$ and $\mathbb{Q} P^{2},$ respectively. The induced metric of this $\mathbb{R} P^{2}$ minimally embedded in $\mathbb{S}^{4}(1)$ differs from the standard metric of constant Gaussian curvature $K=1$ by a constant factor such that $K=\frac{1}{3}$. As for $\mathbb{C} P^{2}, \mathbb{H} P^{2}$ and $\mathbb{Q} P^{2},$ these
are minimally embedded in the unit spheres $\mathbb{S}^{7}(1), \mathbb{S}^{13}(1)$ and $\mathbb{S}^{25}(1)$ respectively, while the induced metric differs from the symmetric space metric by a constant factor such that $\frac{1}{3} \leq \operatorname{Sec} \leq \frac{4}{3}$. According to {\rm \cite{Str,Mas}}, one knows that the first eigenvalues of the focal submanifolds $\mathbb{C} P^{2}, \mathbb{H} P^{2}$ and $\mathbb{Q} P^{2}$ are equal to their dimensions, respectively. In conclusion, when $\ell=3$, one can assert that

\begin{equation*}
\begin{aligned}
\frac{1}{n_{0}}\sum^{n_{0}}_{k=1}\overline{\Lambda}_{m_{0}+k} \leq 2n_{0}+4,
\end{aligned}
\end{equation*}which implies that

\begin{equation*}
\begin{aligned}
\overline{\Gamma}_{2}\leq 2n_{0}+4,
\end{aligned}
\end{equation*}where $n_{0}$ denotes the dimension of focal submanifolds and $m_{0}$ denotes the multiplicity of the first non-zero eigenvalue.
\end{rem}

\section{Some Conjectures and Further Remarks} \label{sec7}
In this section, we raise  some conjectures and give some further remarks to end this paper.

Let $\Omega$ be a bounded domain on an $n$-dimensional Riemannian manifold $\mathcal{M}^{n}$ with piecewise smooth boundary $\partial\Omega$. We consider  Dirichlet eigenvalue problem of  Laplacian on complete Riemannian manifolds as follows:

\begin{equation}\label{diri-prob-la} {\begin{cases} \
\Delta u +\Lambda u=0, \ \ & {\rm in} \ \ \ \ \Omega, \\
  \ u=0, \ \ & {\rm on} \ \ \partial \Omega.
\end{cases}}\end{equation} We suppose that $\Lambda_{k}$ is the $k^{th}$ eigenvalue corresponding to the eigenfunction $u_{k}$. It is well known
that the spectrum of this eigenvalue problem \eqref{diri-prob-la} is real and discrete. Furthermore, the
following Weyl¡¯s asymptotic formula holds (cf. \cite{Cha}):

\begin{equation}\label{asymptotic}
\Lambda_{k} \sim \frac{4 \pi^{2}}{\left(\omega_{n} \operatorname{vol} \Omega\right)^{\frac{2}{n}}} k^{\frac{2}{n}}, \quad k \rightarrow \infty.
\end{equation} From this asymptotic formula \eqref{asymptotic}, it is not difficult to infer that

\begin{equation*}
  \sum_{i=1}^{k} \Lambda_{i} \sim \frac{n}{n+2} \frac{4 \pi^{2}}{\left(\omega_{n} \operatorname{vol} \Omega\right)^{\frac{2}{n}}} k^{\frac{n+2}{n}}, \quad k \rightarrow \infty.
\end{equation*}
In addition, for any positive integer $n_{1}$, it is easy to show that the eigenvalues of the  eigenvalue problem \eqref{diri-prob-la} of Laplacian satisfy:

\begin{equation*}\begin{aligned}    \lim_{j\rightarrow+\infty}\frac{  \Lambda_{j+1}+ \Lambda_{j+2}+\cdots+\Lambda_{j+n_{1}}}{\Lambda_{j}}=n_{1}.
\end{aligned}\end{equation*}In particular, when $n_{1}=n$, we have

\begin{equation}\begin{aligned} \label{co-in-1}  \lim_{j\rightarrow+\infty}\frac{  \Lambda_{j+1}+ \Lambda_{j+2}+\cdots+\Lambda_{j+n}}{\Lambda_{j}}=n.
\end{aligned}\end{equation}
 From \eqref{co-in-1}, we know that \eqref{1.16} can be improved and thus the first author propose the following conjecture.
\begin{con}Let $\Omega$ be a bounded domain with piecewise smooth boundary on an $n$-dimensional Euclidean space $\mathbb{R}^{n}$. Then, the eigenvalues of the  eigenvalue problem \eqref{diri-prob-la} of the Laplace operator satisfy the following universal inequality:

\begin{equation}\begin{aligned} \label{co-in-1}   \frac{  \Lambda_{j+1}+ \Lambda_{j+2}+\cdots+\Lambda_{j+n}}{\Lambda_{j}}\leq \frac{  \Lambda_{2}+ \Lambda_{3}+\cdots+\Lambda_{n+1}}{\Lambda_{2}},
\end{aligned}\end{equation}for any $j=1,2,\cdots.$  \end{con}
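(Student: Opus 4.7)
The plan is to establish the conjectured inequality by combining an Ashbaugh--Benguria style estimate at each index $j$ with a monotonicity argument showing that the ratio $R_j := (\Lambda_{j+1} + \cdots + \Lambda_{j+n})/\Lambda_j$ is controlled by its value at the base case. The starting point is the Payne--P\'{o}lya--Weinberger-type inequality on Euclidean domains that the authors have already extended (Theorem \ref{thm1.2} with $\nu = 0$), which specializes to $\sum_{l=1}^n (\Lambda_{j+l} - \Lambda_j) \le 4\Lambda_j$ and yields the Levitin--Parnovski bound $R_j \le n+4$. The goal is to sharpen this uniform bound by an amount dictated by the spectral gap between $\Lambda_1$ and $\Lambda_2$, so as to recover the right-hand side of \eqref{co-in-1}.

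First I would set $\phi_l = x_l$, $l = 1,\dots,n$, in Proposition \ref{prop2.2} (with $\nu=0$), obtaining an orthogonal rotation $\Phi_l = \sum_t a_{lt} x_t$ such that the products $\Phi_l u_j$ are orthogonal to $u_1,\dots,u_{j+l-1}$ in $L^2(\Omega)$. The min--max principle then gives
\begin{equation*}
\sum_{l=1}^{n}(\Lambda_{j+l}-\Lambda_j)\,\|u_j\nabla\Phi_l\|_\Omega^{2} \le \sum_{l=1}^{n}\|u_j\Delta\Phi_l+2\langle\nabla\Phi_l,\nabla u_j\rangle\|_\Omega^{2},
\end{equation*}
which, by the Euclidean identities $\sum|\nabla x_l|^2=n$ and $\Delta x_l=0$ of Lemma \ref{lem2.5}, reduces to $\sum_{l=1}^n(\Lambda_{j+l}-\Lambda_j)\le 4\Lambda_j$. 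The crucial refinement is to retain the lower-mode contribution in the Parseval expansion $\Phi_l u_j=\sum_k \sigma_{jk}^{(l)} u_k$; by separating the sum into the regimes $k<j$, $j\le k<j+n$, and $k\ge j+n$ and invoking Lemma \ref{lem2.1} in both directions, one should extract a positive correction involving $\sum_{k<j}(\Lambda_j-\Lambda_k)(\sigma_{jk}^{(l)})^2$ that encodes the spectral gap below $\Lambda_j$. This correction vanishes at $j=1$ (no modes below), which is consistent with the right-hand side of \eqref{co-in-1} being attained in the base case.

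The key step, and where I expect the main obstacle to lie, is a monotonicity-type lemma asserting that $R_j$ is non-increasing in $j$ (or at least dominated by $R_1$). Levitin--Parnovski's algebraic identity produces universal inequalities at every level $j$ but is index-independent and cannot by itself detect this monotonicity. To force the desired comparison, one likely needs either a rearrangement argument in the spirit of Ashbaugh--Benguria \cite{AB2,AB3} -- applied not to a single eigenvalue but to the sum of $n$ consecutive eigenvalues -- or a sharpened trial function built from rotations of the coordinate functions pulled back via eigenmaps into $\mathbb{S}^{n-1}$, exploiting the fact that equality in the conjectured bound should occur on the ball, where the coordinates $x_1,\dots,x_n$ form exactly such an eigenmap.

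The principal difficulty is that no rearrangement comparison theorem is presently known for sums of higher eigenvalues: the celebrated Ashbaugh--Benguria theorem is confined to the ratio $\Lambda_2/\Lambda_1$, and propagating its isoperimetric content up to sums $\sum_{l=1}^n\Lambda_{j+l}$ for arbitrary $j$ would itself constitute a major new result. A reasonable preliminary program would therefore be to verify the conjecture first on the ball (where explicit Bessel-function computations are available), then in the large-$j$ regime via the Weyl asymptotics \eqref{asymptotic} which already forces $R_j\to n$, and finally to attempt an interpolation between these two regimes using the correction term extracted in the second paragraph. Only after such intermediate evidence is in place does it seem feasible to attack the full monotonicity statement, and it is entirely possible that the conjecture will require a genuinely new symmetrization principle beyond the techniques currently available.
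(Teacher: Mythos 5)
The statement you were asked to prove is presented in the paper as a \emph{conjecture}, not a theorem: the authors give no proof of it. Their only supporting evidence is the Weyl asymptotic formula \eqref{asymptotic}, from which they observe that the ratio $R_j=(\Lambda_{j+1}+\cdots+\Lambda_{j+n})/\Lambda_j$ tends to $n$ as $j\to\infty$, strictly below the uniform Levitin--Parnovski bound $n+4$ of \eqref{LP1}; this asymptotic slack is what motivates them to propose \eqref{co-in-1}. There is therefore no argument in the paper against which your proposal can be checked, and your own closing assessment --- that the full statement would require a monotonicity lemma for $R_j$ or a symmetrization principle for sums of consecutive eigenvalues that does not currently exist --- is accurate. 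Your proposal is a reasonable research program (verify on the ball, use Weyl asymptotics for large $j$, interpolate), but it is not a proof, and you are right that the main obstacle is genuine.

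One concrete caution about the ``crucial refinement'' in your second paragraph. In the proof of Proposition \ref{prop2.2}, equation \eqref{2.8} gives $\sum_{k\ge j+l}(\Lambda_k-\Lambda_j)\beta_{ljk}^2=\|u_j\nabla\Phi_l\|_\Omega^2+\sum_{k<j}(\Lambda_j-\Lambda_k)\beta_{ljk}^2$, so retaining the lower modes does add a nonnegative correction $\sum_{k<j}(\Lambda_j-\Lambda_k)\beta_{ljk}^2$ to the left-hand side. But to turn this into an actual improvement of the bound you need a \emph{lower} bound on this correction, and the coefficients $\beta_{ljk}=\int_\Omega\Phi_l u_ju_k$ carry no unconditional positivity: they can vanish identically in symmetric configurations (and for $j=1$ the correction is empty, which is why the base case saturates your scheme). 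So the correction does not automatically ``encode the spectral gap'' in a quantitative way; making it do so is itself an open problem, on top of the monotonicity step you already identify as missing.
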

\begin{rem}If the conjecture above is true, it is natural for us to ask the same problem for the case of general Riemannian manifolds, too. \end{rem}
Let $\mathfrak{L}_{\nu}$ be an $n$-dimensional compact Riemannian manifolds without boundary. In this section, we shall investigate eigenvalues
of the closed eigenvalue problem of Laplacian on the Riemannian manifolds $\mathcal{M}^{n}$ as follows:
\begin{equation*}
\Delta \overline{u} +\overline{\Lambda} \overline{u}=0, \ \  {\rm in} \ \ \ \ \mathcal{M}^{n}.\end{equation*}

\begin{con}Let $\mathcal{M}^{n}$ be  an $n$-dimensional  compact minimal submanifold in
the unit sphere $\mathbb{S}^{n+p}(1)$. Then, the eigenvalues of the closed eigenvalue problem \eqref{closed-prob-2} of the Beltrami-Laplacian satisfy:

\begin{equation}\begin{aligned} \label{co-in-1}  \sum_{k=1}^{n}\overline{\Lambda}_{j+k} \leq(n + 3)\overline{\Lambda}_{j} +\frac{\overline{\Lambda}_{j}^{2}}{\overline{\Lambda}_{j+1}}+n^{2}.
\end{aligned}\end{equation} \end{con}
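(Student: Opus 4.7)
The plan is to adapt the Hile-Protter / Ashbaugh-Benguria refinement of the Payne-P\'olya-Weinberger inequality, in its form $(n+3)+\overline{\Lambda}_j/\overline{\Lambda}_{j+1}$ for bounded Euclidean domains, to the setting of closed minimal submanifolds of the unit sphere. The key structural input, which is not exploited in the proof of Theorem \ref{thm7.6}, is that a minimal submanifold $\mathcal{M}^n \hookrightarrow \mathbb{S}^{n+p}(1) \subset \mathbb{R}^{n+p+1}$ has $|H|^2 \equiv 1$ when viewed as a submanifold of $\mathbb{R}^{n+p+1}$, each restricted coordinate function satisfies $\Delta x_\alpha = -n x_\alpha$, and the spherical constraint $\sum_\alpha x_\alpha^2 \equiv 1$ holds on $\mathcal{M}^n$.

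First, I would apply Proposition \ref{prop7.1} with $\nu = 0$ to the coordinate functions $x_1,\dots,x_{n+p+1}$. The Gram-Schmidt procedure produces orthonormal combinations $\Phi_l = \sum_s a_{ls} x_s$, still satisfying $\Delta\Phi_l = -n\Phi_l$, such that
\begin{equation*}
\sum_{l=1}^{n+p+1}(\overline{\Lambda}_{j+l}-\overline{\Lambda}_j)\|u_j\nabla\Phi_l\|_{\mathcal{M}^n}^2 \le \sum_{l=1}^{n+p+1}\int_{\mathcal{M}^n}\bigl(u_j\Delta\Phi_l + 2\langle\nabla\Phi_l,\nabla u_j\rangle_g\bigr)^2 dv,
\end{equation*}
with the additional Gram-Schmidt vanishing $\int_{\mathcal{M}^n}\Phi_l u_j u_{j+s}\,dv = 0$ for $1 \le s \le l-1$. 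Combined with the extrinsic identities $\sum_l(\Delta\Phi_l)^2 = n^2 H^2 = n^2$, $\sum_l|\nabla\Phi_l|^2 = n$ and $\sum_l\langle\nabla\Phi_l,\nabla u_j\rangle_g^2 = |\nabla u_j|_g^2$ from Lemma \ref{lem2.5}, this already recovers the bound $\sum_{k=1}^n\overline{\Lambda}_{j+k} \le (n+4)\overline{\Lambda}_j + n^2$ of Theorem \ref{thm7.6}. The task is therefore to shave one unit off the coefficient of $\overline{\Lambda}_j$ and replace it with $\overline{\Lambda}_j^2/\overline{\Lambda}_{j+1}$.

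For the refinement, I would expand the right-hand side via Parseval,
\begin{equation*}
\int_{\mathcal{M}^n}\bigl(u_j\Delta\Phi_l + 2\langle\nabla\Phi_l,\nabla u_j\rangle_g\bigr)^2 dv = \sum_k(\overline{\Lambda}_k-\overline{\Lambda}_j)^2\beta_{ljk}^2,
\end{equation*}
where $\beta_{ljk} = \int_{\mathcal{M}^n}\Phi_l u_j u_k\,dv$, and pair this with the companion identity $\|u_j\nabla\Phi_l\|^2 = \sum_k(\overline{\Lambda}_k-\overline{\Lambda}_j)\beta_{ljk}^2$ coming from Lemma \ref{lem2.1}. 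The Hile-Protter type step is to split off the $k=j+1$ contribution, on which only the $l=1$ component survives the Gram-Schmidt, and to bound the tail $k \ge j+l$ by a weighted Cauchy-Schwarz that carefully trades factors of $\overline{\Lambda}_k - \overline{\Lambda}_j$ against $\overline{\Lambda}_{j+1}$. A precise balancing of the isolated $k=j+1$ term against the remainder should force $(\overline{\Lambda}_{j+1}-\overline{\Lambda}_j)$ to re-emerge weighted by $\overline{\Lambda}_j/\overline{\Lambda}_{j+1}$, yielding a net correction of $\overline{\Lambda}_j^2/\overline{\Lambda}_{j+1}$ in place of one factor of $\overline{\Lambda}_j$.

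The principal obstacle is making this algebraic refinement precise and compatible with the Gram-Schmidt constraints. In the original Euclidean argument the proof organises the test functions around the second eigenfunction $u_{j+1}$, whereas in the present spherical setting the coordinate functions themselves lie in the eigenspace of the eigenvalue $n$; when $\overline{\Lambda}_j \le n$, a common and important case, the Gram-Schmidt projections may become partially degenerate and the resulting bookkeeping is delicate. Harmonising this refinement with the extrinsic $n^2$ contribution from $|H|\equiv 1$ while producing exactly the sharp constant $(n+3)$ is expected to be the most delicate point, likely requiring a case analysis that separates $\overline{\Lambda}_{j+1} \le n$ from $\overline{\Lambda}_{j+1} > n$ and a careful tracking of how the normalisation $\sum_l\int_{\mathcal{M}^n}\Phi_l^2 u_j^2\,dv = 1$ interacts with the Gram-Schmidt coefficients near the eigenvalue $n$.
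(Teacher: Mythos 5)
You should first note that the paper does not prove this statement: it appears in Section \ref{sec7} as an open conjecture, motivated by the Euclidean inequality of Ashbaugh--Benguria and by Theorem \ref{thm7.6}, so there is no proof in the paper against which your argument can be checked. Your proposal correctly identifies the available ingredients (Proposition \ref{prop7.1} with $\nu=0$ applied to the coordinate functions of $\mathbb{R}^{n+p+1}$, the identities of Lemma \ref{lem2.5}, the relation $\Delta x_{\alpha}=-nx_{\alpha}$ for minimal submanifolds of the sphere, and $|H|\equiv 1$), and these do recover the weaker bound $(n+4)\overline{\Lambda}_{j}+n^{2}$ of Theorem \ref{thm7.6}. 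But the passage from $(n+4)\overline{\Lambda}_{j}$ to $(n+3)\overline{\Lambda}_{j}+\overline{\Lambda}_{j}^{2}/\overline{\Lambda}_{j+1}$ is exactly the content of the conjecture, and at that point your argument stops being a proof: the sentence ``a precise balancing \dots should force \dots yielding a net correction of $\overline{\Lambda}_{j}^{2}/\overline{\Lambda}_{j+1}$'' asserts the conclusion rather than deriving it. No inequality chain is written down for the isolated $k=j+1$ term, the weighted Cauchy--Schwarz is not specified, and you yourself flag the compatibility with the Gram--Schmidt constraints and with the extrinsic $n^{2}$ contribution as unresolved.

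Two further points indicate that the gap is substantive rather than expository. First, the Euclidean prototype $n+3+\Lambda_{1}/\Lambda_{2}$ of Ashbaugh--Benguria is a statement about the first eigenvalue only; for general $j$, even in $\mathbb{R}^{n}$, the known unconditional bound (Levitin--Parnovski) is $n+4$, and the Cheng--Qi refinement quoted in the introduction holds only as a dichotomy. So the premise that one can ``adapt'' an existing argument already overstates what is available for $j\geq 2$, which is the case the conjecture is really about. Second, the mechanism producing $\Lambda_{1}/\Lambda_{2}$ in the Euclidean proof is a Rayleigh-quotient estimate on the component of $x_{\alpha}u_{1}$ orthogonal to the first eigenfunction, tested against the threshold $\Lambda_{2}$; on a minimal submanifold of the sphere the $x_{\alpha}$ are themselves eigenfunctions of eigenvalue $n$, so $x_{\alpha}\overline{u}_{j}$ concentrates on eigenspaces near $\overline{\Lambda}_{j}$ shifted by $n$ rather than near $\overline{\Lambda}_{j+1}$, and it is not clear that the analogous projection estimate yields the factor $\overline{\Lambda}_{j}/\overline{\Lambda}_{j+1}$ at all. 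Until the balancing step is carried out explicitly and shown to survive these degeneracies, what you have is a plausible plan of attack on an open problem, not a proof.
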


\begin{rem}Provided that \eqref{co-in-1} is true and  $\mathcal{M}^{n}$ is a compact minimal isoparametric hypersurface in the unit sphere $\mathbb{S} ^{n+1}(1)$, it is easy to verify  the following inequality

\begin{equation}\label{iso-ineq-n} \overline{\Gamma}_{2}\leq\frac{2n+3+\sqrt{4n^{2}+16n+9}}{2}.\end{equation} Clearly, inequality \eqref{iso-ineq-n} is sharper than inequality \eqref{iso-ineq-new}.
\end{rem}

If $\mathcal{M}^{n}$ is an isoparametric hypersurfaces embedded in the unit sphere $\mathbb{S}^{n+1}(1)$ with $\ell=1$, then $\overline{\Gamma}_{2}=2n+2$.
If $\mathcal{M}^{n}$ is the generalized Clifford torus $\mathbb{S}^{p}\left(\sqrt{\frac{p}{n}}\right) \times \mathbb{S}^{q}\left(\sqrt{\frac{q}{n}}\right)$ $(p+q=n)$, by a straightforward calculation, we can show that the second eigenvalue $\overline{\Gamma}_{2}=2n$. When $\ell=2$, as is well known, the isoparametric hypersurface in $\mathbb{S}^{n+1}(1)$ is isometric to the  Clifford torus. Thus, $\overline{\Gamma}_{2}=2n$ when $\ell=2$.
As a further interest, based on the argument in the previous section, the first author propose the following conjecture, which is closely  related  to Yau's Conjecture:

\begin{con}\label{Z-conj} Let $\mathcal{M}^{n}$ be an  $n$-dimensional closed minimal hypersurface embedded into the $(n+1)$-dimensional unit sphere $\mathbb{S}^{n+1}(1)$. Then, we have$$
2 n \leq  \overline{\Gamma} _{2} \leq 2 n+2.
$$In particular, let $\mathcal{M}^{n}$ be an $n$-dimensional closed minimal isoparametric hypersurface embedded into $\mathbb{S}^{n+1}(1)$. Then, when $\ell=2,3,4,6$, we have $$\overline{\Gamma}_{2}=2n.$$
\end{con}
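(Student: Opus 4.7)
The conjecture has three parts: the global upper bound $\overline{\Gamma}_2 \leq 2n+2$ for every closed embedded minimal hypersurface $\mathcal{M}^n \subset \mathbb{S}^{n+1}(1)$; the global lower bound $\overline{\Gamma}_2 \geq 2n$; and the exact equality $\overline{\Gamma}_2 = 2n$ in the minimal isoparametric cases with $\ell \in \{3,4,6\}$.  My plan is to attack the upper bound first, since Section~\ref{sec6} already gives $\overline{\Gamma}_2 \leq 2n+4$ and only a small refinement is needed; then the lower bound, which depends on Yau's conjecture (known in the isoparametric setting by Tang--Yan~\cite{TY1}); and finally to pin down the value $2n$ in the three distinguished families by combining the two bounds with explicit constructions of second eigenfunctions.

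For the upper bound, the slack of $2$ in the passage from $2n+4$ to $2n+2$ originates from the fact that Theorem~\ref{thm7.1} is applied with $\phi_l = x_l$, the very coordinate functions that Takahashi's theorem identifies as first eigenfunctions (so $\Delta x_l = -n x_l$ on any minimal $\mathcal{M}^n \subset \mathbb{S}^{n+1}(1)$). To save the missing $2$, I would upgrade the test family to the degree-two traceless quadratics $\phi_{\alpha\beta} = x_\alpha x_\beta - \delta_{\alpha\beta}/(n+2)$, which restrict from degree-two spherical harmonics on $\mathbb{S}^{n+1}(1)$ of ambient eigenvalue $2(n+2)$. The needed analogues of Lemma~\ref{lem2.5} -- the pointwise computation of $\sum_{\alpha,\beta} |\nabla \phi_{\alpha\beta}|_g^2$, $\sum_{\alpha,\beta}(\Delta \phi_{\alpha\beta})^2$ and $\sum_{\alpha,\beta}\Delta \phi_{\alpha\beta}\,\nabla \phi_{\alpha\beta}$ -- follow from $\Delta x_l = -n x_l$ together with the sphere constraint $\sum_l x_l^2 = 1$. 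Feeding these identities into Proposition~\ref{prop7.1} with $j$ equal to the multiplicity index $n_0$ of the first eigenvalue should give, after the Chebyshev/averaging step used in the proof of Theorem~\ref{thm7.7}, the refined bound $\overline{\Gamma}_2 \leq 2n+2$.  This constant is the correct one in the model case $\mathcal{M}^n = \mathbb{S}^n(1)$, where equality holds and the second eigenvalue is realised by degree-two spherical harmonics.

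For the lower bound $\overline{\Gamma}_2 \geq 2n$, the first eigenspace contains the span of $x_1,\ldots,x_{n+2}$ by Takahashi. If some eigenvalue $\overline{\Lambda}$ lay in the open interval $(n,2n)$ with eigenfunction $w$, I would use the commutator identity
\begin{equation*}
\Delta(x_\alpha w) = -(n+\overline{\Lambda})\,x_\alpha w + 2\,\langle \nabla x_\alpha, \nabla w \rangle_g
\end{equation*}
and sum over $\alpha$ via Lemma~\ref{lem2.5}: the quadratic form $\int_{\mathcal{M}^n} \sum_\alpha \bigl(\Delta(x_\alpha w) + (n+\overline{\Lambda})x_\alpha w\bigr)^2 = 4\int_{\mathcal{M}^n} |\nabla w|_g^2 = 4\overline{\Lambda}$ should be bounded from below by the spectral gap between $n$ and $\overline{\Lambda}$, forcing an inequality of the form $\overline{\Lambda}(\overline{\Lambda}-n) \geq n^2$ and hence $\overline{\Lambda} \geq 2n$.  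For the isoparametric cases $\ell \in \{3,4,6\}$, I would then invoke the homogeneity of these hypersurfaces -- they are orbits of compact Lie group actions on $\mathbb{S}^{n+1}(1)$ -- and decompose $L^2(\mathcal{M}^n)$ via Peter--Weyl; Solomon's explicit eigenfunction of eigenvalue $2n$ in the OT-FKM family~\cite{Sol3} handles $\ell=4$, while the same idea applied to the $\mathrm{SO}(3)$-action for $\ell=3$ and to the $G_2$ or $\mathrm{SO}(4)$-actions for $\ell=6$ should realise $2n$ as a Casimir eigenvalue on the second isotypic component, which by the upper bound must coincide with $\overline{\Gamma}_2$.

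The hard part will be the lower bound.  Even granting Yau's conjecture, ruling out spectrum in the open interval $(n,2n)$ seems to require a sharp Weitzenb\"ock or Reilly-type identity on minimal hypersurfaces going beyond what is used in Section~\ref{sec6}; a natural candidate is to combine the refined Reilly inequality of Remark~\ref{rem5.1} with Simons' identity for $|A|^2$, but pushing this through for arbitrary (not necessarily isoparametric) embedded minimal hypersurfaces requires control of the second fundamental form that is not available at the generality of the conjecture, and new ideas will almost certainly be needed there.
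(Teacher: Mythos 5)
This statement is Conjecture \ref{Z-conj}: the paper offers no proof of it, only supporting evidence (the bound $\overline{\Gamma}_{2}\leq 2n+4$ from Theorem \ref{thm7.7}, Solomon's eigenfunction of eigenvalue $2n$ on quartic OT-FKM hypersurfaces, and the known spectra in the cases $\ell=1,2,3$ and $n=3,6,12,24$). So there is nothing in the paper to compare your argument against; what I can do is assess your proposed attack, and it has genuine gaps in all three parts.

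The most concrete failure is in the lower bound. Your commutator identity is correct: since $\Delta x_{\alpha}=-nx_{\alpha}$ and $\Delta w=-\overline{\Lambda}w$, one has $\Delta(x_{\alpha}w)+(n+\overline{\Lambda})x_{\alpha}w=2\langle\nabla x_{\alpha},\nabla w\rangle_{g}$, and summing squares via Lemma \ref{lem2.5} gives $4\overline{\Lambda}$. But the step ``bounded from below by the spectral gap'' is unsubstantiated: writing $x_{\alpha}w=\sum_{k}c_{k}^{\alpha}\psi_{k}$, the left side is $\sum_{\alpha,k}\bigl((n+\overline{\Lambda})-\mu_{k}\bigr)^{2}(c_{k}^{\alpha})^{2}$, and without controlling which eigenvalues $\mu_{k}$ actually occur in the expansion of $x_{\alpha}w$ (they can cluster near $n+\overline{\Lambda}$) no lower bound follows. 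Worse, even granting your claimed inequality $\overline{\Lambda}(\overline{\Lambda}-n)\geq n^{2}$, solving the quadratic gives $\overline{\Lambda}\geq n(1+\sqrt{5})/2\approx 1.618\,n$, which is strictly less than $2n$; the argument cannot close even on its own terms. For the upper bound, your indexing ``$j=n_{0}$, the multiplicity of the first eigenvalue'' presupposes that the first nonzero eigenvalue equals $n$ and that its multiplicity is known — for a general closed embedded minimal hypersurface the former is exactly Yau's conjecture and the latter is unknown, so the argument cannot run at the stated generality; moreover, after Gram--Schmidt the individual quadratics $\Phi_{l}$ no longer satisfy a pointwise bound $|\nabla\Phi_{l}|_{g}^{2}\leq c$ of the kind the proof of Theorem \ref{thm1.2} relies on to convert the weighted average into a bound on a sum of consecutive eigenvalues. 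Finally, for $\ell=4$ most OT-FKM isoparametric hypersurfaces are inhomogeneous, so the Peter--Weyl decomposition is unavailable, and Solomon's construction only shows that $2n$ lies in the spectrum — the paper itself notes in Remark \ref{Rem-Sol3} that whether $2n$ is the second distinct eigenvalue remains open. Your closing caveat that new ideas are needed is accurate; as written, none of the three parts is proved.
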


\begin{rem}Yau's Conjecture is to consider the first non-zero eigenvalue, while Conjecture {\rm \ref{Z-conj}} is to explore the second non-zero eigenvalue. However, we consider the lower and upper bounds for the second eigenvalue without counting the multiplications of eigenvalues in Conjecture {\rm \ref{Z-conj}}. \end{rem}
\begin{rem}If the last part of Conjecture {\rm \ref{Z-conj}} holds, the second eigenvalue of Laplacian will give a perfect and new character for the isoparametric hypersurfaces embedding into the unit sphere $\mathbb{S}^{n+1}(1)$.\end{rem}

Hsiang and Lawson \cite{HL} showed that every homogenous hypersurfaces in the unit sphere $\mathbb{S}^{n+1}(1)$
is represented as an orbit of a linear isotropy group of a Riemannian symmetric space of rank $2$. We refer the readers to \cite{MO} for the list of the complete examples of
homogenous hypersurfaces in the unit sphere  $\mathbb{S}^{n+1}(1)$. In what follows, there are some further remarks on the eigenvalues of  Beltrami-Laplacian on the isoparametric hypersurfaces.

\begin{rem}  It is well known that, both $SO(3)/(\mathbb{Z}_{2}+\mathbb{Z}_{2})$ and $SU(3)/\mathbb{T}^{2}$ are two isoparametric hypersurfaces embedded in the unit sphere $\mathbb{S}^{n+1}(1)$ with $\ell=3$, and from {\rm \cite{MOU}}, we know that  Conjecture {\rm \ref{Z-conj}} holds   for the cases of $SO(3)/(\mathbb{Z}_{2}+\mathbb{Z}_{2})$ and $SU(3)/\mathbb{T}^{2}$.
\end{rem}

\begin{rem}   Assume that $\mathcal{M}^{n}$ are the cubic isoparametric minimal hypersurfaces  with $n=3,6,12,24$, or $\mathcal{M}^{n}$,  Solomon proved that {\rm\cite{Sol1,Sol2}}, without considering the multiplicity of eigenvalues, $2n$ is an eigenvalue filling an obvious eigenvalue gap: $0,n,3n$. Likewise, Solomon considered  a class of focal submanifolds with  quadratic forms in the quartic isoparametric hypersurfaces and an analogous results obtained in {\rm\cite{Sol3}}, or see {\rm \cite{TY3}}. However, Solomon has not verified whether $2n$ is the second non-zero eigenvalue or not for those isoparametric hypersurfaces. Just like the case of the first eigenvalue, it shall be non trivial to prove that $2n$ is the second eigenvalue.
\end{rem}

The following is a famous conjecture proposed by Chern(cf. \cite{SWY,Yau}):

  \begin{con}\label{chern-conj}\textbf{\emph{(Chern Conjecture)}}~A closed, minimally immersed hypersurface in  the $(n+1)$-dimensional unit sphere $\mathbb{S}^{n+1}(1)$,
whose scalar curvature is a constant, is isoparametric.\end{con}
Furthermore, suppose that $M^{n}$ has constant scalar curvature, and the first author raise a conjecture as follows:

\begin{con}\label{regidity-conj}\textbf{\emph{(Rigidity Conjecture)}}~Let $\mathcal{M}^{n}$ be an  $n$-dimensional closed minimal hypersurface embedded into the $(n+1)$-dimensional unit sphere $\mathbb{S}^{n+1}(1)$ with constant scalar curvature. Then, the second eigenvalue either satisfies $
\overline{\Gamma} _{2}=2 n $, or $\overline{\Gamma} _{2}=2 n+2
$.
\end{con}
\begin{rem} Essentially, Conjecture {\rm \ref{Z-conj}} and Chern's conjecture {\rm \ref{chern-conj}} imply the rigidity conjecture {\rm \ref{regidity-conj}}. Therefore, form the perspective of  spectrum theory,  it is  also a fantabulous understanding for the isoparametric hypersurfaces if Conjecture {\rm \ref{regidity-conj}} is settled. \end{rem}

\begin{ack}
 The first author expresses his gratitude to professor Q.-M. Cheng for his continuous encouragement and useful discussion in early years. The authors also are debt to professor Mark S. Ashbaugh for sharing his literature {\rm \cite{Ash}}. The research was partially supported by the National Natural
Science Foundation of China  (Grant Nos. 11861036 and 11826213) and the Natural Science Foundation of Jiangxi Province (Grant No. 20171ACB21023). \end{ack}

\end{document}